\author{Christian Millichap}
\address{Department of Mathematics\\ 
Furman University\\ 
Greenville, SC 29613}
\email{christian.millichap@furman.edu}
\author{Rolland Trapp}
\address{Department of Mathematics\\ 
California State University\\ 
San Bernardino, CA 92407}
\email{rtrapp@csusb.edu}
\title{Flat fully augmented links are determined by their complements}
\DeclareMathAlphabet{\curly}{U}{rsfs}{m}{n}
\newtheorem{thm}{Theorem}[section]
\newtheorem{cor}[thm]{Corollary}
\newtheorem{prop}[thm]{Proposition}
\newtheorem{lem}[thm]{Lemma}
\theoremstyle{definition}
\newtheorem{df}[thm]{Definition}
\newtheorem*{rmk}{Remark}
\theoremstyle{remark}
\newtheorem*{namedtheorem}{\theoremname}
\newcommand{\theoremname}{testing}
\newenvironment{named}[1]{\renewcommand{\theoremname}{#1}\begin{namedtheorem}}{\end{namedtheorem}}
\def\1{\mathbf{1}}
\theoremstyle{plain}
\theoremstyle{remark}
\newtheorem*{ques}{Question}
\theoremstyle{plain}
\def\1{\mathbf{1}}
\def\moverlay{\mathpalette\mov@rlay}
\def\mov@rlay#1#2{\leavevmode\vtop{%
   \baselineskip\z@skip \lineskiplimit-\maxdimen
   \ialign{\hfil$\m@th#1##$\hfil\cr#2\crcr}}}
\newcommand{\charfusion}[3][\mathord]{
    #1{\ifx#1\mathop\vphantom{#2}\fi
        \mathpalette\mov@rlay{#2\cr#3}
      }
    \ifx#1\mathop\expandafter\displaylimits\fi}
\let\@@pmod\pmod
\DeclareRobustCommand{\pmod}{\@ifstar\@pmods\@@pmod}
\def\@pmods#1{\mkern4mu({\operator@font mod}\mkern 6mu#1)}
\begin{document}

\begin{abstract}
In this paper, we show that two flat fully augmented links with homeomorphic complements must be equivalent as links in $\mathbb{S}^{3}$. This requires a careful analysis of how  totally geodesic surfaces and cusps intersect in these link complements and behave under homeomorphism. One consequence of this analysis is a complete classification of flat fully augmented link complements that admit multiple reflection surfaces. In addition, our work classifies those symmetries of flat fully augmented link complements which are not induced by symmetries of the corresponding link.
\end{abstract}

\maketitle 
\let\thefootnote\relax\footnote{\textit{Date: \today} \hfill }


\section{Introduction}
\label{sec:intro}

Two links, $L_1$ and $L_2$, in $\mathbb{S}^{3}$ are  \textbf{equivalent} if there exists an orientation-preserving homeomorphism of pairs from $(\mathbb{S}^{3}, L_1)$ to $(\mathbb{S}^{3}, L_2)$. An equivalence of links induces a homeomorphism between the link complements $\mathbb{S}^{3} \setminus L_1$ and $\mathbb{S}^{3} \setminus L_2$, which shows that links determine their complements. However, the converse of this statement is generally not true. For instance, if a link contains an unknotted component, then a Dehn twist along this component is a homeomorphism of the complement that will frequently produce a non-equivalent link. Whitehead used this technique in \cite{Wh1937} to show that that there are infinitely many distinct links with the same complement as the Whitehead link. Some other known constructions for producing distinct links with the same complement can be found in Berge \cite{Berge} and Gordon \cite[Section 6]{Go2002}. In contrast to links,  the Gordon--Luecke Theorem \cite{GL1989} shows that knots are determined by their complements in $\mathbb{S}^{3}$. Knots in certain closed, oriented $3$-manifolds other than $\mathbb{S}^{3}$ are also determined by their complements; see Rong \cite{Ro1993}, Matignon \cite{Ma2010}, Gainullin \cite{Ga2018}, and Ichihara--Saito \cite{IS2021} for some examples.  Moving forward, we will always assume  knots and links are embedded in $\mathbb{S}^{3}$. This contrast between links with multiple components and knots motivated the following question raised by Mangum and Stanford \cite{MaSt2001}:

\begin{ques} Is there a set of links $\mathcal{S}$ such that if $L_1, L_2 \in \mathcal{S}$ and $\mathbb{S}^{3} \setminus L_1$ is homeomorphic to $\mathbb{S}^{3} \setminus L_2$, then $L_1$ is equivalent to $L_2$? \end{ques}

In the same paper, Mangum--Stanford show that the set of homologically trivial and Brunnian links (called HTB links in their paper) provide an affirmative answer to this question \cite[Theorem 3]{MaSt2001}.  As far as the authors know this is the only example in the literature of an infinite set of links, other than the set of knots, with this property.  This motivates the main goal of our paper, which is to show that the family of \textbf{flat fully augmented links} (flat FALs) also have this property.

Flat FALs are a family of hyperbolic links which can be obtained from link diagrams, meeting certain diagrammatic conditions, in the following way.  Given a link diagram add a trivial component (called a crossing circle) enclosing each twist region, then remove all twists so the strands of the diagram run parallel through the crossing circles (see Figure \ref{fig:intropic}). Knot circles of the resulting flat FAL are components that lie in the projection plane of the resulting diagram. Flat FALs, and more generally, FALs, are a rich family of hyperbolic links, which have received much attention in the last 20 years due to the explicit combinatorial descriptions of their geometric structures and connections to highly twisted knots via Dehn surgery; see \cite{BFT2015}, \cite{CDW2012}, \cite{F2017}, \cite{FP2007}, \cite{HMW2020}, \cite{KT2020}, \cite{MMR2020}, \cite{mrstz}, \cite{P2011}, \cite{P2008}, \cite{P2007} for some examples from the literature. We direct the reader to the beginning of Section \ref{sec:FALs} for more details on essential properties of flat FALs used in this paper. We can now formally state our main theorem, recalling that an isotopy of links in $\mathbb{S}^{3}$ induces an equivalence of links.
 
 \begin{thm}\label{MAINTHEOREM}
 Let $\mathcal{A}$ and $\mathcal{A}'$ be flat FALs.  Then $(\mathbb{S}^{3}, \mathcal{A})$ is isotopic to $(\mathbb{S}^{3}, \mathcal{A}')$ if and only if $\mathbb{S}^{3} \setminus \mathcal{A}$ is homeomorphic to $\mathbb{S}^{3} \setminus \mathcal{A}'$. 
 \end{thm}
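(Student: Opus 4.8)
The implication that isotopic flat FALs have homeomorphic complements is immediate, so the content is the converse. Suppose we are given a homeomorphism $h\colon \mathbb{S}^{3}\setminus\mathcal{A}\to\mathbb{S}^{3}\setminus\mathcal{A}'$. Flat FAL complements are finite-volume hyperbolic $3$-manifolds, so by Mostow--Prasad rigidity we may replace $h$ by a properly homotopic isometry, still denoted $h$. It then suffices to upgrade $h$ to an orientation-preserving homeomorphism of pairs $(\mathbb{S}^{3},\mathcal{A})\to(\mathbb{S}^{3},\mathcal{A}')$, since any orientation-preserving homeomorphism of $\mathbb{S}^{3}$ is isotopic to the identity and the isotopy drags $\mathcal{A}$ onto $\mathcal{A}'$.

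The plan is to show that the combinatorial data recording a flat FAL inside its complement is intrinsically geometric, and then to normalize $h$ so that it respects this data. Two features carry the data. First, each crossing circle bounds a twice-punctured disk that becomes a thrice-punctured sphere in the complement, hence is totally geodesic and rigid by Adams's theorem; after an isotopy these crossing disks are pairwise disjoint and totally geodesic, and the crossing disks and the crossing-circle cusps are distinguished --- among all cusps and all canonically positioned totally geodesic thrice-punctured spheres --- by how their ends wrap the cusps. Hence $h$ carries crossing circles of $\mathcal{A}$ to crossing circles of $\mathcal{A}'$ and crossing disks to crossing disks. Second, the projection sphere of $\mathcal{A}$ meets the complement in a totally geodesic \emph{reflection surface} $\Sigma_{\mathcal{A}}$, namely the fixed-point set of the isometric involution given by reflecting across the projection plane. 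Cutting the complement along $\Sigma_{\mathcal{A}}$ together with the crossing disks yields the standard decomposition of a flat FAL complement into ideal right-angled polyhedra, whose face-pairing pattern is the nerve of $\mathcal{A}$; conversely this pattern --- equivalently, the pair consisting of a reflection surface and the collection of crossing disks --- reconstructs $(\mathbb{S}^{3},\mathcal{A})$ up to orientation-preserving homeomorphism of pairs. So it would be enough to arrange that $h$ sends $\Sigma_{\mathcal{A}}$ to $\Sigma_{\mathcal{A}'}$.

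The crux, and the hardest step, is that $h$ need not do this: since $h(\Sigma_{\mathcal{A}})$ is totally geodesic and is fixed by the involution conjugated through $h$, all one knows is that $h(\Sigma_{\mathcal{A}})$ is \emph{some} reflection surface of $\mathbb{S}^{3}\setminus\mathcal{A}'$. The argument therefore requires a classification of reflection surfaces of flat FAL complements. I would analyze how a totally geodesic reflection surface $\Sigma$ must intersect the maximal cusps and the crossing disks: because reflection across $\Sigma$ is an isometry, the intersection of $\Sigma$ with each cusp cross-section and with each crossing disk is a union of geodesics constrained by strong symmetry and right-angle conditions, and propagating these constraints through the polyhedral decomposition should pin down the finitely many possibilities. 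The target statement is that, apart from an explicit list of flat FAL complements, the reflection surface is unique up to the action of the isometry group, while for each complement on that list one checks by hand that its several reflection surfaces all encode isotopic flat FALs. This combinatorial-geometric bookkeeping of totally geodesic surfaces meeting cusps and crossing disks is where the real work lies.

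Granting this, the proof finishes quickly: after composing $h$ with an isometry of $\mathbb{S}^{3}\setminus\mathcal{A}'$ --- and, in an exceptional case, invoking the direct check --- we may assume $h(\Sigma_{\mathcal{A}})=\Sigma_{\mathcal{A}'}$ while $h$ still sends crossing disks to crossing disks. Then $h$ carries the right-angled polyhedral decomposition of $\mathbb{S}^{3}\setminus\mathcal{A}$, and hence the nerve of $\mathcal{A}$, onto that of $\mathcal{A}'$, giving the required orientation-preserving homeomorphism of pairs and so the isotopy. Finally, the flat FAL complements with more than one reflection surface are exactly those appearing in the classification above, and the isometries of $\mathbb{S}^{3}\setminus\mathcal{A}$ that move $\Sigma_{\mathcal{A}}$ off the family recording $\mathcal{A}$ are exactly the symmetries not induced by symmetries of the link --- which is how the two auxiliary results advertised in the abstract fall out of the same analysis.
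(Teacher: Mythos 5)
There is a genuine gap, and it sits exactly where you declare the problem solved. You assert that crossing-circle cusps and crossing disks are distinguished, among all cusps and all totally geodesic thrice-punctured spheres, ``by how their ends wrap the cusps,'' and hence that $h$ must carry crossing circles to crossing circles and crossing disks to crossing disks. This is false. A flat FAL complement contains totally geodesic thrice-punctured spheres that are not crossing disks (longitudinal disks, punctured by three crossing-circle longitudes, and singly-separated disks), and there exist homeomorphisms --- even ones fixing the \emph{unique} reflection surface setwise --- that interchange crossing disks with longitudinal disks and turn knot circles into crossing circles. The reflection surface meets a crossing-circle cusp in meridians and a knot-circle cusp in longitudes, but nothing forces an isometry preserving $\Sigma_{\mathcal{A}}$ to preserve which of the two orthogonal slopes on a given cusp torus is the meridian. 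Concretely, the ``full-swap'' on a signature link such as $P_3$ is a self-homeomorphism of the complement that preserves the reflection surface yet swaps meridians and longitudes on several cusps, changing knot circles into crossing circles and sending crossing disks to longitudinal disks. Because of this, your final step --- reflection surface preserved plus crossing disks preserved implies the nerve, and hence the pair $(\mathbb{S}^3,\mathcal{A})$, is preserved --- does not go through: the hypotheses of that step are not available.

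Your identification of the multiple-reflection-surface issue is on target and matches the paper's strategy (the list is the Borromean rings, $P_n$, and $O_n$), but it is not the crux. The real work in the paper is the unique-reflection-surface, type-changing case your argument skips: one must show that any homeomorphism that changes the type of some component forces the link to be a very particular kind of flat FAL (a ``signature link'') and forces the homeomorphism to act, on peripheral structures, like a composition of Dehn twists along Hopf sublinks (a ``full-swap''), and then one must exhibit an explicit isotopy (a $180^{\circ}$ rotation) showing a signature link is equivalent to its full-swap image. That analysis --- via separating pairs and separating quadruples of thrice-punctured spheres and connect-sum/split-link obstructions from hyperbolicity --- is the content your proposal is missing.
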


Theorem \ref{MAINTHEOREM} provides a new infinite set of links  that are determined by their complements,  distinct from knots and  HTB links. By definition, every flat FAL contains at least three components, and so, no flat FALs are knots. At the same time, there are infinitely many flat FALs that are not HTB links. Specifically, any flat FAL with at least two knot circles is not an HTB link since it will contain a Hopf sublink.  Such a sublink violates the homologically trivial property that the linking number is $0$ for any two components. 

Another way our work differs from both Gordon--Luecke and Mangum--Stanford is in the techniques used.  The proof of Theorem \ref{MAINTHEOREM} greatly leverages the hyperbolic structure of flat FAL complements and relies on analyzing the behavior of totally geodesic surfaces and cusps under isometries induced by homeomorphism. This geometric approach differs from the purely topological approaches used by Gordon--Luecke and Magnum--Stanford, where these authors determine which Dehn surgeries on a knot or HTB link produce $\mathbb{S}^{3}$. 

We note that the \emph{flat} hypothesis of Theorem \ref{MAINTHEOREM} is necessary, making the result as general as possible within the class of FALs.  To see this, consider the twisted FALs of Figure \ref{fig:FlatReq}.  They differ by a Dehn twist on the top circle, so their complements are homeomorphic.  The links are distinct, however, since all components are unknots in Figure \ref{fig:FlatReq}$(a)$ while one component is a trefoil in Figure \ref{fig:FlatReq}$(b)$.

\begin{center}
\begin{figure}[h]
\[
\begin{array}{ccc}
\includegraphics{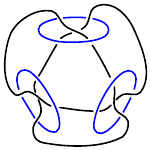}&\hspace{.4in}&\includegraphics{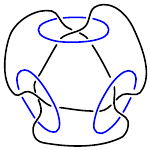}\\
(a)\textrm{ All components unknotted} & &(b)\textrm{ One trefoil component}
\end{array}
\]
\caption{Distinct twisted FALs with homeomorphic complements}
\label{fig:FlatReq}
\end{figure}
\end{center}

The proof of Theorem \ref{MAINTHEOREM} breaks into two cases, depending on the number of reflection surfaces contained in a flat FAL complement. Intuitively, a reflection surface for a flat FAL complement $\mathbb{S}^{3} \setminus \mathcal{A}$ is a totally geodesic surface that corresponds with the projection plane for an FAL diagram of $\mathcal{A}$; see Section \ref{sec:FALs} for more explicit details and properties of reflection surfaces. As an intermediary step in the proof of Theorem \ref{MAINTHEOREM}, we classify flat FAL complements with multiple reflection surfaces and show that homeomorphisms between flat FAL complements preserve reflection surfaces. For emphasis, we now state the classification of flat FALs with multiple reflection surfaces (see Figure \ref{fig:BP4O4} and the beginning of Section \ref{sec:2RS} for descriptions of the links $P_n$ and $O_n$).

\begin{thm}\label{thm:MainTheorem2}
Suppose $M = \mathbb{S}^{3} \setminus \mathcal{A}$ is a flat FAL complement with multiple distinct reflection surfaces. Then either 
\begin{itemize}
\item $\mathcal{A}$ is equivalent to the Borromean rings and $M$ contains exactly three distinct reflection surfaces, or
\item $\mathcal{A}$ is equivalent to $P_n$ with $n \geq 3$, or $O_n$ with $n \geq 2$, and $M$ contains exactly two distinct reflection surfaces. 
\end{itemize}
\end{thm}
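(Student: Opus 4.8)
The plan is to start from two distinct reflection surfaces $\Sigma$ and $\Sigma'$ of $M=\mathbb{S}^{3}\setminus\mathcal{A}$, with corresponding flat FAL diagrams $D$ and $D'$ of $\mathcal{A}$, and to show that the existence of $\Sigma'$ forces the combinatorics of $D$ to be so symmetric that $\mathcal{A}$ lies on a very short list. First I would assemble the rigidity inputs. Since $M$ is a finite-volume hyperbolic $3$-manifold, $\operatorname{Isom}(M)$ is finite; the reflections $r_\Sigma,r_{\Sigma'}\in\operatorname{Isom}(M)$ have $\Sigma,\Sigma'$ as fixed-point sets, so there are only finitely many reflection surfaces and $\langle r_\Sigma,r_{\Sigma'}\rangle$ is a finite dihedral group (in particular $\Sigma$ and $\Sigma'$ meet, orthogonally or at angle $\pi/m$, in a nonempty union of closed geodesics). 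I would also record the two geometric pictures attached to a reflection surface $\Sigma$: it induces a decomposition of $M$ into two congruent right-angled ideal polyhedra whose combinatorics is the nerve of $D$, with white faces coming from the projection plane and shaded faces from the crossing disks (which are essential, hence rigid, totally geodesic three-punctured spheres, and so are pairwise disjoint or meet in a single geodesic); and on the cusp torus of a component $c$, the surface $\Sigma$ meets the torus in a pair of parallel meridians when $c$ is a crossing circle of $D$, and in a pair of parallel longitudes when $c$ is a knot circle of $D$ --- two slopes intrinsic to the cusp.

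The core of the argument is a component-by-component comparison of the roles played by each component of $\mathcal{A}$ in $D$ and in $D'$. I would first observe that if every component has the same role in both diagrams, then $\Sigma$ and $\Sigma'$ induce the same partition of $M$ into white faces and crossing disks; since a flat FAL is recovered from this nerve data (Section~\ref{sec:FALs}), this forces $D=D'$ and hence $\Sigma=\Sigma'$, a contradiction. So some component $c$ changes role --- say $c$ is a crossing circle for $D$ but a knot circle for $D'$. Then the crossing disk $E$ of $D$ bounded by $c$ is a totally geodesic three-punctured sphere that meets the $c$-cusp in the longitudinal slope belonging to the $D'$-framing; this makes $E$ transverse to $\Sigma'$ in a strong sense and forces it to sit inside the polyhedral decomposition coming from $\Sigma'$ in a tightly constrained way. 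Chasing $E$ and its images under the dihedral group $\langle r_\Sigma,r_{\Sigma'}\rangle$ (together with the crossing disks of $D'$) should show that $M$ is tiled by a bounded number of copies of a single piece, capping the number of crossing and knot circles and producing a cyclic, resp.\ dihedral, symmetry permuting them. I expect this propagation step to be the main obstacle: one must rule out the many a priori ways that a totally geodesic three-punctured sphere, a reflection surface and the cusps can fit together, and it is precisely here that the careful analysis of intersections of totally geodesic surfaces with cusps promised in the abstract has to be executed, presumably with help from horoball-packing / cusp-area estimates to bound the combinatorics.

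Once the number of components is bounded and the forced symmetry is known, I would finish with a finite combinatorial classification of the flat FAL nerves that admit a second compatible reflection surface in the above sense, and verify that the only survivors are the nerve of the Borromean rings and the nerves of $P_n$ ($n\ge 3$) and $O_n$ ($n\ge 2$). For the converse direction I would exhibit the reflection surfaces directly: the $S_3$-symmetry of the Borromean rings, permuting its three components, yields three flat FAL presentations and hence three reflection surfaces, while each of $P_n$ and $O_n$ carries a symmetry interchanging a ``horizontal'' and a ``vertical'' flat FAL presentation and hence a second reflection surface. Exactness of the counts ($3$, $2$, $2$) then follows because any reflection surface in one of these complements must be one of the exhibited ones --- either by invoking the uniqueness half of the analysis above, or, more cheaply, by computing $\operatorname{Isom}(M)$ for these particular manifolds and counting its orientation-reversing involutions with two-dimensional fixed-point set. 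Finally, the flatness hypothesis upgrades ``$M$ is one of these complements'' to ``$\mathcal{A}$ is one of these links'', since a flat FAL is recovered up to equivalence from any one of its reflection surfaces together with the cusp data, and every reflection surface of these complements has already been identified.
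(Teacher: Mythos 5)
There is a genuine gap: the central classification step is announced but not executed, and you say so yourself (``I expect this propagation step to be the main obstacle \dots it is precisely here that the careful analysis \dots has to be executed''). Your setup is broadly aligned with the paper's --- you correctly reduce to the case where some component changes type (cf.\ Lemma \ref{lem:samesurface}) and you correctly identify the crossing disk $E$ of the type-changing component as the object to chase --- but the tools you propose for the chase (orbits under the dihedral group $\langle r_\Sigma,r_{\Sigma'}\rangle$, horoball-packing and cusp-area estimates) are not developed and are not what makes the argument close. The paper's proof instead rests on several concrete structural facts that your sketch never establishes: that the two peripheral framings agree set-wise on every cusp (Lemma \ref{lem:MeridianLongitude}, an orthogonality/area computation); that the second surface either contains a given crossing disk or meets it in its separating geodesic (Proposition \ref{prop:Rcrossingpossibilities}); that the crossing disk of a type-changing component is actually a \emph{component of the other reflection surface} (Lemma \ref{lem:factoid}); and Yoshida's trichotomy for how two totally geodesic thrice-punctured spheres can intersect, which splits the classification into three cases. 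Case (i) is the chain-building induction producing $P_n$ and $O_n$ (with an essential-annulus argument to cap the number of leftover crossing circles), and case (ii) is settled not combinatorially but by a volume comparison (Yoshida's rigidity forces volume $2v_8$, and Purcell's minimal-volume result pins down the Borromean rings). None of this is recoverable from the proposal as written, so the list $\{B, P_n, O_n\}$ is asserted rather than derived.

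Two smaller inaccuracies are worth flagging. First, finiteness of $\langle r_\Sigma,r_{\Sigma'}\rangle$ does not by itself force $\Sigma\cap\Sigma'$ to be a ``nonempty union of closed geodesics'': two totally geodesic surfaces in a cusped manifold intersect (when they do) in complete geodesics that here are never closed --- the six simple geodesics on a thrice-punctured sphere all run into punctures --- and nonemptiness of the intersection is something the paper has to prove, not assume. Second, the exactness of the counts ($3$ for $B$, $2$ for $P_n,O_n$) is obtained in the paper from the same structural analysis (Theorem \ref{thm:3rs} shows at most one second surface can satisfy each alternative of Proposition \ref{prop:Rcrossingpossibilities}), not from a computation of $\operatorname{Isom}(M)$; your ``cheaper'' fallback of counting orientation-reversing involutions would only count reflection-\emph{like} surfaces and would still require the identification of each such surface with a flat FAL diagram.
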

 
A slightly more general version of this classification result is given in Theorem \ref{thm:MultipleRS}, along with several useful corollaries on how reflection surfaces behave under homeomorphism. 

The proof of Theorem \ref{thm:MainTheorem2}, relies on an analysis of how cusps and totally geodesic surfaces behave relative to different reflection surfaces in  a flat FAL complement. With Theorem \ref{thm:MainTheorem2} in hand, basic properties of the links $P_n$ and $O_n$ show that two flat FALs whose complements admit multiple reflection surfaces are equivalent as links if and only if their respective complements are homeomorphic. This is highlighted in Corollary \ref{cor:MRSdetermined}. 

Theorem \ref{thm:MainTheorem2} also implies that a flat FAL with multiple reflection surfaces cannot be homeomorphic to a flat FAL with a single reflection surface; see Corollary \ref{thm:uniquereflectionhomeo}. As a result,  we now only need to consider the case where there exists a homeomorphism $h:M \rightarrow M'$ between two flat FAL complements, each with unique reflection surfaces. This is a far more challenging task, and  relies  on the topology and geometry of thrice-punctured spheres in a flat FAL complement that are not contained in the reflection surface, which were classified by Morgan--Mork--Ransom--Spyropoulos--Trapp--Ziegler in \cite{mrstz}. In particular, we  make extensive use of sets of thrice-punctured spheres that separate $M$, whose homeomorphic images in $M'$ are greatly restricted by the  classification in \cite{mrstz}. These restrictions help us describe how the knot and crossing circles which intersect such thrice-punctured spheres must behave under homeomorphism. 

This, in turn, allows us to show that any flat FAL complement that admits a unique reflection surface and a non-trivial homeomorphism  must have a particular link structure, which we call a signature link; see Definition \ref{defn:SigLink}. Furthermore,  only one type of non-trivial homeomorphism is possible in this case, which we call a full-swap. We  carefully describe   full-swap homeomorphisms of signature links in terms of compositions of Dehn twists along sets of Hopf sub-links; see Definition \ref{defn:MLSwap}. In addition, the induced action of any such full-swap homeomorphism on the corresponding flat FAL (which must be a signature link)  can be made explicit on a diagrammatic level, which makes it easy to construct a specific isotopy between any such pair of flat FALs with homeomorphic complements. 

As partially noted in the previous paragraph, our work not only shows that flat FALs are determined by their complements, but classifies the types of homeomorphisms that can exist between flat FAL complements and what types of flat FAL complements can admit certain types of homeomorphisms. These severe geometric restrictions on self-homeomorphisms of flat FAL complements lead to a concise comparison between their symmetry groups and those of their complements. Recall that the symmetry group of a link $L \subset \mathbb{S}^{3}$ is the group of homeomorphisms of pairs $(\mathbb{S}^{3}, L)$ up to isotopy, which we denote by $Sym(\mathbb{S}^{3}, L)$. Similarly, the symmetry group of the corresponding link complement  is the group of self-homeomorphisms of $\mathbb{S}^{3} \setminus L$ up to isotopy, denoted by $Sym(\mathbb{S}^{3} \setminus L)$. Note that, any self-homeomorphism of $(\mathbb{S}^{3}, L)$ induces a self-homeomorphism of $\mathbb{S}^{3} \setminus L$, and so, $Sym(\mathbb{S}^{3}, L) \subseteq Sym(\mathbb{S}^{3} \setminus L)$. However, this can be a strict containment; see \cite{HeWe1992} for some examples. The following theorem  classifies symmetries of flat FAL complements that are not induced by symmetries of the corresponding link.

\begin{thm}\label{thm:SymmetryThm}
Let $\mathcal{A}$ be a flat FAL.  Then either
\begin{itemize}
\item $\mathcal{A}$ is not a signature link and both $\mathcal{A}$ and its complement $M = \mathbb{S}^3\setminus \mathcal{A}$ have the same symmetry group, or
\item $\mathcal{A}$ is a signature link and full-swaps on $\mathcal{A}$ generate symmetries of $M = \mathbb{S}^3\setminus \mathcal{A}$ which are not restrictions of symmetries of $\mathcal{A}$ to $M$.
\end{itemize}
\end{thm}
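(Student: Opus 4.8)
The plan is to reduce the theorem to a single dichotomy about isometries. Since $M=\mathbb S^{3}\setminus\mathcal A$ is a finite-volume cusped hyperbolic $3$-manifold, Mostow--Prasad rigidity identifies $Sym(M)$ with the finite group $\isom(M)$, so every class in $Sym(M)$ has a unique isometry representative. I would first record the standard fact that $[h]\in Sym(\mathbb S^{3},\mathcal A)$ if and only if this isometry representative carries the meridian slope of each cusp to the meridian slope of the image cusp: given such a representative, Dehn filling those slopes extends it to a self-homeomorphism of the pair $(\mathbb S^{3},\mathcal A)$, while conversely any self-homeomorphism of the pair preserves meridian disks and hence meridian slopes up to isotopy. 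Consequently $Sym(M)=Sym(\mathbb S^{3},\mathcal A)$ exactly when every isometry of $M$ is meridian-preserving, and the two alternatives of the theorem are precisely the two ways this can resolve. I would then split according to the number of reflection surfaces, which is where Theorem \ref{thm:MainTheorem2} and the structural results behind Theorem \ref{MAINTHEOREM} enter.

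Suppose first that $M$ has more than one reflection surface. By Theorem \ref{thm:MainTheorem2}, $\mathcal A$ is the Borromean rings, $P_{n}$ with $n\ge 3$, or $O_{n}$ with $n\ge 2$; none of these is a signature link (signature links occur among flat FALs with a unique reflection surface, cf. Definition \ref{defn:SigLink}), so we must be in the first alternative, and it suffices to show that every isometry of such an $M$ is meridian-preserving. For this I would use the corollaries accompanying Theorem \ref{thm:MultipleRS}: an isometry permutes the finitely many reflection surfaces and the cusps, and for each of these three families the flat FAL, and in particular its meridian system, is determined by the combinatorics of a reflection surface together with the cusps meeting it. Since an isometry carries this configuration to another copy of it, it preserves the meridian system; equivalently one checks $Sym(M)=Sym(\mathbb S^{3},\mathcal A)$ directly for the Borromean rings, $P_{n}$, and $O_{n}$ using their explicit symmetry groups.

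Now suppose $M$ has a unique reflection surface. If $\mathcal A$ is not a signature link, I would invoke the classification of self-homeomorphisms of flat FAL complements with a unique reflection surface proved in the course of Theorem \ref{MAINTHEOREM}: every self-homeomorphism of $M$ is isotopic either to a meridian-preserving homeomorphism or to a full-swap (Definition \ref{defn:MLSwap}), and a full-swap exists only when $\mathcal A$ is a signature link. Hence every self-homeomorphism of $M$ is meridian-preserving, so $Sym(M)=Sym(\mathbb S^{3},\mathcal A)$ and we are in the first alternative. If instead $\mathcal A$ is a signature link, then a full-swap $\sigma$ is a genuine self-homeomorphism of $M$; by its explicit description as a nontrivial composition of Dehn twists along sets of Hopf sublinks it does not preserve the meridian slopes (each such Dehn twist alters a meridian, and the defining feature of a full-swap is that these alterations do not cancel), so its isometry representative is not meridian-preserving and $[\sigma]\in Sym(M)\setminus Sym(\mathbb S^{3},\mathcal A)$. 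Combined with the classification, $Sym(M)$ is generated by $Sym(\mathbb S^{3},\mathcal A)$ together with the full-swaps, and the full-swaps are not restrictions of symmetries of $\mathcal A$; this is the second alternative.

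The step I expect to be the genuine obstacle — although most of its weight has already been borne by the earlier sections — is verifying that a full-swap truly fails to be meridian-preserving and is not secretly isotopic in $M$ to a meridian-preserving homeomorphism. This requires tracking the effect of the composition of Dehn twists defining a full-swap on each cusp and confirming the net effect is nontrivial, which is exactly where the precise content of Definition \ref{defn:MLSwap} and the thrice-punctured sphere analysis of \cite{mrstz} are needed. By contrast, the multiple-reflection-surface case is routine, requiring only a modest amount of case-checking across the families $P_{n}$, $O_{n}$, and the Borromean rings.
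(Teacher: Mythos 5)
Your overall architecture matches the paper's: reduce to whether every self-homeomorphism (equivalently, its isometry representative) preserves meridians, split on the number of reflection surfaces, use the classification behind Theorem \ref{MAINTHEOREM} to show that a type-changing self-homeomorphism forces $\mathcal{A}$ to be a signature link, and observe that a full-swap exchanges meridians and longitudes on Hopf sublinks and hence cannot extend to a symmetry of the pair. However, there is one genuine error in your multiple-reflection-surface case: you assert that none of the Borromean rings, $P_n$ ($n\ge 3$), or $O_n$ ($n\ge 2$) is a signature link, ``signature links occur among flat FALs with a unique reflection surface.'' This is false for $P_3$. The link $P_3$ satisfies Definition \ref{defn:SigLink} (take $K_f$ to be one knot circle, $\mathcal{K}=\{K_1,K_2\}$, $\mathcal{C}=\{C_1,C_2\}$, $\mathcal{C_K}=\{C_{12}\}$) and its complement has two reflection surfaces; the paper flags it as the unique signature link with multiple reflection surfaces, and this is precisely why Theorem \ref{thm:SymmetryMRS} is stated for multiple-reflection-surface flat FALs \emph{other than} $P_3$.

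The consequence is not cosmetic. For $P_3$ your argument would place the link in the first alternative and attempt to verify that every isometry of $M$ is meridian-preserving, but the full-swap on $P_3$ is an isometry of its complement that swaps meridian and longitude on the cusps $K_i, C_i$ and therefore is \emph{not} meridian-preserving; the ``direct check'' you propose would come out the other way, and $Sym(M)\ne Sym(\mathbb{S}^3,P_3)$. The fix is exactly the paper's: handle $P_3$ under the signature-link alternative (where the full-swap argument applies verbatim, since the existence of a full-swap does not require a unique reflection surface), and restrict the multiple-reflection-surface meridian-preservation argument to the Borromean rings, $P_n$ with $n\ge 4$, and $O_n$ with $n\ge 2$ --- which is feasible because in those cases the reflection surface has a unique component with more than three punctures, forcing any isometry to preserve the knot/crossing partition. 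Your remaining cases (unique reflection surface, signature or not) are sound and agree with the paper's proof.
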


Part of Theorem \ref{thm:SymmetryThm} is proved in Theorem \ref{thm:SymmetryMRS} at the end of Section \ref{sec:MRS}. The rest of the proof of this theorem is completed at the end of Section \ref{sec:CDFF}.



We now describe the organization of the rest of this paper. In Section \ref{sec:FALs} we introduce flat FALs, the necessary terminology related to cusps and totally geodesic surfaces contained in flat FAL complements, and review some essential facts from the literature on the geometry of FAL complements. In Section \ref{sec:PropsRefSur}, we prove Theorem \ref{thm:MainTheorem2}. This allows us to focus the rest of the paper on homeomorphisms between flat FAL complements, each with a unique reflection surface and this  transition is discussed in Section \ref{sec:Transition}. Then Section \ref{Sec:SigLinks} discusses ``signature link'' complements, a special class of flat FAL complements, along with a ``full-swap'' homeomorphism that can be performed on any signature link complement. The image of a full-swap homeomorphism is another signature link complement, and it is straightforward to construct an explicit isotopy between their corresponding links.  In Section \ref{subsec:SepSets}, we prove some useful facts about sets of thrice-punctured spheres that separate a flat FAL complement (with a unique reflection surface) and how they behave under homeomorphisms. Finally, in Section \ref{sec:CDFF} we build off the tools from Section  \ref{subsec:SepSets} to show that any homeomorphism between flat FAL complements (each with a unique reflection surface) must essentially be a ``full-swap'' homeomorphism between ``signature link'' complements.  This allows us to construct an isotopy between the corresponding links and prove our main theorem. 

We would like to thank Jeffrey Meyer for comments and suggestions on preliminary work done for this paper.


\section{Totally Geodesic Surfaces and Cusps in Flat FALs}
\label{sec:FALs}

This section provides a brief introduction to flat FALs, compiles some known results about totally geodesic surfaces in their complements, and introduces two concepts used extensively in Section \ref{sec:PropsRefSur}: reflection-like surfaces and their induced structures on a flat FAL complement.  Aside from reflection-like surfaces and their induced structures, this section is a review of results in the literature.
 
 We first describe how to construct a flat FAL. For this process,  start with a link $L$ and a diagram $D(L)$. 
 We can build a diagram $D(\mathcal{F})$ for an FAL $\mathcal{F}$ corresponding to $L$ by augmenting each twist region in $D(L)$ with a circle and undoing all full-twists from each respective twist region. After this procedure, twist regions of $D(L)$ that had contained an odd number of crossings will still contain a single crossing in $D(\mathcal{F})$. If we remove all of these remaining single crossings, then we will have constructed a diagram $D(\mathcal{A})$ for a flat FAL $\mathcal{A}$, as illustrated in Figure \ref{fig:intropic}.

In our work, we will solely be interested in the case where a flat FAL $\mathcal{A}$ is hyperbolic in the sense that the complement $\mathbb{S}^{3} \setminus \mathcal{A}$ admits a complete metric of constant negative curvature. As noted in \cite[Theorem 2.5]{P2011}, a (flat) FAL $\mathcal{A}$ is hyperbolic if and only if it there exists a corresponding link diagram $D(L)$ that is non-splittable, prime, twist-reduced, and contains atleast two twist regions, where $D(\mathcal{A})$ is obtained from $D(L)$ by the augmentation process described above. We refer the reader to \cite[Section 1]{FP2007} for these diagrammatic definitions. Moving forward, we will assume any flat FAL is hyperbolic, and refer to $D(L)$ as the corresponding diagram that was augmented to construct $D(\mathcal{A})$.  
 
This augmentation process  partitions the components of $\mathcal{A}$ into \textbf{crossing circles}, the trivial components added via augmentation, and \textbf{knot circles}, components coming from the original link $L$. Observe that each crossing circle in a flat FAL bounds a \textbf{crossing disk}, a disk in $\mathbb{S}^3$ punctured twice by parallel knot circle arcs that replace an original twist region of $D(L)$.  We define a flat FAL diagram to be a link diagram together with this additional structure.  More precisely, we have

\begin{df}\label{defn:FALDiagram}
A link diagram $D(\mathcal{A})$ is a \textbf{flat FAL diagram} for a (hyperbolic) link $\mathcal{A}$ if $D(\mathcal{A})$ was constructed by fully augmenting a corresponding diagram $D(L)$ and removing all  crossings from each twist region.  In addition, we assume $D(\mathcal{A})$ carries with it the partition of components of $\mathcal{A}$ into crossing- and knot-circles, as well as the choice of crossing disks, relative to this augmentation of $D(L)$.  A link $\mathcal{A}$ is a \textbf{flat FAL} if it admits a flat FAL diagram $D(\mathcal{A})$.
\end{df}
 
 \begin{figure}[ht]
	\centering
	\begin{overpic}[width = \textwidth]{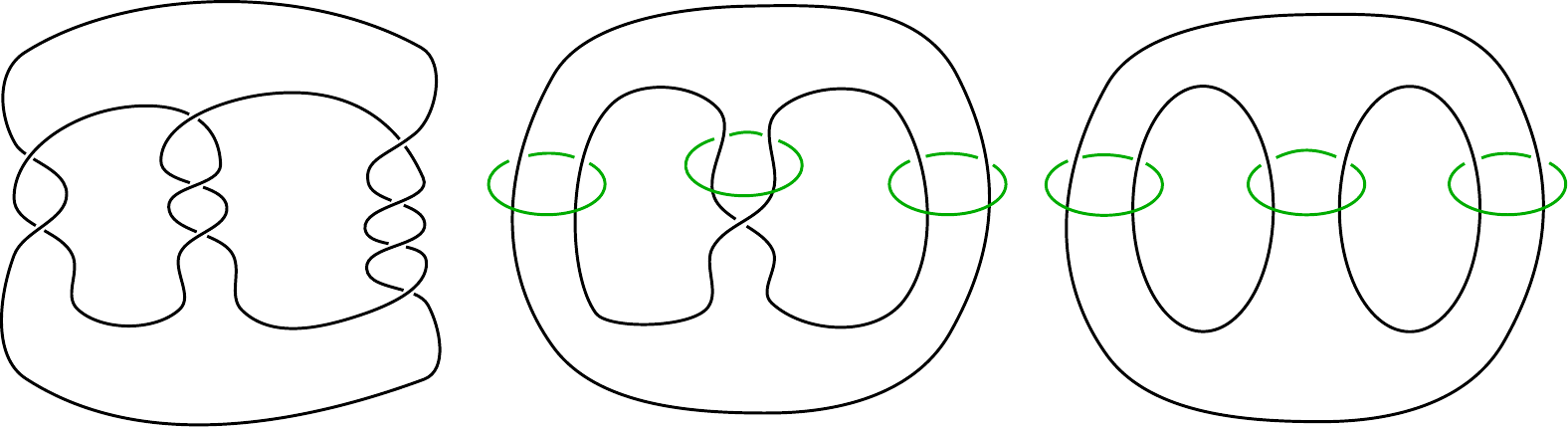}
		\put(0,25){$L$}
		\put(32,22){$\mathcal{F}$}
		\put(68,22){$\mathcal{A}$}
		\put(70,15){$c_{1}$}
		\put(82.5,15){$c_{2}$}
		\put(95.5,15){$c_{3}$}		
	\end{overpic}
	\caption{On the left is a diagram of a link $L$ with three twist regions. The middle diagram shows the corresponding FAL $\mathcal{F}$ obtained from fully augmenting $L$. The right diagram shows the corresponding flat FAL $\mathcal{A}$.   Crossing circles of $\mathcal{A}$ are labeled by $c_i$, for $i=1,2,3$.}
	\label{fig:intropic}
\end{figure}

The geometric structures of  augmented links were first studied in Adams \cite{ad2}. A particularly nice geometric decomposition of (flat) FAL complements into pairs of identical right-angled ideal polyhedra was  described by Agol--Thurston in the appendix of \cite{L2004}. This geometric decomposition has proved to be a fruitful tool for analyzing geometric and topological properties of FALs; see \cite{CDW2012}, \cite{F2017}, \cite{FP2007}, \cite{HMW2020}, \cite{KT2020}, and \cite{P2011} for a few examples.  In addition, infinite subclasses of FALs  have also been examined in the literature. For instance, Meyer--Millichap--Trapp \cite{MMR2020} studied the arithmeticity, invariant trace fields, symmetries, and hidden symmetries of FALs obtained by fully augmenting pretzel links, and Purcell examined FALs whose complements admit a decomposition into regular ideal hyperbolic octahedra in \cite{P2011}. In both cases, each subclass contains an infinite number of flat FALs. Thus the family of flat FALs is a large set of links  with many interesting properties.

 We now collect some important facts about totally geodesic surfaces and cusps contained inside a flat FAL complement. These properties will be essential for proving both Theorems \ref{MAINTHEOREM} and \ref{thm:MainTheorem2}. Most of the results stated here are known in the literature and we refer the reader to \cite{FP2007}, \cite{mrstz}, and \cite{P2011}  for more details on the geometric properties of FAL complements discussed here.

First, we describe  reflection surfaces, which are an important type of totally geodesic surface contained in every flat FAL complement. Given a flat FAL  $\mathcal{A}$ in $\mathbb{S}^{3} \cong \mathbb{R}^{3} \cup \{ \infty \}$, position the crossing circles and disks so that they are orthogonal to the projection plane and embed the knot circles in the projection plane. Such an isotopy is always possible based on the diagrammatic definition of a flat FAL. Then reflection in the projection plane maps every component of $\mathcal{A}$ to itself and fixes the projection plane point-wise. Mostow--Prasad rigidity implies that this projection plane corresponds with a totally geodesic surface $R \subset M = \mathbb{S}^{3} \setminus \mathcal{A}$ and there exists an orientation-reversing involution $\iota_{R}:M \rightarrow M$ that fixes $R$ point-wise corresponding with reflection in the projection plane. Since  $\mathcal{A}$ could admit FAL diagrams where different surfaces play the role of the projection plane, we provide the following definition. 

\begin{df}
Let $M = \mathbb{S}^{3} \setminus \mathcal{A}$ be a flat FAL complement and let $R \subset M$ be an embedded totally geodesic surface. If there exists an FAL diagram $D(\mathcal{A})$ in the projection plane $P$ for which $R = P \setminus \mathcal{A}$, then we say that $R$ is a \textbf{reflection surface} of $M$ (relative to the diagram $D(\mathcal{A})$).
\end{df}

 An FAL diagram  $D(\mathcal{A})$ partitions the cusps of $M = \mathbb{S}^{3} \setminus \mathcal{A}$ into \textbf{crossing circle cusps} and  \textbf{knot circle cusps}, corresponding with crossing circles and knot circles of $\mathcal{A}$, respectively.  We also say that this partition is  induced by a reflection surface $R$, though this ultimately depends on the FAL diagram since reflection surfaces are defined relative to the projection plane for a diagram. As we will see in Section \ref{sec:PropsRefSur}, it is possible for a flat FAL complement to admit distinct reflection surfaces that induce different partitions on the components of $\mathcal{A}$.  At the same time,  it is possible for a flat FAL complement to admit one reflection surface that induces different partitions on the components of $\mathcal{A}$; this phenomenon will be examined in Section \ref{Sec:SigLinks}. Here, we say a cusp of $M$ is an \textbf{$R$-knot circle cusp} (respectively, \textbf{$R$-crossing circle cusp}) if the corresponding link component of $\mathcal{A}$ is a knot circle (resp. crossing circle) relative to $R$. In this paper, we frequently use the same notation to refer to a component of $\mathcal{A}$ and the corresponding cusp on $M$. In addition, each $R$-crossing circle $C$ of $\mathcal{A}$ bounds an $R$-\textbf{crossing disk} $D$, which is a disk twice punctured by the two (not necessarily distinct) knot circles going through $C$. As noted in \cite[Lemma 2.1]{P2011}, each such $R$-crossing disk is an embedded totally geodesic thrice-punctured sphere in $M$. Furthermore, $R$ intersects $D$ orthogonally in the set of three simple, non-separating geodesics on $D$; see Figure \ref{fig:3PS} for a visual of simple geodesics on a thrice-punctured sphere. 

A flat FAL diagram $D(\mathcal{A})$ (or, alternatively, its reflection surface $R$) determines more structure than the partitioning of cusps of $M$ into knot and crossing circle cusps.  It also determines a meridian and longitude on each boundary torus  of a cusp neighborhood.  In this context, meridians and longitudes will be considered \textbf{slopes}, or unoriented isotopy classes of simple closed curves on this boundary torus. 

Each component $L$ of a flat FAL is topologically an unknot in $\mathbb{S}^3$, so the torus boundary $T_L$ of a tubular neighborhood $V_L$ of $L$ has a natural choice of meridional and longitudinal slopes.  The unknotted torus $T_L\subset \mathbb{S}^3$ bounds a solid torus on each side. A meridian $m$ is the slope of $T_L$ that bounds a disk in $V_L$, while a longitude $\ell$ is a slope of $T_L$ that bounds a disk in $\mathbb{S}^3\setminus V_L$.  The unoriented curves $m$ and $\ell$ are well-defined up to ambient isotopy on $T_L$ , so give well-defined slopes on $T_L$.

From a geometric perspective, the slopes $m$ and $\ell$ can be described as the intersection of $T_L$ with totally geodesic surfaces.  More precisely, let $\mathcal{A}$ be a flat FAL with reflection surface $R$.  If $K$ is a knot circle of $\mathcal{A}$ with torus boundary $T_K$ of a  cusp neighborhood of $K$, then $R\cap T_K$ is a pair of simple closed curves that represent longitudes on $T_K$.  Similarly, for any crossing circle $C$ of $\mathcal{A}$, the set $R\cap T_C$ is a pair of simple closed curves that represent meridians on $T_C$.  Crossing disks relative to the reflection surface $R$ are orthogonal to it, and so intersect $T_K$ in meridians and $T_C$ in longitudes.  See \cite[Lemma 2.3]{FP2007} for more details.

The longitudes and meridians just described will be referred to as \textbf{$R$-meridians} and \textbf{$R$-longitudes} when we want to emphasize the FAL diagram (and corresponding reflection surface) used to determine them. Since each torus boundary of a cusp $T$ of $M$ is rectangular (see \cite[Lemma 2.3]{FP2007}),  curves orthogonal to $R\cap T$ determine the second generator for the fundamental group of each  such torus.  We refer to the basis for each $\pi_{1}(T)$ described above as the \textbf{peripheral structure} relative to the FAL diagram $D(\mathcal{A})$.



The following proposition summarizes the essential features of a reflection surface in a flat FAL complement that we just discussed. 

\begin{prop}
\label{prop:RSfeatures}
Let $M = \mathbb{S}^{3} \setminus \mathcal{A}$ be a flat FAL complement with an FAL diagram $D(\mathcal{A})$ which partitions the components of $\mathcal{A}$ into crossing circles and knot circles. Then $M$ contains an embedded totally geodesic $R$ with the following features:
\begin{itemize}
\item $R$ corresponds with $P \setminus \mathcal{A}$, where $P$ is the projection plane for $D(\mathcal{A})$, 
\item $R$ is fixed point-wise by an orientation reversing involution $i_{R}: M \rightarrow M$,
\item $R$ intersects each $R$-crossing disk of $M$ orthogonally in its three simple non-separating geodesics, 
\item $R$ intersects every cusp of $M$ in two parallel curves,  
\item $R$ intersects the boundary torus $T_C$ of each crossing circle cusp $C$ in a pair of meridians and provides a peripheral structure on $\pi_{1}(T_C)$ relative to $D(\mathcal{A})$, and
\item $R$ intersects the boundary torus $T_K$ of each knot circle cusp $K$ in a pair of longitudes and provides a peripheral structure on $\pi_{1}(T_K)$ relative to $D(\mathcal{A})$. 
\end{itemize}
\end{prop}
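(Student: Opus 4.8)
The plan is to assemble Proposition~\ref{prop:RSfeatures} almost entirely from results already in the literature, with the paragraphs immediately preceding the statement doing most of the conceptual work; the proposition is really a bookkeeping summary, so the proof should consist of pointing to the right reference for each bullet and checking that the hypotheses line up. First I would fix, once and for all, the realization of $\mathcal{A}$ in $\mathbb{S}^3 = \mathbb{R}^3 \cup \{\infty\}$ described in the text: isotope so that every knot circle lies in the projection plane $P$ and every crossing circle (with its crossing disk) is a round circle orthogonal to $P$. This isotopy exists by the diagrammatic definition of a flat FAL diagram $D(\mathcal{A})$, and it makes reflection in $P$ an honest involution of the pair $(\mathbb{S}^3, \mathcal{A})$ fixing $P$ pointwise.

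Next I would invoke Mostow--Prasad rigidity exactly as in the paragraph before the definition of a reflection surface: the reflection in $P$ is a homeomorphism of $M = \mathbb{S}^3 \setminus \mathcal{A}$ of finite order, hence (after conjugating by the isotopy to the complete hyperbolic structure) is realized by an isometry $i_R \colon M \to M$; it is orientation-reversing and its fixed-point set is a properly embedded totally geodesic surface $R$, which by construction is the image of $P \setminus \mathcal{A}$. This gives the first two bullets at once. For the third bullet I would cite \cite[Lemma 2.1]{P2011}: each $R$-crossing disk is a totally geodesic thrice-punctured sphere, and since the crossing disk was set up orthogonal to $P$, the surface $R$ meets it orthogonally; that $R \cap D$ is the set of three simple non-separating geodesics on $D$ is exactly what was recalled in the text (a thrice-punctured sphere has exactly three such geodesics, and $R$ being totally geodesic and orthogonal forces the intersection to be geodesic). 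The fourth bullet --- $R$ meets each cusp in two parallel curves --- follows because $i_R$ acts on each cusp torus $T$ as an orientation-reversing involution with nonempty fixed set (the component of $\mathcal{A}$ lies in $P$ or meets $P$ symmetrically), and such an involution of a torus has fixed set a pair of parallel essential simple closed curves; alternatively this is part of the content of \cite[Lemma 2.3]{FP2007}.

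The last two bullets are the geometric identification of these curves as meridians on crossing-circle cusps and longitudes on knot-circle cusps, together with the claim that $R \cap T$ (and the orthogonal curve) furnish the peripheral basis: both are precisely \cite[Lemma 2.3]{FP2007}, and I would quote it, noting that the cusp tori are rectangular so that ``the curve orthogonal to $R \cap T$'' is well defined as a slope, which is what the definition of the peripheral structure relative to $D(\mathcal{A})$ requires. The only point needing a sentence of care --- and the place I expect the mild friction to be --- is matching the topological meridian/longitude of the unknotted component $L$ (the one defined via the solid tori bounded by $T_L$ in $\mathbb{S}^3$, as in the paragraph on $T_L$) with the geometric curves $R \cap T_L$ and the crossing-disk curves: for a knot circle $K$ the curve bounding a disk in the complementary solid torus is carried by a crossing disk through $K$ (hence $R$ meets $T_K$ in longitudes), and for a crossing circle $C$ the disk it bounds in $\mathbb{S}^3$ is the crossing disk, so $R \cap T_C$ consists of meridians. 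This is again \cite[Lemma 2.3]{FP2007}, so the proof is essentially a guided tour of that lemma plus \cite[Lemma 2.1]{P2011} and the rigidity argument; there is no genuinely hard step, and I would keep the write-up to a short paragraph of citations.
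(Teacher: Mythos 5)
Your proposal is correct and follows essentially the same route as the paper, which presents this proposition explicitly as a summary of the immediately preceding discussion (the Mostow--Prasad rigidity argument for the reflection involution, \cite[Lemma 2.1]{P2011} for the crossing disks, and \cite[Lemma 2.3]{FP2007} for the cusp intersections and peripheral structures). Your added care in matching the topological meridian/longitude of each unknotted component with the geometric curves $R\cap T_L$ is exactly the content the paper delegates to \cite[Lemma 2.3]{FP2007}, so there is nothing to correct.
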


Our main goal is to show that flat FALs are determined by their complements among the set of all flat FALs.  For this reason we will need to consider homeomorphic images of reflection surfaces and the structures associated with them that are highlighted in the previous proposition.  By Mostow--Prasad rigidty, any homeomorphism between flat FALs induces an isometry between them, and so, we can restrict our analysis to the isometric image of a reflection surface. 

Let $M, M'$ be flat FAL complements with reflection surfaces $R, R'$, respectively, and let $\rho :M'  \rightarrow M$ be an isometry between these hyperbolic $3$-manifolds. Since the definition of a reflection surface is diagram dependent, we can not immediately assume that $\rho(R')$ is a reflection surface for $M$.  This motivates the following definition.

\begin{df}\label{defn:Rlike}
We say that $S$ is a \textbf{reflection-like} surface for a flat FAL complement $M$ if there exists an isometry $\rho:M'\rightarrow M$ between flat FAL complements such that $S = \rho(R')$, where $R'$ is a reflection surface for $M'$. 
\end{df}

Note that, any reflection surface for $M$ is also a reflection-like surface for $M$ via the identity map. In addition,  any homeomorphic flat FAL complements have the same number of reflection-like surfaces.  Indeed, a homeomorphism $h:M' \rightarrow M$ between flat FAL complements $M'$ and $M$ induces a unique isometry $\rho_h:M'  \rightarrow M$.  By Definition \ref{defn:Rlike}, the image of each reflection-like surface in $M'$ is a reflection-like surface in $M$.  At the same time, $\rho_{h}^{-1}$ preserves the property of being reflection-like, so $M'$ and $M$ have the same number of reflection-like surfaces. Similarly, equivalent flat FALs have the same number of reflection-like surfaces since an equivalence of links induces a homeomorphism between their respective complements.

Many important topological and geometric properties of reflection surfaces will be preserved under isometry, which we highlight below. These directly follow from Proposition \ref{prop:RSfeatures}.

\begin{prop}
\label{prop:RLSfeatures}
Let $\rho:M'  \rightarrow M$ be an isometry between flat FAL complements, let $R' \subset M'$ be a reflection surface, and let $S=\rho(R')$ denote the corresponding reflection-like surface in $M$. Then 
\begin{itemize}
\item $S$ is fixed point-wise by an orientation reversing involution $i_{S}: M \rightarrow M$,
\item $S$ intersects the image of each $R'$-crossing disk of $M'$ orthogonally in its three simple non-separating geodesics, 
\item $S$ intersects every cusp of $M$ in two parallel curves,  
\item Let $T$ be the boundary torus of a cusp of $M$. Then $S$ determines a peripheral structure on $T$, namely, the image of the peripheral structure that $R'$ determines on $\rho^{-1}(T)$.  
\end{itemize}
\end{prop}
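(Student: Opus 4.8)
The plan is to transport each bullet of Proposition \ref{prop:RSfeatures} across the isometry $\rho$, using only that $\rho$ is a homeomorphism (in fact isometry) and that $R' \subset M'$ already carries all the structure of Proposition \ref{prop:RSfeatures} as a reflection surface of $M'$. First I would record the naturality principle: conjugation by $\rho$ sends self-homeomorphisms of $M'$ to self-homeomorphisms of $M$, sends totally geodesic surfaces to totally geodesic surfaces (isometries preserve the metric), sends cusp cross-sections to cusp cross-sections, and preserves orthogonality and the property of a curve being separating/non-separating on a given surface. All four items then follow by applying $\rho$ to the corresponding statement for $R'$.

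For the orientation-reversing involution, set $i_S := \rho \circ i_{R'} \circ \rho^{-1}$. Since $i_{R'}$ is an orientation-reversing involution of $M'$ fixing $R'$ pointwise (Proposition \ref{prop:RSfeatures}), and $\rho$ is an orientation-preserving-or-reversing homeomorphism, $i_S$ is an involution of $M$ whose fixed point set is $\rho(\mathrm{Fix}(i_{R'})) = \rho(R') = S$; and $i_S$ is orientation-reversing because conjugation by any homeomorphism preserves the parity of orientation behavior. For the crossing-disk bullet: if $D'$ is an $R'$-crossing disk, then $\rho(D')$ is the corresponding disk in $M$, it is totally geodesic (image of a totally geodesic thrice-punctured sphere under an isometry), and since $\rho$ preserves angles, $S = \rho(R')$ meets $\rho(D')$ orthogonally; the three arcs $R' \cap D'$ are simple and non-separating on $D'$, and $\rho|_{D'}$ is a homeomorphism $D' \to \rho(D')$, so their images $S \cap \rho(D')$ are simple non-separating geodesics on $\rho(D')$. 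For the cusp bullet: each cusp torus $T$ of $M$ is $\rho(T'')$ for a cusp torus $T''$ of $M'$, and $R' \cap T''$ is two parallel curves by Proposition \ref{prop:RSfeatures}, hence $S \cap T = \rho(R' \cap T'')$ is two parallel curves. For the peripheral-structure bullet, the statement is essentially definitional: $\rho^{-1}(T)$ is a cusp torus of $M'$, on which $R'$ determines a peripheral structure (meridian/longitude basis) by Proposition \ref{prop:RSfeatures}; pushing this basis forward by the homeomorphism $\rho$ gives a basis on $\pi_1(T)$, which is by construction the peripheral structure $S$ determines on $T$.

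I do not anticipate a genuine obstacle here: this proposition is a formal corollary, and the only thing to be careful about is bookkeeping — matching up which cusp/disk of $M$ is the $\rho$-image of which cusp/disk of $M'$, and noting that "orthogonal," "totally geodesic," "parallel," and "non-separating" are all preserved by an isometry (the first three) or by any homeomorphism (the last). The mild subtlety worth a sentence is that a reflection-like surface $S$ need not be a reflection surface of $M$ itself — that is the whole point of Definition \ref{defn:Rlike} — so one cannot invoke Proposition \ref{prop:RSfeatures} directly for $S$; everything must be deduced by transporting the corresponding facts about the honest reflection surface $R'$ through $\rho$. That is exactly the argument above, so the proof is short.
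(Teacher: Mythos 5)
Your proof is correct and takes essentially the same approach as the paper: the paper simply asserts that these properties ``directly follow from Proposition \ref{prop:RSfeatures},'' i.e.\ by transporting each feature of the honest reflection surface $R'$ through the isometry $\rho$, which is exactly the conjugation/pushforward argument you spell out (including $i_S = \rho \circ i_{R'} \circ \rho^{-1}$ and the preservation of orthogonality, total geodesy, and the separating/non-separating distinction).
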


Our work in Section \ref{sec:PropsRefSur} will show that every reflection-like surface in an FAL complement is a reflection surface. This is highlighted in Corollary \ref{thm:uniquereflectionhomeo}. After proving this result, we will drop the term reflection-like surface, and instead, only use reflection surface. 

 A reflection-like surface $S \subset M$ determines a partition of the components of $\mathcal{A}$ into $S$-crossing circles and $S$-knot circles, coming from the partition of $\mathcal{A}'$ into $R'$-crossing circles and $R'$-knot circles. We use the term \textbf{$S$-structure} on $M$ to refer to this partition  of the components of $\mathcal{A}$, the peripheral structures on each cusp determined by $S$, and a fixed choice of images of $R'$-crossing disks in $M$.  Suppose $M$ contains distinct reflection-like surfaces $R$ and $S$. A component $L$ of $\mathcal{A}$ \textbf{changes type} if $L$ is an $R$-knot circle and an $S$-crossing circle (or vice-versa). This terminology will be useful when discussing flat FAL complements that admit multiple reflection surfaces.

In addition to reflection surfaces, (totally geodesic) thrice-punctured spheres contained in flat FAL complements will serve as powerful tools for analyzing homeomorphisms between these manifolds.  We now review properties of, and results regarding, embedded, totally geodesic, thrice-punctured spheres in flat FAL complements. 

Any thrice-punctured sphere has precisely six simple geodesics, three of which are separating, three of which are nonseparating, and none of which are closed, as depicted in Figure \ref{fig:3PS}. Any intersection of a totally geodesic thrice-punctured sphere $D$ with another totally geodesic surface in a flat FAL complement must be some  subset of these six simple geodesics on $D$ that are pairwise disjoint. This puts strong restrictions on the behavior of intersections between $D$ and other totally geodesic surfaces, which we shall exploit.

\begin{figure}[h]
\begin{center}
\includegraphics[width=2.5in]{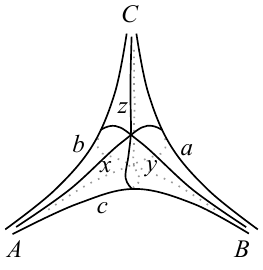}
\end{center}
\caption{A thrice-punctured sphere with separating geodesics labeled $x,y,z$ and nonseparating geodesics labeled $a,b,c$.}
\label{fig:3PS}
\end{figure}

Every FAL complement contains many totally geodesic thrice-punctured spheres that are not contained in a reflection surface, which we will call \textbf{non-reflection, thrice-punctured spheres} (relative to a designated reflection surface). Crossing disks in $M$, for example, constitute one category of such spheres.  Results in \cite{mrstz} imply there are three types of non-reflection thrice-punctured spheres in flat FAL complements, classified in terms of their intersections with $R$ and their punctures. Here, a \textbf{puncture} refers to the intersection of a totally geodesic surface in $M$ with a torus boundary of a cusp of $M$, which will produce a set of simple closed curves, each of which represents a slope on this torus. We sometimes use the term longitudinal puncture to refer to a puncture that is a representative for a longitude on this torus and we also use the term meridional puncture for a puncture that is a representative for a meridian on this torus (here meridian and longitude refer to the peripheral structure induced by a reflection surface $R\subset M$). We now define the three types of non-reflection, thrice-punctured spheres in flat FAL complements.

\begin{df} \label{df:NR3PS}
Let $D$ be a non-reflection thrice-punctured sphere in the flat FAL complement $M = \mathbb{S}^{3} \setminus \mathcal{A}$ with reflection surface $R$. 
\begin{itemize}
\item $D$ is a \textbf{crossing disk} if $D \cap R$ consists of the three non-separating geodesics of $D$ and the punctures of $D$ are one crossing circle longitude and two knot circle meridians,
\item $D$ is a \textbf{longitudinal disk} if $D \cap R$ consists of the three non-separating geodesics of $D$ and the punctures of $D$ are longitudes of three distinct crossing circles, and
\item $D$ is a \textbf{singly-separated disk} if $D \cap R$ consists of one separating geodesic of $D$ and the punctures of $D$ are a longitude of one crossing circle $C$ and two meridians of another crossing circle $C'$. 
\end{itemize}
Collectively, we refer to crossing disks and longitudinal disks as \textbf{$N$-disks} since their intersections with $R$ both consist of a non-separating geodesics of $D$. 
\end{df}

Note that this definition of crossing disk broadens the typical meaning of the term. The crossing circle $C$ illustrated in Figure \ref{fig:Types3ps}(a) bounds two distinct crossing disks, the one interior to $C$ as well as the shaded disk. The reason for extending the meaning of ``crossing disk'' is that any such disk $D$ for a crossing circle $C$ could be chosen as a crossing disk in $\mathcal{A}$ by replacing the current crossing disk with $D$. The two crossing disk structures come from fully augmenting diagrams of the same link obtained by flyping one twist region to a different part of the link diagram (see \cite{mrstz}). 

Parts (b) and (c) of Figure \ref{fig:Types3ps} illustrate a longitudinal and singly-separated disk, respectively. The following theorem from \cite{mrstz}  classifies non-reflection thrice-punctured spheres in flat FAL complements.

\begin{center}
\begin{figure}[h]
\[
\begin{array}{ccc}
\includegraphics[width=1.3in]{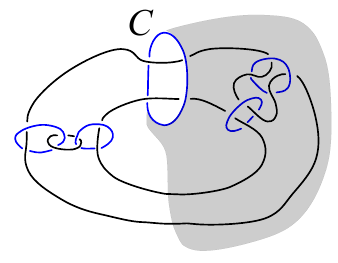} &\includegraphics[width=1in]{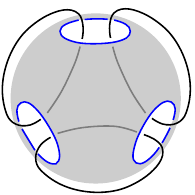}&\includegraphics[width=1.3in]{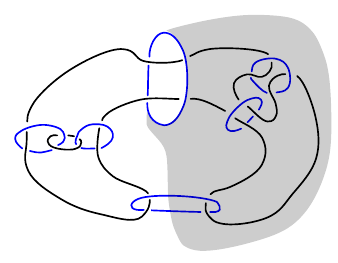}\\
(a)\textrm{ A crossing disk} & (b)\textrm{ A longitudinal disk} &(c)\textrm{ A singly-separated disk}
\end{array}
\]
\caption{Types of non-reflection, thrice-punctured spheres}
\label{fig:Types3ps}
\end{figure}
\end{center}

\begin{thm}\cite[Theorem 3.1]{mrstz}
\label{prop:BeltSumSummary}
Let $D$ be a non-reflection thrice-punctured sphere in the flat FAL complement $M = \mathbb{S}^{3} \setminus \mathcal{A}$ with reflection surface $R$. Then $D$ is orthogonal to $R$ in $M$ and $D$ is either a crossing, longitudinal, or singly-separated disk. 
\end{thm}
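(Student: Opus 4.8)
The statement to prove is Theorem \ref{prop:BeltSumSummary} (which is quoted as Theorem 3.1 from \cite{mrstz}): every non-reflection thrice-punctured sphere $D$ in a flat FAL complement $M$ with reflection surface $R$ is orthogonal to $R$, and is one of the three types (crossing, longitudinal, singly-separated disk). Since this is cited from \cite{mrstz}, the "proof" here is really a proof sketch indicating how the classification is obtained; the plan is to reduce everything to the combinatorics of geodesics on thrice-punctured spheres together with the explicit right-angled polyhedral decomposition of Agol--Thurston.

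The plan is as follows. First I would establish that $D$ meets $R$ geodesically and orthogonally. Since $D$ and $R$ are both totally geodesic and properly embedded, any component of $D\cap R$ is a geodesic in both surfaces; the thrice-punctured sphere $D$ has exactly six simple geodesics (three separating, three nonseparating, none closed, as in Figure \ref{fig:3PS}), so $D\cap R$ is a pairwise-disjoint subset of these six arcs. Orthogonality would follow from the reflection symmetry: $\iota_R$ fixes $R$ pointwise and is an isometry, so it carries $D$ to another totally geodesic thrice-punctured sphere $\iota_R(D)$ meeting $R$ in the same arcs; analyzing $D\cup\iota_R(D)$ near $R$ and using that a geodesic surface making a nonright angle with $R$ would, after reflection, force two totally geodesic surfaces tangent along a geodesic (hence equal, contradicting $D\neq\iota_R(D)$ unless the angle is $\pi/2$, in which case $\iota_R(D)=D$), one concludes the intersection is orthogonal. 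Next, since $D$ is a non-reflection sphere, $D\cap R\neq\emptyset$ (a totally geodesic thrice-punctured sphere disjoint from $R$ would have to sit entirely inside one of the two isometric polyhedral pieces cut out by $R$, which have no such surface other than those meeting $R$), and the set $D\cap R$ is a nonempty pairwise-disjoint collection of simple geodesics on $D$. The combinatorics of Figure \ref{fig:3PS} then leaves only two possibilities: $D\cap R$ is the three nonseparating geodesics, or $D\cap R$ is a single separating geodesic (any two separating geodesics intersect; a separating and a nonseparating one can intersect; and one cannot have two nonseparating plus anything compatible beyond the triple). These are exactly the "$N$-disk" and "singly-separated" cases.

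Then I would identify the punctures in each case using the polyhedral decomposition. Cutting $M$ along $R$ yields a right-angled ideal polyhedron $P$ (two copies glued along $R$-faces), in which crossing disks appear as specific ideal triangles/quadrilaterals and cusp cross-sections are explicit. For an $N$-disk, $D$ is split by its three nonseparating geodesics into four pieces, two ideal triangles lying in $P$ on each side of $R$; matching these against the faces of $P$ and tracking which ideal vertices are knot-circle versus crossing-circle cusps (and which slopes the boundary curves represent, via the $R$-peripheral structure from Proposition \ref{prop:RSfeatures}) shows the punctures are either one crossing-circle longitude and two knot-circle meridians (crossing disk) or three crossing-circle longitudes (longitudinal disk). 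For a singly-separated disk, $D$ is cut by one separating geodesic into two pieces, one in each copy of $P$; the same vertex/slope bookkeeping forces the punctures to be one crossing-circle longitude and two meridians of a single other crossing circle. Finally I would check these three outcomes are mutually exclusive and exhaustive, which follows from the trichotomy of $D\cap R$ and the puncture computations.

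The main obstacle is the puncture identification: going from "$D\cap R$ is this set of geodesics" to "the punctures are these specific slopes on these specific types of cusps" requires a careful hands-on analysis inside the Agol--Thurston polyhedron $P$ — understanding how an ideal triangle or quadrilateral embedded in $P$ with edges along $R$-faces must be positioned, which ideal vertices of $P$ it can hit, and how the standard meridian/longitude framing restricts to its boundary. This is where one genuinely uses the rigidity of the right-angled structure (no flexibility in how totally geodesic pieces sit), and it is essentially the content of \cite[Theorem 3.1]{mrstz}; reproducing it in full would mean reproducing the casework of that paper, so here I would cite it and only indicate the shape of the argument.
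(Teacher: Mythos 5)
This statement is imported verbatim from \cite[Theorem 3.1]{mrstz}; the paper under review does not prove it, so there is no internal proof to compare your sketch against, and your decision to defer the detailed puncture bookkeeping to that reference is reasonable. Your overall architecture (orthogonality via the reflection $\iota_R$, a trichotomy on $D\cap R$ among the six simple geodesics of Figure \ref{fig:3PS}, then slope identification via the Agol--Thurston polyhedra) does match the shape of the argument in \cite{mrstz}.

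That said, two steps in your sketch are genuinely flawed as written. First, the orthogonality argument: if $D$ meets $R$ at angle $\theta\ne\pi/2$ along a geodesic $\gamma$, then $D$ and $\iota_R(D)$ are two totally geodesic surfaces crossing \emph{transversally} along $\gamma$ at angle $\pi-2\theta$; they are not tangent, so you cannot conclude they are equal, and distinct totally geodesic thrice-punctured spheres meeting along a geodesic do occur (this is exactly the situation Yoshida classifies). The correct route is to first establish $\iota_R(D)=D$ as a set --- e.g.\ because $\iota_R$ preserves each cusp and the punctures of $D$, so $\iota_R(D)$ is an essential thrice-punctured sphere isotopic to $D$, and by Adams' uniqueness of totally geodesic representatives it equals $D$ --- and only then deduce that the invariant surface must meet the fixed-point set $R$ orthogonally. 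Second, your enumeration of possible intersection patterns is under-justified: pairwise disjointness alone also permits a single nonseparating geodesic, a pair of nonseparating geodesics, or a separating geodesic together with the nonseparating geodesic disjoint from it. What rules these out is that $R$ separates $M$ (the projection sphere splits $\mathbb{S}^3$ into two balls), so the components of $D\setminus R$ must admit a two-coloring by side of $R$ in which adjacent components get opposite colors; only the full triple of nonseparating geodesics and a single separating geodesic pass this test. Neither of the reasons you list in your parenthetical actually excludes the offending configurations. With those two repairs, and granting the cited polyhedral casework for the puncture types, the sketch is sound.
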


In our work, we will frequently make use of Theorem \ref{prop:BeltSumSummary} and the qualifications of non-reflection thrice-punctured spheres given in Definition \ref{df:NR3PS}.  The properties of separating and non-separating are topological, so if a reflection surface intersects a thrice-punctured sphere in its separating geodesic, so will any homeomorphic image of them.  Thus, if $D$ is singly-separated by the reflection surface $R$, then $h(D)$ is also separated by $h(R)$ along one separating geodesic. Similarly, the homeomorphic image of an $N$-disk relative to the reflection surface $R$ must intersect the reflection-like surface $h(R)$ in its non-separating geodesics.  In the case when $h(R)$ is again a reflection surface, we say that a homeomorphism preserves the type of a singly-separated disk, and preserves the property of being an $N$-disk.  The only type changes possible when $h(R)$ is a reflection surface, then, are between crossing and longitudinal disks.  These can be recognized by analyzing the images of punctures under this homeomorphism.




Finally,  we define a \textbf{separating pair} to be a pair of disjoint thrice-punctured spheres that separate $M$. The following theorem from \cite{mrstz} tells us exactly which pairs of thrice-punctured spheres are separating pairs in FAL complements. 

\begin{thm}\cite[Theorem 4.1]{mrstz}\label{thm:SepPair}
Let $\{S_1,S_2\}$ be a pair of disjoint essential thrice-punctured spheres in the complement $M = \mathbb{S}^3\setminus\mathcal{A}$ of the FAL $\mathcal{A}$.  The pair $\{S_1,S_2\}$ is a separating pair if and only if each is either a crossing disk or a singly-separated disk and their longitudinal slopes coincide.
\end{thm}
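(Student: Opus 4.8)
The plan is to reduce the statement, via a $\mathbb{Z}/2$–homological criterion for separation, to a short list of possible configurations and then eliminate all but the desired one.  First I would put $S_1$ and $S_2$ in totally geodesic form; since totally geodesic thrice–punctured spheres are unique in their isotopy class, the new $S_1,S_2$ are still disjoint, and by Theorem \ref{prop:BeltSumSummary} each is either an essential subsurface of the reflection surface $R$ or one of the three non–reflection types of Definition \ref{df:NR3PS}.  The separation criterion I would use is the standard one: a two–sided properly embedded surface $\Sigma\subset M$ separates $M$ if and only if $[\Sigma]=0$ in $H_2(M,\partial M;\mathbb{Z}/2)$.  Since $H_1(M;\mathbb{Z}/2)$ is free on the meridians $\mu_X$ of the components $X$ of $\mathcal{A}$, the class of a properly embedded thrice–punctured sphere $S$ is detected by the intersection numbers $S\cdot\mu_X=\varepsilon_X(S)\in\mathbb{Z}/2$, where $\varepsilon_X(S)$ is the number of \emph{longitudinal} punctures of $S$ on the cusp $X$: a meridional puncture contributes $0$ because on a rectangular cusp torus $m\cdot m=0$ while $m\cdot\ell=1$.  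In particular $[S_1]=[S_2]$ exactly when $\varepsilon_X(S_1)=\varepsilon_X(S_2)$ for every component $X$.

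Reading off the puncture data in Definition \ref{df:NR3PS} and Proposition \ref{prop:RSfeatures}, the ``$\varepsilon$–signature'' of each type is: a crossing disk for the crossing circle $C$ has $\varepsilon=1$ on $C$ and $0$ elsewhere (its two knot–circle punctures are meridional); a singly–separated disk whose longitudinal puncture lies on $C$ likewise has $\varepsilon=1$ on $C$ and $0$ elsewhere (its two punctures on the second crossing circle are meridional); a longitudinal disk has $\varepsilon=1$ on exactly the three distinct crossing circles carrying its punctures; and an essential subsurface of $R$ has $\varepsilon=0$ on every crossing circle (its crossing–circle punctures are meridians, by Proposition \ref{prop:RSfeatures}) and is supported on knot–circle meridians only.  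The reverse implication of the theorem is now immediate: if $S_1$ and $S_2$ are crossing or singly–separated disks with coinciding longitudinal slope, that slope is the longitude of a single crossing circle $C$ for both, so $[S_1]=[S_2]$ (both have the signature ``$\varepsilon=1$ on $C$''), hence $[S_1]+[S_2]=0$ and the pair separates.  One can also exhibit the separation directly: the two longitudinal punctures are disjoint parallel longitudes on $T_C$, and cutting $M$ along $S_1\cup S_2$ peels off the twist region they cobound, essentiality guaranteeing both complementary pieces are nonempty.

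For the forward implication, assume $\{S_1,S_2\}$ separates, so $[S_1]=[S_2]$, i.e.\ $S_1$ and $S_2$ have the same $\varepsilon$–signature.  Comparing with the list above, the signature of any thrice–punctured sphere in $M$ is of exactly one of three kinds — concentrated on one crossing circle, on three crossing circles, or on knot circles only — and in each case the kind of signature pins down the type of surface.  If the common signature is concentrated on one crossing circle $C$, then neither $S_i$ can be a subsurface of $R$ (trivial signature on crossing circles) nor a longitudinal disk (signature of size three), so each $S_i$ is a crossing or singly–separated disk whose longitudinal puncture is a longitude of $C$; as $T_C$ carries a unique longitudinal slope, their longitudinal slopes coincide, which is the assertion.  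If the common signature is supported on knot circles only, both $S_i$ are essential subsurfaces of $R$, so $S_1\cup S_2$ is a proper subsurface of $R$; then $R\setminus(S_1\cup S_2)\neq\emptyset$ and any point of it locally joins the two sides of $R$, which are the two components of $M\setminus R$, so $M\setminus(S_1\cup S_2)$ is connected — a contradiction.  Finally, if the common signature is supported on three crossing circles, both $S_i$ are longitudinal disks punctured by longitudes of the same triple $\{C_1,C_2,C_3\}$, and I must rule this out.  Here I would cap off each longitudinal disk along the longitude–bounding disks of the $C_j$ (these crossing circles form an unlink in $\mathbb{S}^3$, since they bound disjoint crossing disks) to obtain a sphere in $\mathbb{S}^3$ meeting $\mathcal{A}$ only in $C_1\cup C_2\cup C_3$; a disjoint pair of longitudinal disks then yields two such spheres sharing exactly $C_1\cup C_2\cup C_3$, and one argues from the resulting ball decomposition of $\mathbb{S}^3$ together with essentiality that one of the disks must be boundary–parallel, contrary to Theorem \ref{prop:BeltSumSummary} and essentiality.

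The part I expect to be genuinely delicate is exactly this exclusion of a disjoint pair of essential longitudinal disks on a common triple of crossing circles.  Such a pair would have equal $\mathbb{Z}/2$–homology classes and would therefore separate $M$, so the theorem holds only because the configuration does not occur; demonstrating that non–existence is the one step the homological bookkeeping cannot supply, and it needs the geometry of how totally geodesic surfaces sit in $M$ — Theorem \ref{prop:BeltSumSummary}, orthogonality to $R$, the reflection involution $\iota_R$, and an Euler–characteristic / boundary–parallelism analysis of the pieces cut off by the capped–off spheres.  A smaller point to verify carefully is the $\varepsilon$–signature of a singly–separated disk: one must check that its two meridional punctures on the second crossing circle contribute nothing to $\mathbb{Z}/2$–homology rather than secretly acting like a longitudinal puncture, which follows once those punctures are confirmed to be meridians in the peripheral structure of Proposition \ref{prop:RSfeatures}.
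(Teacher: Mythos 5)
This theorem is imported from \cite{mrstz} and not proved in the present paper, so your argument can only be judged on its own terms. The homological bookkeeping is sound as far as it goes: the $\varepsilon$-signatures are computed correctly, and the reverse implication (two crossing or singly-separated disks sharing a longitudinal crossing-circle slope have equal classes in $H_2(M,\partial M;\mathbb{Z}/2)$, hence cobound) is complete. The forward direction, however, has two genuine holes beyond the one you flag. First, the criterion ``$S_1\cup S_2$ separates iff $[S_1]+[S_2]=0$'' is false for disconnected surfaces in general: the pair also separates if either $[S_i]=0$ on its own. Here this is harmless only because every surface in the classification has odd total signature (one longitudinal crossing-circle puncture, three of them, or an odd number of knot-circle longitudes by Lemma~\ref{lem:Reflection3ps}), but that needs to be said before you may conclude $[S_1]=[S_2]$ from separation. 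Second, and more seriously, your Case B asserts without justification that $S_1\cup S_2$ is a \emph{proper} subsurface of $R$. When $R$ consists of exactly two thrice-punctured spheres --- which happens precisely for the Borromean rings --- the two components of $R$ genuinely form a separating pair, your ``contradiction'' evaporates, and the theorem's conclusion is only recovered by recognizing those two components as a crossing disk and a singly-separated disk relative to a \emph{different} reflection surface (compare the proof of Theorem~\ref{thm:3rs}). An argument that fixes one reflection surface throughout cannot see this case.

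The step you yourself identify as delicate --- excluding a disjoint pair of essential longitudinal disks with the same three crossing-circle punctures --- is indeed where the homology gives out, and your capping-off sketch is not a proof. The fact that closes this case is Lemma 4.2 of \cite{mrstz}, quoted elsewhere in this paper: there is at most one longitudinal disk through any two given crossing-circle longitudes, so two such disks on a common triple coincide rather than being disjoint. Without that input (or a completed version of your ball-decomposition argument), the forward implication is not established.
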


It is clear that any two crossing and/or singly-separated disks that share longitudinal crossing-circle punctures separate a flat FAL complement.  Theorem \ref{thm:SepPair} states that these are the only separating pairs in FAL complements.


\section{Flat FAL Complements with multiple reflection surfaces}
\label{sec:PropsRefSur}

Proposition \ref{prop:RSfeatures} shows that a reflection surface determines a lot of the geometric structure of a flat FAL complement. In this section, we will determine exactly how many reflection surfaces can exist in a flat FAL complement and exactly which flat FAL complements admit multiple reflection surfaces. This will be a useful first step towards our main goal of showing that flat FALs are determined by their complements.  We begin with some preliminary observations which specify how distinct reflection-like surfaces can intersect boundary tori of cusp neighborhoods.

As described in Section \ref{sec:FALs}, reflection-like surfaces induce a peripheral structure on boundary tori of cusp neighborhoods. Recall that the peripheral structure consists of two orthogonal, unoriented isotopy classes of simple closed curves (slopes), one labeled meridian and the other longitude. Let $M = \mathbb{S}^{3} \setminus \mathcal{A}$ denote a flat FAL complement with distinct reflection-like surfaces $R$ and $S$, and let $T$ denote the boundary torus of a cusp of $M$.  Our first lemma shows that the meridian-longitude pairs determined by $R$ and $S$ are the same set-wise.

\begin{lem}\label{lem:MeridianLongitude}
Let $M$ be a flat FAL complement with two distinct reflection-like surfaces $R$ and $S$ and let $T$ be the boundary torus of a cusp of $M$. Then an $R$-meridian of $T$ is either an $S$-meridian or $S$-longitude. Likewise, an $R$-longitude of $T$ is either an $S$-meridian or $S$-longitude. 
\end{lem}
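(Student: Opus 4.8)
The plan is to exploit the fact that both $R$ and $S$ are totally geodesic surfaces meeting the cusp torus $T$, together with Proposition \ref{prop:RLSfeatures}, which tells us that $S$ meets every cusp in exactly two parallel curves and determines an (orthogonal) peripheral structure on $T$. Recall that each cusp torus of a flat FAL complement is rectangular (\cite[Lemma 2.3]{FP2007}), so $\pi_1(T) \cong \mathbf{Z}^2$ has a preferred pair of orthogonal axes, and the peripheral structure determined by any reflection-like surface is one of these two axis directions (with the other axis being the orthogonal complement used as the second generator). First I would argue that on a rectangular torus the only pair of slopes that can be realized as orthogonal geodesic curves for the flat structure is the pair of axis directions itself: a simple closed geodesic on the flat torus $\mathbf{R}^2/\Lambda$ in direction $(p,q)$ is orthogonal to one in direction $(r,s)$ only when those directions are the axis pair (for a genuinely rectangular, non-square lattice), so both $R\cap T$ and $S\cap T$ are forced to consist of curves parallel to one of the two axes.

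With that in hand, the key step is to observe that $R\cap T$ is a pair of parallel geodesics in one axis direction and $S\cap T$ is a pair of parallel geodesics in one axis direction, and the $R$-meridian / $R$-longitude slopes are precisely these two axis directions (one labeled meridian, one longitude, per Proposition \ref{prop:RSfeatures}), and likewise the $S$-meridian / $S$-longitude slopes are precisely the same two axis directions. Therefore the unordered pair $\{$$R$-meridian, $R$-longitude$\}$ and the unordered pair $\{$$S$-meridian, $S$-longitude$\}$ are the same two slopes on $T$ as a set. Consequently an $R$-meridian, being one of these two slopes, must coincide with either the $S$-meridian or the $S$-longitude, and similarly for an $R$-longitude. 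This is exactly the statement of the lemma.

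The main obstacle, and the step I would spend the most care on, is justifying that the peripheral slope determined by a reflection-like surface really is an \emph{axis} direction of the rectangular torus rather than some other slope that happens to be perpendicular to its mate — in other words, pinning down that ``$S$ intersects $T$ in two parallel curves'' together with ``$S$ determines an orthogonal peripheral structure'' forces the $S$-curves to lie along an axis. This should follow from Proposition \ref{prop:RLSfeatures}: the peripheral structure $S$ determines on $T$ is the $\rho$-image of the peripheral structure $R'$ determines on $\rho^{-1}(T)$, which by Proposition \ref{prop:RSfeatures} is a genuine meridian-longitude basis, and $\rho$ is an isometry so it carries the orthogonal axis pair of the rectangular torus $\rho^{-1}(T)$ to the orthogonal axis pair of $T$; hence the $S$-slopes are axis directions of $T$. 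Once this identification is nailed down the rest is the short set-theoretic argument above. I would also remark that the lemma does \emph{not} assert $R$-meridians equal $S$-meridians — the labels ``meridian'' and ``longitude'' are diagram-dependent, and indeed the whole point of the subsequent analysis is that a component can change type — so the proof must stay at the level of the unordered pair of slopes and not accidentally claim more.
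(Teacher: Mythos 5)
Your overall strategy coincides with the paper's: reduce the lemma to the statement that the rectangular cusp torus $T$ carries essentially one orthogonal meridian--longitude pair, so that the unordered pairs $\{m,\ell\}$ and $\{m',\ell'\}$ must agree. But the step you yourself single out as the crux is justified by a false claim. It is not true that on a rectangular, non-square flat torus the only orthogonal pair of simple closed geodesics is the axis pair. Take $\Lambda$ generated by $(\sqrt{2},0)$ and $(0,1)$: the primitive lattice vectors $(\sqrt{2},1)$ and $(\sqrt{2},-2)$ are orthogonal (dot product $2-2=0$) and each represents a simple closed geodesic, yet neither is an axis direction. What rules such pairs out is not orthogonality but the fact that a meridian--longitude pair is a \emph{basis} of $\pi_1(T)$: for an orthogonal pair this is equivalent to the product of the two geodesic lengths equaling the area of $T$ (in the example above the product is $\sqrt{3}\cdot\sqrt{6}=3\sqrt{2}$, three times the area $\sqrt{2}$, reflecting the determinant $-3$). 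This extra equation, $\mu\lambda=\mathrm{Area}(T)=\mu'\lambda'$, is precisely the ingredient the paper's proof adds before running the coordinate computation $m'=pm+q\ell$, $\ell'=rm+s\ell$; without it the argument does not close.

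Your fallback in the last paragraph does not repair this. Saying that the isometry $\rho$ ``carries the orthogonal axis pair of $\rho^{-1}(T)$ to the orthogonal axis pair of $T$'' presupposes that a rectangular torus has a \emph{unique} orthogonal basis up to signs and swapping --- which is exactly the uniqueness statement in question. All the isometry gives you is that the $S$-pair is \emph{some} orthogonal basis of $\pi_1(T)$ whose length product is the area; you still must show any two such bases coincide as slope sets. So the fix is to insert the area identity and the resulting inequality $\mu'\lambda' > \mu\lambda$ when both coordinates of $m'$ are nonzero, as in the paper. Your closing caveat --- that the lemma only identifies unordered pairs and must allow meridians and longitudes to swap roles --- is correct and worth keeping.
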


\begin{proof}
The result will follow from the fact that these meridian-longitude pairs are orthogonal.

Let $T$ denote the boundary torus of a cusp neighborhood of $M$ for some cusp expansion of $M$. On this torus, let $m,\ell$ denote the $R$-meridian and $R$-longitude, with geodesic lengths $\mu, \lambda$, respectively. Likewise,   let $m', \ell'$ denote the $S$-meridian and $S$-longitude with lengths $\mu', \lambda'$,  respectively.  Since meridian-longitude pairs are orthogonal 
\[
\mu\lambda = Area(T)  = \mu'\lambda'. 
\]

Given an arbitrary orientation on the slopes $\{m,\ell\}$ and $\{m',\ell'\}$, there are integers $p,q,r,s$ with $m' = pm + q\ell$ and $\ell' = rm+s\ell$. By orthogonality we have
\[
\mu' = \sqrt{(p\mu)^2 + (q\lambda)^2}, \textrm{ and } \lambda' = \sqrt{(r\mu)^2 + (s\lambda)^2}.
\]
At least one of $p$ or $q$ is non-zero, which implies $\mu' \ge \textrm{min}\{\mu,\lambda\}$, with equality exactly when $m'$ is the shortest curve of the set $\{m,\ell\}$.  Moreover, when neither $p$ nor $q$ are zero, $\mu' > \textrm{max}\{ \mu,\lambda\}$.  Similar inequalities hold for $\lambda'$. If both $p$ and $q$ are non-zero, then 
\[
\mu'\lambda' > \textrm{max}\{ \mu,\lambda\}\textrm{min}\{\mu,\lambda\} = \mu\lambda.
\]

Since $\mu \lambda = \mu'\lambda'$, however, this implies the oriented and simple $m'$ is one of $\pm m$ or $\pm \ell$.  The slope $m'$, then, is one of the slopes $m$ or $\ell$.  Similarly, $\ell'$ equals the other slope and the sets $\{ m', \ell' \}$ and $\{ m,  \ell \}$ are equal. 
\end{proof}

Thus, if a flat FAL complement admits multiple reflection-like surfaces $\{R_i \}$, each such $R_i$-structure still induces the same basis on a boundary torus of a cusp, though their induced peripheral structures may differ since meridians and longitudes might switch roles (i.e. some components of $\mathcal{A}$ could change type).

\begin{rmk}
The proof of Lemma \ref{lem:MeridianLongitude} would not work if we assumed $M$ was just an FAL complement. If the corresponding FAL has some number of half-twists, then certain meridian-longitude pairs might no longer be orthogonal, and so, our arguments used above would no longer apply. We point this since multiple essential results for this paper rely on Lemma \ref{lem:MeridianLongitude}, and we want to make sure the reader knows why our work doesn't immediately apply more broadly to the class of FALs and not just flat FALs. 
\end{rmk}





The next two propositions clarify how a different reflection-like surface $S$ behaves relative to a given $R$-structure by considering $R$-knot circle and $R$-crossing circle cusps separately.

\begin{prop}\label{thm:KnotCircleMeridian}
Let $M$ be a flat FAL complement with two distinct reflection-like surfaces $R$ and $S$.  If $K$ is an $R$-knot circle and $T_K$ the boundary torus of a cusp corresponding to $K$, then $S\cap T_K$ is a pair of $R$-meridians of $K$.  
\end{prop}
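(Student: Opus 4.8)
The plan is to fix one $R$-crossing disk meeting $K$ and determine its position relative to $S$ using the classification of non-reflection thrice-punctured spheres. By Lemma~\ref{lem:MeridianLongitude} applied to the rectangular torus $T_K$, the pair of parallel curves $S\cap T_K$ has slope equal to the $R$-meridian or to the $R$-longitude of $K$, so it suffices to exclude the $R$-longitude case. Since $\mathcal{A}$ is hyperbolic, hence non-split, the $R$-knot circle $K$ must link at least one $R$-crossing circle $C$; let $D$ be the $R$-crossing disk of $C$ having $K$ as one (or both) of its two strands. By Proposition~\ref{prop:RSfeatures} and the discussion preceding it, $D$ is an embedded totally geodesic thrice-punctured sphere meeting $R$ orthogonally in its three non-separating geodesics, and each puncture of $D$ on $T_K$ is an $R$-meridian of $K$. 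Pulling back through the isometry $M'\to M$ realizing $S=\rho(R')$ and applying Theorem~\ref{prop:BeltSumSummary} in $M'$, exactly one of the following holds: $D\subset S$; or $D$ is orthogonal to $S$ and is a crossing, longitudinal, or singly-separated disk relative to $S$.

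The step I expect to be the main obstacle is eliminating the case that $D$ is an $N$-disk relative to $S$, i.e.\ a crossing or longitudinal disk relative to $S$. In that case $D\cap S$ is again the three non-separating geodesics of $D$, so some geodesic $\gamma$ lies in both $D\cap R$ and $D\cap S$, and $D$ is orthogonal to both $R$ and $S$ along $\gamma$. Lifting to $\mathbb{H}^3$, the totally geodesic planes through a fixed geodesic form a pencil, and exactly one of them is perpendicular to the plane carrying $D$; hence $R$ and $S$ agree in a neighborhood of $\gamma$ and therefore share a connected component $\Sigma$. Both $i_R$ and $i_S$ fix $\Sigma$ pointwise, so $g=i_R\circ i_S$ is an orientation-preserving isometry of $M$ fixing the $2$-dimensional totally geodesic surface $\Sigma$ pointwise; since $g$ preserves the normal line to $\Sigma$ and is orientation-preserving, $dg$ is the identity at a point of $\Sigma$, so $g=\mathrm{id}$. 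Thus $i_R=i_S$, and since the fixed-point set of each of these maps (a reflection in a plane, up to conjugacy) is exactly $R$, resp.\ $S$, we obtain $R=S$, contradicting the hypothesis. Hence $D$ is either contained in $S$ or is a singly-separated disk relative to $S$.

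It remains to finish these two cases. If $D\subset S$, then $D\cap T_K\subset S\cap T_K$; since $D$ has a puncture on $T_K$ of $R$-meridian slope and $S\cap T_K$ is a pair of parallel curves, $S\cap T_K$ is a pair of $R$-meridians of $K$, as desired. If $D$ is a singly-separated disk relative to $S$, then two of its three punctures lie on a common cusp torus, as $S$-meridians of a common $S$-crossing circle (Definition~\ref{df:NR3PS}); but in the $R$-picture the punctures of $D$ lie on $T_C$ together with the cusp tori of the one or two knot-circle strands through $C$, and because $T_C$ differs from every knot-circle cusp, the only way a cusp torus is hit twice is that both strands through $C$ are $K$, with the two repeated punctures lying on $T_K$. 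Then $K$ is an $S$-crossing circle and the two $R$-meridians of $K$ on $T_K$ are $S$-meridians of $K$; since $S$ meets each $S$-crossing-circle cusp in a pair of $S$-meridians (Proposition~\ref{prop:RLSfeatures}), $S\cap T_K$ is a pair of $R$-meridians of $K$. In every case the $R$-longitude option is excluded, which gives the result; the delicate point throughout is the $N$-disk case, where the geometric observation about uniqueness of common perpendiculars combines with the rigidity of the reflection involutions, while the remaining cases are bookkeeping with Definition~\ref{df:NR3PS}.
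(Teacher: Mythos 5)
Your proof is correct, and it reaches the conclusion by a genuinely different route than the paper. The paper assumes for contradiction that $S\cap T_K$ is a pair of $R$-longitudes and splits into two sub-cases: if $S\cap T_K=R\cap T_K$, then $\iota_R\circ\iota_S$ is the identity on $T_K$ and preserves the normal direction, forcing $R=S$; if the longitudes are distinct, $S$ meets the crossing disk $D$ orthogonally in the separating geodesic with both endpoints on $K$, so $\iota_S$ would interchange the remaining two punctures of $D$, which lie on distinct cusps (one knot-circle, one crossing-circle), contradicting the fact that $\iota_S$ preserves cusps. You instead run the classification of Theorem~\ref{prop:BeltSumSummary} forward on $D$ relative to $S$ and verify the meridian conclusion in every surviving case. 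Your elimination of the $N$-disk case --- two reflection-like surfaces orthogonal to $D$ along a common geodesic must share a component, whence $\iota_R\circ\iota_S=\mathrm{id}$ and $R=S$ --- is the same rigidity mechanism (via \cite[Proposition A.2.1]{BePe1992}) that the paper deploys in Proposition~\ref{prop:Rcrossingpossibilities} and Theorem~\ref{thm:3rs}, just invoked a step earlier, and your observation that only one plane through a given geodesic is perpendicular to $D$ is a clean way to get the shared component. Your singly-separated case is resolved affirmatively rather than by contradiction: it forces both strands through $C$ to equal $K$, identifies $K$ as an $S$-crossing circle, and reads off that the $S$- and $R$-meridian slopes on $T_K$ coincide. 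The paper's route is shorter because assuming the longitudinal conclusion immediately rules out the cases you must check by hand; yours has the advantage of exhibiting, already at this stage, the trichotomy that Proposition~\ref{prop:Rcrossingpossibilities} later makes precise. (One cosmetic point: since $R$ and $S$ are only assumed reflection-like, the properties of $D$ you quote should be cited from Proposition~\ref{prop:RLSfeatures} rather than Proposition~\ref{prop:RSfeatures}; the content transfers under the defining isometry.)
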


\begin{proof}
Lemma \ref{lem:MeridianLongitude} implies that $S \cap T_K$ is either a pair of $R$-meridians or $R$-longitudes.  For the sake of contradiction, suppose that $S \cap T_K$ is a pair of $R$-longitudes.

Since $K$ is an $R$-knot circle, $R\cap T_K$ is a pair of $R$-longitudes.  First suppose that $R \cap T_K = S \cap T_K$. Then the  composition $\iota_{R}\circ\iota_{S}$ acts as the identity on $T_K$ and preserves the normal direction.  Therefore $\iota_{R}\circ\iota_{S}$ is the identity on $M$ by \cite[Proposition A.2.1]{BePe1992}, and $R= S$. This is a contradiction, which implies $S \cap T_K\ne R\cap T_K$. 

Now consider the case where $S\cap T_K$ are $R$-longitudes distinct from $R\cap T_K$, and let $D$ be an $R$-crossing disk punctured by $K$.  Note that $D$ intersects $T_K$ in $R$-meridians, which are orthogonal to the $R$-longitudes $S\cap T_K$.  Thus $S$ intersects $D$ orthogonally. Moreover, $R$ intersects $D$ in the non-separating geodesics of $D$, which implies that $S \cap D$ consists of a single separating geodesic with both ``ends" on $K$.  The other two punctures of $D$ must be on distinct cusps because one is an $R$-knot circle puncture, and the other an $R$-crossing circle puncture. Since $S$ intersects $D$ orthogonally along a separating geodesic, the reflection $\iota_{S}$ preserves $D$ and interchanges the two punctures on distinct cusps. This contradicts the fact that $\iota_{S}$ preserves all cusps, and $S\cap T_K$ cannot be $R$-longitudes distinct from $R\cap T_K$.

Since $S$ cannot meet $T_K$ in $R$-longitudes, the pair of curves $S \cap T_K$ are $R$-meridians. \end{proof}



\begin{prop}\label{prop:Rcrossingpossibilities}
Let $M$ be a flat FAL complement with two distinct reflection-like surfaces $R$ and $S$. Let $C$ be an $R$-crossing circle bounding the $R$-crossing disk $D$. Then either
\begin{enumerate}[label=\roman*.]
\item $D$ is a component of $S$, or
\item $S$ intersects $D$ orthogonally along the separating geodesic of $D$ with punctures on $C$.  In this case the $R$-crossing circle $C$ is also an $S$-crossing circle.
\end{enumerate}
\end{prop}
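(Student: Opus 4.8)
The plan is to pin down the intersection $S\cap D$ using Theorem~\ref{prop:BeltSumSummary}, and then to dispose of the one configuration not covered by (i) and (ii) by a rigidity argument for reflections.

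By Definition~\ref{defn:Rlike}, write $S=\rho(R')$ for an isometry $\rho:M'\to M$ from a flat FAL complement $M'$ with reflection surface $R'$, and let $D'=\rho^{-1}(D)$, a totally geodesic thrice-punctured sphere in $M'$. If $D'\subseteq R'$, then $D\subseteq S$, and since $D$ is an embedded totally geodesic surface it is a component of $S$ -- this is conclusion (i). Otherwise $D'$ is a non-reflection thrice-punctured sphere in $M'$, so Theorem~\ref{prop:BeltSumSummary} applies: $D'$ is orthogonal to $R'$, and $D'\cap R'$ is either the three non-separating geodesics of $D'$ (when $D'$ is a crossing or longitudinal disk) or a single separating geodesic of $D'$ (when $D'$ is singly-separated). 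Applying $\rho$, we conclude that $D$ is orthogonal to $S$ and that $S\cap D$ is either the three non-separating geodesics of $D$ or a single separating geodesic of $D$. So it remains to rule out the first of these possibilities and, in the second, to identify which separating geodesic occurs and check that $C$ becomes an $S$-crossing circle.

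Suppose $S\cap D$ is the set of three non-separating geodesics of $D$. Because $D$ is an $R$-crossing disk, Proposition~\ref{prop:RLSfeatures} shows that $R$ meets $D$ orthogonally in exactly its three non-separating geodesics; hence $R\cap D=S\cap D$, and both $\iota_R$ and $\iota_S$ preserve $D$ and restrict on $D$ to the reflection across this common curve system. Then $\iota_R\circ\iota_S$ fixes $D$ pointwise and preserves the normal direction to $D$, so it is the identity, forcing $R=S$ and contradicting that $R$ and $S$ are distinct. (Equivalently: orthogonality forces $T_pR$ and $T_pS$ to coincide at each point $p$ of the shared geodesics, so $R=S$.) This is the step I expect to carry the real content; everything else is bookkeeping with Theorem~\ref{prop:BeltSumSummary} together with the fact that $\iota_S$ preserves every cusp of $M$, being conjugate via $\rho$ to $\iota_{R'}$, which fixes each cusp of $M'$.

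Finally, suppose $S\cap D$ is a single separating geodesic $\gamma$. The three separating geodesics of $D$ cut off, respectively, the puncture $p_C$ on $C$ and the two punctures $p_1,p_2$ on the (not necessarily distinct) knot circles $K_1,K_2$ running through $C$. Since $D$ is orthogonal to $S$ along $\gamma$, the reflection $\iota_S$ preserves $D$ and restricts there to the reflection across $\gamma$, which fixes the cut-off puncture and interchanges the other two. If $\gamma$ cut off $p_1$ or $p_2$, then $\iota_S$ would interchange the cusp $C$ with one of the knot-circle cusps; but $\iota_S$ preserves every cusp and $C$ is distinct from every knot-circle cusp, a contradiction. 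Hence $\gamma$ cuts off $p_C$, i.e., it is the separating geodesic of $D$ with both ends on $C$, which is the geometric content of (ii). To conclude (ii), pull back once more: $\rho^{-1}(D)$ meets $R'$ in a separating geodesic, so it is a singly-separated disk by Theorem~\ref{prop:BeltSumSummary}, and all punctures of a singly-separated disk lie on crossing circles by Definition~\ref{df:NR3PS}. In particular $\rho^{-1}(C)$ is an $R'$-crossing circle, so $C$ is an $S$-crossing circle.
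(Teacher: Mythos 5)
Your proof is correct, but it reaches the trichotomy by a genuinely different route than the paper. The paper's proof pivots on Lemma \ref{lem:MeridianLongitude}: it first determines whether $S\cap T_C$ consists of $R$-meridians or $R$-longitudes, and then in each case works out $S\cap D$ by tracking which simple geodesics of $D$ meet $T_C$ and comparing boundary slopes (with a separate contradiction, via a puncture count on singly-separated disks, for the sub-case where $D\cap T_C$ is parallel to but disjoint from $S\cap T_C$). You instead pull $D$ back to $M'$ and apply Theorem \ref{prop:BeltSumSummary} directly, which immediately yields that $S\cap D$ is empty-with-containment, the three non-separating geodesics, or one separating geodesic; you then kill the non-separating case by the reflection-rigidity argument ($T_pR=T_pS$ along the shared geodesics forces $R=S$), and you identify \emph{which} separating geodesic occurs by noting that $\iota_S|_D$ is the reflection across it and must preserve cusps, so the geodesic cannot be based at a knot-circle puncture (this last step is the same cusp-preservation trick the paper uses in Proposition \ref{thm:KnotCircleMeridian}, and the rigidity step is the same pattern the paper itself deploys in Theorem \ref{thm:3rs}). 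The final deduction that $C$ is an $S$-crossing circle is identical to the paper's. What your approach buys is economy: you bypass the cusp-torus analysis of Lemma \ref{lem:MeridianLongitude} entirely and let the classification theorem do the case division in one stroke; what the paper's approach buys is the explicit boundary-slope information on $T_C$, which it reuses downstream. Both are sound.
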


\begin{proof}
Let $C$ be the $R$-crossing circle bounding $D$.  By Lemma \ref{lem:MeridianLongitude} we know that $S \cap T_C$ is either two $R$-meridians or two $R$-longitudes of $C$.   

Consider the case where $S \cap T_C$ consists of two $R$-meridians of $C$.  Since $C$ is an $R$-crossing circle the intersection $R\cap T_C$ consists of $R$-meridians as well.  If the surfaces $R$ and $S$ intersect $T_C$ in the same curves then, as in the proof of Proposition \ref{thm:KnotCircleMeridian}, one arrives at the contradiction that $R=S$. Thus the $R$-meridians $S \cap T_C$ are distinct from those of $R\cap T_C$. Now $S$ and $D$ are an embedded totally geodesic surfaces in $M$, so their intersection is a union of disjoint, complete, simple geodesics.  This follows from the observation that the local picture of $S\cap D$, as seen in the universal cover, consists of two planes intersecting along a geodesic.  The only complete, simple geodesics of $D$ that intersect $T_C$ are two non-separating geodesics, say $\gamma_1, \gamma_2$, and one separating geodesic, label it $\gamma$. Since $R$ contains the geodesics $\gamma_1, \gamma_2$, their intersection with $T_C$ lies in $R\cap T_C$.  This implies that $S$ cannot contain $\gamma_1, \gamma_2$, since $S$ and $R$ intersect $T_C$ in distinct curves. Thus $S\cap D$ contains the separating geodesic $\gamma$ and neither $\gamma_1$ nor $\gamma_2$. All other complete, simple geodesics on $D$ intersect $\gamma$, which implies $\gamma = S\cap D$ and we are in case $(ii)$. Orthogonality follows from the fact that the $R$-meridians $S \cap T_C$ are orthogonal to the $R$-longitude $D \cap T_C$.

It remains to prove that $C$ is an $S$-crossing circle as well.  Since $S$ is reflection-like, there is a flat FAL $M'$ with reflection surface $R'$ and an isometry $h:M'\to M$ with $S=h(R')$.  Note that $D' = h^{-1}(D)$ is not contained in $R'$ since $D$ is not contained in $S$.  Moreover, since $S$ intersects $D$ in a separating geodesic, the geodesic $R'\cap D'$ separates $D'$ as well.  The classification of non-reflection thrice punctured spheres given in Theorem \ref{prop:BeltSumSummary} shows that $D'$ must be a singly-separated disk, and all punctures of singly-separated disks are crossing-circle punctures (see Figure \ref{fig:Types3ps}).  Thus all punctures of $D$ are $S$-crossing circles, and $C$ must be an $S$-crossing circle.

Now suppose $S \cap T_C$ consists of two $R$-longitudes of $T_C$. We will show $D\subset S$. Since $C$ is an $R$-crossing circle we know $R\cap T_C$ is an $R$-meridian, and since $D$ is an $R$-crossing disk for $C$ it intersects $T_C$ in a single $R$-longitude of $T_C$.  Thus $D\cap T_C$ is a single curve parallel to, or equal to one of, the curves in $S \cap T_C$.  

First consider the case where $D\cap T_C$ equals one of the curves of $S \cap T_C$, call it $\gamma$.  Then $S$ and $D$ are embedded totally geodesic surfaces whose intersection contains $\gamma$ which, despite being geodesic in the induced Euclidean metric on $T_C$, is not a geodesic in $M$.  If $S$ and $D$ met transversally, their intersection could only contain geodesics. As $D$ is connected and intersects $S$ in a non-geodesic curve, it must be a subset of $S$. 

Now consider the case where $D\cap T_C$ is parallel to, and distinct from, the curves of $S\cap T_C$. We will show this case leads to a contradiction.  As above, let $M'$ be a flat FAL with reflection surface $R'$ that admits an isometry $h:M'\to M$ for which $S = h(R')$.  Further, let $D' = h^{-1}(D)$ and $T'=h^{-1}\left(T_C\right)$. 

Since we are assuming $D\cap T_C$ is disjoint from $S\cap T_C$, so also $D'\cap T'$ and $R'\cap T'$ are disjoint and $D'$ is not a subset of $R'$.  Then $D'$ is a non-reflection, thrice-punctured sphere in $M'$ and must be one of the types described in Theorem \ref{prop:BeltSumSummary} (see Figure \ref{fig:Types3ps}).  The classification shows that punctures of non-reflection, thrice-punctured spheres are either knot meridians, crossing longitudes or crossing meridians.  Of these, only the crossing meridians of singly-separated disks are disjoint from the reflection surface (see Figure \ref{fig:Types3ps}$(c)$).  Thus $D'$ is a singly-separated disk in $M'$ and $D'\cap T'$ consists of two crossing circle meridians.  This, however, contradicts the fact that $D\cap T_C$ is a single component.  

Therefore, if $S \cap T_C$ consists of two $R$-longitudes of $T_C$, then $D$ is a component of $S$ and we are in case $(i)$ of the theorem.
 \end{proof}

\subsection{Three Reflection Surfaces}
\label{sec:MRS}

In this subsection we classify flat FAL complements that contain more than two distinct reflection-like surfaces. 


\begin{thm}
\label{thm:3rs}
Suppose a flat FAL complement $M = \mathbb{S}^{3} \setminus \mathcal{A}$ admits at least three distinct reflection-like surfaces. Then $\mathcal{A}$ is equivalent to the Borromean Rings and $M$ contains exactly three reflection-like surfaces, all of which are reflection surfaces. 
\end{thm}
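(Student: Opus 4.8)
The plan is to assume $M$ carries three distinct reflection-like surfaces $R,S,T$ and to rigidify, step by step, how their induced structures on $\mathcal{A}$ must interact, until $\mathcal{A}$ is forced to be the Borromean rings with exactly these three (honest) reflection surfaces. \emph{Step 1 (the knot-circle role is exclusive).} I would first prove that an $R$-knot circle $K$ must be an $S$-crossing circle. Since $\mathcal{A}$ is a hyperbolic flat FAL, $K$ is punctured by some $R$-crossing disk $D$, and then $D\cap T_K$ and $S\cap T_K$ are both $R$-meridians of $K$ by Proposition \ref{thm:KnotCircleMeridian}, so they have the same slope. If the curve $D\cap T_K$ lies in $S$, then $D\subset S$ (two totally geodesic surfaces sharing the non-geodesic curve $D\cap T_K$ are tangent, hence coincide along the connected surface $D$); then $\iota_R\circ\iota_S$ fixes the three non-separating geodesics $D\cap R$ pointwise, hence fixes the $2$-dimensional set $D$ pointwise, while being orientation-preserving, so $\iota_R\circ\iota_S=\mathrm{id}$ and $R=S$, a contradiction. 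Otherwise $D\cap T_K$ and $S\cap T_K$ are parallel but disjoint; pulling $D$ back through an isometry realizing $S$ as reflection-like and invoking the classification of non-reflection thrice-punctured spheres (Theorem \ref{prop:BeltSumSummary}), a knot-circle puncture of such a sphere must be a meridian, so $D\cap T_K$ would be an $S$-meridian of $K$ while $S\cap T_K$ is an $S$-longitude of $K$, which is impossible since these slopes differ. Hence $K$ is an $S$-crossing circle, so no component of $\mathcal{A}$ is a knot circle for two of $R,S,T$; write $k_R,k_S,k_T\ge 1$ for the numbers of $R$-, $S$-, $T$-knot circles, $a$ for the number of components that are crossing circles for all three, and $n$ for the number of components of $\mathcal{A}$, so that $n=a+k_R+k_S+k_T$.

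\emph{Step 2 (relocated crossing disks and the incidence count).} If $K$ is an $R$-knot circle, it is an $S$-crossing circle by Step 1 and bounds an $S$-crossing disk $D$; applying Proposition \ref{prop:Rcrossingpossibilities} with $S$ in the role of ``$R$'' and $R$ in the role of ``$S$'', case (ii) would force $K$ to be an $R$-crossing circle, so case (i) holds: $D$ is a component of $R$ (a complete totally geodesic thrice-punctured sphere contained in a totally geodesic surface is a connected component of it). The same holds with $T$ in place of $S$. These distinguished thrice-punctured-sphere components of $R$ are pairwise distinct: the $S$-crossing disk of $K$ is punctured by two $S$-knot circles, which are $T$-crossing circles by Step 1, so it cannot be a $T$-crossing disk of any $R$-knot circle (all three of whose punctures would then be $T$-crossing circles), and distinct crossing circles bound distinct crossing disks. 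Thus $R$ has at least $2k_R$ thrice-punctured-sphere components. Now $R$ meets each of the $n$ cusps of $M$ in two curves (Proposition \ref{prop:RSfeatures}), so $R$ has exactly $2n$ ends, the two on each $R$-knot circle lying on its $S$- and $T$-crossing disks; counting incidences of $S$-knot circles among the punctures of the distinguished components of $R$ in two ways yields $k_S\le k_R$, hence by symmetry $k_R=k_S=k_T=:k$, and equality forces every remaining component of $R$ to be punctured only by components that are crossing circles for all three of $R,S,T$.

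\emph{Step 3 (killing $a$, then finishing).} Suppose $C$ is a crossing circle for $R$, $S$, and $T$, with $R$-crossing disk $D_0$. Since $C$ is a crossing circle for both $R$ and $S$, Proposition \ref{prop:Rcrossingpossibilities}(ii) gives that $S$ meets $D_0$ orthogonally along the separating geodesic of $D_0$ encircling the $C$-puncture; the same applies with $T$ in place of $S$, so $S$ and $T$ both contain that geodesic and are both orthogonal to $D_0$ along it. But there is a unique totally geodesic surface containing a given geodesic and orthogonal to a given totally geodesic surface, so $S$ and $T$ agree to first order there, whence $\iota_S=\iota_T$ and $S=T$, a contradiction. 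Thus $a=0$, $n=3k$, and the $2k$ distinguished components of $R$ account for all $2n=6k$ of its ends, so $R$ has exactly $2k$ components, all thrice-punctured spheres. On the other hand $R$ is homeomorphic to $P\setminus\mathcal{A}$ for the projection sphere $P\cong\mathbb{S}^2$, and the $k$ $R$-knot circles are $k$ disjoint simple closed curves in $\mathbb{S}^2$ which, together with the finitely many crossing-circle intersection points (whose removal does not change connectivity), cut $\mathbb{S}^2$ into exactly $k+1$ pieces. Hence $2k=k+1$, so $k=1$ and $n=3$: $\mathcal{A}$ has one knot circle and two crossing circles, and by Step 1 each crossing circle is the knot circle of another of $R,S,T$, so the three components are pairwise mutually augmented, which forces $\mathcal{A}$ to be the Borromean rings. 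Since reflection surfaces are reflection-like and the Borromean rings complement has at least three reflection surfaces, these are $R,S,T$; a fourth reflection-like surface would, by Step 1, share a knot/crossing partition with one of $R,S,T$ and hence equal it, so there are exactly three, all reflection surfaces.

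\emph{Where the difficulty lies.} The computational engine is the two-way incidence count in Step 2; the two genuinely delicate points are the argument that $a=0$ (turning ``two reflection surfaces orthogonal to a common crossing disk along a common geodesic'' into $S=T$ via Mostow--Prasad rigidity, cf.\ \cite{BePe1992}), and extracting from the bare combinatorics $k=1$, $n=3$ the statement that $\mathcal{A}$ is literally the Borromean rings. Step 1 also requires the careful case analysis above to exclude an $R$-knot circle being an $S$-knot circle with its meridian and longitude interchanged.
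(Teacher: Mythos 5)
Your proof is correct in its essentials but follows a genuinely different route from the paper's. The paper's argument is local and short: it fixes a single $R$-crossing disk $D$ bounded by $C$ and observes that at most one further reflection-like surface can satisfy each alternative of Proposition \ref{prop:Rcrossingpossibilities} (which gives the bound of three at once); it then shows the surface $P$ containing $D$ must also contain an $S$-crossing disk $D'$ sharing the crossing-circle puncture $C$, so that $\{D,D'\}$ is a separating pair by Theorem \ref{thm:SepPair}. Since no proper subset of a reflection-like surface separates, $P=D\cup D'$ has exactly six ends, hence $M$ has three cusps and $\mathcal{A}$ is the Borromean rings. Your argument is global: you first show no component can be a knot circle for two of the three surfaces, then exhibit $2k_R$ distinguished thrice-punctured-sphere components of $R$ and double-count the $2n$ ends of $R$, finishing with the observation that a reflection surface cut out of $\mathbb{S}^2$ by $k$ disjoint knot circles has $k+1$ components, forcing $2k=k+1$. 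The paper's route buys brevity and reuses the separating-pair machinery already developed; yours buys more structural information along the way (the symmetric partition of the components into the three knot-circle classes and the vanishing of $a$) and avoids Theorem \ref{thm:SepPair} entirely.

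Two small points to tighten. First, the double count as set up actually gives $k_R\le k_S$ rather than $k_S\le k_R$: the $2k_R$ $S$-knot-circle punctures of the distinguished $S$-crossing disks are among the at most $2k_S$ ends of $R$ lying on $S$-knot-circle cusps. Cycling through $R,S,T$ still yields $k_R=k_S=k_T$, so nothing is lost, but the inequality should be stated in the right direction. Second, in Step 3 you pass directly to case $(ii)$ of Proposition \ref{prop:Rcrossingpossibilities}; the proposition does not say that $C$ being an $S$-crossing circle forces case $(ii)$, so case $(i)$ must be excluded explicitly. This is immediate from your Step 1: if $D_0\subset S$, then $D_0$ is a component of $S$ all three of whose punctures are $S$-crossing circles ($C$ by hypothesis, and the two $R$-knot circles because they change type), contradicting Lemma \ref{lem:Reflection3ps}.
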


\begin{proof}
First, we show that a flat FAL complement $M$ can admit at most 3 distinct reflection-like surfaces. This follows from the observation that at most one reflection-like surface can satisfy each case of Proposition \ref{prop:Rcrossingpossibilities}.  To see this, let $D$ be a crossing disk with respect to a reflection-like surface $R$, and suppose $S_1$ and $S_2$ are reflection-like surfaces distinct from $R$ in $M$. If both $S_1$ and $S_2$ contain $D$ (so satisfy Proposition \ref{prop:Rcrossingpossibilities}$(i)$), then $\iota_{S_2}\circ\iota_{S_1}$ is the identity on $D$ and preserves the normal direction. By \cite[Proposition A.2.1]{BePe1992} we have $S_1= S_2$.  On the other hand, if $S_1$ and $S_2$ both satisfy Proposition \ref{prop:Rcrossingpossibilities}$(ii)$, then both intersect $D$ orthogonally along the same separating geodesic.  This implies $\iota_{S_2}\circ\iota_{S_1}$ fixes any point on this geodesic as well as the tangent space there, so again $S_1 = S_2$.

Thus a flat FAL complement has at most 3 distinct reflection-like surfaces, and now suppose $M$ has $3$ distinct reflection-like surfaces: $R$, $S$, and $P$. Moreover, let $D$ be an $R$-crossing disk bounded by the $R$-crossing circle $C$.  Then the argument given above shows that (up to relabeling) $P$ contains $D$ while $S$ and $D$ satisfy Proposition \ref{prop:Rcrossingpossibilities}$(ii)$. In particular, $R$ and $S$ intersect $T_C$ in disjoint pairs of $R$-meridians. Our goal is to show that $P$ consists of exactly two (disjoint) thrice-punctured spheres, which allows us to quickly classify all such flat FAL complements with such a reflection surface. 

Since $S$ and $D$ satisfy Proposition \ref{prop:Rcrossingpossibilities}$(ii)$, $C$ is an $S$-crossing circle. We will let $D'$ be an $S$-crossing disk for $C$. Note that, $D' \neq D$  since $S$ intersects $D$ in a separating geodesic so $D$ cannot be an $S$-crossing disk. Now consider the surfaces $R$ and $P$ relative to $D'$.  Since $D'$ intersects $S$ orthogonally, and $R\cap T_C$ is parallel to $S\cap T_C$, we have $R$ intersects $D'$ orthogonally.  Proposition \ref{prop:Rcrossingpossibilities} applied to $D'$ and $R$ implies that they satisfy case $(ii)$.  Hence $P$ and $D'$ must satisfy case $(i)$. Thus $D'$ is a component of $P$, and $D \cup D' \subset P$.

Now consider $D\cup D'$ relative to the reflection-like surface $S$.  The disk $D$ is an $S$-singly-separated disk which shares a longitudinal crossing-circle puncture with the $S$-crossing disk $D'$. Thus $D'$ and $D$ form a separating pair by Theorem \ref{thm:SepPair}.  Since no proper subset of a reflection-like surface separates, we have $P = D\cup D'$. 

The reflection-like surface $P = D \cup D'$ has a total of six punctures, and must intersect the boundary of each cusp in two curves; so there are three cusps in $M$.  The only three-component flat FAL is the Borromean rings, so $\mathcal{A}$ must be the Borromean rings. 

\begin{figure}[h]
\begin{center}
\includegraphics[width=2.5in]{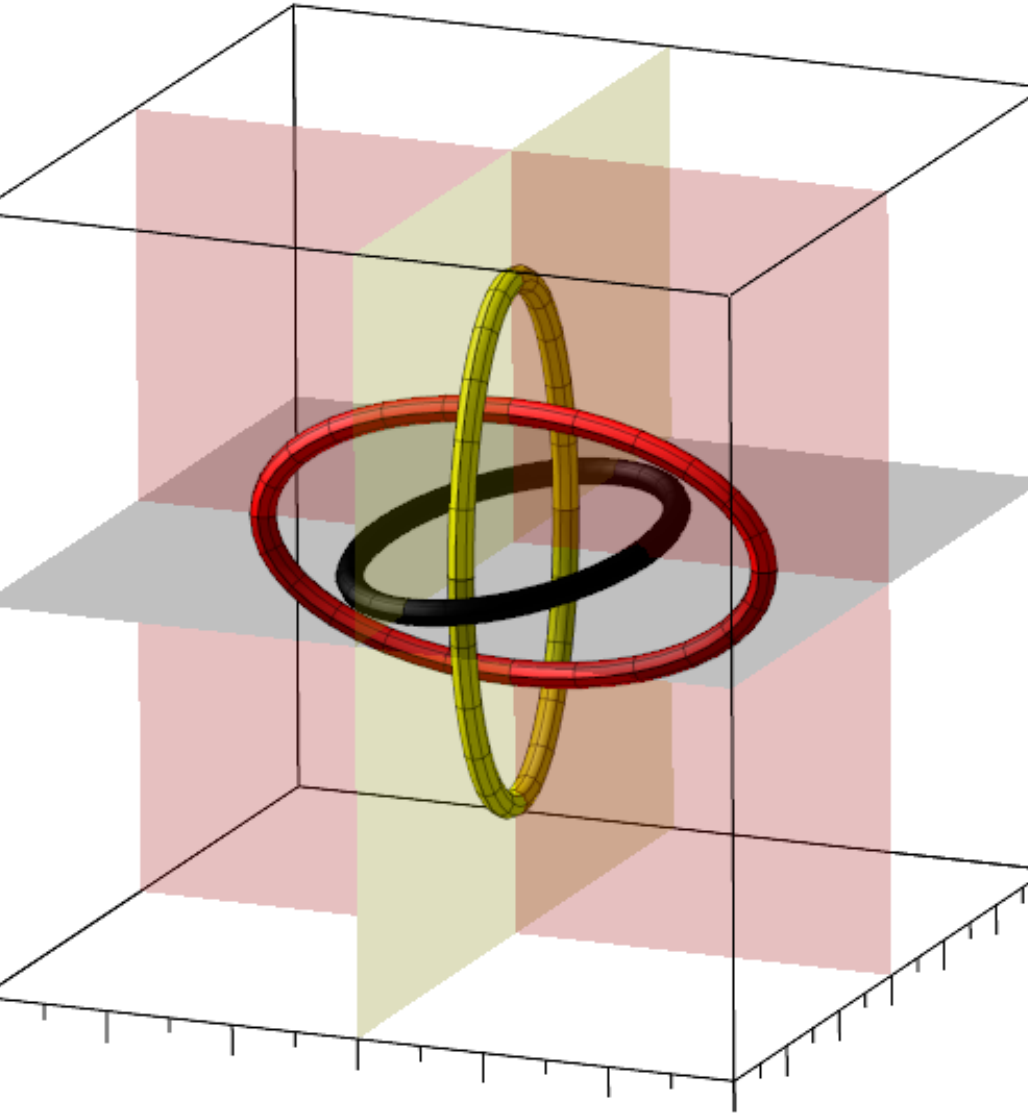}
\end{center}
\caption{Three reflection surfaces in the Borromean rings complement}
\label{fig:BringsMRS}
\end{figure}

Finally, note that the Borromean rings complement contains three distinct reflection surfaces, as depicted in Figure \ref{fig:BringsMRS}. Each shaded plane is a reflection surface with the link component it contains serving as the one knot circle.

\end{proof}


\subsection{Two Reflection Surfaces}
\label{sec:2RS}

Let $M = \mathbb{S}^{3} \setminus \mathcal{A}$ be a flat FAL complement with reflection-like surface $R$ and an additional (distinct)  reflection-like surface $R'$. Our goal in this subsection is to show any such flat FAL must be equivalent to either the Borromean Rings, $P_n$ with $n \geq 3$, or $O_n$ with $n \geq 2$. See Figure \ref{fig:BP4O4} for FAL diagrams of $P_4$ and $O_4$. 

\begin{figure}[ht]
	\centering
	\begin{overpic}[width = \textwidth]{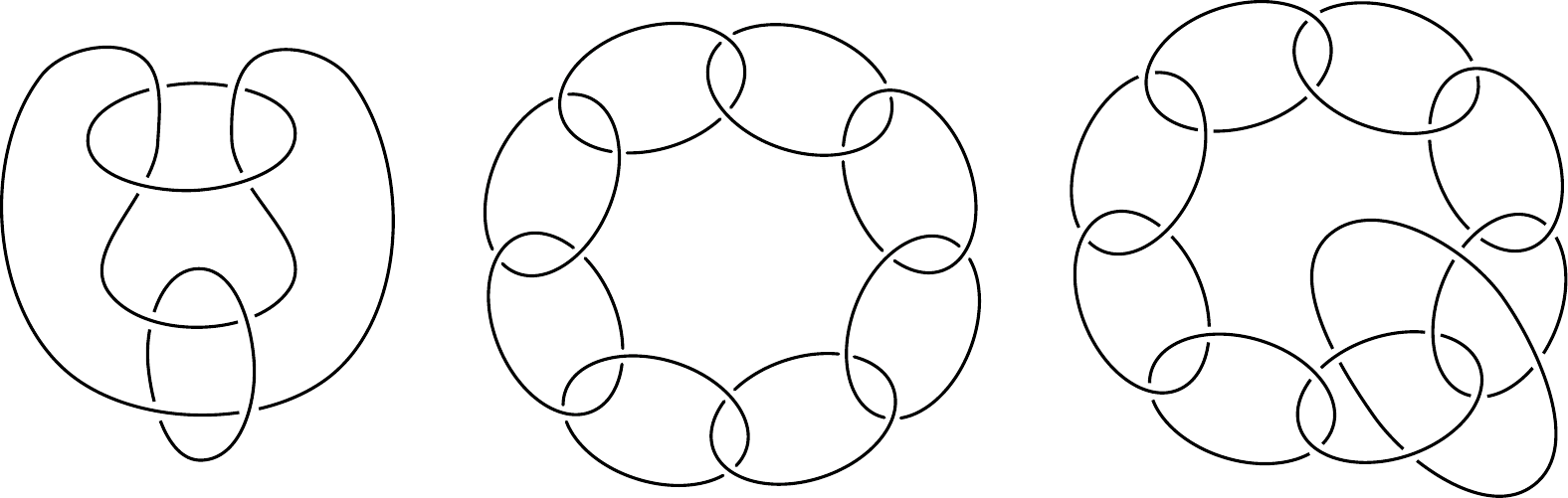}
		\put(0,28){$B$}
		\put(30,28){$P_4$}
		\put(66,28){$O_4$}
		\put(34,20){$c_{1}$}
		\put(50,25.5){$c_{2}$}
		\put(57.5,10.5){$c_{3}$}
		\put(41,4){$c_{4}$}
		\put(88,15){$c_{0}$}
		\end{overpic}
	\caption{On the left, the Borromean Rings, $B$, is depicted as a flat FAL. In the middle, $P_4$ is depicted with its four crossing circles labeled. On the right, $O_4$ is depicted, which can be constructed by adding the crossing circle $C_0$ to $P_4$.}
	\label{fig:BP4O4}
\end{figure}

More generally, $P_n$ is the minimally-twisted chain with $2n$ components.  Thus $P_n$ is a flat FAL that admits an  FAL diagram with $n$ crossing circles and $n$ knot circles, linked together in a chain alternating between crossing circles and knot circles. This link can also be described as fully augmenting a pretzel link with $n$ twist regions with an even number of crossings in each twist region, and performing a homeomorphism of the complement to undo all twists from each twist region.  The link $O_n$ is a flat FAL that admits an  FAL diagram with $n+1$ crossing circles and $n$ knot circles, where this FAL diagram can be obtained by adding a single crossing circle to the FAL diagram for $P_n$, just as depicted in Figure \ref{fig:BP4O4}. 

\begin{figure}[h]
\includegraphics{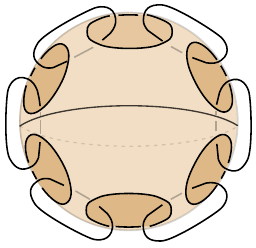}
\caption{The second reflection-like surface in $P_6$.}
\label{fig:SecondRS}
\end{figure}

Each of $P_n$ and $O_n$ contain (at least) two distinct reflection surfaces: one corresponding with the projection plane in the following figures and one corresponding with a $2$-sphere containing $C_1, \ldots, C_n$ and meeting $K_1, \ldots, K_n$ orthogonally in these figures; in the case of $O_n$, this $2$-sphere meets the link component $C_0$ orthogonally.  Note that a $90^{\circ}$ rotation of $\mathbb{S}^3$ about the axis of the chain interchanges the two reflection surfaces (see Figure \ref{fig:SecondRS} for the second reflection-like surface in $P_6$).

We now prove a useful fact about about thrice-punctured spheres contained in a reflection surface, which we will make use of throughout this subsection.

\begin{lem}
\label{lem:Reflection3ps}
Let $M$ be a flat FAL complement with reflection-like surface $R$, and let $D$ be a thrice-punctured sphere component of $R$.  Then $D$ has an odd number of $R$-knot circle punctures. 
\end{lem}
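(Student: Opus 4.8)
The plan is to reduce the statement to a purely diagrammatic count and then to exclude the single bad parity using the hypotheses that make $\mathcal{A}$ hyperbolic. First I would reduce to the case that $R$ is an honest reflection surface: if $R$ is merely reflection-like, write $R=\rho(R')$ for an isometry $\rho\colon M'\to M$ with $R'$ a reflection surface of $M'$; then $\rho^{-1}(D)$ is a thrice-punctured sphere component of $R'$, and since the $R$-structure on $M$ is by definition the $\rho$-image of the $R'$-structure on $M'$, the number of $R$-knot circle punctures of $D$ equals the number of $R'$-knot circle punctures of $\rho^{-1}(D)$. So assume $R=P\setminus\mathcal{A}$ for an FAL diagram $D(\mathcal{A})$ in the projection plane $P\cong S^2$. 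The knot circles of $\mathcal{A}$ are disjoint simple closed curves in $P$, each crossing circle meets $P$ in exactly two points, and the components of $R$ are exactly the regions of $P\setminus(\text{knot circles})$ with the crossing-circle puncture points deleted. If $D$ corresponds to a region $F$ with $k$ boundary knot circles and $c$ crossing-circle punctures in its interior, then $D$ is a $(k+c)$-punctured sphere whose knot circle punctures are the $k$ boundary circles and whose crossing circle punctures are the $c$ deleted points (by Proposition \ref{prop:RSfeatures} these are longitudinal punctures on knot circle cusps and meridional punctures on crossing circle cusps, as it should be). So $D$ is a thrice-punctured sphere exactly when $k+c=3$, and we must show $k$ is odd. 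I note that one cannot shortcut this by pushing $D$ slightly off $R$ and applying Theorem \ref{prop:BeltSumSummary}, since a totally geodesic thrice-punctured sphere is the unique one in its isotopy class, so the push-off is isotopic only to $D\subset R$ itself.

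Two easy observations then cut the problem down to a single case. A flat FAL has at least one knot circle (the knot circles are the components of the link $L$ that was augmented to build $D(\mathcal{A})$), and a knot circle is a simple closed curve in $S^2$, hence separates $P$; therefore every region of $P\setminus(\text{knot circles})$ has at least one boundary knot circle. Thus $k\ge 1$, so $c\le 2$ and $k\in\{1,2,3\}$. The cases $k=1$ and $k=3$ are fine, so it remains only to rule out $k=2$, equivalently $c=1$.

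The heart of the argument is thus to show that a region $F$ with exactly two boundary knot circles cannot contain exactly one crossing-circle puncture. The key structural input is the geometry of the clasp of a crossing circle $C$: since $C$ clasps two parallel strands of knot circles, its crossing disk is orthogonal to $P$ and meets $P$ in an arc crossing those two strands, which forces the two points of $C\cap P$ to lie in the two regions on the far sides of the clasped band. Consequently the two punctures of $C$ on $P$ either lie in the same region (when both clasped strands belong to a single knot circle) or lie in two distinct regions at distance two in the dual tree of the region decomposition, joined through the ``middle'' region of the clasp. If $F$ has two boundary knot circles and a single crossing puncture $p$, the crossing circle $C$ through $p$ must be of the second kind, with one of its clasped strands on a boundary circle $K$ of $F$ and the other on a knot circle not bounding $F$; I would then use non-splittability together with the fact that $F$ contains no further crossing puncture to pin down how $K$ and $C$ sit relative to $F$, and produce a simple closed curve in $P$ meeting the diagram $D(L)$ in at most four points that exhibits a forbidden reduction — a splitting, a composition along a $2$-puncture, or a mergeable pair of twist regions — contradicting that $D(L)$ is non-splittable, prime, and twist-reduced. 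Turning ``$c=1$'' into such a concrete violation is the step I expect to be the main obstacle, and it is precisely where the \emph{flat} hypothesis and the diagrammatic hypotheses on $\mathcal{A}$ genuinely enter (as emphasized after Lemma \ref{lem:MeridianLongitude}, orthogonality of the peripheral structure — a consequence of flatness — is what makes the clasp analysis rigid). Once $c=1$ is excluded, $k\in\{1,3\}$ is odd, completing the proof.
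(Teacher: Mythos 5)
Your reduction is sound and matches the paper's: pass to an honest reflection surface, note that every component of $R$ has at least one knot circle puncture, and observe that the lemma comes down to excluding a thrice-punctured sphere component with exactly two knot circle punctures (your $k=2$, $c=1$ case). But the exclusion of that case is the entire content of the lemma, and your proposal does not actually carry it out: you describe a plan to ``produce a simple closed curve in $P$ meeting $D(L)$ in at most four points that exhibits a forbidden reduction'' and then explicitly flag this as ``the step I expect to be the main obstacle.'' That is a genuine gap, not a proof. Moreover, the intermediate claims you lean on are shakier than you suggest: the two punctures of a crossing circle on $P$ need not sit in globally distinct regions ``at distance two in the dual tree'' (the three local regions around a clasp can coincide globally), and it is not clear that non-splittability, primeness, and twist-reducedness of the pre-augmented diagram $D(L)$ can be converted into the needed contradiction without substantial additional work.

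For comparison, the paper closes the case by a geometric argument inside $M$ rather than a diagrammatic one. With $D$ the annular component of $\mathbb{S}^2\setminus(J\cup K)$ punctured once by a crossing circle $C$ at a point $p$, the crossing disk $D_C$ meets $D$ exactly in the geodesic $\gamma_{pK}$ joining $p$ to $K$. Any other crossing disk $D'$ punctured by $J$ is disjoint from $C$ and from $D_C$, so $D'\cap D$ can be neither the separating geodesic at $J$ (which crosses $\gamma_{pK}$) nor the geodesic from $J$ to $p$; hence $D'\cap D$ must be the single geodesic $\gamma_{JK}$. Since all crossing disks are orthogonal to $R$ and at most one totally geodesic surface can meet $R$ orthogonally along a given geodesic, at most one crossing circle links $J$ --- contradicting the fact that every knot circle of a hyperbolic FAL is linked by at least two crossing circles. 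If you want to complete your write-up, you should either supply this geodesic-intersection argument or genuinely execute the diagrammatic contradiction you only sketched.
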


\begin{proof}
The properties involved are topological, so without loss of generality we assume that $R$ is the reflection surface of the flat FAL complement $M = \mathbb{S}^3\setminus \mathcal{A}$. Further let $\mathbb{S}^2$ be the projection two-sphere in $M$, so that $R=\mathbb{S}^2\setminus \mathcal{A}$ is the reflection surface. Finally, let $D$ be a thrice-punctured sphere component of $R$.  Since each component of $R$ has at least one knot circle puncture, we must show that $D$ does not have two knot circle punctures.  

Suppose, on the contrary, that exactly two knot circles, labeled $J$ and $K$, puncture $D$.  We will show that one of $J$ or $K$ has at most one crossing circle linking it, contradicting the fact that every knot circle is linked by at least two crossing circles. Some notation will be useful.

Note that $\mathbb{S}^2\setminus \left(J\cup K\right)$ consists of two disks and an annulus. Let $D_J$ and $D_K$ denote the disk components bounded by $J$ and $K$, respectively, and note that $D$ is the annular component.  Then $D$ is a once-punctured annulus and the additional puncture, label it the point $p$, comes from a crossing circle $C$.  Now $C$ punctures $\mathbb{S}^2$ twice, and only once in $D$, so (possibly after relabeling) $C$ also punctures $D_K$. Now $D$ is a thrice-punctured sphere, so there is a unique geodesic $\gamma_{pK}$ in $D$ joining the punctures $p$ and $K$. Thus, if $D_C$ is a crossing disk bounded by $C$ then $\gamma_{pK} = D_C\cap D$.

Now let $C'$ be a crossing circle other than $C$ with a crossing disk $D'$ punctured by $J$.  As $C$ is the only crossing circle puncturing $D$, we know $C'$ is disjoint from $D$.  However, since $J$ punctures $D'$ we know $D'\cap D$ is a nonempty subset of geodesics on $D$ with at least one endpoint on $J$.  The separating geodesic $\gamma_J$ of $D$ with both endpoints on $J$ intersects $\gamma_{pK}$ non-trivially, since $\gamma_{pK}$ is the non-separating geodesic of $D$ opposite $J$.  Since $\gamma_{pK} = D_C\cap D$ and crossing disks are disjoint, $D'$ cannot intersect $D$ in $\gamma_J$.  Further, $D'\cap D$ cannot be the non-separating geodesic of $D$ joining $J$ and $p$ since $p$ is on the crossing circle $C$ and $C'\ne C$.  Thus $D'\cap D$ must be the non-separating geodesic $\gamma_{JK}$ of $D$ joining $J$ and $K$.

We have shown that any crossing disk punctured by $J$ intersects $D$ in the geodesic $\gamma_{JK}$.  Since crossing disks are orthogonal to the reflection surface, at most one crossing disk intersects $D$ along a given geodesic.  Thus at most one crossing circle bounds a crossing disk punctured by $J$, contradicting the fact that each knot circle in an FAL is linked by at least two crossing circles.
\end{proof}

Each reflection-like surface partitions the cusps corresponding to components of $\mathcal{A}$ into a set of knot circle cusps and a set of crossing circle cusps. If some component of $\mathcal{A}$ changes type, then the geometry of  $M$ is greatly restricted. It is also possible that no ``swapping'' occurs, i.e., $R$ and $R'$ induce the same partition on the components of $\mathcal{A}$ into knot circle (cusps) and crossing circle (cusps). We now show that the latter case can not happen when $M$ has two distinct reflection surfaces. 

\begin{lem} 
\label{lem:samesurface}
Suppose $M = \mathbb{S}^{3} \setminus \mathcal{A}$ is a flat FAL complement with two  reflection-like surfaces $R$ and $R'$ that induce the same partitions on the cusps of $M$ into crossing circle cusps and knot circle cusps. Then $R = R'$.
\end{lem}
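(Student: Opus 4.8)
The plan is to leverage Proposition~\ref{prop:Rcrossingpossibilities} together with the observation (from the proof of Theorem~\ref{thm:3rs}) that at most one reflection-like surface distinct from $R$ can satisfy each of the two cases of that proposition. Since $R$ and $R'$ induce the same partition, every $R$-crossing circle $C$ is also an $R'$-crossing circle. Fix such a $C$, let $D$ be an $R$-crossing disk for $C$, and apply Proposition~\ref{prop:Rcrossingpossibilities} to $D$ and $R'$. I would argue that case~(ii) is impossible here: in case~(ii), $R'\cap T_C$ consists of $R$-meridians of $C$ that are \emph{distinct} from $R\cap T_C$, and $R'$ meets $D$ orthogonally in the separating geodesic with both ends on $C$. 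But then, running the argument in the proof of Lemma~\ref{lem:Reflection3ps}/Proposition~\ref{prop:Rcrossingpossibilities} in reverse, one finds that an $R'$-crossing disk $D'$ for $C$ is distinct from $D$, and relative to $R$ the disk $D$ is now singly-separated; yet $D$ was an $R$-crossing disk, so it has two $R$-knot-circle punctures, whereas a singly-separated disk has all crossing-circle punctures — contradiction. Hence $D$ must satisfy case~(i), i.e.\ $D \subset R'$.

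So every $R$-crossing disk is a component of $R'$. Symmetrically (swapping the roles of $R$ and $R'$, which is legitimate since both are reflection-like and the hypothesis is symmetric), every $R'$-crossing disk is a component of $R$. Now fix an $R$-crossing circle $C$ with $R$-crossing disk $D$; we have $D \subset R'$, so $D$ is a thrice-punctured sphere component of $R'$, and since $D$ has two $R$-knot-circle punctures and these are $R'$-knot-circle punctures (same partition), $D$ has exactly two $R'$-knot-circle punctures. This contradicts Lemma~\ref{lem:Reflection3ps}, which says a thrice-punctured sphere component of a reflection-like surface has an \emph{odd} number of knot-circle punctures — unless $M$ has no crossing circles at all, which is impossible for a flat FAL (every flat FAL has at least two crossing circles, as each arises from a twist region, and the hyperbolicity hypothesis requires at least two twist regions).

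The only remaining possibility is that no $R$-crossing circle gives rise to a \emph{distinct} reflection-like surface behavior — that is, we must directly conclude $R = R'$. To make this airtight, I would instead phrase the dichotomy as: for the fixed $C$ and $D$, case~(i) of Proposition~\ref{prop:Rcrossingpossibilities} forces $D\subset R'$, leading to the Lemma~\ref{lem:Reflection3ps} contradiction, while case~(ii) leads to the singly-separated-disk contradiction above; since both cases are impossible and the proposition is exhaustive, the only escape is that the hypothesis "$R \neq R'$" fails, giving $R = R'$. The main obstacle I anticipate is being careful about the first step — verifying that case~(ii) of Proposition~\ref{prop:Rcrossingpossibilities} genuinely contradicts the ``same partition'' hypothesis. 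The key point is that case~(ii) explicitly asserts $C$ is an $S$-crossing circle, which is consistent with the same partition, so one cannot rule it out purely from the partition; one really does need the finer information that $D$ would then be an $R'$-singly-separated disk while simultaneously being an $R$-crossing disk, and to extract the contradiction from Lemma~\ref{lem:Reflection3ps} applied after producing $D\subset R'$ or $D'\subset R$ (whichever is cleaner), together with the fact that the knot-circle punctures agree. I would organize the write-up so that case~(i) is handled first (quick: $D\subset R'$, then Lemma~\ref{lem:Reflection3ps}), and then show case~(ii) reduces to case~(i) for the companion disk $D'$.
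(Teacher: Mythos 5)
Your proof is correct, and it reaches the conclusion by a genuinely different route than the paper. The paper does not invoke Proposition~\ref{prop:Rcrossingpossibilities} at all: it takes an $R$-crossing disk $D$ for some crossing circle $C$, rules out $D\subset R'$ via Lemma~\ref{lem:Reflection3ps} (as you do in your case~(i)), concludes from Theorem~\ref{prop:BeltSumSummary} that $D$ must then be an $R'$-crossing disk, and derives the contradiction on a knot-circle cusp torus $T_K$: the curve $D\cap T_K$ is orthogonal to both $R\cap T_K$ and $R'\cap T_K$, forcing these to be parallel, whereas Proposition~\ref{thm:KnotCircleMeridian} makes them orthogonal when $R\neq R'$. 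You instead run the dichotomy of Proposition~\ref{prop:Rcrossingpossibilities} and kill case~(ii) by observing that $D$ would then be singly-separated by $R'$, hence all three of its punctures would be $R'$-crossing circles, contradicting the fact that its two $R$-knot-circle punctures are $R'$-knot circles under the same-partition hypothesis. That is a valid and self-contained alternative (the needed fact that all punctures of a singly-separated disk are crossing circles is established inside the proof of Proposition~\ref{prop:Rcrossingpossibilities}(ii), so you should cite that explicitly rather than the statement alone, which records only that $C$ changes to an $S$-crossing circle); it also avoids the appeal to Proposition~4.6 of \cite{HMW2020} and to Proposition~\ref{thm:KnotCircleMeridian}. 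Two small points to clean up: in your case~(ii) discussion the disk $D$ becomes singly-separated relative to $R'$, not relative to $R$ as written; and your middle paragraph (deducing that \emph{every} $R$-crossing disk lies in $R'$, and the symmetric statement) is superfluous once you organize the argument as in your final paragraph, since a single crossing disk already yields the contradiction in both cases.
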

\begin{proof}
Proposition 4.6 of \cite{HMW2020} shows that there exists some $R$-crossing circle $C$ of $\mathcal{A}$  such that the corresponding $R$-crossing disk $D$ is the unique totally geodesic thrice-punctured sphere in $M$ that intersects the torus boundary of the cusp corresponding to $C$ in a longitude and intersects the boundary of two $R$-knot circle cusps (not necessarily distinct) in a  meridian on each of these cusps.  By assumption, $C$ is also an $R'$-crossing circle and the two $R$-knot circle cusps intersecting $D$ are also $R'$-knot circle cusps. By Lemma \ref{lem:Reflection3ps}, $D$ can not be a subset of $R'$, and so, $D$ is a non-reflection thrice-punctured sphere for $R'$.  Since $D$ has two $R'$-knot circle and one $R'$-crossing circle punctures,  Theorem \ref{prop:BeltSumSummary} implies $D$ must also be the corresponding $R'$-crossing disk for $C$. Let $T_K$ be the boundary torus of a cusp neighborhood for one of the knot circles that intersect $D$. Then $D \cap T_K$ is orthogonal to both $R \cap T_K$ and $R' \cap T_K$. However, if $R \neq R'$, then Proposition \ref{thm:KnotCircleMeridian} implies that $R \cap T_K$ and $R' \cap T_K$ are orthogonal, which provides a contradiction. Thus, $R = R'$ under these conditions. 
\end{proof}

In the rest of this subsection, we consider the case where a component of $\mathcal{A}$ changes type. First, we give a useful lemma. 

\begin{lem} 
\label{lem:factoid} Let $M = \mathbb{S}^{3} \setminus \mathcal{A}$ be a flat FAL complement with two distinct reflection-like surfaces $R$ and $R'$. Suppose $L$ is an $R$-knot circle and an $R'$-crossing circle.  If $D'$ is an $R'$-crossing disk for $L$, then $D'\subset R$; moreover, $D'$ is unique. \end{lem}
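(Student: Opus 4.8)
The plan is to prove the containment $D'\subset R$ by contradiction, using the classification of non-reflection thrice-punctured spheres (Theorem~\ref{prop:BeltSumSummary}) together with the orthogonality of crossing disks to their reflection surfaces, and then to deduce uniqueness via the separating-pair classification (Theorem~\ref{thm:SepPair}). First I would note that $L$ is a puncture of $D'$ and, by hypothesis, an $R$-knot circle, so $D'$ has a knot-circle puncture relative to $R$. If $D'$ were not contained in $R$, then $D'$ would be a non-reflection thrice-punctured sphere for $R$, hence by Theorem~\ref{prop:BeltSumSummary} and Definition~\ref{df:NR3PS} it would be a crossing, longitudinal, or singly-separated disk relative to $R$. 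Longitudinal and singly-separated disks have only crossing-circle punctures, so the knot-circle puncture $L$ forces $D'$ to be an $R$-crossing disk. In particular $D'$ meets $R$ orthogonally in exactly the three non-separating geodesics of $D'$.

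Next I would exploit that $D'$ is also an $R'$-crossing disk: by Definition~\ref{df:NR3PS} (with reflection surface $R'$) and Theorem~\ref{prop:BeltSumSummary}, $D'$ meets $R'$ orthogonally in exactly the three non-separating geodesics of $D'$ as well. Hence $D'\cap R = D'\cap R'$ is the full set of non-separating geodesics of $D'$. Choosing a point $q$ on one such geodesic, both $R$ and $R'$ pass through $q$, are orthogonal to $D'$ there, and contain the tangent line $\ell$ to this geodesic. A short computation in $T_qM$ shows that any totally geodesic surface through $q$ orthogonal to $D'$ and containing $\ell$ has tangent plane $\mathrm{span}(\ell,n)$, where $n$ is a unit normal to $D'$ at $q$; therefore $T_qR = T_qR'$, and since a totally geodesic surface is determined by a point and a tangent plane there, $R=R'$, a contradiction. (Equivalently: since $D'$ is orthogonal to both $R$ and $R'$, the involution $\iota_R$ preserves $D'$ and restricts to the reflection of the thrice-punctured sphere $D'$ across $D'\cap R$, and $\iota_{R'}|_{D'}$ is the reflection across $D'\cap R'$; as $D'\cap R = D'\cap R'$, these agree, so $\iota_R\circ\iota_{R'}$ is an orientation-preserving isometry of $M$ restricting to the identity on $D'$, hence the identity on $M$ by \cite[Proposition A.2.1]{BePe1992}, again giving $R=R'$.) This establishes $D'\subset R$.

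For uniqueness, suppose $D_1'$ and $D_2'$ are distinct $R'$-crossing disks for $L$. By the first part both lie in $R$, and since each is a connected, properly embedded, totally geodesic surface inside the totally geodesic surface $R$, each is clopen in $R$ and hence a connected component of $R$; being distinct they are disjoint. Each $D_i'$ is essential (totally geodesic surfaces are incompressible), and each has its longitudinal puncture on $T_L$ with the same, $R'$-longitudinal, slope. Theorem~\ref{thm:SepPair} then says $\{D_1',D_2'\}$ is a separating pair, so $D_1'\cup D_2'$ separates $M$. If $D_1'\cup D_2'$ is a proper subset of $R$, this contradicts the fact (already invoked in the proof of Theorem~\ref{thm:3rs}) that no proper subset of a reflection-like surface separates $M$. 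If instead $D_1'\cup D_2' = R$, then $R$ has exactly six punctures, so $M$ has three cusps and $\mathcal{A}$ is the Borromean rings; in that case a direct inspection of the Borromean rings complement shows that the only other thrice-punctured sphere component of $R$ adjacent to $L$ is punctured twice by a single crossing circle, so it is a singly-separated disk rather than an $R'$-crossing disk, ruling out a second $D_i'$. Hence $D'$ is unique.

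The main obstacle is the geometric rigidity step in the second paragraph: once $D'$ has been pinned down as simultaneously an $R$-crossing disk and an $R'$-crossing disk, one must convert the coincidence of the orthogonal intersections $D'\cap R$ and $D'\cap R'$ into $R=R'$. This is exactly where the orthogonality of crossing disks to their reflection surfaces (Propositions~\ref{prop:RSfeatures} and~\ref{prop:RLSfeatures}) and the rigidity principle of \cite[Proposition A.2.1]{BePe1992} (or the equivalent tangent-plane computation) are needed; the puncture bookkeeping and the separating-pair argument for uniqueness are then routine given the classification results of Section~\ref{sec:FALs}.
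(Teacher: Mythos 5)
Your proof is correct, but the containment step takes a genuinely different route from the paper's. The paper never invokes the classification of non-reflection thrice-punctured spheres here; it argues directly with slopes: since $L$ is an $R$-knot circle, Proposition~\ref{thm:KnotCircleMeridian} forces $\partial D'$ to be a single $R$-longitude on $T_L$ parallel to $R\cap T_L$, so $D'$ meets each $R$-crossing disk $D$ linking $L$ in a non-separating geodesic with one endpoint on $L$ (a separating geodesic would give $\partial D'$ two components on $T_L$); that geodesic also lies in $R\cap D$, and since $R$ and $D'$ are both orthogonal to $D$ along it, $D'$ coincides with the component of $R$ containing it. You instead assume $D'\not\subset R$, use Theorem~\ref{prop:BeltSumSummary} to force $D'$ to be an $R$-crossing disk (the only type with a knot-circle puncture), note that then $D'\cap R=D'\cap R'$ is the full set of non-separating geodesics met orthogonally by both surfaces, and deduce $R=R'$. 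Two remarks on that. First, your parenthetical involution argument ($\iota_R\circ\iota_{R'}$ is the identity on $D'$ and preserves the normal, hence is the identity by \cite[Proposition A.2.1]{BePe1992}) is the version you should keep as primary: the bare tangent-plane statement only identifies the \emph{components} of $R$ and $R'$ through the chosen point, since reflection-like surfaces are disconnected; the involution version is exactly the rigidity argument the paper uses in Theorem~\ref{thm:3rs}, so it is available. Second, Theorem~\ref{prop:BeltSumSummary} is stated for reflection surfaces, so applying it to the reflection-like surface $R$ requires pulling $D'$ back through the isometry exhibiting $R$ as reflection-like, as the paper does in Proposition~\ref{prop:Rcrossingpossibilities}; this is routine but worth saying. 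The trade-off is that the paper's argument is more elementary (only Proposition~\ref{thm:KnotCircleMeridian} plus slope bookkeeping), while yours leans on the heavier classification but gets to the contradiction faster and in fact yields a second, independent contradiction for free (an $R$-crossing disk must meet $L$ in an $R$-meridian, whereas $D'\cap T_L$ is an $R$-longitude). Your uniqueness argument --- separating pair, $R=D_1'\cup D_2'$, cusp count to the Borromean rings, then a puncture inspection in the $R'$-structure --- is essentially identical to the paper's.
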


\begin{proof} To see this, note that since $L$ is an $R$-knot circle, the reflection surfaces $R'$ and $R$ intersect $T_L$ in orthogonal pairs of curves by Proposition \ref{thm:KnotCircleMeridian}.  As an $R$-knot circle there are at least two $R$-crossing disks that link $L$.   Then $\partial D'$ is a single curve on $T_L$ orthogonal to $R'\cap T_L$, so it must be an $R$-longitude parallel to $R\cap T_L$.  As such $D'$ intersects $R$-crossing disks linking $L$, and that intersection must be in a non-separating geodesic with one puncture on $L$.  To see this let $D$ be an $R$-crossing disk punctured by $L$. If $D'$ intersected $D$ in a separating geodesic of $D$ with endpoints on $L$ then $\partial D'$ would consist of two $R$-longitudes of $L$, but $\partial D' \cap T_L$ is a single $R$-longitude since $D'$ is an $R'$-crossing disk.  

Then $R$ and $D'$ have a common intersection with $R$-crossing disks that link $L$, and both are orthogonal to such crossing disks, so $D' \subset R$.

To see that $D'$ is unique, assume there is a second $R'$-crossing disk $D^{\ast}$ for $L$.  By the above argument, $D^{\ast}\subset R$ as well.  Then $D'\cup D^{\ast}$ form a separating pair contained in $R$.  Since a proper subset of $R$ cannot separate, $R$ must equal $D'\cup D^{\ast}$.  As in the proof of Theorem \ref{thm:3rs}, a cusp count shows manifold has three cusps and must be Borromean rings.  The Borromean rings, however, do not have a crossing circle that bounds two crossing disks in same $R'$-structure; therefore, $D'$ is unique. \end{proof}

Suppose $K$ is a component of $\mathcal{A}$ that corresponds with an $R$-knot circle cusp and an $R'$-crossing circle cusp. Let $D'$ be the $R'$-crossing disk bound by $K$. Then Lemma \ref{lem:factoid} guarantees that $D'$ is a component of $R$. Since $K$ is an $R$-knot circle, there exists some $R$-crossing circle $C$ that links $K$, along with an $R$-crossing disk $D$ which $K$ punctures. We now consider the intersection patterns for $D \cap D'$. The work of Yoshida \cite[Proposition 3.1]{Y2018} shows that there are three possible cases to consider: (i) $D \cap D'$ is a single non-separating geodesic on both $D$ and $D'$, (ii) $D \cap D'$ is a pair of non-separating geodesics on both $D$ and $D'$, and  (iii) $D \cap D'$ is a separating geodesic on one and non-separating on the other.  Note that since $D'\subset R$ and $D\cap R$ consists of non-separating geodesics on $D$, the curve $D\cap D'$ must be non-separating on $D$.  Hence, in case (iii), the separating geodesic must be on $D'$. See Figure \ref{fig:3cases} for corresponding diagrams. These cases are considered separately in the next three propositions.

\begin{figure}[ht]
	\centering
	\begin{overpic}[width = \textwidth]{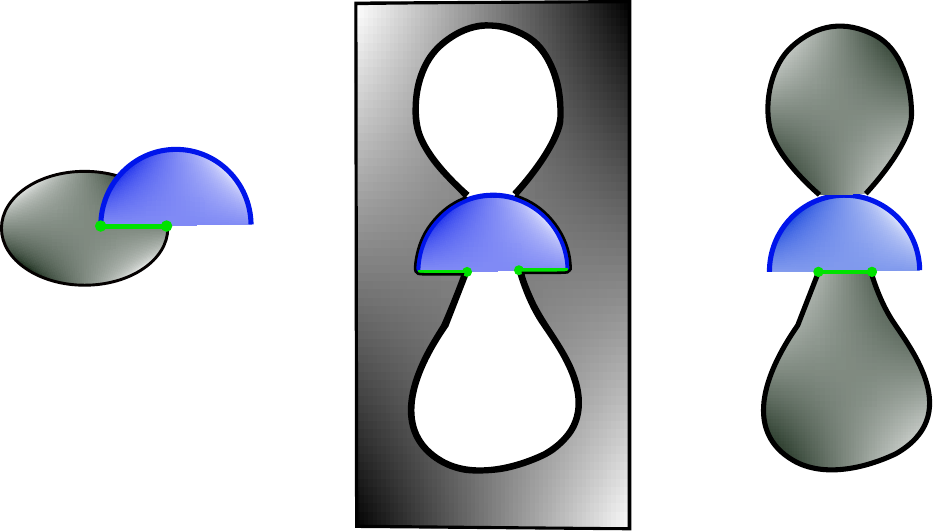}
		\put(4,41){Case (i)}
		\put(28,50){Case(ii)}
		\put(71,50){Case (iii)}
		\put(7,31){$D'$}
		\put(12,24){$K$}
		\put(18,35){$D$}
		\put(25.5,38.5){$C$}
		\put(43,24){$D'$}
		\put(61.5,8){$K$}
		\put(52,31){$D$}
		\put(58,35){$C$}
		\put(90,14){$D'$}
		\put(99,8){$K$}
		\put(90,31){$D$}
		\put(97,35){$C$}
		\end{overpic}
	\caption{The three cases for intersection patterns for $D' \cap D$ where the projection plane here is determined by $R$. In each figure, the thrice-punctured spheres $D$ and $D'$ are shaded blue and grey, respectively. Only the top half of the crossing disk $D$ is drawn in each diagram. Intersections between $D$ and $D'$ are highlighted in green.}
	\label{fig:3cases}
\end{figure}

\begin{prop}
\label{prop:CaseI}
Suppose $M = \mathbb{S}^{3} \setminus \mathcal{A}$ is a flat FAL complement with two distinct reflection-like surfaces $R$ and $R'$. Let $K$ be a component of $\mathcal{A}$ that is an $R'$-crossing circle  and an $R$-knot circle, and let $D'$ be the $R'$-crossing disk $K$ bounds.  Let $C$ be an $R$-crossing circle bounding the $R$-crossing disk $D$, with $K$ puncturing $D$. If the intersection  $D' \cap D$ is a single non-separating geodesic on each thrice-punctured sphere (case $(i)$ of Figure \ref{fig:3cases}), then $\mathcal{A}$ is either $P_n$ with $n \geq 3$, or $O_n$ with $n \geq 2$. 
\end{prop}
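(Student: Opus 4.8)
The plan is to first determine the possible ``types'' of the components of $\mathcal{A}$ relative to $R$ and $R'$, then bootstrap from the single geodesic $D\cap D'$ to propagate a chain structure through the whole link, and finally identify $\mathcal{A}$ as $P_n$ or $O_n$. By Proposition~\ref{thm:KnotCircleMeridian}, if $L$ is an $R$-knot circle then $R'$ meets $T_L$ in $R$-meridians of $L$, so $L$ is an $R'$-crossing circle; symmetrically every $R'$-knot circle is an $R$-crossing circle. Hence every component of $\mathcal{A}$ is of type $\alpha$ ($R$-knot, $R'$-crossing), type $\beta$ ($R$-crossing, $R'$-knot), or type $\gamma$ ($R$-crossing, $R'$-crossing), and $K$ is of type $\alpha$. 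By Lemma~\ref{lem:factoid} the $R'$-crossing disk $D'$ is a thrice-punctured sphere \emph{component} of $R$, so Lemma~\ref{lem:Reflection3ps} forces $D'$ to have exactly one $R$-knot circle puncture, namely $K$; its other two punctures thus lie on $R$-crossing circles which, being $R'$-knot circles, are of type $\beta$. Since $D\cap T_K$ is an $R$-meridian of $K$ while $D'\cap T_K$ is an $R$-longitude of $K$, the geodesic $D\cap D'$ has exactly one end running into the cusp $K$; being non-separating on both surfaces, its other end lies on a cusp that is a puncture of $D$ (so $C$ or the second knot-circle puncture $K''$ of $D$) and of $D'$ (so one of the two type-$\beta$ punctures). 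As $K''$ is an $R$-knot circle but the remaining punctures of $D'$ are $R$-crossing circles, we conclude that $C$ is a puncture of $D'$; in particular $C$ is of type $\beta$, $D$ is punctured by $\{C,K,K''\}$, and $D'$ is punctured by $\{K,C,J\}$ for some type-$\beta$ component $J$.

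\emph{Building the chain.} The engine is the following reciprocity: if a type-$\beta$ component $B$ has $R$-crossing disk punctured by a type-$\alpha$ component $A$, then the $R'$-crossing disk $D'_A$ of $A$ (which lies in $R$ by Lemma~\ref{lem:factoid}) is punctured by $B$, and conversely. Indeed, the $R$-crossing disk of $B$ and $D'_A$ meet in a geodesic running into the cusp $A$ (transverse slopes again); if $B$ were not a puncture of $D'_A$ their only common cusp would be $A$, so this geodesic would be separating on the $R$-crossing disk of $B$ — impossible, since that intersection lies in $R\supset D'_A$ and $R$ meets an $R$-crossing disk only in its non-separating geodesics. The reverse implication follows by exchanging the roles of $R$ and $R'$. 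So ``the $R$-crossing disk of $B$ punctures $A$'' and ``the $R'$-crossing disk of $A$ punctures $B$'' define one symmetric relation on $\{\text{type }\alpha\}\cup\{\text{type }\beta\}$, and since each of these crossing disks is punctured exactly twice by knot-circle strands this relation is $2$-regular. Therefore the type-$\alpha$ and type-$\beta$ components, together with their crossing disks, organize into disjoint cyclic chains $A_1B_1A_2B_2\cdots A_mB_m$ alternating in type, each realizing the combinatorial pattern of $P_m$; starting from $K,C$ we are sitting in one such chain.

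\emph{Type-$\gamma$ components, uniqueness, and identification.} For a type-$\gamma$ component $\Gamma$ with $R$-crossing disk $D_\Gamma$, the three punctures of $D_\Gamma$ consist of one $R$-longitude on $\Gamma$ and two $R$-meridians on type-$\alpha$ components, which in terms of the $R'$-structure are one $R'$-longitude and two $R'$-meridians, all on $R'$-crossing circles; by the classification of non-reflection thrice-punctured spheres (Theorem~\ref{prop:BeltSumSummary}) this configuration can only be a singly-separated disk, which forces the two type-$\alpha$ punctures to coincide — so $\Gamma$ clasps a single type-$\alpha$ component, exactly the role of $C_0$ in $O_n$. It then remains to show that there is exactly one cyclic chain and at most one such clasping circle, after which the link is combinatorially $P_n$ (no type-$\gamma$ component) or $O_n$ (one type-$\gamma$ component), with the lower bounds $n\ge 3$ and $n\ge 2$ emerging because a shorter chain either forces a different intersection pattern in Figure~\ref{fig:3cases} or yields a non-hyperbolic diagram. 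I expect this last step to be the main obstacle: passing from the local ``$2$-regular alternating chain plus at most one clasp'' picture to a global identification of $\mathcal{A}$ up to link equivalence requires ruling out several chains and several clasping circles — presumably via non-splittability of $\mathcal{A}$ and the separating-pair classification of Theorem~\ref{thm:SepPair} — and then recognizing the resulting FAL diagram as that of $P_n$ or $O_n$.
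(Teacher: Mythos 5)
Your overall strategy (analyze $D\cap D'$ locally, propagate a chain of alternating components, then account for leftover crossing circles) parallels the paper's, but there are two genuine gaps. First, the opening trichotomy is not justified: Proposition \ref{thm:KnotCircleMeridian} only says that $R'$ meets $T_L$ in curves orthogonal to $R\cap T_L$; it does not say that $L$ is an $R'$-crossing circle. Whether $L$ is an $R'$-knot or $R'$-crossing circle is defined through the isometry realizing $R'$ as the image of a reflection surface of some \emph{other} flat FAL, not by the slopes of $R'\cap T_L$ in $M$ --- the whole point of the ``reflection-like'' formalism is that these need not agree a priori (compare how Proposition \ref{prop:Rcrossingpossibilities} has to work to conclude that $C$ is an $S$-crossing circle in its case $(ii)$). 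Ruling out a component that is a knot circle for both $R$ and $R'$ requires an argument, and since the trichotomy underlies your chain construction, your type-$\gamma$ analysis, and your use of Lemma \ref{lem:Reflection3ps} --- which only gives an \emph{odd} number of knot-circle punctures, so ``exactly one'' needs a second puncture already known to be a crossing circle, which is what you are in the middle of proving --- this is a real hole rather than a cosmetic one. The paper avoids it by never invoking a global trichotomy: it iterates Lemma \ref{lem:factoid} and Lemma \ref{lem:Reflection3ps} strictly locally, at each step already holding one crossing-circle puncture of the disk in hand.

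Second, you explicitly leave unproved the step that actually pins down $\mathcal{A}$: that there is exactly one chain and at most one extra crossing circle. This is where the paper does its substantive work, and the tools you guess at (non-splittability, Theorem \ref{thm:SepPair}) are not the ones that succeed. To show the chain $\mathcal{P}$ exhausts the knot circles of $\mathcal{A}$, the paper uses the involution $\iota_{R'}$: the $R$-crossing disks of the chain cut the part of $R$ away from the spheres $S_i$ into two halves exchanged by $\iota_{R'}$, so an additional knot circle would lie entirely in one half and be carried off itself, contradicting that $\iota_{R'}$ preserves every cusp. To show there is at most one additional crossing circle, the paper constructs an essential annulus between two putative extra crossing circles using a Heegaard solid-torus neighborhood of $\cup_i\left(K_i\cup S_i\cup C_i\cup D_i\right)$, contradicting hyperbolicity. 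Neither argument is a routine consequence of non-splittability or of the separating-pair classification, so as written the proposal does not establish the proposition.
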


\begin{proof}
Let $K$, $D'$, $C$ and $D$ be as in the statement of the proposition.  By Lemma \ref{lem:factoid} the disk $D'$ is a component of $R$ and note that, since $D'\cap D$ is a single non-separating geodesic on both, $D'$ punctured once each by $K$ and $C$. Lemma \ref{lem:Reflection3ps} implies that the remaining puncture of $D'$ comes from an $R$-crossing circle, say $J$.

Since $D'$ has punctures along three distinct cusps, the $R$-crossing disk $D_J$ that $J$ bounds intersects $D'$ in a single non-separating geodesic for both disks (case $(i)$ of Figure \ref{fig:3cases}).  In the $R$-structure, then, the cusps $J,K,C$ form a chain of three components.

Now consider $J$, $K$ and $C$ in the $R'$-structure.  We know that $K$ is an $R'$-crossing circle and that $D'$ is its corresponding $R'$-crossing disk.  Thus the other two punctures, $J$ and $C$, must be $R'$-knot circles.  By Lemma \ref{lem:factoid}, then, the $R$-crossing disks $D$ and $D_J$ are components of the reflection surface $R'$.

Applying Lemma \ref{lem:Reflection3ps} to the disks $D$ and $D_J$ in the $R'$-structure implies the third puncture in each is an $R'$-crossing circle puncture.  Translating the picture to the $R$-structure, and applying Lemma \ref{lem:factoid} to each end of the chain produces a chain of four or five components.   There are four components if the punctures unaccounted for in $D$ and $D_J$ correspond to the same $R'$-crossing circle.

Since $\mathcal{A}$ has finitely many components, iterating this argument terminates in a sublink of $\mathcal{A}$ isotopic to $P_n$, and which we denote $\mathcal{P}$.  Note that the $R$-crossing disks for $\mathcal{P}$ are components of $R'$, and vice-versa.  For convenience, let $K_1,\dots,K_n$ denote the $R$-knot circles of $\mathcal{P}$, and let $S_1,\dots,S_n$  be the corresponding thrice-punctured sphere components of $R$ that they bound (so the $S_i$ are $R'$-crossing disks).  Further let $C_1,\dots,C_n$  denote the $R$-crossing circles of $\mathcal{P}$, and let $D_1,\dots,D_n$ be the respective $R$-crossing disks that they bound.

Consider the action of $\iota_{R'}$ on the components of the reflection surface $R$.  Each of the $S_i$ reflect to themselves since they are $R'$-crossing disks.  Now let $\widehat{R} = R\setminus \left(\cup_{i=1}^n S_i\right)$. Our goal is to show that the $K_i$ are the only $R$-knot circle punctures of $\widehat{R}$.  Note that cutting $\widehat{R}$ along its intersection with the $D_i$ separates $\widehat{R}$ into two subsets $\widehat{R}_0$ and $\widehat{R}_1$, which are $R'$-reflections of each other.  Let $N$ be an $R$-knot circle distinct from the $K_i$ that punctures $\widehat{R}$.  Then $N$ cannot pass through the $D_i$, which are already punctured twice by the $K_i$.  Thus $N$ is contained entirely in one of $\widehat{R}_0$ or $\widehat{R}_1$, and $\iota_{R'}(N)$ is in the other.  This contradicts the fact that $\iota_{R'}$ preserves each cusp; therefore, no such $N$ exists and the $K_i$ are the only $R$-knot circle punctures of $\widehat{R}$.  Since $R = \widehat{R} \cup \left(\cup_{i=1}^n S_i\right)$ and the only $R$-knot circle puncturing $S_i$ is $K_i$ for $i= 1, \ldots, n$, we have that the $K_i$ are the only $R$-knot circles puncturing $R$, and so, they are also the only $R$-knot circles of $\mathcal{A}$. 

At this stage, any components of $\mathcal{A}$ that are not in $\mathcal{P}$ must be $R$-crossing circles that puncture $\widehat{R}$ twice.  Our goal is to show there is at most one such $R$-crossing circle.  Let $\widetilde{C}$ be an $R$-crossing circle of $\mathcal{A}$ that is not a component of $\mathcal{P}$, and let $\widetilde{D}$ be an $R$-crossing disk bounded by $\widetilde{C}$.  Since the $K_i$ are the only $R$-knot circles of $\mathcal{A}$, the disk $\widetilde{D}$ intersects some $K_j$ and the thrice-punctured sphere $S_j\subset R$ that it bounds. The crossing disks of $\mathcal{P}$ intersect $S_j$ in the two non-separating geodesics on $S_j$ with endpoints on $K_j$, so  $\gamma_j=S_j\cap \widetilde{D}$ must be a separating geodesic on $S_j$ since $R$-crossing disks must be disjoint.  

Suppose there are at least two  $R$-crossing circles in $\mathcal{A} \setminus \mathcal{P}$, and label a second one $\widetilde{C}'$ with $R$-crossing disk $\widetilde{D}'$. We now describe how to construct an open, embedded, essential annulus $A \subset M$, with boundary (in $\mathbb{S}^3$) curves $\widetilde{C}$ and $\widetilde{C}'$. Consider the standard genus one Heegaard decomposition of $\mathbb{S}^{3} = T_1 \cup T_2$, where $T_1$ and $T_2$ are solid tori. A sufficiently small closed neighborhood of $\displaystyle{\cup_{i=1}^n\left(K_i\cup S_i \cup C_i \cup D_i\right)}$ produces a solid torus in $\mathbb{S}^{3}$ that is isotopic to one of these solid tori, say $T_1$, and disjoint from $\widetilde{C}$, $\widetilde{C}'$, as well as any additional $R$-crossing circles not in $\mathcal{P}$. In addition, by taking appropriate neighborhoods, we can assume each of $\widetilde{D}$ and $\widetilde{D}'$ intersects $T_1$ in a meridional disk of $T_1$.   Note that, $\partial T_1 \setminus (\widetilde{D} \cup \widetilde{D}')$ produces two embedded annuli in $M$, and label one of these $A'$. Then $A = A' \cup (\widetilde{D} \setminus int(T_1)) \cup (\widetilde{D}' \setminus int(T_1))$ provides the desired annulus, contradicting hyperbolicity. Thus, $\mathcal{A} \setminus \mathcal{P}$ is either empty or contains exactly one $R$-crossing circle that intersects $\widehat{R}$ twice, once in  $\widehat{R}_0$ and once in $\widehat{R}_1$, as needed. 
\end{proof}

\begin{prop}
\label{prop:CaseII}
Suppose $M = \mathbb{S}^{3} \setminus \mathcal{A}$ is a flat FAL complement with two distinct reflection-like surfaces $R$ and $R'$. Let $K$ be a component of $\mathcal{A}$ that is an  $R'$-crossing circle and $R$-knot circle, and let $D'$ be the $R'$-crossing circle disk $K$ bounds. Let $C$ be an $R$-crossing circle bounding the $R$-crossing disk $D$, with $K$ puncturing $D$. If the intersection  $D' \cap D$ is a pair of non-separating geodesics on each thrice-punctured sphere (case $ii$ in Figure \ref{fig:3cases}), then $\mathcal{A}$ is the Borromean Rings.
\end{prop}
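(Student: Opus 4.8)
The plan is to mirror the structure of the proof of Proposition~\ref{prop:CaseI}, but exploit the stronger local configuration forced by case $(ii)$: when $D'\cap D$ is a pair of non-separating geodesics on each of $D$ and $D'$, the two thrice-punctured spheres $D$ and $D'$ share \emph{both} of their non-$L$ punctures, so $K=L$, $C$, and the single remaining cusp (call it $J$) are the only three cusps involved. More precisely, $D$ has punctures $K,K,C$ (it is an $R$-crossing disk for $C$), and since two of the three boundary curves of $D'$ coincide with non-separating geodesics of $D$ ending on $C$ and $K$ respectively, the punctures of $D'$ must be $K$ (the $R'$-crossing-circle puncture, with $D'\cap D$ being the two geodesics of $D$ emanating from $K$... wait — re-examine: $D'\cap D$ is a pair of non-separating geodesics on $D'$, and $D'$ is an $R'$-crossing disk for $K$, so $D'$ is punctured once by $K$ and twice more; the pair of intersection geodesics on $D$ have endpoints among $\{K,C\}$ and on $D'$ have endpoints among the two non-$K$ punctures). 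The upshot is that $D'$ is punctured by $K$ once and by $C$ twice, i.e. $C$ is the only other crossing-type cusp appearing, and the third cusp is forced. So the first step is to carefully read off the puncture data of $D'$ from the case-$(ii)$ intersection pattern and conclude that $D$ and $D'$ together involve exactly three cusps.

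Next I would invoke Lemma~\ref{lem:factoid}: since $K$ is an $R$-knot circle and an $R'$-crossing circle, $D'\subset R$, and $D'$ is the unique $R'$-crossing disk for $K$. Symmetrically, I want to argue $C$ is an $R'$-knot circle and $D$ (or the relevant $R$-crossing disk) is a component of $R'$; to do this I'll examine $D$ in the $R'$-structure — by Lemma~\ref{lem:Reflection3ps} a thrice-punctured sphere component of $R'$ has an odd number of $R'$-knot circle punctures, and combined with the puncture data this pins down which cusps are $R'$-knot vs.\ $R'$-crossing circles. Once both $D\subset R'$ and $D'\subset R$, I can run the reflection-symmetry argument from the end of Proposition~\ref{prop:CaseI}: the involution $\iota_{R'}$ preserves $D$ (it's an $R'$-crossing disk) and preserves each cusp, and likewise $\iota_R$ preserves $D'$; using that $\iota_{R'}$ cannot swap cusps, no additional knot circle can thread through the already-doubly-punctured crossing disks, which caps the number of cusps at three.

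Then a cusp count finishes it: $R$ (or $R'$) intersects each cusp torus in two curves and has six punctures total among its thrice-punctured sphere components $D'$ and its complement, so $M$ has exactly three cusps, and the only three-component flat FAL is the Borromean rings. The main obstacle I anticipate is the bookkeeping in the middle step — correctly extracting the puncture labels of $D'$ from ``case $(ii)$'' and verifying, via Lemma~\ref{lem:Reflection3ps} and Proposition~\ref{thm:KnotCircleMeridian}/Lemma~\ref{lem:factoid}, that this genuinely forces $C$ to be an $R'$-knot circle with its $R$-crossing disk lying in $R'$ (rather than, say, some third crossing circle sneaking in); the symmetric application of Lemma~\ref{lem:factoid} to close the chain after just one step is the crux, and it is precisely the case-$(ii)$ ``double geodesic'' hypothesis that prevents the chain from growing the way it does in Proposition~\ref{prop:CaseI}. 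A smaller technical point to be careful about is ruling out $D=D'$ and ruling out extra crossing circles in $\mathcal{A}\setminus\{J,K,C\}$, which I'd handle with the same annulus/hyperbolicity argument used at the end of Proposition~\ref{prop:CaseI} (or observe it is unnecessary once the cusp count gives three cusps).
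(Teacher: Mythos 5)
There is a genuine gap, and in fact two problems. First, the local reading of case $(ii)$ is off. Since $D'\subset R$ (Lemma \ref{lem:factoid}) and the two intersection geodesics cover all three punctures of each disk, the cusps met by $D$ and the cusps met by $D'$ coincide; combining this with Lemma \ref{lem:Reflection3ps} (each component of $R$ has an odd number of $R$-knot-circle punctures) rules out a third cusp $J$ entirely and forces $D$ to have punctures $C,K,K$ and $D'$ to have punctures $K,C,C$. So $D\cup D'$ involves exactly \emph{two} cusps, not three -- your ``single remaining cusp $J$'' does not exist as a puncture of either disk. This matters because in the Borromean rings themselves the third component is disjoint from $D\cup D'$.

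Second, and more seriously, the step that ``caps the number of cusps at three'' is the entire content of the proposition and your route to it does not close. Since the target link genuinely has a component not touching $D\cup D'$, no reflection-symmetry argument about $D$ and $D'$ alone can control the rest of $\mathcal{A}$; one must rule out arbitrarily many additional components linked to $K$ or $C$ through \emph{other} crossing disks, and analyzing how those disks meet $D'$ throws you back into the full trichotomy of intersection patterns. Your concluding cusp count presupposes that $R$ consists of $D'$ together with exactly one more thrice-punctured sphere; in Theorem \ref{thm:3rs} the analogous fact came from a separating pair lying inside a single reflection-like surface (Theorem \ref{thm:SepPair}), but here $D'\subset R$ while $D$ would lie in $R'$, and $D$ and $D'$ intersect, so that mechanism is unavailable. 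The paper avoids all of this with a rigidity argument of a completely different flavor: Yoshida's classification says any hyperbolic $3$-manifold containing two thrice-punctured spheres meeting in a pair of non-separating geodesics on each is a Dehn filling of one of three manifolds of volume $2v_8$, and Purcell's result that the Borromean rings complement is the unique minimal-volume flat FAL complement (of volume $2v_8$), together with the strict volume decrease under nonempty Dehn filling, pins down $M$ immediately. If you want a combinatorial proof you would need a substitute for that global step.
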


\begin{proof}
By the work of Yoshida \cite[Lemma 3.7]{Y2018}, any hyperbolic $3$-manifold with two thrice-punctured spheres intersecting in this manner must be a (possibly empty) Dehn filling of one of three  manifolds: a certain double cover of the Whitehead link complement, the Borromean rings complement, or the minimally twisted hyperbolic 4-chain link complement. See Figure 17 in \cite{Y2018} for diagrams of these links. All three of these manifolds have the common hyperbolic volume of $2v_{8}$, where $v_{8}$ denotes the volume of a regular ideal hyperbolic octahedra. At the same time, the work of Purcell \cite[Proposition 3.6]{P2011} shows that the Borromean rings complement is the unique minimal volume flat  FAL complement. Since non-empty Dehn filling strictly decreases volume, the only flat  FAL complement with thrice-punctured spheres intersecting in this manner is the Borromean rings complement, as needed.
\end{proof}

\begin{prop}
\label{prop:CaseIII}
Suppose $M = \mathbb{S}^{3} \setminus \mathcal{A}$ is a flat FAL complement with two distinct reflection-like surfaces $R$ and $R'$. Let $K$ be a component of $\mathcal{A}$ that is an  $R'$-crossing circle  and $R$-knot circle, and let $D'$ be the $R'$-crossing circle disk $K$ bounds. Let $C$ be an $R$-crossing circle bounding the $R$-crossing disk $D$, with $K$ puncturing $D$. If the intersection  $D' \cap D$ is a separating geodesic on $D'$ and a non-separating geodesic on $D$ (case $(iii)$ in Figure \ref{fig:3cases}), then $\mathcal{A}$ is either the Borromean Rings or $O_n$ with $n \geq 2$. 
\end{prop}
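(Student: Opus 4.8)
The plan is to mimic the iterative "chain-building" strategy of Proposition~\ref{prop:CaseI}, but now starting from a singly-separated intersection, and to show that the separating geodesic forces the chain to close up (giving $O_n$) unless the manifold has only three cusps (giving the Borromean rings). First I would set up notation: by Lemma~\ref{lem:factoid}, $D' \subset R$ and $D'$ is the unique $R'$-crossing disk for $K$. By Lemma~\ref{lem:Reflection3ps}, $D'$ has an odd number of $R$-knot circle punctures; since $D\cap D'$ is separating on $D'$ with both ends on... wait — I would first pin down \emph{which} punctures of $D'$ are involved. In case $(iii)$, the separating geodesic $D\cap D'$ on $D'$ has both endpoints on the same cusp; since $D$ is punctured by $K$ and $C$, and the geodesic $D\cap D'$ lies on $D$ as a non-separating geodesic with one endpoint on $K$ and one on $C$, the two endpoints on $D'$ lie on $K$ and on $C$ respectively — so it is \emph{not} a separating geodesic with both ends on one cusp in the naive sense; rather I should say $D'\cap D$ runs between the $K$-puncture and the $C$-puncture of $D'$ but is separating on $D'$. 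Then $D'$ has punctures on $K$, on $C$, and on one more cusp $J$. Since $K$ is an $R'$-crossing circle and $D'$ its $R'$-crossing disk, the other two punctures $C$ and $J$ are $R'$-knot circles. Applying Lemma~\ref{lem:factoid} to $C$ (now an $R'$-knot circle) shows that the $R$-crossing disk $D$ is a component of $R'$; similarly the $R$-crossing disk $D_J$ bounded by $J$ lies in $R'$.

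Next I would iterate. Applying Lemma~\ref{lem:Reflection3ps} to $D$ and to $D_J$ inside the $R'$-structure, the remaining puncture of each is an $R'$-crossing circle; translating back to the $R$-structure via Lemma~\ref{lem:factoid} and analyzing the intersection type at each new junction (using Yoshida's trichotomy \cite[Proposition 3.1]{Y2018} again), I would argue that at each step we either extend a chain or, in the separating subcase, we are forced to close the chain up into a cycle. The cyclic closure is exactly the combinatorial pattern of $O_n$: a ring of $n$ knot circles $K_1,\dots,K_n$ and $n$ crossing circles $C_1,\dots,C_n$ alternating, together with the distinguished crossing circle $C_0 = K$ whose $R'$-crossing disk $D' \subset R$ is separated. (If instead at some step we run into case $(i)$ or $(ii)$, Propositions~\ref{prop:CaseI} and \ref{prop:CaseII} already handle those and reduce us to $P_n$ or the Borromean rings; for case $(iii)$ recurring I keep building the cycle.) As in Proposition~\ref{prop:CaseI}, I then define $\widehat{R} = R \setminus (\bigcup S_i)$ where the $S_i$ are the thrice-punctured sphere components of $R$ bounded by the $K_i$, use the fact that $\iota_{R'}$ preserves each cusp and exchanges the two halves of $\widehat{R}$ cut along the $D_i$, to conclude the $K_i$ are the only $R$-knot circles of $\mathcal{A}$, and finally run the essential-annulus argument (building $A$ from $\partial T_1$ and the portions of two extra crossing disks outside a Heegaard solid torus) to rule out more than the one extra crossing circle $K = C_0$. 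A three-cusp count at the degenerate end of the iteration, exactly as in the proof of Theorem~\ref{thm:3rs}, yields the Borromean rings as the other possibility.

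The main obstacle I anticipate is the bookkeeping around the separating geodesic: unlike case $(i)$, where the chain grows symmetrically and every new thrice-punctured sphere is forced to meet its neighbor non-separatingly, here I must carefully track at \emph{which} junctions of the growing configuration the intersection is separating versus non-separating, and show the separating one occurs exactly once (at $K = C_0$) so that the rest of the configuration is a genuine $P_n$-type chain that then closes. Concretely: after the first step, $D'$ is the "odd one out" (a singly-separated disk in the $R'$-structure), and I need to verify that propagating Lemma~\ref{lem:factoid} and Lemma~\ref{lem:Reflection3ps} around the ring never reintroduces a second separating intersection, because the components $S_i$ are genuine $R'$-crossing disks meeting their neighboring $D_j$'s in non-separating geodesics. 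I would also need to confirm, using Lemma~\ref{lem:MeridianLongitude} and Proposition~\ref{thm:KnotCircleMeridian}, that the longitude/meridian roles along the ring are forced, so that no further type-changes beyond $K$ can occur; this is what ultimately prevents the configuration from being anything other than $O_n$ or, in the collapsed case, the Borromean rings.
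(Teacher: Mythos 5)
Your setup goes wrong at the very first step, where you identify the endpoints of the geodesic $\gamma = D\cap D'$. You write that $\gamma$ ``runs between the $K$-puncture and the $C$-puncture of $D'$ but is separating on $D'$,'' but this is a contradiction in terms: on a thrice-punctured sphere the three separating simple geodesics each have \emph{both} ends on a single puncture, while any simple geodesic joining two distinct punctures is non-separating. Since $\gamma$ is separating on $D'$, both of its ends exit the same cusp; since it is non-separating on $D$, its ends lie on two \emph{distinct} punctures of $D$. The only way to reconcile these facts is that $D$ is punctured \emph{twice} by $K$ (so $C$ links $K$ twice) and $\gamma$ is the non-separating geodesic of $D$ joining its two $K$-punctures, which becomes the separating geodesic of $D'$ about its single $K$-puncture. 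In particular $C$ does \emph{not} puncture $D'$ (otherwise one of the two $C$-to-$K$ geodesics of $D\cap R$ would also lie in $D'$, contradicting the hypothesis that $D\cap D'$ is a single geodesic). Consequently your deductions that $D'$ is punctured by $K$, $C$, and a third cusp $J$, that $C$ becomes an $R'$-knot circle, and that $D\subset R'$ by Lemma~\ref{lem:factoid} are all false; in $O_n$ the component playing the role of $C$ is $C_0$, which is a crossing circle in \emph{both} structures and whose crossing disk is not contained in $R'$. The iterative chain-closing machinery you then build is anchored to this incorrect configuration and cannot be repaired without redoing the identification of $\gamma$.

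The correct continuation is a case split on the two remaining punctures of $D'$. By Lemma~\ref{lem:Reflection3ps} the number of $R$-knot-circle punctures of $D'$ is odd; if it is three, one reaches a contradiction, because the only geodesics of $D'$ disjoint from $\gamma$ are the two non-separating geodesics incident to $K$, so any crossing disk punctured by the extra knot circle $K_1$ must meet $D'$ in the single geodesic joining $K$ to $K_1$, whence at most one crossing circle links $K_1$. So the remaining punctures are crossing circles $C_1,C_2$: if $C_1=C_2$ one reduces to the argument of Proposition~\ref{prop:CaseII} and gets the Borromean rings, and if $C_1\ne C_2$ one reduces to Proposition~\ref{prop:CaseI} and then excludes $P_n$ because $P_n$ contains no crossing disk meeting a component of $R$ in a separating geodesic. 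Your parenthetical remark about invoking Propositions~\ref{prop:CaseI} and~\ref{prop:CaseII} is the right instinct, but in the actual proof this reduction is the whole argument, not a fallback, and no iteration is needed.
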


\begin{proof}
Let $K$, $D'$, $C$ and $D$ be as in the statement of the proposition.  By Lemma \ref{lem:factoid} the disk $D'$ is a component of $R$, and note that it is punctured once by $K$. Since $D'$ is  a thrice-punctured sphere, we know that it must have two more punctures. We break down this proof into cases depending on whether those punctures come from $R$-crossing circles or $R$-knot circles. Note that, since $D \cap D'$ is a separating geodesic $\gamma_K$ on $D'$, the thrice-punctured sphere $D$ partitions $D'$ into two regions separated by $\gamma_K$.

\textbf{Case I:} Suppose the other two punctures of $D'$ come from $R$-crossing circles $C_1$ and $C_2$. Further, suppose $C_1 = C_2$. Then $C_1$ and $C_2$ must puncture different regions of $D'\setminus D$ since $D'$ must have a puncture on each side of the separating geodesic $\gamma_k$. In this case, $\mathcal{A}$ contains a Borromean Rings sub-link, $L_B = K \cup C \cup C_1$. Let $D_1$ designate the $R$-crossing disk corresponding to $C_1$. As an $R$-crossing disk, $D_1$ intersects $R$ in the three nonseparating geodesics on $D_1$. Two of these geodesics must be contained in $D' \subset R$ since $C_1$ punctures $D'$ in two different regions and $D_1$ can not intersect $\gamma_K$. Since $D' \cap D_1$ is a pair of non-separating geodesics on $D_1$, it follows that this intersection is also a pair of non-separating geodesics on $D'$ by the work of Yoshida \cite[Proposition 3.1]{Y2018}. Following the proof of Proposition \ref{prop:CaseII} with $D_1$ replacing $D$ implies that $A = L_B$, i.e., $A$ is the Borromean rings. 


Now suppose that $C_1$ and $C_2$ are distinct $R$-crossing circles, each of which puncture $D'$. Then $C_1$ has an $R$-crossing disk $D_1$, which is punctured by $K$ and some distinct $R$-knot circle $K_1$. This implies that the intersection $D' \cap D_1$ is a single non-separating geodesic on each of these thrice-punctured spheres. Then Proposition \ref{prop:CaseI} with $D_1$ replacing $D$ shows that $L$ is either $P_n$ or $O_n$. However, $P_n$ does not contain a crossing circle $C$ whose crossing disk $D$ separates a region of $R$, which implies that $\mathcal{A}$ must be $O_n$ here. 

\textbf{Case II:} Suppose that at least one of the other two punctures of $D'$ comes from an $R$-knot circle, $K_1$. We will show this case leads to a contradiction. By Lemma \ref{lem:Reflection3ps}, $D'$ must have an odd number of $R$-knot circle punctures, and so, the third puncture of $D'$ comes from an $R$-knot circle distinct from $K$  and $K_1$, which we label $K_2$. Furthermore, since $\gamma_K$ is a separating geodesic on $D'$, $K_1$ and $K_2$ must puncture different regions of $D'\setminus D$. Any $R$-crossing disk punctured by $K_1$, call it $D_1$, is disjoint from $D$ and so must intersect $D'$ in geodesic(s) disjoint from $\gamma_{K} =D\cap D'$. The only geodesics of $D'$ disjoint from $D\cap D'$ are the non-separating geodesics $\{ \gamma_1, \gamma_2\}$ joining $K$ to the other punctures $K_1$ and $K_2$ respectively.  Since $D_1$ is   punctured by $K_1$, it's one other $R$-knot circle puncture must come from $K$, and so, $\gamma_1 = D_1\cap D'$. Now there is at most one connected, embedded, totally geodesic surface orthogonal to $R$ and containing $\gamma_1$; therefore, at most one crossing disk intersects $D'$ along $\gamma_1$.  This implies at most one crossing circle links $K_1$, contradicting the fact that every knot circle must be linked by at least two crossing circles in an FAL. 
\end{proof}

We can now give the following classification of flat FAL complements that admit multiple reflection surfaces. A slightly less general version of this theorem was originally stated in Theorem \ref{thm:MainTheorem2} in Section \ref{sec:intro}.

\begin{thm}
\label{thm:MultipleRS}
Suppose $M = \mathbb{S}^{3} \setminus \mathcal{A}$ is a flat FAL complement with multiple distinct reflection-like surfaces. Then either 
\begin{itemize}
\item  $\mathcal{A}$ is equivalent to the Borromean rings, and $M$ contains exactly three distinct reflection-like surfaces, all of which are reflection surfaces, or
\item  $\mathcal{A}$ is equivalent to $P_n$ with $n \geq 3$, or $O_n$ with $n \geq 2$, and $M$ contains exactly two distinct reflection-like surfaces, both of which are reflection surfaces. 
\end{itemize}
\end{thm}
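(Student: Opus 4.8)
The plan is to assemble Theorem \ref{thm:MultipleRS} as a direct corollary of the structural results already established, combining Theorem \ref{thm:3rs} (handling three or more reflection-like surfaces) with the three case propositions \ref{prop:CaseI}, \ref{prop:CaseII}, \ref{prop:CaseIII} (handling exactly two). First I would dispose of the case of at least three distinct reflection-like surfaces: Theorem \ref{thm:3rs} says immediately that $\mathcal{A}$ must be the Borromean rings, that $M$ has exactly three reflection-like surfaces, and that all three are genuine reflection surfaces. So the first bullet is done with essentially no new work, and it remains to treat the case where $M$ has exactly two distinct reflection-like surfaces $R$ and $R'$.

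For the two-surface case, the key dichotomy is whether some component of $\mathcal{A}$ changes type. Lemma \ref{lem:samesurface} rules out the possibility that $R$ and $R'$ induce the same crossing/knot partition while being distinct — so some component $K$ must be, say, an $R$-knot circle and an $R'$-crossing circle. Then I invoke the setup immediately preceding Proposition \ref{prop:CaseI}: let $D'$ be the $R'$-crossing disk bounded by $K$ (which by Lemma \ref{lem:factoid} lies in $R$ and is unique), pick an $R$-crossing circle $C$ linking $K$ with $R$-crossing disk $D$, and apply Yoshida's trichotomy for the intersection pattern $D \cap D'$. Case (i) gives $\mathcal{A} \cong P_n$ with $n\ge 3$ or $O_n$ with $n\ge 2$ by Proposition \ref{prop:CaseI}; case (ii) gives the Borromean rings by Proposition \ref{prop:CaseII}; case (iii) gives the Borromean rings or $O_n$ with $n\ge 2$ by Proposition \ref{prop:CaseIII}. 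Taking the union of these outcomes, $\mathcal{A}$ is equivalent to the Borromean rings, $P_n$ ($n\ge 3$), or $O_n$ ($n\ge 2$).

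The remaining points to nail down are the exact counts of reflection-like surfaces and the assertion that in the two-surface case both are reflection surfaces. If $\mathcal{A}$ is the Borromean rings, Theorem \ref{thm:3rs} (applied in reverse — the Borromean rings complement does admit three distinct reflection surfaces, as shown at the end of that proof) tells us it actually has three, not two, so it falls under the first bullet; this is why the Borromean rings outcomes of Cases (ii) and (iii) are consistent with the statement. For $P_n$ and $O_n$, I would argue that they have exactly two: the discussion at the start of Section \ref{sec:2RS} exhibits two explicit reflection surfaces, and the bound ``at most three, with equality forcing Borromean rings'' from Theorem \ref{thm:3rs} together with the fact that $P_n, O_n \not\cong$ Borromean rings for the relevant $n$ forces the count to be exactly two. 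Finally, to see that each of these two reflection-like surfaces is a genuine reflection surface, note that the two surfaces exhibited in Section \ref{sec:2RS} are reflection surfaces by construction (each arises from an explicit FAL diagram / projection plane), and since there are only two reflection-like surfaces total, these must be all of them.

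The main obstacle I anticipate is not any single hard argument but rather the bookkeeping of reconciling the overlapping conclusions of the three case propositions and matching them precisely to the two bullets — in particular making sure the Borromean rings never sneaks into the ``exactly two'' bullet and that the counts ``exactly three'' and ``exactly two'' are justified rather than merely ``at most.'' This requires carefully citing the at-most-three bound and its equality case from Theorem \ref{thm:3rs}, the explicit constructions in Section \ref{sec:2RS}, and the observation that $P_n$ and $O_n$ are not the Borromean rings (which is clear on the level of component counts: $P_n$ has $2n \ge 6$ components and $O_n$ has $2n+1 \ge 5$ components, while the Borromean rings has $3$). With those pieces in place the proof is a short synthesis.
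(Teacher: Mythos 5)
Your proposal is correct and follows essentially the same route as the paper's proof: Lemma \ref{lem:samesurface} to force a type change, Lemma \ref{lem:factoid} to place the $R'$-crossing disk inside $R$, Yoshida's trichotomy feeding into Propositions \ref{prop:CaseI}--\ref{prop:CaseIII}, and Theorem \ref{thm:3rs} together with the explicit constructions at the start of Section \ref{sec:2RS} to pin down the exact counts and to confirm that every reflection-like surface is a reflection surface. The only cosmetic difference is that you split off the at-least-three case first, whereas the paper runs the case analysis from the at-least-two hypothesis and sorts the outcomes at the end.
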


\begin{proof} Suppose $M$ is a flat FAL complement with at least two distinct reflection-like surfaces. Then Lemma \ref{lem:samesurface} shows that some $R'$-crossing circle $K$ of $\mathcal{A}$ must switch to become an $R$-knot circle  (or vice versa). Let $D'$ be the $R'$-crossing disk corresponding to $K$. By Lemma \ref{lem:factoid}, $D' \subset R$. At the same time, since $K$ is an $R$-knot circle, it punctures at least two $R$-crossing disks, one of which we label $D$.  Then $D$ and $D'$ are both totally geodesic thrice-punctured spheres in $M$ that intersect non-trivially since $D$ must intersect $R$ on each side of $K$ (thinking of $K$ as a simple closed curve in the projection plane), and one of these components is $D'$.  The work of Yoshida \cite[Proposition 3.1]{Y2018}, shows that there are exactly three possibilities for such intersection patterns, which are covered in Proposition \ref{prop:CaseI}, Proposition \ref{prop:CaseII}, and Proposition \ref{prop:CaseIII}. Combined, these propositions tell us that $\mathcal{A}$ is equivalent to either the Borromean rings, $P_n$ with $n \geq 3$, or $O_n$ with $n \geq 2$. Theorem \ref{thm:3rs} distinguishes the Borromean rings as the only flat FAL whose complement admits at least three distinct reflection-like surfaces. In this case, this FAL complement has exactly three reflection-like surfaces, all of which are reflection surfaces, as noted in Theorem \ref{thm:3rs}; see Figure \ref{fig:BringsMRS} for a visualization of the three reflection surfaces. If an FAL complement has exactly two reflection-like surfaces, then the corresponding link is either $P_n$ with $n \geq 3$ or $O_n$ with $n \geq 2$. The discussion at the beginning of Section \ref{sec:2RS} shows that the corresponding FAL complements for these links each admit two distinct reflection surfaces, and so, every reflection-like surface is also a reflection surface in all of these cases. 
\end{proof}

This classification theorem implies that within the family of flat FALs, those whose complements admit multiple reflection surfaces are determined by their complements. We highlighted this result in the following corollary.

\begin{cor}\label{cor:MRSdetermined}
Let $\mathcal{A}$ and $\mathcal{A'}$ be flat FALs, and suppose the complement of $\mathcal{A}$ admits multiple distinct reflection surfaces. Then $\mathbb{S}^{3} \setminus \mathcal{A}$ is homeomorphic to $\mathbb{S}^{3} \setminus \mathcal{A'}$ if and only if $\mathcal{A}$ and $\mathcal{A'}$ are equivalent links. 
\end{cor}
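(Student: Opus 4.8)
The plan is to leverage Theorem~\ref{thm:MultipleRS} to reduce this corollary to a finite case analysis together with some elementary properties of the links $B$, $P_n$, and $O_n$. The reverse implication is immediate: equivalent links have homeomorphic complements since an equivalence of links restricts to a homeomorphism of the pair $(\mathbb{S}^3,\mathcal A)$ and hence of the complements. So the content is in the forward direction.

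First I would suppose $h:\mathbb{S}^3\setminus\mathcal A\to\mathbb{S}^3\setminus\mathcal A'$ is a homeomorphism, and recall from the discussion following Definition~\ref{defn:Rlike} that homeomorphic flat FAL complements have the same number of reflection-like surfaces. Since $\mathbb{S}^3\setminus\mathcal A$ admits multiple reflection surfaces (hence multiple reflection-like surfaces), so does $\mathbb{S}^3\setminus\mathcal A'$. By Theorem~\ref{thm:MultipleRS}, each of $\mathcal A$ and $\mathcal A'$ is equivalent to one of the Borromean rings $B$, some $P_n$ with $n\ge 3$, or some $O_n$ with $n\ge 2$; moreover the theorem tells us the \emph{number} of reflection-like surfaces: exactly three for $B$ and exactly two for the $P_n$ and $O_n$. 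Since $h$ preserves this count, either both $\mathcal A$ and $\mathcal A'$ are equivalent to $B$ (and we are done), or both belong to the family $\{P_n : n\ge 3\}\cup\{O_n : n\ge 2\}$. In the first case the conclusion is immediate.

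In the remaining case I would distinguish $P_n$ from $O_n$ and pin down $n$ using invariants of the complement. The cleanest route is a cusp count: $\mathbb{S}^3\setminus P_n$ has $2n$ cusps while $\mathbb{S}^3\setminus O_n$ has $2n+1$ cusps, and the number of cusps is a homeomorphism invariant. This already separates the two families except for the possible coincidence $2n = 2m+1$, which is impossible by parity; so a homeomorphic pair lies in the same family, and within $\{P_n\}$ the cusp count forces $n=m$, while within $\{O_n\}$ it likewise forces $n=m$. (Alternatively one could invoke hyperbolic volume, since $P_n$ and $O_n$ complements decompose into a known number of ideal octahedra, but the cusp count is more elementary.) Once $n$ is determined and the family is determined, $\mathcal A$ and $\mathcal A'$ are equivalent to the same link, hence to each other, completing the proof.

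The main obstacle is really bookkeeping rather than a deep point: one must be careful that Theorem~\ref{thm:MultipleRS} is stated for reflection-\emph{like} surfaces and that the ``same number'' statement genuinely transfers under homeomorphism (which is exactly the content recorded after Definition~\ref{defn:Rlike}), and one must make sure the cusp-count (or volume) argument correctly distinguishes all pairs in $\{P_n\}_{n\ge3}\cup\{O_n\}_{n\ge2}$, including ruling out a $P_n$ being homeomorphic to an $O_m$. Parity of the cusp number handles that cleanly.
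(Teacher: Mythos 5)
Your proposal is correct and follows essentially the same route as the paper: apply Theorem \ref{thm:MultipleRS} to both links and then distinguish $B$, the $P_n$, and the $O_n$ by counting cusps (equivalently, link components), with parity ruling out $P_n \cong O_m$. The paper's proof is this same argument, phrased via the component count $3$, $2n$, $2n+1$ rather than via the number of reflection surfaces, but the content is identical.
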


\begin{proof}
If $\mathcal{A}$ and $\mathcal{A}'$ are equivalent links, the orientation-preserving homeomorphism between the pairs $(\mathbb{S}^3,\mathcal{A})$ and $(\mathbb{S}^3,\mathcal{A}')$ induces one between their complements. So, suppose $\mathbb{S}^{3} \setminus \mathcal{A}$ and $\mathbb{S}^{3} \setminus \mathcal{A}'$ are homeomorphic flat FAL complements  where $\mathbb{S}^{3} \setminus \mathcal{A}$ admits multiple reflection surfaces. Then these flat FAL complements are isometric and $\mathbb{S}^{3} \setminus A'$ also contains multiple reflection-like surfaces. By Theorem \ref{thm:MultipleRS}, $A'$ is equivalent to either the Borromean rings $B$, $P_n$ with $n \geq 3$, or $O_n$ with $n \geq 2$. Thus, we just need to consider the cases where $\mathbb{S}^{3} \setminus \mathcal{A}$ is homeomorphic to the complement of one of these links.  Note that $B$ has three components, each $P_n$ has $2n$ components with $n \geq 3$, and $O_n$ has $2n+1$ components with $n \geq 2$. Thus the number of components distinguishes the links $B$, $ \{ P_n \}_{n = 3}^{\infty} $ and $\{ O_n \}_{n = 2}^{\infty}$. Since there is a one-to-one correspondence between  components of a link and cusps of the corresponding link complement, this shows that the number of cusps of one of these link complements determines the corresponding link, completing the proof.
\end{proof}

The work from this section places an important restriction on the behavior of homeomorphisms between flat FAL complements. 

\begin{cor}
\label{thm:uniquereflectionhomeo}
Let $M$ and $M'$ be flat FAL complements and suppose there exists a homeomorphism $h: M \rightarrow M'$, which induces an isometry $\rho_{h}$. Then $R$ is a reflection-like surface for $M$ if and only if $R$ is a reflection surface for $M$. In particular, $\rho_h$  provides a one-to-one correspondence between reflection surfaces. 
\end{cor}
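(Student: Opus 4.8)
The plan is to reduce everything to Theorem \ref{thm:MultipleRS}. Since the identity map is an isometry, every reflection surface of $M$ is automatically a reflection-like surface (this is the easy direction), so the real content is the converse: every reflection-like surface of $M$ is a reflection surface. The first step is to recall from Proposition \ref{prop:RSfeatures} that a flat FAL complement always contains at least one reflection surface, namely the one coming from the projection plane of some FAL diagram of $\mathcal{A}$.

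Next I would split into two cases according to the number of distinct reflection-like surfaces of $M$. If $M$ has only one, then it must coincide with the reflection surface supplied by Proposition \ref{prop:RSfeatures}, and there is nothing more to prove. If $M$ has two or more distinct reflection-like surfaces, then Theorem \ref{thm:MultipleRS} applies directly: $\mathcal{A}$ is equivalent to the Borromean rings, to some $P_n$ with $n \geq 3$, or to some $O_n$ with $n \geq 2$, and in each of these cases the theorem explicitly records that \emph{every} reflection-like surface is in fact a reflection surface. Combining the two cases yields the asserted equivalence: $R$ is a reflection-like surface for $M$ if and only if $R$ is a reflection surface for $M$.

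For the final ``in particular'' clause I would argue as follows. Let $\rho_h \colon M \to M'$ be the isometry induced by $h$, and let $R \subset M$ be a reflection surface. By the equivalence just established, $R$ is reflection-like, so Definition \ref{defn:Rlike} shows $\rho_h(R)$ is a reflection-like surface of $M'$, hence (again by the equivalence) a reflection surface of $M'$. Applying the same reasoning to $\rho_h^{-1}$, which is the isometry induced by $h^{-1}$, shows that $\rho_h$ carries reflection surfaces of $M$ bijectively onto reflection surfaces of $M'$.

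I do not expect a genuine obstacle at this stage: all of the substantive work is upstream, in Theorem \ref{thm:MultipleRS} and its supporting propositions (\ref{thm:KnotCircleMeridian}, \ref{prop:Rcrossingpossibilities}, and the case analysis of Propositions \ref{prop:CaseI}--\ref{prop:CaseIII}). The only points requiring a little care are to remember to invoke Proposition \ref{prop:RSfeatures} to dispose of the ``unique reflection-like surface'' case, since Theorem \ref{thm:MultipleRS} has no content when $M$ has a single reflection-like surface, and to observe that the ``only if'' direction of the biconditional is the trivial one, realized by the identity isometry.
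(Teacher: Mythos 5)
Your proposal is correct and follows essentially the same route as the paper's proof: dispose of the unique reflection-like surface case by noting the diagram-derived reflection surface must be that unique one, invoke Theorem \ref{thm:MultipleRS} for the multiple-surface case, and derive the bijection from the fact that $\rho_h$ and $\rho_h^{-1}$ preserve the reflection-like property.
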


\begin{proof}
By the comments immediately following Definition \ref{defn:Rlike}, the isometry $\rho_h$ produces a one-to-one correspondence between reflection-like surfaces.  If we show that every reflection-like surface is actually a reflection surface, we will be done.

Theorem \ref{thm:MultipleRS} covers the multiple reflection-like surface case.  On the other hand, the reflection surface in a flat FAL is one reflection-like surface. Therefore, if a flat FAL has a unique reflection-like surface it must be the reflection surface, completing the proof.
\end{proof}

Before moving on, we make two useful observation that follow from Corollary \ref{thm:uniquereflectionhomeo}. First off, we will no longer use the term reflection-like surface since a reflection-like surface is a reflection surface. In addition, if $h: M \rightarrow M'$ is a homemorphism, $M$ has a unique reflection surface $R$, and $M'$ contains a reflection surface $R'$, then  $M'$ also has a unique reflection surface and $\rho_{h}(R) = R'$. 

Our final result from this section shows that in most cases, every symmetry of a flat FAL complement with multiple reflection surfaces is induced by a symmetry of that link. In particular, the following theorem proves the first statement from Theorem \ref{thm:SymmetryThm} in the introduction.

\begin{thm}\label{thm:SymmetryMRS}
Let $\mathcal{A}$ be a flat FAL, other than $P_3$, whose complement admits multiple reflection surfaces.  Then both $\mathcal{A}$ and its complement $M = \mathbb{S}^3\setminus \mathcal{A}$ have the same symmetry group.
\end{thm}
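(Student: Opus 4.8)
The plan is to leverage the classification in Theorem \ref{thm:MultipleRS}: the only flat FALs with multiple reflection surfaces are $B$, the links $P_n$ ($n\geq 3$), and $O_n$ ($n\geq 2$), so it suffices to check the claim for each family separately, excluding $P_3$ by hypothesis. In each case one has $\mathrm{Sym}(\mathbb{S}^3,\mathcal{A})\subseteq \mathrm{Sym}(\mathbb{S}^3\setminus\mathcal{A})$ automatically, so the real content is to show that every self-homeomorphism $h$ of $M$ is (isotopic to) one that extends over $\mathbb{S}^3$ preserving $\mathcal{A}$. By Mostow--Prasad rigidity we may replace $h$ by its isometric representative $\rho_h$, and then use Corollary \ref{thm:uniquereflectionhomeo} together with the counting arguments from the proofs of Theorems \ref{thm:3rs} and \ref{thm:MultipleRS}: $\rho_h$ permutes the (finitely many) reflection surfaces of $M$, hence permutes the associated $R$-structures, and thus induces a bijection on the set of reflection-surface structures on $\mathcal{A}$.

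The key steps, in order. First I would recall that for each of $B$, $P_n$, $O_n$, the reflection surfaces and their induced partitions of the cusps into knot/crossing circles are known explicitly (from the discussion at the start of Section \ref{sec:2RS} and Figure \ref{fig:BringsMRS}): $B$ has three reflection surfaces cyclically permuted by the evident order-$3$ symmetry, while $P_n$ and $O_n$ each have exactly two, interchanged by the $90^\circ$ rotation about the chain axis. Second, given an isometry $\rho_h:M\to M$, it permutes the set of reflection surfaces and hence the set of crossing disks relative to any fixed reflection surface; since a crossing disk together with its boundary slope data determines which cusp is the corresponding crossing circle (Proposition \ref{prop:RSfeatures}), $\rho_h$ induces a well-defined permutation of the cusps compatible with \emph{some} reflection-surface structure on the target. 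Third, I would invoke the known symmetry groups of these links from the literature (for the Borromean rings this is classical; for $P_n$ and $O_n$, e.g., the computations in Meyer--Millichap--Trapp \cite{MMR2020} on augmented pretzel links), and match: compose $\rho_h$ with an isometry induced by an honest link symmetry so that the composite fixes a chosen reflection surface $R$. Fourth, once $\rho_h$ fixes $R$ setwise, it preserves the $R$-structure, so it preserves the pattern of knot circles, crossing circles, and crossing disks; such an isometry is realized by a homeomorphism of the pair $(\mathbb{S}^3,\mathcal{A})$ because $R$ reconstructs the link diagram — knot circles lie in the projection plane $P$ with $R=P\setminus\mathcal{A}$, and crossing circles bound crossing disks orthogonal to $R$ — so a symmetry of this combinatorial/geometric data extends over the cusps and over $\mathbb{S}^3$. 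Therefore $h$ lies in the image of $\mathrm{Sym}(\mathbb{S}^3,\mathcal{A})$, giving the reverse containment and hence equality.

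The main obstacle, and the step that needs genuine care, is the fourth: showing that an isometry of $M$ preserving a reflection surface $R$ (setwise) actually descends to a symmetry of the link $(\mathbb{S}^3,\mathcal{A})$ rather than merely of the complement. This is exactly the phenomenon ruled out for $P_3$ — one must verify that no "extra" homeomorphism (a Dehn-twist-type move, or a full-swap in the language of later sections) survives once the reflection structure is pinned down, and that is precisely why $P_3$ is excluded: in $P_3$ the two reflection surfaces give rise to a coincidence that permits a non-link symmetry. I would handle this by arguing that once $R$ is fixed, the induced peripheral structure (Proposition \ref{prop:RSfeatures}, Lemma \ref{lem:MeridianLongitude}) is preserved, so every meridian goes to a meridian; an isometry preserving the meridian of every cusp and fixing $R$ then extends by the usual Dehn-filling-back argument to a homeomorphism of $\mathbb{S}^3$ carrying $\mathcal{A}$ to $\mathcal{A}$. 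The delicate point for $B$, $P_n$ ($n\geq 4$), and $O_n$ is verifying there is no ambiguity in how $\rho_h$ may permute the two or three reflection surfaces beyond what link symmetries already realize, which reduces to the explicit symmetry-group computations cited above; for $P_3$ this fails, which is the reason for its exclusion and presumably the content of the signature-link/full-swap analysis of Section \ref{Sec:SigLinks}.
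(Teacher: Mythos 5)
Your overall strategy (reduce via the classification in Theorem \ref{thm:MultipleRS}, show every self-homeomorphism preserves peripheral structures, then extend over $\mathbb{S}^3$) matches the paper's, but there is a genuine gap at the step you yourself flag as the crux. You assert that once $\rho_h$ fixes a reflection surface $R$ setwise, ``it preserves the $R$-structure, so it preserves the pattern of knot circles, crossing circles, and crossing disks,'' and hence preserves meridians. This does not follow. Fixing $R$ setwise only gives $\rho_h(R\cap T) = R\cap \rho_h(T)$ for each cusp torus $T$, and that identity is perfectly consistent with $\rho_h$ carrying a knot-circle cusp (where $R\cap T$ is a pair of longitudes) to a crossing-circle cusp (where $R\cap T'$ is a pair of meridians) — i.e., with a type-changing map. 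Indeed this is exactly what happens for $P_3$: the full-swap gives a self-isometry that can be arranged (after composing with the $90^\circ$ rotation) to preserve the reflection-surface set yet still change component types. So the implication you need is false in general, and your proposal never supplies the reason it holds for $B$, $P_n$ ($n\ge 4$), and $O_n$. The paper's proof closes this gap with a concrete topological invariant: for $O_n$ ($n\ge 2$) and $P_n$ ($n\ge 4$) the reflection surface has a \emph{unique} component with more than three punctures, $\rho_h$ must carry that component of $R$ to the corresponding component of $R'=\rho_h(R)$, and its punctures are precisely the knot circles (plus $C_0$ twice for $O_n$); this pins down the knot/crossing partition, after which Lemma \ref{lem:MeridianLongitude} gives preservation of peripheral structures. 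Note the paper does not need to first normalize $\rho_h$ to fix $R$.

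Your proposed fallback — citing symmetry-group computations from the literature — is also problematic. Knowing $\mathrm{Sym}(\mathbb{S}^3,\mathcal{A})$ does not bound $\mathrm{Sym}(M)$, which is the group you must control; and \cite{MMR2020} concerns fully augmented pretzel links, which does not cover $O_n$ (and the relevant isometry-group computations for $P_n$ would still need to be checked to exist in the form you need). If you replace your step four with the puncture-count argument on components of the reflection surface (and handle the Borromean rings by a direct computation, as the paper does via SnapPy), the proof goes through and also makes transparent why $P_3$ must be excluded: there all components of $R$ are thrice-punctured spheres, so the count fails to distinguish the component punctured by knot circles.
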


\begin{proof}
Since every symmetry of $(\mathbb{S}^{3}, \mathcal{A})$ induces one of its complement, it's enough to show that every homeomorphism $h:M\to M$ extends to an isotopy of $\mathbb{S}^3$.

Since $\mathcal{A}$ is not  $P_3$, Theorem \ref{thm:MultipleRS} implies it is either $P_n$ with $n \geq 4$,  $O_n$ with $n \geq 2$, or the Borromean rings, and we consider the cases separately.

Let $\mathcal{A} = O_n$, with $n\ge 2$, and let $R$ be a reflection surface in $M= \mathbb{S}^3\setminus O_n$.  Note that $R$ consists of $n$ thrice-punctured spheres and one $(n+2)$-punctured sphere $S_{n+2}^2$ whose punctures are $n$ longitudes along the $R$-knot circles of $O_n$ and $2$ punctures by the same $R$-crossing circle $C_0$. Since $n\ge 2$, $S^2_{n+2}$ is the only component of $R$ with more than three punctures.

Now let $h:M\to M$ be a homeomorphism with induced isometry $\rho_h:M\to M$. Corollary \ref{thm:uniquereflectionhomeo}, applied to the case where $M'=M$, implies that $R'=\rho_h(R)$ is a reflection surface for $O_n$.  Since $R'$ is a reflection surface for $O_n$, it has a unique $(n+2)$-punctured sphere component ${S^2_{n+2}}'$.  The unique cusp punctured twice by ${S^2_{n+2}}'$ corresponds to an $R'$-crossing circle while the remaining $n$ cusps punctured by ${S^2_{n+2}}'$ correspond to the $R'$-knot circles of $O_n$. Since $\rho_h$ preserves the topology of components of $R$ we have $\rho_h\left({S^2_{n+2}}\right) = {S^2_{n+2}}'$, and $R$-knot circles must map to $R'$-knot circles.  Moreover, $C_0' = \rho_h(C_0)$ must be the $R'$-crossing circle of $O_n$ puncturing ${S^2_{n+2}}'$ twice.  The remaining components of $O_n$ are $R$-crossing circles and, since all $R'$-knot circles are accounted for, $\rho_h$ must map them to $R'$-crossing circles.  

Thus $\rho_h$ preserves the type of each component of $O_n$--crossing circles map to crossing circles, and similarly with knot circles.  Let $L$ be a component of $O_n$ with image $L'=\rho_h(L)$, and let $\{m,\ell\}$ and $\{m',\ell'\}$ denote the respective $R$- and $R'$- meridian-longitude pairs.  Lemma \ref{lem:MeridianLongitude} implies $\rho_h$ maps the set $\{m,\ell\}$ to the $\{m',\ell'\}$, and we must show it takes meridians to meridians.

If $K$ is an $R$-knot circle of $O_n$, then $T_K\cap R$ is a pair of simple closed curves, each representing an $R$-longitude,  which we denoted by $\ell$ earlier.  As above, let $\{m',\ell'\}$ denote the $R'$ meridian and longitude for $T_{K'}= \rho_h(T_K)$.  Then $\rho_h(T_K\cap R) = T_{K'}\cap R'$, and so maps longitudinal slopes of $K$ to those of $K'$.  As above, Lemma \ref{lem:MeridianLongitude} implies that $\rho_h$ preserves meridians as well.  Thus $\rho_h$ preserves peripheral structures on all $R$-knot circles.

To see that $h$ preserves peripheral structures on $R$-crossing circles, let $C$ be an $R$-crossing circle with $R'$-crossing circle image $C'=\rho_h(C)$.  Using notation similar to the above, we have the calculation $\rho_h(T_C\cap R) = T_{C'}\cap R'$ so $\rho_h$ preserves meridional slopes, and Lemma \ref{lem:MeridianLongitude} implies it preserves peripheral structures on crossing circles of $O_n$ as well.

Thus $\rho_h$, and therefore our original $h:M\to M$, preserves peripheral structures on all components and extends to an isotopy of $\mathbb{S}^3$.  This implies, of course, that a symmetry of $M$ is the restriction of a symmetry of $(\mathbb{S}^{3}, O_n)$ to its complement.

The proof for $P_n$, with $n \ge 4$, follows similarly, but is a little more direct.  In this case the reflection surface $R$ consists of $n$ thrice punctured spheres and one $n$-punctured sphere $S_n^2$.  Since $n\ge 4$, the sphere $S^2_n$ is the unique component of $R$ with more than three punctures.  The proof follows as above, with the simplification that punctures of $S^2_n$ correspond to the distinct $R$-knot circles of $P_n$.

We have proven the theorem for flat FALs with two reflection surfaces and the only case remaining is the flat FAL with three reflection surfaces -- the Borromean rings. In this case, a quick check using SnapPy confirms the symmetry group of the Borromean rings and its complement coincide, with common symmetry group $\mathbb{Z}_2 \times G$, where $G$ represents the group of symmetries of the octahedron. 
\end{proof}

Note that the proof of Theorem \ref{thm:SymmetryMRS} does not extend to $P_3$. For $P_3$ all components of the reflection surface $R$ are thrice-punctured spheres, with one punctured by three knot circles. A simple puncture count, then, does not guarantee that the isometry $\rho_h$ preserves the component of $R$ punctured by knot circles.  Section \ref{Sec:SigLinks} will show that this is more than just a shortcoming of the above proof.  In fact $P_3$ is a signature link, and it will be seen that signature link complements have more symmetries than the links themselves.


\section{Transition to the unique reflection surface case}
\label{sec:Transition}

Corollary \ref{thm:uniquereflectionhomeo} tells us that homeomorphic flat FAL complements must have the same number of reflection surfaces. This allows us to break down the proof of our main result, Theorem \ref{MAINTHEOREM}, into two cases: 

(1) There exists a homeomorphism between flat FAL complements, each with multiple reflection surfaces. This case is already covered by Corollary \ref{cor:MRSdetermined}.  

(2) There exists a homeomorphism between flat FAL complements, each with a unique reflection surface. 

We can actually put a more narrow focus on the homeomorphisms we need to analyze in case (2). Let $M$ and $M'$ be the complements of the flat FALs $\mathcal{A}$ and $\mathcal{A}'$, respectively, each containing unique reflection surfaces denoted by $R$ and $R'$.  If there exists a homeomorphism $h:M\to M'$, which induces isometry $\rho_h$, then we must have $R' = \rho_{h}(R)$. Recall that the reflection surfaces determine peripheral structures on each component of $\mathcal{A}$ and $\mathcal{A}'$, respectively. Thus if $h$ preserves both knot circles and crossing circles, then it preserves peripheral structures and extends to an isotopy of $\mathbb{S}^3$, making the links $\mathcal{A}$ and $\mathcal{A}'$ equivalent.  For this reason we will be mainly interested in homeomorphisms that change a knot circle $K$ into a crossing circle, or vice-versa.  In this case we will say that $h$ \textbf{changes the type} of a component $K$ of $\mathcal{A}$, and will call $h$ a \textbf{type-changing homeomorphism}. 

The rest of this paper is dedicated to analyzing type-changing homeomorphisms of flat FAL complements containing unique reflection surfaces. Section \ref{Sec:SigLinks} will introduce a particular class of type-changing homeomorphisms where we can easily find an isotopy in $\mathbb{S}^{3}$ between the corresponding FALs. Section \ref{subsec:SepSets} examines how separating sets (partially introduced in Section \ref{sec:FALs}) behave under type-changing homeomorphisms to help restrict the behavior of such homeomorphisms. Combining the work of these two sections, we then prove our main result in Section  \ref{sec:CDFF}.

\section{Signature links and full-swap homeomorphisms} \label{Sec:SigLinks}

In this section we define signature links, which are a special class of FALs, and show that they admit a particular type-changing homeomorphisms, which we call a full-swap, on their complements.  Furthermore, we will show that signature links whose complements are homeomorphic via a full-swap exhibit an explicit isotopy  in $\mathbb{S}^{3}$.

\begin{df}\label{defn:SigLink} 
A \textbf{signature link} is a flat FAL $\mathcal{L}$ whose components can be partitioned into four non-empty sets
\[
\mathcal{L}= \{K_f\} \cup \mathcal{K}\cup\mathcal{C}\cup\mathcal{C_K},
\]
with the following properties. The first set consists solely of a knot circle $K_f$ which cuts the projection plane into two disks, one of which contains the remaining knot circles $\mathcal{K} = \left\{K_1,\dots,K_n\right\}$, which we call the inside of $K_f$.  Each $K_i\in\mathcal{K}$ is linked with $K_f$ by a unique crossing circle $C_i$, and $\mathcal{C} = \left\{C_1,\dots,C_n\right\}$.  The set of remaining crossing circles is denoted $\mathcal{C_K}$, and components in $\mathcal{C_K}$ link two distinct knot circles in $\mathcal{K}$.  A crossing circle of $\mathcal{C_K}$ that links $K_i,K_j\in\mathcal{K}$ will be denoted by $C_{ij}$.
\end{df}

Throughout this section, we let $D_i$ designate a crossing disk for $C_i \in \mathcal{C}$ and we let $D_{ij}$ designate a crossing disk for $C_{ij} \in \mathcal{C_K}$.

\begin{figure}[h]
\[
\begin{array}{ccc}
\includegraphics{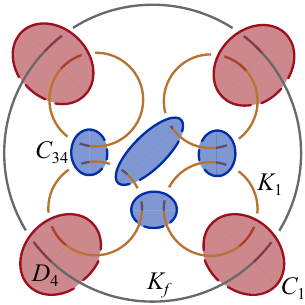} &\hspace{.65in} &\includegraphics{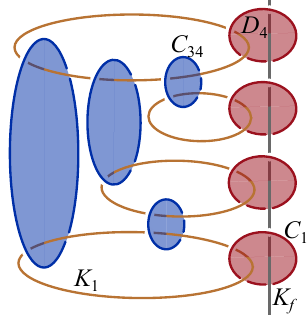}\\
(a)\textrm{ Aesthetic diagram} && (b)\textrm{ Pragmatic diagram}
\end{array}
\]
\caption{Two diagrams of a signature link $\mathcal{L}$.}
\label{fig:SigLink}
\end{figure}

Figure \ref{fig:SigLink}$(a)$ depicts a signature link.  The knot circles  in $\mathcal{K}$ are all inside $K_f$, and are numbered counterclockwise around $K_f$ (only $K_1$ is labeled in Figure \ref{fig:SigLink}$(a)$).  The diagram of Figure \ref{fig:SigLink}$(b)$ will be helpful in visualizing a full-swap, and is obtained by isotoping $K_f$ until it is vertical.  We remark that there can be more than one way to decompose $\mathcal{L}$ as a signature link.  The knot circle $K_4$ (unlabeled in Figure \ref{fig:SigLink}) could have been designated $K_f$ instead since it is linked to every other knot circle.  

According to Definition \ref{defn:SigLink} the link $P_3$ is a signature link, but a quick check verifies that this is the only link whose complement contains multiple reflection surfaces which is a signature link.  Thus all other signature link complements have a unique reflection surface. 

The fact that $\mathcal{L}$ is hyperbolic places restrictions on the set $\mathcal{C_K}$.  The set $\mathcal{C_K}$, for example, can not be empty.  More can be said, of course, about properties of $\mathcal{C_K}$ resulting from the hyperbolicity of $\mathcal{L}$, but we content ourselves with a result about longitudinal disks which requires the following technical lemma.

\begin{lem}\label{lem:NdisksDisjoint}
Two $N$-disks in an FAL complement are either identical or disjoint.
\end{lem}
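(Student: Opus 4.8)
The plan is to assume $D_1$ and $D_2$ are $N$-disks in a flat FAL complement $M=\mathbb{S}^3\setminus\mathcal{A}$ with reflection surface $R$, to suppose $D_1\cap D_2\neq\emptyset$, and to conclude $D_1=D_2$. By definition each $D_i$ is a totally geodesic thrice-punctured sphere whose intersection with $R$ is exactly the triple of non-separating geodesics of $D_i$, and by Theorem \ref{prop:BeltSumSummary} each $D_i$ is orthogonal to $R$. If $D_1\neq D_2$, then $D_1\cap D_2$ is a nonempty disjoint union of complete simple geodesics (locally, in the universal cover, two distinct geodesic planes meet along a geodesic), each of which is one of the six simple geodesics of $D_1$ and one of the six of $D_2$.

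The crucial step is to extract from $D_1\cap D_2$ a geodesic that is \emph{non-separating} on one of the two disks, and for this I would quote Yoshida \cite[Proposition 3.1]{Y2018}, already used repeatedly in this section: two distinct totally geodesic thrice-punctured spheres that meet nontrivially do so either in a single geodesic that is non-separating on both, in a pair of geodesics that are non-separating on both, or in a single geodesic that is separating on one and non-separating on the other. In every case $D_1\cap D_2$ contains a geodesic $\gamma$ that is non-separating on at least one of the $D_i$; by symmetry of the hypotheses we may assume $\gamma$ is non-separating on $D_2$. Since the three non-separating geodesics of $D_2$ are precisely $D_2\cap R$, we get $\gamma\subseteq R$, hence $\gamma\subseteq D_1\cap R$, so $\gamma$ is also one of the non-separating geodesics of $D_1$. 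Finally, at any point $x\in\gamma$, the fact that $D_i$ meets $R$ orthogonally along $\gamma$ forces the tangent plane $T_xD_i$ to be the plane spanned by $T_x\gamma$ and the normal to $R$ at $x$; thus $T_xD_1=T_xD_2$. Two totally geodesic surfaces sharing a tangent plane at a point agree near that point, and totally geodesic surfaces in a hyperbolic manifold that agree on an open set are equal (their lifts to $\mathbb{H}^3$ are geodesic planes), so $D_1=D_2$, a contradiction. Hence $D_1$ and $D_2$ are identical or disjoint.

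The only real obstacle is the possibility that $D_1\cap D_2$ is a geodesic that is separating on \emph{both} disks: the ``non-separating geodesic lies in $R$'' argument fails in that case. Yoshida's Proposition 3.1 disposes of this precisely because the separating-on-both configuration does not occur; a self-contained alternative would use that the three separating geodesics of a thrice-punctured sphere pairwise intersect (so a separating-on-both intersection must be a single geodesic), together with an analysis of how $\iota_R$, which restricts on each $D_i$ to the reflection in its non-separating geodesics, acts on that geodesic. Invoking \cite{Y2018} is shorter and matches the style of the surrounding arguments, so that is the route I would take.
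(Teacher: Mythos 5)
Your proof is correct and follows essentially the same route as the paper's: both extract from $D_1\cap D_2$ a geodesic that is non-separating on at least one disk (the paper cites Yoshida's Lemma 3.4 rather than Proposition 3.1, but to the same effect), observe that this geodesic therefore lies in $R$, and conclude equality from the fact that both disks are totally geodesic and orthogonal to $R$ along that common geodesic. The extra detail you give about matching tangent planes is just an expansion of the paper's final sentence.
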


\begin{proof}
To see this, we show that if two $N$-disks intersect, then they are identical.  Let $D_1,D_2$ be $N$-disks in an FAL complement $M$, and so, they each intersect the reflection surface $R$ in their non-separating geodesics.  Lemma 3.4 of \cite{Y2018} states that thrice-punctured spheres in orientable three-manifolds cannot intersect along a geodesic that is separating in both.  Thus, if $\gamma \in D_1\cap D_2$, then $\gamma$ is non-separating in at least one disk, say $D_1$.  The disk $D_1$ is an $N$-disk so $\gamma$ is contained in the reflection surface $R$ and $D_2$ intersects $D_1$ along a geodesic in $R$. By Theorem \ref{prop:BeltSumSummary} both $D_1$ and $D_2$ are orthogonal to $R$ along $\gamma$. Therefore, since $D_1$ and $D_2$ are both embedded totally geodesic surfaces that intersect in a common geodesic and are orthogonal to $R$, we can conclude that they must be equal, completing the proof.
\end{proof}

\begin{lem}\label{lem:SigLong}
Let $\mathcal{L}$ be a signature link.  Then every crossing circle $C_{ij}\in\mathcal{C_K}$ bounds a totally geodesic, longitudinal disk with crossing circles $C_i$ and $C_j$.
\end{lem}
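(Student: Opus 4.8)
The plan is to produce an essential thrice-punctured sphere in $M = \mathbb{S}^3\setminus\mathcal{L}$ whose three boundary slopes are the longitudes of $C_{ij}$, $C_i$, and $C_j$, and then to recognize its totally geodesic representative as a longitudinal disk via the classification in Theorem \ref{prop:BeltSumSummary}.

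To build the surface I would work in a signature link diagram and exploit the incidences forced by Definition \ref{defn:SigLink}: the crossing disks $D_{ij}$, $D_i$, $D_j$ are pairwise disjoint $N$-disks by Lemma \ref{lem:NdisksDisjoint}, and $C_{ij}$ shares a subarc of $K_i$ with $C_i$, a subarc of $K_j$ with $C_j$, while $C_i$ and $C_j$ share a subarc of $K_f$. Tubing $D_i$ to $D_{ij}$, $D_j$ to $D_{ij}$, and $D_i$ to $D_j$ along tubes in $M$ running parallel to these knot-circle subarcs (hence disjoint from $\mathcal{L}$), and then compressing away the resulting handle, yields an embedded planar surface $P_0\subset M$ with exactly three boundary components, one on each of $T_{C_{ij}}$, $T_{C_i}$, $T_{C_j}$; the disjointness in Lemma \ref{lem:NdisksDisjoint} is what keeps the tubed-and-compressed surface embedded. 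These three boundary curves are the longitudes of the respective crossing circles, since each of $D_{ij}$, $D_i$, $D_j$ contributed its longitudinal boundary slope while its knot-circle meridian slopes were consumed in the tubing. (Equivalently, the sublink $K_f\cup K_i\cup K_j\cup C_i\cup C_j\cup C_{ij}$ is a copy of $P_3$, and the spanning surface for those three crossing circles in the $P_3$-diagram can be taken disjoint from the remaining components of $\mathcal{L}$.)

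Next I would check $P_0$ is essential. It is not boundary-parallel because its three cusps are three distinct components of $\mathcal{L}$. It is incompressible and boundary-incompressible because a compression or boundary-compression would force the longitude of one of $C_{ij}$, $C_i$, $C_j$ to bound a disk in $M$; but such a longitude is isotopic in $\mathbb{S}^3$ to the corresponding crossing circle, which links the two knot circles it encircles, so it cannot bound a disk disjoint from $\mathcal{L}$. Since an essential thrice-punctured sphere in a hyperbolic $3$-manifold is isotopic to a unique totally geodesic one, $P_0$ is isotopic to a totally geodesic thrice-punctured sphere $D\subset M$ with the same cusps and the same boundary slopes. Now $D$ has no knot-circle puncture, so Lemma \ref{lem:Reflection3ps} shows $D$ is not a component of the reflection surface $R$; hence $D$ is a non-reflection thrice-punctured sphere and Theorem \ref{prop:BeltSumSummary} applies. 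A crossing disk has two knot-circle meridian punctures, and a singly-separated disk has its two meridian punctures on a single crossing circle, so the only possibility for $D$ — whose punctures are longitudes of the three distinct crossing circles $C_{ij}$, $C_i$, $C_j$ — is that it is a longitudinal disk with crossing circles $C_{ij}$, $C_i$, $C_j$, bounded by $C_{ij}$, as claimed.

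I expect the main obstacle to be the first step: carrying out the diagram construction and confirming that the resulting surface is embedded and carries the claimed longitudinal (rather than meridional) boundary slopes; in particular one must check that the handle created by the final tube really is compressible in $M$. Once an essential thrice-punctured sphere with these three slopes is in hand, the passage to a totally geodesic representative together with the classification of Theorem \ref{prop:BeltSumSummary} finishes the argument with essentially no further work.
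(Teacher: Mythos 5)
Your overall strategy --- produce an embedded essential thrice-punctured sphere whose three boundary slopes are the longitudes of $C_i$, $C_j$, $C_{ij}$, pass to its totally geodesic representative, and then read off from Theorem \ref{prop:BeltSumSummary} and Lemma \ref{lem:Reflection3ps} that it must be a longitudinal disk --- is sound, and the last two steps are essentially correct (modulo the remark below). But the first step, which you yourself flag as the main obstacle, is where the entire content of the lemma lives, and neither of your two constructions closes it. Tubing $D_i$, $D_j$, $D_{ij}$ pairwise along arcs of $K_i$, $K_j$, $K_f$ produces a genus-one surface with three boundary components, and you then need a compressing disk in $M$ for the handle. That is not a routine verification: the obvious candidate curves on the handle (the tube meridians, which are meridians of $K_i$, $K_j$, $K_f$, and the core curve running once around the $D_i$--$D_{ij}$--$D_j$ cycle) are not nullhomotopic in $M$, and the existence of \emph{some} compressible essential curve on the tubed surface is roughly equivalent to the existence of the longitudinal disk you are trying to build. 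Your parenthetical alternative --- that the sublink $K_f\cup K_i\cup K_j\cup C_i\cup C_j\cup C_{ij}$ is a $P_3$ whose spanning pair-of-pants ``can be taken disjoint from the remaining components of $\mathcal{L}$'' --- asserts exactly the nontrivial point: the other crossing circles of $\mathcal{C}$ linking $K_f$, and of $\mathcal{C_K}$ linking $K_i$ or $K_j$, sit in the regions that surface must pass through, and routing around them is where the signature-link structure has to be used. The paper does this directly: it chooses embedded arcs $\alpha_i\subset\mathcal{P}_i$, $\alpha_j\subset\mathcal{P}_j$, $\alpha_{ij}\subset\mathcal{P}_f$ joining the relevant crossing-circle punctures and disjoint from all crossing disks (possible because crossing disks meet each of these regions in disjoint arcs running to the boundary, so their complement is connected), closes these up with the top halves of $C_i$, $C_j$, $C_{ij}$ to a curve $\gamma$, and shows $\gamma$ bounds a disk $D_+$ in the upper handlebody $M^+$ because deleting crossing-disk neighborhoods leaves a ball; the longitudinal disk is then $D_+\cup D_-$ with $D_-$ its reflection. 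Some version of that argument is unavoidable, and your proposal does not supply it.

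Two smaller points. Your boundary-incompressibility argument is incomplete: a boundary-compression need not force a longitude to bound a disk; as in the paper's proof, gluing the $\partial$-compressing disk to half of the cut-open pair of pants yields an annulus from a second crossing circle to a curve on $T_i$, and one must separately rule out the case where that curve is essential on $T_i$ (there one gets an essential non-boundary-parallel annulus, contradicting hyperbolicity, rather than a disk). Finally, your homological argument for incompressibility (a crossing-circle longitude links a knot circle once, hence is nontrivial in $H_1(M)$ and bounds no disk) is fine and is in fact a slightly cleaner justification than the one in the paper.
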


\begin{proof}
Let $\mathcal{L}$ be a signature link with complement $M = \mathbb{S}^3\setminus \mathcal{L}$. Given a crossing circle $C_{ij}\in\mathcal{C_K}$, we will construct a longitudinal disk with punctures $C_i,C_j,C_{ij}$  by gluing two disks, which are essentially topological descriptions of the geodesic disks described in \cite{mrstz}. Afterwards, we will show this longitudinal disk is totally geodesic. 

A knot circle $K_i\in \mathcal{K}$ of a signature link $\mathcal{L}$ bounds two disks in the projection plane, and we define the \emph{inside} of $K_i$ to be the disk $\mathcal{P}_i$ that does not contain $K_f$.  Thus $\mathcal{P}_i$ is punctured once by $C_i$ and once by each crossing circle of $\mathcal{C_K}$ that links $K_i$.  

Now consider how crossing disks intersect $\mathcal{P}_i$.  Since all crossing circles in $\mathcal{L}$ link distinct knot circles, and since $K_i$ is the only knot circle puncture of $\mathcal{P}_i$, no crossing disk intersects $\mathcal{P}_i$ in a geodesic arc with both endpoints on $K_i$.  Crossing disks that intersect $\mathcal{P}_i$, then, do so in a geodesic joining a crossing circle puncture to the boundary curve $K_i$.  Further, since crossing disks are disjoint they intersect $\mathcal{P}_i$ in disjoint arcs (see Figure \ref{fig:In}$(a)$). The complement of crossing disks in $\mathcal{P}_i$ is then connected and there is an embedded arc, disjoint from crossing disks, between any two crossing circle punctures of $\mathcal{P}_i$.   

\begin{figure}[h]
\[
\begin{array}{ccc}
\includegraphics{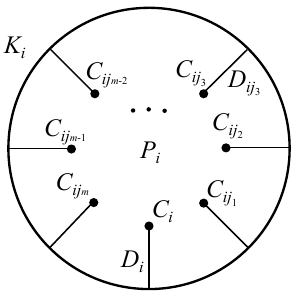} &\hspace{0.25in}& \includegraphics{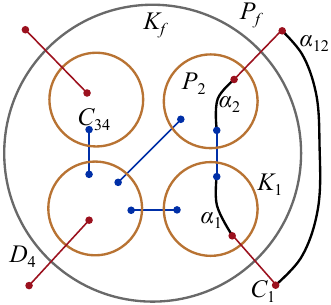}\\
(a) \textrm{ Crossing disks in }\mathcal{P}_i & &(b)\  \alpha\textrm{ curves for }D_{\ell}
\end{array}
\]
\caption{Crossing disks and a longitudinal disk intersecting the reflection surface.}
\label{fig:In}
\end{figure}

Similarly, the knot circle $K_f$ divides the projection plane into two topological disks, one punctured by the knot circles of $\mathcal{K}$ and the other by the crossing circles of $\mathcal{C}$. The \emph{outside} of $K_f$, denoted $\mathcal{P}_f$, refers to the disk punctured by the crossing circles of $\mathcal{C}$.  The argument of the previous paragraph shows that there is an embedded arc in $\mathcal{P}_f$ joining any two crossing circle punctures that is disjoint from crossing disks of $\mathcal{L}$.

Given a crossing circle $C_{ij}\in\mathcal{C_K}$ we construct a longitudinal disk $D_{\ell}$, with punctures $C_i, C_j, C_{ij}$, by gluing a disk $D_+$ above the reflection surface to its reflection $D_-$.  We begin by describing the boundary of $D_+$.  Let $\alpha_i\subset \mathcal{P}_i$ be an embedded arc disjoint from crossing disks that joins $C_i$ and $C_{ij}$ punctures, and define $\alpha_j$ similarly. Also let $\alpha_{ij}$ denote an embedded arc in $\mathcal{P}_f$ which is disjoint from crossing disks and joins the $C_i$ and $C_j$ punctures. Figure \ref{fig:In}$(b)$, for example, illustrates the $\alpha$ arcs corresponding to the crossing circle $C_{12}$ of the signature link in Figure \ref{fig:SigLink}.

The arcs $\alpha_i,\alpha_j,\alpha_{ij}$, together with the top halves of the crossing circles $C_i, C_j, C_{ij}$, form a simple closed curve $\gamma$ in $\mathbb{S}^3$.  Let $M^+$ be the region of $M$ above the reflection surface and we wish to show that $\gamma$ bounds a disk in $M^+$. First, $M^+$ is a handlebody, since it is a three-ball with  arcs removed for each crossing circle.  Further, the top half of each crossing disk is a meridional disk for each handle.  Thus removing an open neighborhood of each crossing disk from $M^+$ results in a three ball with $\gamma$ in its boundary.  The curve $\gamma$, then, bounds a disk $D_+$ in $M^+$.  The disk $D_+$ is disjoint from crossing disks and intersects the reflection surface along the $\alpha$ arcs in its boundary.  Let $D_-$ be the reflection of $D_+$, and let $D_{\ell} = D_+\cup D_-$.  Then the crossing circles $C_i, C_j, C_{ij}$ form the boundary of $D_{\ell}$ in $\mathbb{S}^3$, and the interior of $D_{\ell}$ is an embedded thrice-punctured sphere in $M$ with longitudinal punctures along the crossing circles $C_i, C_j, C_{ij}$, as desired.

To see that $D_{\ell}$ is totally geodesic it is enough to show that it is incompressible and boundary incompressible, by Theorem 3.1 of \cite{ad1}.  The proof given here is essentially that of \cite[Lemma 2.1]{P2011}, but slightly simpler because the punctures of $D_{\ell}$ are distinct.  Suppose $\alpha$ is a curve in $D_{\ell}$ that bounds a compressing disk $D\subset M\setminus D_{\ell}$.  Then $\alpha$ separates $D_{\ell}$ into two pieces, one of which contains a single puncture $C$ of $D_{\ell}$.  Thus $\alpha \cup C$ bound an annulus in $D_{\ell}$ whose union with $D$ is a boundary compressing disk for the crossing circle $C$, contradicting the fact that $M$ is hyperbolic. 

When discussing $\partial$-incompressibility it will be convenient to think of $M$ as the interior of a orientable, closed three-manifold $\overline{M}$ with torus boundary components.  In this case, $D_{\ell}$ is properly embedded in $\overline{M}$ with longitudinal boundary curves along $T_i,T_j,T_{ij}$, the boundary tori of $\overline{M}$ corresponding to cusps $C_i, C_j, C_{ij}$ of $M$. For convenience, and by an abuse of notation, we let $C_i, C_j, C_{ij}$ denote the boundary curves of $D_{\ell}$ as well.  

Now suppose that $D$ is a $\partial$-compressing disk for $D_{\ell}$. Then $\partial D = \alpha\cup \beta$ where $\alpha = D\cap D_{\ell}$ is an arc in $D_{\ell}$ with both endpoints on the same boundary curve, say $C_i$, and $\beta$ is an arc in $T_i$.  The arc $\alpha$ is isotopic to a separating geodesic of $D_{\ell}$, which decomposes $D_{\ell}$ into two annuli, and we let $A$ denote the one containing $C_j$.  The other boundary component of the annulus $A$ consists of the arc $\alpha$ together with a subarc of $C_i$, call it $\delta$. Gluing the $\partial$-compressing disk $D$ to $A$ along their intersection $\alpha$ yields another annulus $A\cup D$ with $C_j$ as one boundary component.  The other boundary component of $A\cup D$ is the simple closed curve $\beta\cup\delta$ on the boundary torus $T_i$.

If $\beta\cup\delta$ bounds a disk on $T_i$, a copy of it in $M$ caps off one boundary component of $A\cup D$, creating a $\partial$-compressing disk for $C_j$, which is impossible.  On the other hand, suppose $\beta\cup\delta$ is non-trivial on $T_i$.  Then $A\cup D$ is an incompressible annulus which is not boundary parallel since its boundary curves are on separate boundary components of $\overline{M}$.  Again, this contradicts the fact that $M$ is hyperbolic. 

Thus $D_{\ell}$ is incompressible and $\partial$-incompressible and it has a totally geodesic representative by \cite[Theorem 3.1]{ad1}.
\end{proof}

The remainder of this section is devoted to constructing a ``full-swap", which is a type-changing homeomorphism of the complement of a signature link $\mathcal{L}$.  Full-swaps, despite changing the types of some components of $\mathcal{L}$, will be shown to produce a link equivalent to $\mathcal{L}$. 

To begin, we define an $ml$-swap homeomorphism on a  a flat FAL complement $M = \mathbb{S}^{3} \setminus \mathcal{A}$ in terms of Dehn twists on a Hopf sublink $\mathcal{H} \subset \mathcal{A}$. In particular, the Hopf sublink will consist of a knot and crossing circle pair that are linked.  Zevenbergen, in \cite{Z2021}, first constructed a product that exchanged meridional and longitudinal slopes on each component of $\mathcal{H}$ and was able to analyze the effect on the other components of $\mathcal{A}$.  We review his construction here, then provide an alternative cut-and-paste construction which highlights how $ml$-swaps effect reflection surfaces and crossing disks.

Figure \ref{fig:DehnTwists} illustrates an $ml$-swap on the Hopf sublink determined by the knot- and crossing-circle pair labeled $\{K,C\}$.  Before describing the Dehn twists we observe some features of $\mathcal{A}$ relative to $\mathcal{H}$.  Since $\mathcal{H}= K\cup C$ is a Hopf link, the crossing circle $C$ must link distinct knot circles and we label the other one $J$.  Let $D$ be a crossing disk for $C$ and let $N$ be an open regular neighborhood of the cell complex $K\cup C \cup D$ in $\mathbb{S}^3$.  Then $N$ is an unknotted open solid torus, so $W = \mathbb{S}^3\setminus N$ is a closed unknotted solid torus in $\mathbb{S}^3$.  The neighborhood $N$ can be chosen so that $N\cap \mathcal{A}$ contains the components $K$ and $C$, and an arc of $J$ that intersects $D$ (see Figure \ref{fig:DehnTwists}$(a)$). Then $W$ contains all components of $\mathcal{A}\setminus (K\cup C\cup J)$ together with one arc of $J$.

\begin{figure}[h]
\begin{center}
\includegraphics{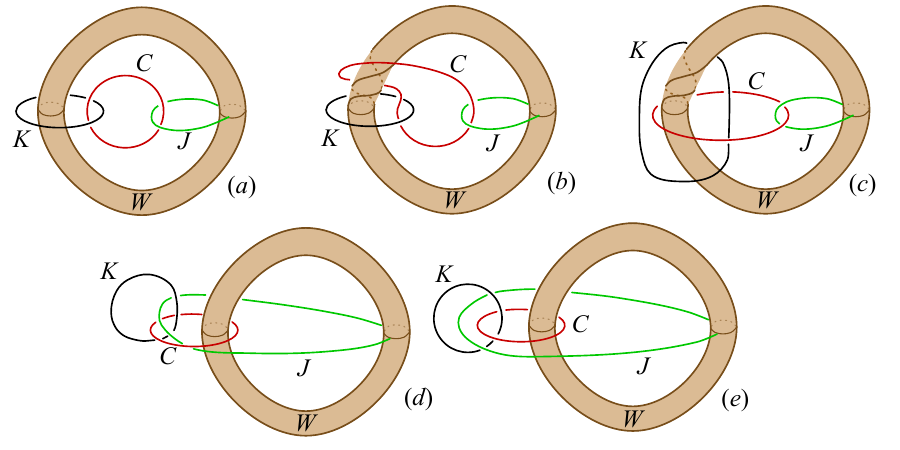}
\end{center}
\caption{An $ml$-swap as a product of Dehn twists}
\label{fig:DehnTwists}
\end{figure}

We now describe Zevenbergen's Dehn twists that make up an $ml$-swap and, abusing notation, we refer to components by their original labels throughout the Dehn twist process.  First perform a Dehn twist along $K$, which adds a full twist to $W$ and links $C$ around $W$ as in Figure \ref{fig:DehnTwists}$(b)$.  For simplicity, isotope $C$ so that it is flat, twisting $J$ and $K$ vertical in the process, to get Figure \ref{fig:DehnTwists}$(c)$.  Now perform a Dehn twist along $C$ that untwists $W$ so that it is returned to its original form.  This twist unlinks $K$ and $W$ while linking the arc of $J$ with both $W$ and $K$, as in Figure \ref{fig:DehnTwists}$(d)$.  Finally, perform a Dehn twist on $K$ that unlinks $C$ and $J$, obtaining the link depicted in Figure \ref{fig:DehnTwists}$(e)$.  This composition of Dehn twists is an $ml$-swap.  The result will not be another flat FAL in general, as observed in \cite{Z2021}, but we will see that performing multiple $ml$-swaps on signature links can produce a flat FAL.
 
Note that this product of Dehn twists is ultimately a local operation in the sense that changes to the link occur within a 3-ball containing the Hopf sublink. Moreover, if there are multiple Hopf sublinks that are contained in disjoint three balls, then the result of performing Dehn twists on each Hopf sublink is independent of the order in which they are done. With this background in place we make the following definition.

\begin{df}\label{defn:MLSwap}
An \textbf{$ml$-swap} on a Hopf sublink  $\mathcal{H}$ of a flat FAL $\mathcal{A}$ is the homeomorphism resulting from performing the Dehn twists just described above on the components of  $\mathcal{H}$.    A    \textbf{full-swap} on a signature link $\mathcal{L}$ is the composition of all $ml$-swaps on Hopf sublinks $K_i\cup C_i$, for $i=1, \ldots, n$.
\end{df}

Dehn twists provide a convenient description of an $ml$-swap, and we now consider an alternative description that highlights the effect of an $ml$-swap on the reflection surface and crossing disks involved. Figure \ref{fig:mlSwap}$(a)$ highlights a crossing circle $C$ and knot circle $K$ whose meridians and longitudes will be swapped.  The other knot circle linked by $C$ is included to emphasize how the reflection surface moves, but the rest of the link is not pictured.  The homeomorphism maps $C$ to the knot circle $C'$ and $K$ to the crossing circle $K'$ depicted in Figure \ref{fig:mlSwap}$(g)$.  Note that an $ml$-swap preserves the reflection surface, while moving the location of the component $R_0$ to that of $R_0'$. 
 
\begin{figure}[h]
\begin{center}
\includegraphics{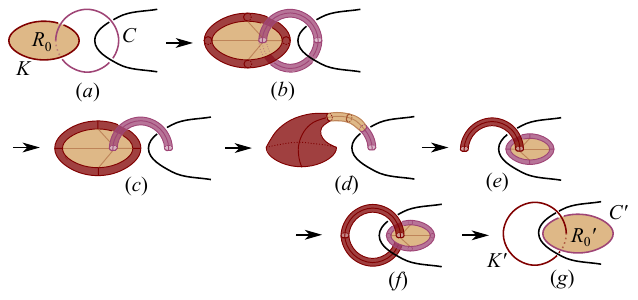}
\end{center}
\caption{An $ml$-swap}
\label{fig:mlSwap}
\end{figure}

Let us walk through the homeomorphism one step at a time.  In Figure \ref{fig:mlSwap}$(b)$, torus neighborhoods of $C$ and $K$ are pictured to emphasize what happens to meridians and longitudes.  Now slice the manifold along the reflection surface and consider the top half pictured in Figure \ref{fig:mlSwap}$(c)$, which is a handlebody $H_+$ (the homeomorphism on the bottom half is the reflection of that pictured).  In Figure \ref{fig:mlSwap}$(c)$, the half-tori around $K$ and $C$, as well as the copy of $R_0$, form three annuli. The middle row of Figure \ref{fig:mlSwap} depicts an isotopy sliding the three annuli along a handle of $H_+$.  Note that in the process meridians and longitudes of $K$ and $C$ are swapped.  The final row depicts regluing the isotoped $H_{\pm}$ along $R$, and finally removing the torus neighborhoods of $K'$ and $C'$.  In terms of peripheral structures on cusps of $M$, an $ml$-swap swaps meridians and longitudes on cusps corresponding to $C$ and $K$, and leaves the remaining structures the same.  

It is instructive to consider the image under an $ml$-swap of crossing disks punctured by the knot circle involved. The image of a crossing disk $D$ bounded by $C$ is the natural first choice to consider.  Figure \ref{fig:CrossDiskRot} illustrates that $D$ is sliced in half, each half rotated by ``a third", then re-glued along its non-separating geodesics. A similar rotation is done on the bottom half, so swapping the types of $C$ and $K$ has the effect of ``rotating" $D$.

\begin{figure}[h]
\begin{center}
\includegraphics{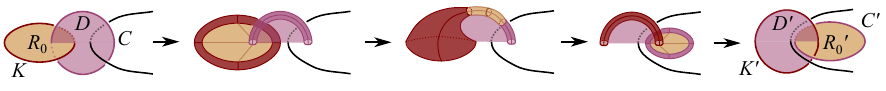}
\end{center}
\caption{Rotating a crossing disk in an $ml$-swap}
\label{fig:CrossDiskRot}
\end{figure}

Let $C^{\ast}$ be a crossing circle other than $C$ linking $K$. Let $D^{\ast}$ a crossing disk bounded by $C^{\ast}$, and consider the image of $D^{\ast}$ under the $ml$-swap. Since the $K$-puncture of $D^{\ast}$ becomes a crossing circle puncture, its image ${D^{\ast}}'$ has two crossing circle punctures. This is illustrated in Figure \ref{fig:CrossDiskSlide}. Since an $N$-disk in an FAL complement has either one or three crossing circle punctures, we know ${D^{\ast}}'$ is not a crossing disk, assuming the image of $M = \mathbb{S}^{3} \setminus \mathcal{A}$ under this $ml$-swap is a flat FAL complement.

\begin{figure}[h]
\begin{center}
\includegraphics{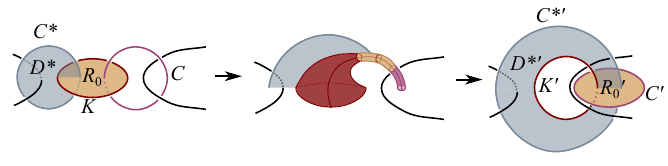}
\end{center}
\caption{Sliding a crossing disk in an $ml$-swap}
\label{fig:CrossDiskSlide}
\end{figure}

In fact, an $ml$-swap on a flat FAL can result in a link that is not an FAL (see \cite{Z2021}).  Performing $ml$-swaps on all possible $(C_i, K_i)$ pairs in a signature link (i.e. a full-swap), however, does produce another flat FAL.  Consider, for example, the simplest signature link: the chain $P_3$ in Figure \ref{fig:TwoMLswaps}$(a)$.  A full-swap homeomorphism $h_{f}$ is realized by performing successive $ml$-swaps on the Hopf sublinks $C_1\cup K_1$ and $C_2\cup K_2$, which yields the sequence of Figure \ref{fig:TwoMLswaps}.  

\begin{figure}[h]
\[
\begin{array}{ccc}
\includegraphics{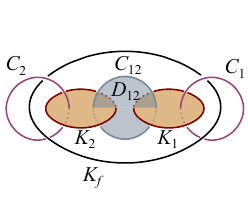}&\includegraphics{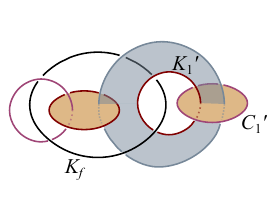}&\includegraphics{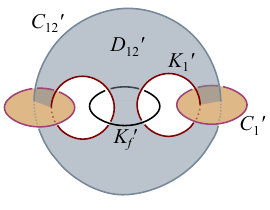}\\
(a)\textrm{ Original FAL} & (b)\textrm{ Non-FAL after one $ml$-swap} & (c)\textrm{ FAL isotopic to original}\\
&&\textrm{after two $ml$-swaps}
\end{array}
\]
\caption{Two $ml$-swaps}
\label{fig:TwoMLswaps}
\end{figure}

Note that the image of $P_3$ is isotopic to $P_3$. Proposition \ref{prop:MLswaps} will show that this holds more generally--that the image of a signature link under a full-swap is always isotopic to the original link.  While full-swaps produce equivalent links, they do interchange some crossing- and longitudinal-disks. For example, in Figure \ref{fig:TwoMLswaps}$(c)$, the image of the crossing disk $D_{12}$ is the longitudinal disk $D_{12}'$. In addition, the longitudinal disk with punctures $\{C_1,C_2,C_{12}\}$ of Figure \ref{fig:TwoMLswaps}$(a)$ becomes the crossing disk that $C_{12}'$ bounds (neither are pictured, but reading Figure \ref{fig:TwoMLswaps} backwards illustrates the change from longitudinal to crossing disk).  The proof of Proposition \ref{prop:MLswaps} will show that a full-swap on an arbitrary signature link interchanges crossing- and longitudinal-disks for every crossing circle in $\mathcal{C}_{\mathcal{K}}$.

The pragmatic diagram of Figure \ref{fig:SigLink}$(b)$ will be more convenient for the proof of Proposition \ref{prop:MLswaps}, so we assume $K_f$ is vertical and crossing circles of $\mathcal{C}$ are ordered from bottom to top.  Figure \ref{fig:TypeChangeHomeo} illustrates the effect of a full swap on such a diagram of a signature link.  Do note that $h_f$  \emph{does not} map the crossing disk for $C_{24}$ to that of $C'_{24}$ but to the longitudinal disk with punctures $K_2'$, $K_4'$, and $C'_{24}$. This happens on a more general scale and will be justified in the following proof. 


\begin{figure}[h]
\begin{center}
\includegraphics{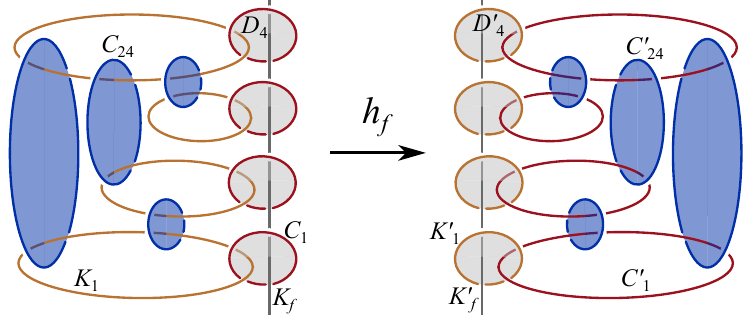}
\end{center}
\caption{The type-changing homeomorphism $h_{f}$ applied to a signature link}
\label{fig:TypeChangeHomeo}
\end{figure}

\begin{prop}\label{prop:MLswaps}
Let $\mathcal{L}$ be a signature link and let the homeomorphism $h_{f}$ designate the full swap on all Hopf sublinks $C_i\cup K_i$. Then $\mathcal{L}$ and $h_{f}\left(\mathcal{L}\right)$ are isotopic links. In particular, $h_{f}\left(\mathcal{L}\right)$ is a signature link.
\end{prop}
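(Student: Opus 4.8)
The plan is to analyze the effect of the full-swap $h_f$ diagrammatically on the pragmatic diagram of Figure \ref{fig:SigLink}$(b)$, tracking what happens to each of the four families of components $\{K_f\}$, $\mathcal{K}$, $\mathcal{C}$, $\mathcal{C_K}$, and then to exhibit an explicit ambient isotopy carrying $h_f(\mathcal{L})$ back to $\mathcal{L}$. First I would recall that an $ml$-swap is supported in a $3$-ball containing the relevant Hopf sublink $C_i\cup K_i$, that distinct Hopf sublinks $C_i\cup K_i$ (for $i=1,\dots,n$) lie in disjoint such $3$-balls, and hence that the $ml$-swaps commute and $h_f$ is well-defined independently of the order. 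I would then use the local model of Figure \ref{fig:mlSwap}: the $i$-th $ml$-swap swaps the roles of meridian and longitude on the cusps $C_i$ and $K_i$ (so $C_i$ becomes a knot circle $C_i'$ and $K_i$ becomes a crossing circle $K_i'$), leaves all other peripheral structures unchanged, and preserves the reflection surface. The upshot is that in $h_f(\mathcal{L})$ the new knot circles are $\{C_1',\dots,C_n'\}$ together with $K_f$ (unchanged, since $K_f\notin\bigcup_i(C_i\cup K_i)$), the new crossing circles linking $K_f$ are $\{K_1',\dots,K_n'\}$, and each $C_{ij}\in\mathcal{C_K}$ still links exactly two of the $K_i'$. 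That is precisely the combinatorial data of a signature link with $K_f$ again playing the distinguished role, which gives the ``in particular'' clause once the isotopy is established.

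The core step is the isotopy showing $h_f(\mathcal{L})\simeq\mathcal{L}$. Here I would argue as in the $P_3$ example of Figure \ref{fig:TwoMLswaps} and the general picture of Figure \ref{fig:TypeChangeHomeo}. Relabel: after the full swap, set $\widehat{K}_i := C_i'$ (the $i$-th knot circle inside $K_f$), $\widehat{C}_i := K_i'$ (the crossing circle of $\widehat{K}_i$ and $K_f$), and leave $K_f$ and each $C_{ij}$ fixed. One checks directly from Figure \ref{fig:mlSwap} that the linking pattern $\widehat{K}_i\leftrightarrow\widehat{C}_i\leftrightarrow K_f$ reproduces the original $K_i\leftrightarrow C_i\leftrightarrow K_f$ chain, and that $C_{ij}$ links $\widehat{K}_i,\widehat{K}_j$ exactly as it linked $K_i,K_j$; the only subtlety is that an $ml$-swap on $C_i\cup K_i$ drags an arc of each other knot circle linked through $C_i$ across the old crossing disk $D_i$ (Figures \ref{fig:CrossDiskRot}, \ref{fig:CrossDiskSlide}), but since the full-swap is performed on \emph{all} pairs $C_i\cup K_i$ simultaneously, these dragged arcs are themselves subsequently swapped back into the standard position, which is why a single $ml$-swap does not yield an FAL but the full composition does. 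I would make this precise by building the isotopy one crossing-circle ``column'' at a time along the vertical $K_f$ of the pragmatic diagram, using that the operations in disjoint columns are independent. Simultaneously I would record the fate of crossing disks: $h_f$ sends the crossing disk $D_{ij}$ of $C_{ij}$ to a thrice-punctured sphere with punctures on $\widehat{K}_i,\widehat{K}_j,C_{ij}$ having longitudinal $C_{ij}$-boundary, i.e.\ to the longitudinal disk of Lemma \ref{lem:SigLong}, and conversely sends that longitudinal disk to the crossing disk of $C_{ij}'=C_{ij}$ in the new structure — this is the "interchanges crossing- and longitudinal-disks for every crossing circle in $\mathcal{C_K}$" assertion, and it follows from the local picture of Figure \ref{fig:CrossDiskSlide} together with the fact that after all swaps the two $K$-punctures of $D_{ij}$ have become longitudinal crossing-circle punctures.

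The main obstacle I anticipate is not any deep argument but the bookkeeping of the isotopy in full generality: one must verify that after performing all $n$ local $ml$-swaps the global picture really does simplify to $\mathcal{L}$ (with the relabeling above) and that no residual linking or clasping is left over between the dragged arcs of different knot circles. The cleanest way to handle this, which I would adopt, is to set up the isotopy columnwise and appeal to the locality/commutativity of the $ml$-swaps so that the verification reduces to the single-column case — essentially the $P_3$ computation of Figure \ref{fig:TwoMLswaps} — plus the observation that distinct columns interact only through the crossing circles $C_{ij}\in\mathcal{C_K}$, whose boundary behavior has already been tracked. Once that reduction is in place, each columnwise check is a short diagrammatic isotopy of the type illustrated in Figures \ref{fig:TwoMLswaps} and \ref{fig:TypeChangeHomeo}, and assembling them proves $h_f(\mathcal{L})\simeq\mathcal{L}$; the signature-link structure of $h_f(\mathcal{L})$ then follows because an isotopic copy of a signature link is a signature link.
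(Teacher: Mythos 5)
Your overall strategy matches the paper's: exploit the locality and commutativity of the $ml$-swaps, track the fate of the crossing and longitudinal disks attached to each $C_{ij}\in\mathcal{C_K}$, and then exhibit an explicit ambient isotopy in the pragmatic diagram. Two differences in execution are worth recording. First, the ``bookkeeping of dragged arcs'' that you identify as the main obstacle is exactly what the paper's argument is built to avoid: since every component of $\mathcal{L}$ is unknotted and links the twisting components at most once, the Dehn twists preserve unknottedness and the canonical $\mathbb{S}^3$-peripheral structure on every component except $K_i$ and $C_i$, where meridian and longitude are exchanged. One then reads off the images of the disk pair $\{D_{ij},D_{ij}^{\ell}\}$ purely from their punctures; in particular the image of the longitudinal disk $D_{ij}^{\ell}$ of Lemma \ref{lem:SigLong} is an embedded disk bounded by $C_{ij}'$ and punctured once by each of $C_i'$ and $C_j'$, which immediately forces $C_{ij}'$ to link only $C_i'$ and $C_j'$ --- no columnwise verification of residual clasping between columns is required. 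Second, the paper's closing isotopy is a single $180^{\circ}$ rotation of $\mathbb{S}^3$ about the $K_f$-axis of the pragmatic diagram rather than a column-by-column assembly. Finally, watch your labels: after the full swap $C_{ij}'$ links the new \emph{knot} circles $C_i'$ and $C_j'$, not the new crossing circles $K_i'$ as your first paragraph asserts, and the image of the crossing disk $D_{ij}$ is punctured by $K_i'$, $K_j'$ and $C_{ij}'$ (all crossing circles of the new structure, hence a longitudinal disk), while it is the image of $D_{ij}^{\ell}$ that is the new crossing disk punctured by $\widehat{K}_i=C_i'$ and $\widehat{K}_j=C_j'$; your description of the punctures of $h_f(D_{ij})$ has these interchanged, which is precisely the kind of slip the peripheral-structure argument is designed to prevent.
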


\begin{proof}
Let $\mathcal{L} = \{K_f\} \cup \mathcal{K}\cup\mathcal{C}\cup\mathcal{C_K}$ be a signature link with  $\mathcal{K}$ and $\mathcal{C}$ each containing $n$ components. Since each ml-swap maps a link in $\mathbb{S}^{3}$ to a link in $\mathbb{S}^{3}$, we have that $h_{f}\left(\mathcal{L}\right)$ is a link in $\mathbb{S}^{3}$ by construction. We will  describe how to build $h_{f}(\mathcal{L})$ by performing a full-swap on the pragmatic diagram of $\mathcal{L}$, where $K_{f}$ corresponds with  the z-axis  and so that each $C_j$ is contained in a vertical translate of the $xy$-plane at height $z=j$, as depicted on the left side of Figure \ref{fig:TypeChangeHomeo}.   Here, the $yz$-plane corresponds with the projection plane for an FAL diagram of $\mathcal{L}$ with each $C_{ij} \in \mathcal{C_K}$ linking only $K_i$ and $K_j$ and meeting the $yz$-plane orthogonally. 

Now, consider the sublink $\mathcal{L}_s  = \{K_f\} \cup \mathcal{K}\cup\mathcal{C}$ and its image $h_{f}(\mathcal{L}_s)$. Recall that $h_{f}$ is the composition of $n$ ml-swaps, one performed on each Hopf sublink $(C_i, K_i) \in \mathcal{C} \times \mathcal{K}$ for $i =1, \ldots, n$; see Figure \ref{fig:mlSwap} for the local picture of a single ml-swap. Then $h_{f}(\mathcal{L}_s)$ is constructed from  $\mathcal{L}_s$ by keeping $K_f$ fixed as the $z$-axis, while each Hopf sublink $h_{f}(K_j \cup C_j)$ links $K_{f}' = h_{f}(K_f)$ via $K_{j}' = h_{f}(K_j)$. In addition, each $K_{j}'$ is contained in the vertical translate of the $xy$-plane at height $z=j$, as depicted on the right in Figure \ref{fig:TypeChangeHomeo}.

We will now show that the images of the crossing circles in $\mathcal{C}_{\mathcal{K}}$ under $h_f$ are unlinked unknots, which will help us determine how $h_{f}(\mathcal{C}_{\mathcal{K}})$ behaves. Every component of $\mathcal{L}$ is unknotted, and links those components used in the Dehn twists of an $ml$-swap at most once.  In this situation, the Dehn twists never knot an unknotted component so the image of each component in $\mathcal{L}$ is an unknot as well. Further, an $ml$-swap does not link two crossing circles of $\mathcal{C_K}$ because the linking introduced by the first Dehn twist along $K$ in Figure \ref{fig:DehnTwists} is undone by the following twist along $C$. In addition, since the ml-swaps that comprise a full-swap occur in disjoint $3$-balls, the same applies to a full-swap.

Now unknots in $\mathbb{S}^3$ have a canonical peripheral structure in which a meridian links the component once and a longitude bounds an embedded disk in its complement.  An $ml$-swap preserves this $\mathbb{S}^3$-peripheral structure on all components of $\mathcal{L}$ except $K$ and $C$, for which it swaps meridians and longitudes.  This observation allows us to discuss the \emph{topology} of the images of thrice-punctured spheres under a full swap by analyzing images of their punctures.

A crossing disk $D_{ij}$ has a longitudinal puncture along $C_{ij}$ and meridional punctures along $K_i$ and $K_j$.  Since meridians of $K_i, K_j$ map to $\mathbb{S}^3$-longitudes of $K_i', K_j'$, the image $D_{ij}'$ of $D_{ij}$ under a full swap is a thrice-punctured sphere with longitudinal punctures along each of $C_{ij}'$, $K_i'$ and $K_j'$.  Similarly, the longitudinal disk $D_{ij}^{\ell}$ (guaranteed by Lemma \ref{lem:SigLong}) has longitudinal punctures along $C_{ij}$, $C_i$ and $C_j$.  A full swap maps longitudes of $C_i$ and $C_j$ to meridians in the $\mathbb{S}^3$-peripheral structure of $C_i'$ and $C_j'$, so ${D_{ij}^{\ell}}'$ has a $\mathbb{S}^3$-longitudinal puncture along $C_{ij}'$, and $\mathbb{S}^3$-meridional punctures along $C_i'$ and $C_j'$.  The component $C_{ij}'$, then, bounds an embedded disk in $\mathbb{S}^3$ punctured once by each of $C_i'$ and $C_j'$.  This implies $C_{ij}'$ links only $C_i'$ and $C_j'$.


Using these representatives for $\mathcal{L}$ and $h_{f}(\mathcal{L})$, we see that a rotation along the $z$-axis by $180^{\circ}$ provides the necessary isotopy between $\mathcal{L}$ and $h_{f}(\mathcal{L})$ in $\mathbb{R}^{3} \cup \{ \infty \} \cong \mathbb{S}^{3}$.   Furthermore, $h_{f}(\mathcal{L})$ is a signature link $\{K_{f}'\} \cup \mathcal{K}' \cup \mathcal{C}' \cup \mathcal{C}_{\mathcal{K}}'$, where $K_{f}' = h_{f}(K_{f})$, $\mathcal{K}' = h_{f}(\mathcal{C})$, $\mathcal{C}' = h_{f}(\mathcal{K})$, and $\mathcal{C}_{\mathcal{K}}' = h_{f}(\mathcal{C}_{\mathcal{K}})$. \end{proof}

\section{Separating Sets}
\label{subsec:SepSets}

In this section, we will utilize two important subsets of thrice-punctured spheres whose removal separates an FAL complement: separating pairs and separating quadruples. The behavior of type-changing homeomorphisms between flat FAL complements is significantly restricted by the existence of separating sets.

Separating pairs were introduced in Section \ref{sec:FALs} as a pair of disjoint thrice-punctured spheres whose union separates an FAL complement. Theorem \ref{thm:SepPair} from \cite{mrstz} was also introduced in that section, which states that a pair of thrice-punctured spheres $\{ S_1, S_2 \}$  is a separating pair if and only if each is either a crossing disk or a singly-separated disk and their longitudinal slopes coincide.




We now introduce a particular type of separating set consisting of four thrice-punctured spheres.  Let $D$ be a longitudinal disk with longitudinal punctures along the crossing circles $C_1,C_2,C_3$, and let $D_i$ be a crossing disk with crossing circle puncture $C_i$.  Then the set $Q=\{D,D_1,D_2,D_3\}$ is a separating set of four thrice-punctured spheres. The sets we are concerned with have one additional property.

\begin{df}\label{defn:SepQuad}
Let $D$ be a longitudinal disk in a flat FAL complement $M$ with crossing circle punctures $C_1, C_2$, and $C_3$ that bound crossing disks $D_1,D_2$ and $D_3$.  Then $Q = \{D,D_1,D_2,D_3\}$ is a \textbf{separating quadruple} if each crossing disk $D_i$ is punctured by distinct knot circles. 
\end{df}

The signature links of Section \ref{Sec:SigLinks} contain separating quadruples. To see this, note that in a signature link every crossing circle links distinct knot circles. Moreover, Lemma \ref{lem:SigLong} guarantees the existence of a longitudinal disk $D^{\ell}_{ij}$ for every $C_{ij} \in \mathcal{C}_{\mathcal{K}}$. Thus, each triple $(C_i, C_j, C_{ij})$ in a signature link generates a separating quadruple $Q_{ij}$.  The remark below will show that $Q_{ij}$ is unique up to a choice of crossing disks.

We introduce some terminology. A general separating quadruple $Q$ is illustrated in Figure \ref{fig:SeparatingQuadruples}, which depicts only those components which puncture $Q$. In a flat FAL knot circles cannot cross each other, so knot circles puncture adjacent crossing disks. Let $K_i$ denote the knot circle puncturing $Q$ that is \textbf{opposite} the crossing circle $C_i$ in the sense that they are not linked. By an abuse of terminology, a \textbf{component of (or in) $Q$} will refer to components that puncture disks in $Q$. 

\textbf{Remark:} We also point out that two separating quadruples with the same crossing circle punctures have the same knot circle punctures and longitudinal disk.   Suppose $Q$ and $Q'$ are separating quadruples with the same crossing circle punctures.  Lemma 4.2 of \cite{mrstz} shows that there is at most one longitudinal disk containing any two given crossing circle punctures, let alone three, so $Q$ and $Q'$ have the same longitudinal disk.  Now recall that crossing circles in a separating quadruple link distinct knot circles, and only those components of $\mathcal{A}$.  This implies that every disk they bound is punctured by the same two knot circles.  Hence two separating quadruples sharing the same crossing circle punctures can differ only in their crossing disks.


\begin{figure}[h]
\begin{center}
\includegraphics{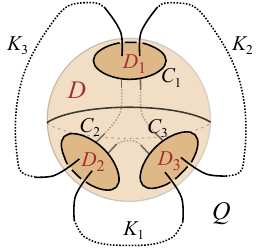}
\end{center}
\caption{A separating quadruple}
\label{fig:SeparatingQuadruples}
\end{figure}

We begin with the following lemma which immediately leads to a special case, Corollary \ref{cor:TwoBridge}, of our main result. This lemma will also be essential for establishing some technical results on how separating quadruples can behave under a type-changing homeomorphism. 

\begin{lem}\label{lem:SameKCPSameType}
Let $M=\mathbb{S}^3\setminus \mathcal{A}$ flat FAL complement with a unique reflection surface, and let $C$ a crossing circle in $\mathcal{A}$ that links the same knot circle $K$ twice.  If $h:M\to M'$ is a homeomorphism of flat FAL complements, then $h$ preserves the types of both $C$ and $K$.
\end{lem}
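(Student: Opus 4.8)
The plan is to use the distinguishing geometric feature of a crossing circle $C$ that links a single knot circle $K$ twice: the crossing disk $D$ bounded by $C$ is punctured by $K$ in \emph{both} of its knot-circle punctures, so $D$ has exactly two distinct cusps among its three punctures. First I would observe that the induced isometry $\rho_h : M \to M'$ carries the reflection surface $R$ of $M$ to the reflection surface $R'$ of $M'$ (using Corollary \ref{thm:uniquereflectionhomeo} and the uniqueness hypothesis, exactly as in the ``Transition'' section). Then $D$ is a non-reflection thrice-punctured sphere relative to $R$ (it is a crossing disk in the sense of Definition \ref{df:NR3PS}), and its image $h(D)$ is a non-reflection thrice-punctured sphere relative to $R'$, hence by Theorem \ref{prop:BeltSumSummary} it is a crossing, longitudinal, or singly-separated disk.

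The key step is to rule out all possibilities except that $h(D)$ is again a crossing disk bounded by the image of $C$, with the image of $K$ as its (repeated) knot-circle puncture. The topological invariant I would exploit is the number of distinct cusps met by the thrice-punctured sphere. The disk $D$ meets exactly two cusps of $M$ (the cusp of $C$ once and the cusp of $K$ twice), so $h(D)$ meets exactly two cusps of $M'$. Among the three types in Definition \ref{df:NR3PS}, a longitudinal disk meets three distinct crossing-circle cusps, and a crossing disk in general meets three cusps (one crossing, two knot) \emph{unless} its two knot-circle punctures lie on the same cusp; a singly-separated disk meets two cusps (one crossing circle $C$ once, another crossing circle $C'$ twice). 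So the ``two distinct cusps'' condition already eliminates longitudinal disks and forces $h(D)$ to be either a singly-separated disk or a crossing disk whose two knot-circle punctures coincide on one cusp. To eliminate the singly-separated case, I would use the intersection pattern with the reflection surface together with the puncture types: the reflection surface $R'$ meets a singly-separated disk in one \emph{separating} geodesic, whereas it meets a crossing disk in its three \emph{non-separating} geodesics, and the separating/non-separating dichotomy is a topological property preserved by $h$ (as emphasized in the discussion following Theorem \ref{prop:BeltSumSummary}); since $R\cap D$ consists of the three non-separating geodesics of $D$, the intersection $R'\cap h(D)$ consists of non-separating geodesics, so $h(D)$ cannot be singly-separated. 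Hence $h(D)$ is a crossing disk of $M'$ with both knot-circle punctures on a single cusp, call it $K'$, and with crossing-circle puncture on a cusp $C'$.

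It then remains to conclude that $C' = h(C)$ is a crossing circle and $K' = h(K)$ is a knot circle, so that $h$ preserves the type of both. Because $h(D)$ is a crossing disk of $M'$, its crossing-circle puncture lies on a crossing circle of $\mathcal{A}'$ and its knot-circle punctures lie on a knot circle of $\mathcal{A}'$; by definition these punctures lie on the boundary tori of the cusps $h(T_C)$ and $h(T_K)$, so $h(C)$ is a crossing circle and $h(K)$ is a knot circle of $M'$. This is exactly the claim. I expect the main obstacle to be the careful bookkeeping of which cusps the three punctures of $h(D)$ land on — one must be sure that ``$D$ meets exactly two cusps'' is genuinely a homeomorphism invariant and that the type characterizations in Definition \ref{df:NR3PS} are phrased in terms that let one read off the cusp count and the separating/non-separating data after applying $h$; the cleanest route is to combine the cusp count (which kills longitudinal disks) with the non-separating intersection data (which kills singly-separated disks), leaving only the desired crossing-disk conclusion.
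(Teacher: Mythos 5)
Your proposal is correct and follows essentially the same route as the paper: both reduce to applying the classification of non-reflection thrice-punctured spheres (Theorem \ref{prop:BeltSumSummary}) to $h(D)$, using that $h$ carries the unique reflection surface of $M$ to the unique reflection surface of $M'$ and that $D$ is punctured twice by the single cusp $K$. The paper organizes the elimination by noting that no non-reflection disk has two longitudinal punctures on the same crossing circle (which directly rules out $h(K)$ being a crossing circle), while you eliminate the longitudinal and singly-separated cases via the cusp count and the separating/non-separating intersection with $R'$ — equivalent bookkeeping of the same classification.
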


\begin{proof}
Let $C$ be a crossing circle in a flat FAL complement $M$ containing a unique reflection surface $R$. Further, let $C$ bound a crossing disk $D$ which is punctured twice by the same knot circle $K$, and suppose $h:M\to M'$ is a homeomorphism of flat FAL complements.  The assumption that the reflection surface $R\subset M$ is unique implies that $M'$ has a unique reflection surface $R'$, and that $R' = h(R)$.

We first prove that $K' = h(K)$ must be a knot circle in $M'$.  Suppose, on the contrary, that $K'$ is a crossing circle in $M'$. Then meridians of $K$ map to longitudes of $K'$ because both are perpendicular to reflection surfaces and $R' = h(R)$.  The fact that $R' = h(R)$ further implies that $D' = h(D)$ is a non-reflection thrice-punctured sphere in $M'$, since $D$ is in $M$.  The two meridional $K$-punctures of $D$ map to two longitudinal $K'$-punctures of $D'$. However, by Theorem \ref{prop:BeltSumSummary}, non-reflection thrice-punctured spheres in a flat FAL complement do not have two longitudinal punctures along the same crossing circle. So, $K'$ must be a knot circle in $M'$. 

Since $K'$ is a knot circle puncturing the disk $D'$ twice in $M'$, the remaining puncture of $D'$ must be an $M'$ crossing circle by the characterization of Theorem \ref{prop:BeltSumSummary}. The remaining puncture of $D'$ is the image $C' = h(C)$ of $C$, and $h$ preserves the types of both $C$ and $K$.
\end{proof}

If a flat FAL (whose complement has  a unique reflection surface) has a single knot circle, as is the case for FALs of two-bridge links with an even number of twist regions (other than the Borromean rings, which has three reflection surfaces), then every crossing circle links the same knot circle twice. Thus any homeomorphism preserves the type of all components and their peripheral structures and can be realized by an isotopy of $\mathbb{S}^{3}$. This observation leads to the following immediate corollary of Lemma \ref{lem:SameKCPSameType}:

\begin{cor} \label{cor:TwoBridge}
A flat FAL with a single knot circle is determined by its complement among all flat FALs. In particular, flat FALs corresponding to $2$-bridge links with an even number of twists are determined by their complements. 
\end{cor}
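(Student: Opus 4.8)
The plan is to deduce the corollary directly from Lemma~\ref{lem:SameKCPSameType} by reducing everything to peripheral structures. Suppose $\mathcal{A}$ is a flat FAL whose complement $M = \mathbb{S}^3 \setminus \mathcal{A}$ has a unique reflection surface and exactly one knot circle $K$. Let $\mathcal{A}'$ be a flat FAL with $\mathbb{S}^3 \setminus \mathcal{A}' = M'$ homeomorphic to $M$, say via $h : M \to M'$, which induces an isometry $\rho_h$ by Mostow--Prasad rigidity; since $M$ has a unique reflection surface, so does $M'$, and $\rho_h(R) = R'$. First I would observe that in this situation \emph{every} crossing circle of $\mathcal{A}$ links the single knot circle $K$ twice, since every crossing circle of a flat FAL links two (not necessarily distinct) knot circles and $K$ is the only one available. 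Therefore Lemma~\ref{lem:SameKCPSameType} applies to every crossing circle $C$ of $\mathcal{A}$: the homeomorphism $h$ preserves the type of each $C$ and of $K$. In other words, $h$ carries $R$-knot circles to $R'$-knot circles and $R$-crossing circles to $R'$-crossing circles.

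Next I would upgrade ``type-preserving'' to ``peripheral-structure-preserving.'' By Proposition~\ref{prop:RSfeatures}, the reflection surface $R$ intersects each knot-circle cusp torus in a pair of longitudes and each crossing-circle cusp torus in a pair of meridians, and this determines the peripheral structure on every cusp; the same holds for $R'$ in $M'$. Since $\rho_h(R) = R'$ and $\rho_h$ maps each cusp torus $T_L$ of $M$ to the cusp torus $T_{L'}$ of $M'$ for the component $L' = \rho_h(L)$ of the same type, we get $\rho_h(R \cap T_L) = R' \cap T_{L'}$. Because a cusp torus of a flat FAL is rectangular (\cite[Lemma 2.3]{FP2007}), the slope orthogonal to $R \cap T_L$ is the remaining generator, so $\rho_h$ also matches the orthogonal slopes; hence $\rho_h$ (and so $h$) preserves the full peripheral structure on every component. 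A homeomorphism of link complements that preserves meridians of every component extends over the cusps to a homeomorphism of pairs $(\mathbb{S}^3, \mathcal{A}) \to (\mathbb{S}^3, \mathcal{A}')$; after composing with a reflection if necessary we may take it orientation-preserving, so $\mathcal{A}$ and $\mathcal{A}'$ are equivalent links. Thus $\mathcal{A}$ is determined by its complement among flat FALs.

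Finally, for the second sentence I would identify the relevant diagrammatic class. A $2$-bridge link $L$ with every twist region containing an even number of crossings has a rational tangle description; fully augmenting each twist region and flattening produces a flat FAL $\mathcal{A}$ whose single knot circle is the image of $L$ itself (the augmentation adds crossing circles but no new knot circles, and the flattened $2$-bridge link has one underlying circle through all the twist regions). One excludes the Borromean rings case since it has three reflection surfaces and is handled separately by Theorem~\ref{thm:MultipleRS}; every other such FAL has a unique reflection surface, so the first part of the corollary applies verbatim.

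I do not expect a genuine obstacle here: the work is entirely in Lemma~\ref{lem:SameKCPSameType}, and the only thing to be careful about is the passage from ``preserves type of each component'' to ``preserves the peripheral structure,'' which is where the rectangularity of the cusp tori and Proposition~\ref{prop:RSfeatures} are needed. The mildest subtlety is making sure that ``one knot circle'' really does force every crossing circle to self-link that knot circle (rather than, say, linking a knot circle and... there being no other option), but this is immediate from the definition of a flat FAL.
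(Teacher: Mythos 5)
Your proposal is correct and follows essentially the same route as the paper: the whole argument rests on the observation that a single knot circle forces every crossing circle to link it twice, so Lemma~\ref{lem:SameKCPSameType} makes every homeomorphism type-preserving, hence peripheral-structure-preserving, hence extendable to an isotopy of $\mathbb{S}^3$, with the Borromean rings (the only one-knot-circle flat FAL with multiple reflection surfaces) handled separately by the earlier classification. Your write-up merely spells out the type-to-peripheral-structure upgrade and the extension over the cusps in more detail than the paper's two-sentence proof does.
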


\begin{proof}
The only flat FAL with one knot circle and multiple reflection surfaces is the Borromean rings, which we've already seen to be determined by its complement.  The unique reflection surface case follows from Lemma \ref{lem:SameKCPSameType}.
\end{proof}

\begin{rmk}
We would like to emphasize the necessity of the unique reflection surface hypothesis in Lemma \ref{lem:SameKCPSameType}. For instance, as noted in Theorem \ref{thm:3rs}, the Borromean rings complement admits three distinct reflection surfaces and the flat FAL diagram for this link has two crossing circles and a single knot circle that links each crossing circle twice. However, there exists homeomorphisms of the Borromean rings complement  where both a crossing circle and the knot circle switch types. 

\end{rmk}

We now prove a technical lemma considering homeomorphisms that change a crossing disk to a longitudinal disk, or vice-versa. It turns out that such a homeomorphism $h$ implies the existence of a separating quadruple $Q$, and the action of $h$ on $Q$ can be made quite precise. 

\begin{lem}\label{lem:SepQuad}
Let $M$ and $M'$ be homeomorphic flat FAL complements with unique reflection surfaces, and $h:M\to M'$ a homeomorphism that changes the type of an $N$-disk $D\subset M$. Then  
\begin{enumerate}[label=\roman*.]
\item The disk $D$ is part of a separating quadruple $Q$ in $M$ whose image $Q'=h(Q)$ in $M'$ is also a separating quadruple,
\item The homeomorphism $h$ fixes the types of exactly one opposite knot- and crossing-circle pair $K_f,C_f$ in $Q$.
\item The longitudinal disk and exactly one crossing disk in $Q$ change type under $h$.  These disks share the crossing circle puncture $C_f$.
\end{enumerate}
\end{lem}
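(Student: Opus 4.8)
The plan is to leverage the classification of non-reflection thrice-punctured spheres (Theorem \ref{prop:BeltSumSummary}) together with the characterization of separating pairs (Theorem \ref{thm:SepPair}) and the uniqueness of reflection surfaces. Since $h$ changes the type of an $N$-disk $D$, and type changes of $N$-disks (when the image of the reflection surface is again a reflection surface, which it is by uniqueness) can only be between crossing disks and longitudinal disks, we may assume without loss of generality that $D$ is a crossing disk in $M$ whose image $D'=h(D)$ is a longitudinal disk in $M'$. Write the punctures of $D$ as one crossing circle longitude along $C_f$ and two knot circle meridians along knot circles $K_a,K_b$ (a priori possibly equal, but Lemma \ref{lem:SameKCPSameType} forces $K_a\ne K_b$: if $D$ were punctured twice by the same knot circle, $h$ would preserve the type of $D$, contradicting our hypothesis). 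Since $D'$ is a longitudinal disk, its punctures are longitudes of three distinct crossing circles $C_f' , C_a', C_b'$; pulling back, $K_a$ and $K_b$ are crossing circles in the preimage sense — that is, $h$ changes the types of $K_a$ and $K_b$ to crossing circles $C_a' = h(K_a)$, $C_b' = h(K_b)$, while $C_f' = h(C_f)$ remains a crossing circle. So $h$ preserves the type of $C_f$.

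Next I would build the separating quadruple $Q$ in $M$. The three crossing circles $C_f', C_a', C_b'$ puncture the longitudinal disk $D'$ in $M'$, and each bounds a crossing disk in $M'$; pulling these back gives three thrice-punctured spheres $D$ (already have it for $C_f$, since $D$ is the crossing disk of $C_f$... wait, we need the crossing disk of $C_f$ in $M$, call it $D_f$) — more carefully: in $M'$, each of $C_f', C_a', C_b'$ bounds a crossing disk; call them $E_f', E_a', E_b'$; set $E_f = h^{-1}(E_f')$, etc. Each $E_i'$ is a crossing disk in $M'$, so (since $h^{-1}$ also preserves reflection surfaces by uniqueness) each $E_i$ is either a crossing disk or a longitudinal disk in $M$. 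One checks that $E_f$ — which shares the crossing circle $C_f$ with $D$, and which must be disjoint from $D$ in $M$ since $E_f'$ and $D'$ are disjoint (they form a separating pair in $M'$ by Theorem \ref{thm:SepPair}, as $D'$ is a longitudinal disk and $E_f'$ a crossing disk sharing the longitudinal slope $C_f'$) — together with $D$ gives the needed structure. The crossing circles $C_f$ (in $M$), and $K_a,K_b$ are the three crossing circles puncturing the longitudinal disk; wait, I need to re-examine which disk is the longitudinal disk of the quadruple $Q$ in $M$: it should be $D$'s partner. Since $D$ is a crossing disk in $M$ with crossing circle $C_f$ and knot circles $K_a,K_b$, and $K_a,K_b,C_f$ become the three crossing circles of the longitudinal disk $D'$ in $M'$, the quadruple $Q = \{D_\ell, D_a, D_b, D_f\}$ in $M$ should have longitudinal disk $D_\ell = h^{-1}(D')\,$... but $h^{-1}(D') = D$. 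So $D$ itself plays the role of the longitudinal disk?  No — $D$ is a crossing disk in $M$. The resolution: the separating quadruple $Q$ in $M$ is the preimage of a separating quadruple $Q'$ in $M'$ built around $D'$. Namely $Q' = \{D', E_f', E_a', E_b'\}$ where $D'$ is the longitudinal disk and $E_f', E_a', E_b'$ are crossing disks for $C_f', C_a', C_b'$; one verifies each $E_i'$ is punctured by two distinct knot circles (using that $C_a' = h(K_a)$, $C_b' = h(K_b)$, $C_f' = h(C_f)$ and tracing which knot circles can appear — this uses Lemma \ref{lem:SameKCPSameType} again to rule out a crossing circle linking one knot circle twice), so $Q'$ is a separating quadruple per Definition \ref{defn:SepQuad}. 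Then $Q = h^{-1}(Q') = \{D, E_f, E_a, E_b\}$; since $h^{-1}$ preserves reflection surfaces, $Q$ is a separating quadruple in $M$ (separating is topological; that the types work out — $E_f$ a crossing disk, $E_a, E_b$ crossing disks, and the fourth disk $D$ the longitudinal disk — follows from reading Definition \ref{df:NR3PS} backwards through $h^{-1}$, with the key point being that $D$ must be the longitudinal disk of $Q$). This establishes (i).

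For (ii), the opposite pairs in $Q$: $C_f$ is opposite some knot circle, and the analysis above shows $h$ fixes the type of $C_f$ and fixes the type of the knot circle opposite it (the one puncturing $E_a, E_b$ but not linked to $C_f$), while $h$ changes the types of the other two crossing circles $K_a, K_b$ and correspondingly the knot circles opposite them. That exactly one opposite knot-/crossing-circle pair is fixed uses that $Q$ has exactly three opposite pairs and we have pinned down the behavior of each. For (iii), among the four disks of $Q$, the longitudinal disk is $D$ (changes type to the longitudinal disk $D'$) — wait, $D$ is a crossing disk; the longitudinal disk of $Q$ is... I will need to be careful here: re-reading, the longitudinal disk of $Q$ should be the one whose three punctures are $C_f, K_a, K_b$-as-crossing-circles, but in $M$ those include two knot circles, so it is not literally a longitudinal disk of $M$. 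The correct statement: in $Q$, relabeling per Figure \ref{fig:SeparatingQuadruples}, the "longitudinal disk" $D$ and the crossing disk $D_f$ of $C_f$ are the two disks in $Q$ sharing the puncture $C_f$; $h$ changes the type of $D$ (to a longitudinal disk) and of exactly one crossing disk — namely $D_f = E_f$? No, $E_f$ is a crossing disk mapping to the crossing disk $E_f'$... Let me just say: $h$ changes the type of $D$ and of the crossing disk of $Q$ that is punctured by $C_f$, and fixes the types of the other two crossing disks; these two type-changing disks share the puncture $C_f$. I would verify this by the puncture-tracing already done, noting $E_f$ is punctured by $C_f$ and two knot circles $K_a, K_b$ — hmm, then $E_f = D$. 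OK — the cleanest route, which I would write out carefully, is: $Q$ consists of one disk with punctures $\{C_f, K_a, K_b\}$ (this is $D$) and three crossing disks $D_f, D_a, D_b$ for $C_f, K_a, K_b$ respectively (where $D_a, D_b$ are "crossing disks" because $K_a, K_b$ are really knot circles — so actually $Q$ in $M$ is: longitudinal-role disk $D$? This needs the setup reconciled with Definition \ref{defn:SepQuad}, and the honest claim is that the separating quadruple structure is the one inherited from $Q'$ in $M'$ via $h^{-1}$, with $D$ as the distinguished disk).

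\textbf{Main obstacle.} The delicate part is correctly identifying, in $M$, which disk of $Q$ plays the role of the longitudinal disk and which are the crossing disks, since $D$ itself is a crossing disk of $M$ yet must correspond under $h$ to the longitudinal disk of $Q' = h(Q)$. The resolution is that "separating quadruple" as a configuration is defined by the intersection pattern with the reflection surface and the puncture data, and one must chase Definition \ref{df:NR3PS} and Definition \ref{defn:SepQuad} carefully through $h$ and $h^{-1}$ — using uniqueness of reflection surfaces at each step so that $h, h^{-1}$ map reflection surfaces to reflection surfaces — to see that the labeled configuration $Q = \{D, D_f, D_a, D_b\}$ (with $D$ opposite nothing, i.e. the longitudinal disk, and $D_f, D_a, D_b$ the crossing disks) is genuinely a separating quadruple in $M$ with $C_f$ the crossing circle whose crossing disk $D_f$ and longitudinal disk $D$ both change type. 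Repeated application of Lemma \ref{lem:SameKCPSameType} (to forbid a crossing circle of $Q$ linking one knot circle twice, which would obstruct the type changes) and of Theorem \ref{thm:SepPair} (to guarantee disjointness and the separating property) carries the argument through. Once the labeling is pinned down, parts (ii) and (iii) are bookkeeping: exactly one of the three opposite pairs, namely $(K_f, C_f)$ with $K_f$ the knot circle opposite $C_f$, keeps its types, and the two type-changing disks of $Q$ are precisely $D$ and $D_f$, which meet $C_f$.
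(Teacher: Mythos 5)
There is a genuine gap, and it is exactly the one you flag as the ``main obstacle'' without resolving: your construction never produces a separating quadruple in $M$ in the sense of Definition~\ref{defn:SepQuad}. Having reduced to the case where $D$ is a crossing disk of $M$ mapping to a longitudinal disk $D'$ of $M'$, you conclude that $Q=h^{-1}(Q')=\{D,E_f,E_a,E_b\}$ is a separating quadruple of $M$ ``with $D$ as the longitudinal disk.'' But a separating quadruple in $M$ is defined relative to the reflection surface of $M$: its distinguished disk must be a genuine longitudinal disk of $M$ (three crossing-circle longitude punctures), and $D$ is by hypothesis a crossing disk of $M$. Likewise $K_a,K_b$ are knot circles of $M$ and do not bound ``crossing disks $D_a,D_b$.'' The disk that actually plays the longitudinal role in $Q$ is $h^{-1}(E_f')$, where $E_f'$ is a crossing disk of $C_f'=h(C_f)$ in $M'$: one shows (this is the content of the separating-pair argument) that $E_f'$ must change type under $h^{-1}$, so $h^{-1}(E_f')$ is a longitudinal disk of $M$ punctured by $C_f$ and two further crossing circles $C_2,C_3$ of $M$, each linking one of $K_a,K_b$ to a new knot circle $K_f$ opposite $C_f$. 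Your write-up never identifies $C_2$, $C_3$, or $K_f$, so parts (ii) and (iii) — which are statements about precisely these components — are not actually established. The clean route, which the paper takes, is to prove the lemma first in the case where $D$ is a longitudinal disk of $M$ (there $Q=\{D,D_1,D_2,D_3\}$ with $D_i$ a crossing disk of each crossing circle puncturing $D$, and $D$ really is the longitudinal disk), and then to dispose of the crossing-disk case by applying that argument to $h^{-1}$ and $D'$, with $D$ ending up as one of the three \emph{crossing} disks of $Q$.

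A second, smaller error: you assert that $D'$ (a longitudinal disk) and $E_f'$ (a crossing disk) sharing the longitude of $C_f'$ ``form a separating pair in $M'$ by Theorem~\ref{thm:SepPair}.'' That theorem says a separating pair must consist of crossing and/or singly-separated disks, so a longitudinal disk is never part of a separating pair; the disjointness you want follows instead from Lemma~\ref{lem:NdisksDisjoint}. The correct use of Theorem~\ref{thm:SepPair} in this proof is the contrapositive one: if the crossing disk of the type-preserved crossing circle did \emph{not} change type, its image together with the image of the longitudinal disk would be two crossing disks sharing a longitudinal slope, hence a separating pair, whereas their preimages do not separate — forcing the type change. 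That step is the engine of parts (i) and (iii), and it is missing from your argument.
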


\begin{proof}
We are given that $M$ and $M'$ are homeomorphic flat FAL complements with unique reflection surfaces, and that $h:M\to M'$ is a homeomorphism changing the type of an $N$-disk $D$.  Since $M$ and $M'$ each have unique reflection surfaces $R$ and $R'$, we have $R' = h(R)$. Then the image of a non-reflection thrice-punctured sphere in $M$ is non-reflection in $M'$.  Further, $N$-disks and singly-separated disks are distinguished by the topological property of separating, which implies that $h$ maps $N$-disks to $N$-disks. 

Consider first the case where $D$ is a longitudinal disk whose image $D' = h(D)$ is a crossing disk.  Let $C_1, C_2, C_3$ denote the crossing circle punctures of $D$, and let $D_i$ be a choice of crossing disk bounded by $C_i$. To show $D$ is part of a separating quadruple we must show that the $D_i$ are punctured by distinct knot circle components.

Two of the crossing circle punctures of $D$, say $C_2,C_3$, change type because $D'$ is a crossing disk.  Thus Lemma \ref{lem:SameKCPSameType} implies $D_2, D_3$ are punctured by distinct knot circle punctures.  Now consider $D_1$, whose image $D_1'$ must be a crossing disk or longitudinal disk since $N$-disks map to $N$-disks. If $D_1'$ stays a crossing disk, then Theorem \ref{thm:SepPair} implies that $\{D_1', D'\}$ is a separating pair since both are crossing disks sharing the crossing circle puncture $C_1'$.  This cannot happen because the pair $\{D_1,D\}$ does not separate in $M$ (again by Theorem \ref{thm:SepPair}). Therefore, $D_1$ changes type and $D_1'$ must be a longitudinal disk.  Then, by Lemma \ref{lem:SameKCPSameType}, $D_1$ has distinct knot circle punctures and $Q=\{D,D_1,D_2,D_3\}$ is a separating quadruple in $M$.

For convenience, label the knot circle punctures $K_1,K_2,K_3$, where $K_i$ is the knot circle opposite $C_i$ in that it does not puncture $D_i$.

\begin{figure}[h]
\begin{center}
\includegraphics{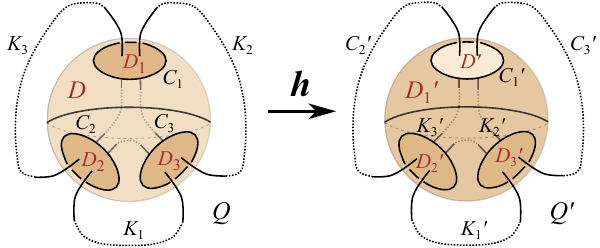}
\end{center}
\caption{Homeomorphic image of $Q$ when longitudinal disk changes type}
\label{fig:ImageOfQ}
\end{figure}

To see that $Q'=\{D',D_1',D_2',D_3'\}$ is also a separating quadruple, we must show that it consists of three crossing disks, with distinct knot circle punctures, and one longitudinal disk.  The disk $D'$ is assumed to be a crossing disk and, in this case, the disk $D_1'$ was shown to be longitudinal.  Moreover, since $C_2',C_3'$ are knot circles, the disks $D_2',D_3'$ must be crossing disks in $M'$ because these are the only non-reflection disks with knot circle punctures (Theorem \ref{prop:BeltSumSummary}).  Further, each of the disks in $Q$ are punctured by three distinct components, so their images are as well, and each crossing disk in $Q'$ is punctured by distinct knot circle components.  Thus $Q'$ is a separating quadruple in $M'$.

Note that this analysis proves the third conclusion as well, since $D, D_1$ change type while the crossing disks $D_2,D_3$ do not.  

To see statement $(ii)$ note that $C_1', K_2',K_3'$ are the crossing circle punctures of $Q'$ since they are the punctures of the longitudinal disk $D_1'$. The remaining punctures of $Q'$ must be knot circles in $M'$, so $K_1'$ is a knot circle.  Thus $h$ preserves the type of the opposite knot- and crossing-circle pair $C_1, K_1$, while changing the type of all other components.  Observe that $C_1$ is the crossing circle puncture shared by the disks that change type, namely $D$ and $D_1$.




Now suppose $D$ is a crossing disk in $M$ that changes type to a longitudinal disk $D'$ in $M'$.  Apply the previous argument to $h^{-1}$ and $D'$, then note that if $h^{-1}$ and $D'$ satisfy the conclusions of the lemma, then so does $h$ and $D$.
\end{proof}

Lemma \ref{lem:SepQuad} can be applied to the full-swap homeomorphisms discussed in Section \ref{Sec:SigLinks}, revealing some of the geometric structure inherent in such homeomorphisms.  Before proceeding we remark that any homeomorphism between flat FAL complements with unique reflection surfaces that does not preserve peripheral structures must change the type of a knot circle.  Indeed, if a crossing circle changes type, then one of the knot circles it links changes type as well since there are no (non-reflection) thrice-punctured spheres with three knot circle punctures.  Thus if $h$ changes the type of a component, there must be a knot circle that changes type. 


\begin{lem}\label{lem:UniqueCTC}
Let $h:M\to M'$ be a homeomorphism between flat FAL complements with unique reflection surfaces, and suppose $h$ changes the type of the knot circle $K$ in $M$.  Then $h$ changes the type of exactly one crossing circle $C_1$ linked by $K$ and, of all crossing disks punctured by $K$, $h$ fixes the type of exactly those bounded by $C_1$. 
\end{lem}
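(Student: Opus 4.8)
The plan is to track, for each crossing circle $C$ linking $K$, how $h$ transforms the crossing disks that are bounded by $C$ and punctured by $K$, and to show that the fate of such a disk is dictated entirely by whether $C$ changes type. I would pass to the isometry induced by $h$ (legitimate, since the ``type'' of a component or of a thrice-punctured sphere is an isotopy invariant once the reflection surface is unique), so that $R' = h(R)$ and $h$ carries non-reflection thrice-punctured spheres to non-reflection ones and $N$-disks to $N$-disks. A preliminary remark: since $K$ changes type, Lemma \ref{lem:SameKCPSameType} forbids any crossing circle from linking $K$ twice, so every crossing circle linking $K$ links it exactly once, together with one other (distinct) knot circle; in particular, for each crossing circle $C$ linking $K$ there is a crossing disk bounded by $C$ and punctured by $K$.

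First I would establish a dichotomy. Let $C$ link $K$ and let $D$ be a crossing disk bounded by $C$ and punctured by $K$, with third puncture on a knot circle $J\neq K$; set $D' = h(D)$, which is a crossing disk or a longitudinal disk by Theorem \ref{prop:BeltSumSummary}. If $C$ changes type, then $h(C)$ is a knot circle, so $D'$ has a knot-circle puncture and cannot be longitudinal, hence is a crossing disk; since its unique crossing-circle puncture is then $h(K) = K'$, the disk $D'$ is a crossing disk bounded by $K'$. If instead $C$ keeps its type, then $h(C)$ and $K'$ are two distinct crossing-circle punctures of $D'$, which is impossible for a crossing disk, so $D'$ is a longitudinal disk. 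Thus the image type of such a $D$ depends only on $C$: $h$ fixes its type precisely when $C$ changes type.

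Next I would count the crossing circles linking $K$ that change type. For \emph{at least one}: $K' = h(K)$ is a crossing circle in $M'$, hence bounds an $R'$-crossing disk $E'$, and $E = h^{-1}(E')$ is a non-reflection $N$-disk in $M$ punctured by $K$; since $K$ is a knot circle, $E$ cannot be a longitudinal disk, so it is a crossing disk punctured by $K$, bounded by some crossing circle $C_1$ linking $K$, and the dichotomy (applied to $h(E)=E'$) forces $C_1$ to change type. For \emph{at most one} --- which I expect to be the main obstacle --- I would argue by contradiction: if distinct crossing circles $C_1, C_2$ linking $K$ both changed type, choose crossing disks $D_1, D_2$ for $C_1, C_2$ respectively, each punctured by $K$; these are distinct $N$-disks, hence disjoint by Lemma \ref{lem:NdisksDisjoint}, and by the dichotomy $h(D_1), h(D_2)$ are disjoint crossing disks both bounded by $K'$, so $\{h(D_1), h(D_2)\}$ share a longitudinal slope and form a separating pair by Theorem \ref{thm:SepPair}, whence $\{D_1, D_2\}$ separates $M$. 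But $D_1, D_2$ are disjoint crossing disks whose longitudinal punctures lie on the \emph{distinct} crossing circles $C_1 \neq C_2$, so Theorem \ref{thm:SepPair} says $\{D_1, D_2\}$ is \emph{not} a separating pair --- a contradiction.

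Finally, letting $C_1$ be the unique crossing circle linking $K$ that changes type, the dichotomy yields the last assertion immediately: among crossing disks punctured by $K$, those bounded by $C_1$ map to crossing disks (type preserved), and the rest map to longitudinal disks (type changed). The only genuinely delicate point is the ``at most one'' count, powered by the separating-pair obstruction; the remaining steps are routine applications of the classification of non-reflection thrice-punctured spheres (Theorem \ref{prop:BeltSumSummary}) and of the separating-pair criterion (Theorem \ref{thm:SepPair}).
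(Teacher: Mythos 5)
Your proof is correct. The uniqueness half (``at most one crossing circle linking $K$ changes type'') is essentially the paper's own argument: push two putative crossing disks forward, observe they become a separating pair sharing the longitude of $K'$ by Theorem \ref{thm:SepPair}, and contradict the fact that the original pair, having longitudinal punctures on distinct crossing circles, does not separate. Likewise your final step, reading off the fate of each crossing disk from the puncture classification of Theorem \ref{prop:BeltSumSummary}, matches the paper. Where you genuinely diverge is the existence half. The paper starts from a crossing circle linking $K$ whose type is \emph{fixed}, shows its crossing disk becomes longitudinal, and then invokes the separating-quadruple machinery of Lemma \ref{lem:SepQuad} to locate, inside the resulting quadruple, the one crossing disk whose type is fixed and hence the crossing circle $C_1$ that changes type. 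You instead pull back the $R'$-crossing disk bounded by the crossing circle $K'=h(K)$ in $M'$: its preimage is an $N$-disk punctured by the knot circle $K$, hence a crossing disk, and your dichotomy forces its bounding crossing circle to change type. This is shorter and avoids Lemma \ref{lem:SepQuad} entirely; the only inputs are the classification of non-reflection thrice-punctured spheres, the preservation of $N$-disks, and Lemma \ref{lem:SameKCPSameType} (to rule out a crossing circle linking $K$ twice, so the dichotomy applies). The paper's longer route is not wasted effort in context, since the separating quadruple it produces is reused immediately in Lemma \ref{lem:CQC1}, but as a self-contained proof of this lemma your version is cleaner.
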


\begin{proof}
First we show $h$ changes the type of at most one crossing circle linking $K$.  Suppose, on the contrary, that $C_1, C_2$ are distinct crossing circles linking $K$ whose images $C_1',C_2'$ are both knot circles in $M'$. Let $D_1, D_2$ be a choice of crossing disks they bound and note that, since $C_1,C_2$ are distinct, the disks $D_1$ and $D_2$  do not form a separating pair by Theorem \ref{thm:SepPair}.  To determine if the image of an $N$-disk is a crossing disk, it is enough to show that it has a knot circle puncture because homeomorphisms between flat FAL complements with unique reflection surfaces map $N$-disks to $N$-disks. Since $D_i' = h(D_i)$ is punctured by the knot circle $C_i'$, the disk $D_i'$ must be a crossing disk in $M'$.   The disks $D_1',D_2'$ also share the crossing circle puncture $K'$ and so form a separating pair in $M'$, again by Theorem \ref{thm:SepPair}.  The homeomorphic image of a non-separating set, however, cannot be separating, and $h$ changes the type of at most one crossing circle linking $K$.

Now we argue that the image of at least one crossing circle linking $K$ is a knot circle in $M'$.  Since $K$ is linked by at least two crossing circles, at most one of which can change type, there is a crossing circle $C$ linking $K$ whose image $C'$ is a crossing circle in $M'$.  Let $D$ be a crossing disk bounded by $C$ and $J$ be the knot circle other than $K$ which punctures $D$. By Lemma \ref{lem:SameKCPSameType}, $J \neq K$.  Since $D'=h(D)$ has two crossing circle punctures in $K'$ and $C'$, we find $J'$ must also be a crossing circle. Hence $D'$ is a longitudinal disk in $M'$ (see Figure \ref{fig:Cross2Long}).  

\begin{figure}[h]
\begin{center}
\includegraphics{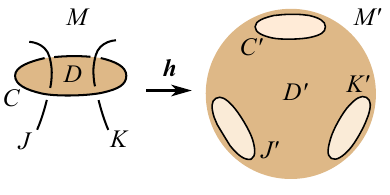}
\end{center}
\caption{Crossing disk $D$ maps to longitudinal disk $D'$}
\label{fig:Cross2Long}
\end{figure}

Thus $D$ is a crossing disk in $M$ that changes type, and Lemma \ref{lem:SepQuad} shows that $D$ is part of a separating quadruple $Q = \{D,D_1,D_2,D_3\}$.  Assume we've labeled disks so that $D_3$ is the longitudinal disk, and $J$ punctures $D_2$ while $K$ punctures $D_1$. Finally, let $C_1$ be crossing circle puncture of $D_1$ and $K_f$ the final knot circle in $Q$.  Figure \ref{fig:SchematicQ} depicts a ``schematic" diagram of this labeling in the sense that components of the FAL which are not in $Q$ are not pictured.

Now $h$ changes the crossing disk $D$ to a longitudinal disk, so Lemma \ref{lem:SepQuad}$(iii)$ implies the images of $D_1,D_2$ are again crossing disks. Since $D_1'$ is a crossing disk with crossing circle puncture $K'$, the crossing circle $C_1$ changes type under $h$. Thus at least one crossing circle linking $K$ maps to a knot circle.  Combining that with the first part of the proof shows that $h$ changes the type of exactly one crossing circle linking $K$.

\begin{figure}[h]
\begin{center}
\includegraphics{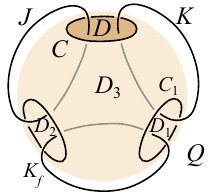}
\end{center}
\caption{Schematic of $Q$}
\label{fig:SchematicQ}
\end{figure}

To see the last statement, let $C_1$ be the unique crossing circle linking $K$ that changes type and suppose $D_1$ is any crossing disk punctured by $K$ which is bounded by $C_1$.  Since $C_1$ changes type, the image $D_1' = h(D_1)$ has a knot circle puncture in $M'$ and must be a crossing disk.

Conversely, suppose $D$ is a crossing disk punctured by $K$ and bounded by the crossing circle $C\ne C_1$.  Then two punctures of $D'$, both $C'$ and $K'$, are crossing circles in $M'$. Theorem \ref{prop:BeltSumSummary} implies the third puncture of $D'$  is also a crossing circle, and $D'$ is a longitudinal disk.  
\end{proof}

Lemmas \ref{lem:SepQuad} and \ref{lem:UniqueCTC} allow us to show, in the following lemma, that separating quadruples exist in the presence of type-changing homeomorphisms.

\begin{lem}\label{lem:CQC1}
Let $h:M\to M'$ be a homeomorphism between flat FAL complements with unique reflection surfaces, and suppose $K$ is a knot circle that changes type under $h$. Further, let $C_1$ be the unique crossing circle linking $K$ that changes type under $h$. Then each crossing circle $C\ne C_1$ that links $K$ is part of a separating quadruple that includes the punctures $K$, $C_1$ and $C$. 
\end{lem}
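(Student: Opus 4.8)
The plan is to take for $Q$ precisely the separating quadruple produced by Lemma \ref{lem:SepQuad}. Fix a crossing circle $C \neq C_1$ linking $K$. First I would note that $C$ cannot link $K$ twice: if it did, Lemma \ref{lem:SameKCPSameType} would force $h$ to preserve the type of $K$, contrary to hypothesis. Hence the crossing disk $D$ bounded by $C$ is punctured by $K$ and by a \emph{distinct} knot circle. Since $h$ fixes the type of a crossing disk punctured by $K$ only when that disk is bounded by $C_1$ (Lemma \ref{lem:UniqueCTC}) and $C \neq C_1$, the disk $D$ must change type under $h$; as homeomorphisms between flat FAL complements with unique reflection surfaces carry $N$-disks to $N$-disks, $D$ is sent to a longitudinal disk. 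Now apply Lemma \ref{lem:SepQuad} to $h$ and the $N$-disk $D$: this yields a separating quadruple $Q$ in $M$ containing $D$, together with the structural information in parts $(ii)$ and $(iii)$.

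The next step is to read off the punctures of $Q$. Since $D$ is a crossing disk of $Q$ that changes type, Lemma \ref{lem:SepQuad}$(iii)$ identifies $D$ as the unique crossing disk of $Q$ that changes type and says that the longitudinal disk $D_{\ell}$ of $Q$ also changes type, the two sharing a crossing circle puncture; the only crossing circle puncture of $D$ is $C$, so this common puncture is $C$, and therefore $C$ is one of the three crossing circle punctures of $D_{\ell}$ — in particular a puncture of $Q$. Likewise $K$ is a puncture of $Q$, since $K$ punctures $D$. It remains to locate $C_1$ among the punctures of $Q$. Here I would use that $D_{\ell}$ changes type: its image $h(D_{\ell})$ is a crossing disk, which has a single crossing circle (longitudinal) puncture; since $C \neq C_1$, its image $h(C)$ is a crossing circle, so $h(C)$ must be that puncture, and the remaining two crossing circle punctures $E_1, E_2$ of $D_{\ell}$ are sent by $h$ to knot circles — that is, both $E_1$ and $E_2$ change type. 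Finally, $K$ is one of the three knot circle punctures of $Q$, and by the structure of a separating quadruple (see Figure \ref{fig:SeparatingQuadruples}) it is linked by exactly two of the three crossing circles $C, E_1, E_2$; one of these is $C$ (because $K$ punctures $D$), so $K$ is linked by exactly one of $E_1, E_2$, say $E_1$. Then $E_1$ links $K$ and changes type, so the uniqueness clause of Lemma \ref{lem:UniqueCTC} forces $E_1 = C_1$. Thus $C_1$ is a crossing circle puncture of $D_{\ell}$, hence a puncture of $Q$, and $Q$ is a separating quadruple whose punctures include $K$, $C_1$, and $C$.

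The main obstacle, as I see it, is not a geometric one — the substantive geometry is already packaged into Lemmas \ref{lem:SepQuad} and \ref{lem:UniqueCTC} and Theorem \ref{prop:BeltSumSummary} — but rather the careful bookkeeping of which disks and components of $Q$ change type. The delicate point is that the distinguished $N$-disk $D$ here plays the role of a \emph{crossing} disk of $Q$, not its longitudinal disk, so one must apply Lemma \ref{lem:SepQuad}$(iii)$ in that configuration and correctly deduce that $C_f = C$ and that both crossing circle punctures of $D_{\ell}$ other than $C$ change type. Once this is in hand, the identification $E_1 = C_1$ is immediate: $C_1$ is by definition the unique crossing circle linking $K$ whose type changes, and $E_1$ is such a circle.
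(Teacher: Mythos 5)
Your proposal is correct and follows essentially the same route as the paper: show $D$ changes type via Lemma \ref{lem:UniqueCTC}, invoke Lemma \ref{lem:SepQuad} to obtain the separating quadruple, and then use the fact that $K$ punctures exactly two crossing disks of $Q$ together with the uniqueness clause of Lemma \ref{lem:UniqueCTC} to identify the second crossing circle linking $K$ as $C_1$. The only cosmetic difference is that the paper pins down $C_1$ by noting the \emph{other crossing disk} punctured by $K$ has fixed type, whereas you note the \emph{other crossing circle} linking $K$ changes type; both are the same application of Lemma \ref{lem:UniqueCTC}.
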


\begin{proof}
Since $C\ne C_1$, Lemma \ref{lem:UniqueCTC} implies that $h$ fixes the type of $C$ so $C'=h(C)$ is a crossing circle in $M'$.  The image $D'$ of a crossing disk $D$ bounded by $C$, then, is punctured by the crossing circles $K'$ and $C'$; therefore, $D'$ must be a longitudinal disk.  Thus $D$ is an $N$-disk that changes type, and Lemma \ref{lem:SepQuad} implies it is part of a separating quadruple $Q$. 

It remains to show that $C_1$ must be a crossing circle puncture in $Q$. First, since $K$ punctures $D$ it is a knot circle component of $Q$ and must puncture one other crossing disk, say $D_1$, in $Q$. Moreover, since exactly one crossing disk of $Q$ changes type, by Lemma \ref{lem:SepQuad}$(iii)$, $h$ fixes the type of $D_1$. Lemma \ref{lem:UniqueCTC} then implies that $D_1$ is bounded by $C_1$, finishing the proof.
\end{proof}

Lemmas \ref{lem:UniqueCTC} and \ref{lem:CQC1} demonstrate that flat FALs which admit a type-changing homeomorphism contain features similar to those of signature links. This motivates considering a certain sublink, the signature sublink, which we define in the next section.


\section{Complements determine flat FALs}\label{sec:CDFF}

In this section we show that flat FALs are determined by their complements.  We start by assuming that a flat FAL complement $M = \mathbb{S}^{3} \setminus \mathcal{A}$ has a unique reflection surface and admits a type-changing homeomorphism to another flat FAL complement, since all other cases are either trivial or covered by the work in Section \ref{sec:PropsRefSur}. With these assumptions, we first show in Subsection \ref{subsec:sigsub} that any such $\mathcal{A}$ contains a sublink, $\mathcal{L}_h$, that features many of the properties of a signature link. Then in Subsection \ref{subsec:StandardBall} we prove some technical tools involving separating quadruples, which are used to introduce an embedded two-sphere, $S^2_{\alpha}$, in $\mathbb{S}^{3}$ that intersects any such $\mathcal{A}$ in only two points on $\mathcal{K}_f$ and separates the other components of $\mathcal{A}$ in a useful manner. From here, our next goal is to show that $\mathcal{L}^{c} = \mathcal{A} \setminus \mathcal{L}_h$ must be empty and in fact, $\mathcal{A}$ must be a signature link. This is all done in Subsection \ref{subsec:hyp}. Essentially, if $\mathcal{L}^c$ is nonempty or  $\mathcal{A}$ fails to have any of the features necessary to be a signature link, then we can use our two-sphere $S^2_{\alpha}$ to show that $\mathcal{A}$ is either a connect-sum or a split link, contradicting hyperbolicity. Once we have proven that $\mathcal{A}$ must be a signature link, then we can quickly show that this type-changing homeomorphism is a full-swap, possibly pre- or post-composed with homeomorphisms that extend to isotopies of $\mathbb{S}^3$. At this point, we can use Proposition \ref{prop:MLswaps} to obtain the desired result.

\subsection{Signature Sublinks} \label{subsec:sigsub}

The forthcoming lemma highlight the necessary properties for us to define a signature sublink.

\begin{lem}\label{lem:SigSublink}
Let $M = \mathbb{S}^{3} \setminus \mathcal{A}$ be a flat FAL  complement with a unique reflection surface, and suppose there is a type-changing homeomorphism $h:M\to M'$ to another flat FAL complement $M'$.  Then $\mathcal{A}$ contains a sublink
\[
\mathcal{L}_h= \{K_f \} \cup \mathcal{K}\cup\mathcal{C}\cup\mathcal{C_K},
\]
whose components satisfy the following properties:
\begin{enumerate}[label=\roman*.]
\item The sets $\mathcal{K}$ and $\mathcal{C}$ contain $n\ge 2$ knot- and crossing-circles, respectively.
\item Each crossing circle of $\mathcal{C}=\{C_1,\dots,C_n\}$ links the corresponding knot circle of $\mathcal{K}=\{K_1,\dots,K_n\}$ and the knot circle $K_f$.  Further, the homeomorphism $h$ changes the types of the components in $\mathcal{C\cup K}$ and fixes the type of $K_f$. 
\item Let $\mathcal{C_K}$ denote the set of all crossing circles of $\mathcal{A}$ that link distinct knot circles in $\mathcal{K}$. Then $h$ fixes the type of each component in $\mathcal{C_K}$.  Moreover, each knot circle in  $\mathcal{K}$ is linked with at least one crossing circle in $\mathcal{C_K}$, and at most one crossing circle in $\mathcal{C_K}$ links two given knot circles in $\mathcal{K}$.
\end{enumerate}
\end{lem}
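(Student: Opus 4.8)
The goal is to extract, from a type-changing homeomorphism $h:M\to M'$, a sublink $\mathcal{L}_h$ with the listed signature-link-like properties. The starting point is the remark preceding Lemma~\ref{lem:UniqueCTC}: if $h$ changes the type of any component, it changes the type of some knot circle. So I would begin by picking a knot circle $K_f$ of $\mathcal{A}$ that is \emph{not} changed by $h$. (I need to first argue such a $K_f$ exists; a flat FAL has at least one knot circle, but I must rule out the possibility that $h$ changes the type of \emph{every} knot circle. This is the first thing to nail down — see below.) Having fixed $K_f$, let $\mathcal{C}$ be the set of crossing circles of $\mathcal{A}$ that link $K_f$ and that change type under $h$, and for each $C_i\in\mathcal{C}$ let $K_i$ be the other knot circle it links; set $\mathcal{K}=\{K_i\}$. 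By Lemma~\ref{lem:SameKCPSameType}, $C_i$ links two distinct knot circles (a crossing circle linking the same knot circle twice cannot change type), so $K_i\ne K_f$, and by the same lemma each such $K_i$ must change type as well (since $C_i$ does). Then define $\mathcal{C_K}$ to be the set of crossing circles of $\mathcal{A}$ linking two distinct components of $\mathcal{K}$, and $\mathcal{L}_h=\{K_f\}\cup\mathcal{K}\cup\mathcal{C}\cup\mathcal{C_K}$.

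With these definitions the bulk of the proof is verifying properties (i)--(iii) using the machinery of Section~\ref{subsec:SepSets}. For (ii), the statement that $h$ fixes the type of $K_f$ is by construction; the statement that $h$ changes the type of each $C_i$ and each $K_i$ follows from the construction of $\mathcal{C}$ and from Lemma~\ref{lem:SameKCPSameType}. The delicate point is to check that $\mathcal{C}$ is a well-defined function of the $K_i$'s — i.e., that each $K_i\in\mathcal{K}$ is linked by \emph{exactly one} crossing circle that changes type — which is precisely Lemma~\ref{lem:UniqueCTC} applied to $K_i$ (the unique such crossing circle is our $C_i$). I would also need $n\ge 2$: since some knot circle changes type, and each knot circle in $\mathcal{K}$ has the unique type-changing crossing circle linking it to (a knot circle, which may or may not be $K_f$); the chain/pairing structure forces $|\mathcal{K}|=|\mathcal{C}|=n$, and I must argue $n\ne 1$. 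For (iii), the claim that $h$ fixes the type of each component of $\mathcal{C_K}$: a crossing circle $C\in\mathcal{C_K}$ links two knot circles in $\mathcal{K}$, each of which changes type; if $C$ itself changed type it would be the unique type-changing crossing circle linking \emph{each} of those two knot circles, but by Lemma~\ref{lem:CQC1} the unique type-changing crossing circle linking $K_i$ is $C_i$, which links $K_f$, not another $K_j$ — so $C\notin\{C_i\}$, hence $C$ fixes type. The remaining two assertions of (iii) — that every $K_i$ is linked by at least one crossing circle in $\mathcal{C_K}$, and that at most one crossing circle in $\mathcal{C_K}$ links a given pair $K_i,K_j$ — will come from counting/linking arguments: every knot circle is linked by at least two crossing circles in an FAL, one of which is $C_i$ (linking $K_f$), so at least one other crossing circle links $K_i$, and by Lemma~\ref{lem:SameKCPSameType} it links $K_i$ to a \emph{distinct} knot circle; I would then argue that distinct knot circle must lie in $\mathcal{K}$ (otherwise one gets a crossing circle linking $K_i$ and some knot circle $\notin\mathcal{K}\cup\{K_f\}$ — I'd need to rule this out, possibly by observing that such a knot circle, being linked to a type-changing knot circle, would itself have a type-changing crossing circle, re-entering the chain). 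Uniqueness of a $\mathcal{C_K}$-crossing-circle between $K_i$ and $K_j$ should follow from Theorem~\ref{thm:SepPair}: two crossing disks linking the same pair $\{K_i,K_j\}$ would share longitudinal slopes after $h$ and form a separating pair whose preimage doesn't separate, a contradiction — essentially the argument already run in Lemma~\ref{lem:UniqueCTC}.

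\textbf{The main obstacle.} The hard part, I expect, is the bookkeeping that shows the construction \emph{closes up} correctly: that $\mathcal{K}$, defined as ``the partner knot circles of the type-changing crossing circles linking $K_f$,'' is exactly the set of \emph{all} knot circles that change type, and that no component that changes type is left outside $\mathcal{L}_h$. Concretely, I would want a ``propagation'' argument: if $K$ changes type, then by Lemma~\ref{lem:UniqueCTC} it is linked by a unique type-changing crossing circle $C$, which (by Lemma~\ref{lem:SameKCPSameType}) links $K$ to a distinct knot circle $J$; and since $C$ changes type, $J$ changes type too. One must show this forces $K$ or $J$ to eventually be $K_f$ — i.e., that the ``type-changing graph'' (vertices = type-changing components, edges recording the linking) has a very rigid structure, essentially a union of segments each terminating at $K_f$. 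The cleanest route is probably to invoke the separating-quadruple lemmas (Lemmas~\ref{lem:SepQuad}, \ref{lem:CQC1}): each type-changing crossing disk sits in a separating quadruple with a distinguished fixed-type pair $(K_f,C_f)$ by Lemma~\ref{lem:SepQuad}(ii), and chasing the unique fixed-type knot circle through these quadruples should pin down that all the $K_f$'s arising are the same component and that it is linked to every $K_i$. I'd present this as the technical heart, citing the separating-quadruple apparatus, and handle the ``$K_f$ exists'' and ``$n\ge 2$'' sub-points as short lemmas or remarks along the way.
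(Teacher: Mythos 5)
Your skeleton matches the paper's: the same sets $\mathcal{C}$, $\mathcal{K}$, $\mathcal{C_K}$, and the same supporting lemmas (Lemma \ref{lem:SameKCPSameType}, Lemma \ref{lem:UniqueCTC}, Lemma \ref{lem:CQC1}, Lemma \ref{lem:SepQuad}). But there are genuine gaps. First, the construction is set up backwards: you propose to \emph{choose} a fixed-type knot circle $K_f$ and then collect the type-changing crossing circles linking it, but an arbitrary fixed-type knot circle need not be linked by any type-changing crossing circle (at this stage nothing rules out fixed-type knot circles lying entirely in $\mathcal{A}\setminus\mathcal{L}_h$), so your $\mathcal{C}$ could be empty. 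The paper instead starts from a knot circle $K_1$ that \emph{changes} type (which exists because a type-changing crossing circle with fixed-type knot circles on both strands would send its crossing disk to a thrice-punctured sphere with three knot-circle punctures, forcing it into the reflection surface), takes the unique type-changing crossing circle $C_1$ linking $K_1$, and defines $K_f$ as the other knot circle of $C_1$; this guarantees $\mathcal{C}\ne\emptyset$. Relatedly, you flag but never prove $n\ge 2$ and never complete the argument that each $K_i$ is linked by some member of $\mathcal{C_K}$; both of these are where the real work lives, and both are done in the paper by the same mechanism: Lemma \ref{lem:CQC1} applied to $\{K_i,C_i\}$ puts every other crossing circle linking $K_i$ into a separating quadruple containing $K_f$, and Lemma \ref{lem:SepQuad}$(ii)$ then forces that quadruple's second $K_f$-linking crossing circle into $\mathcal{C}$ and its third knot circle into $\mathcal{K}$. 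You gesture at this in your final paragraph but do not execute it.

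Second, your uniqueness argument for $C_{ij}$ is wrong as stated. If $C,C'$ are distinct crossing circles both linking $K_i$ and $K_j$, their crossing disks do not share a longitudinal crossing-circle puncture (one is punctured longitudinally by $C$, the other by $C'$), so they do not form a separating pair in $M$; and their images under $h$ are \emph{longitudinal} disks, which by Theorem \ref{thm:SepPair} never form a separating pair either (separating pairs consist of crossing disks and singly-separated disks). So no contradiction arises the way you describe. The paper's argument is different: each such crossing circle sits in a separating quadruple whose longitudinal disk has crossing-circle punctures $C_i$ and $C_j$, and Lemma 4.2 of \cite{mrstz} (at most one longitudinal disk through two given crossing-circle punctures) forces uniqueness. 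Finally, a minor misplaced worry: the lemma does not assert that $\mathcal{K}$ exhausts all type-changing knot circles — that completeness is Theorem \ref{thm:LisLs}, proved later — so the ``closing up'' concern in your last paragraph is not actually an obligation here.
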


\begin{proof}
We begin by showing that $h$ changes the type of at least one knot circle in $\mathcal{A}$.  Since $M$ and $M'$ are homeomorphic and $M$ has a unique reflection surface $R$, Corollary \ref{thm:uniquereflectionhomeo} say that $R' = h(R)$, where $R'$ is the unique reflection surface for $M'$.  Suppose $h$ only changes the type of crossing circles. Let $C$ be a crossing circle that changes type and $D$ a crossing disk bounded by $C$.  Then $D' = h(D)$ would be a thrice-punctured sphere with three knot circle punctures, and Theorem \ref{prop:BeltSumSummary} implies that $D'$ must be part of the reflection surface $R'$.  This cannot occur since $D$ is a non-reflection thrice-punctured sphere in $M$ and $R' = h(R)$; therefore, $h$ changes the type of a knot circle, call it $K_1$.

Since $h$ changes $K_1$ to a crossing circle, Lemma \ref{lem:UniqueCTC} implies there is a unique crossing circle $C_1$ that links $K_1$ and changes type. Lemma \ref{lem:SameKCPSameType} implies that $C_1$ links distinct knot circles, $K_1$ and another which we denote $K_f$. The type of $K_f$ is fixed by $h$; otherwise, a crossing disk bounded by $C_1$ would map to a non-reflection thrice punctured sphere with exactly one knot circle puncture, contradicting Theorem \ref{prop:BeltSumSummary}.

The type-changing homeomorphism $h$, then, guarantees the existence of a knot circle $K_f$ whose type is fixed, together with a Hopf sublink $\{K_1,C_1\}$ whose types change and for which $K_f$ is linked by $C_1$.

Define $\mathcal{C}$ to be all crossing circles of $\mathcal{A}$ which link $K_f$ \emph{and} change type.  Note that $\mathcal{C}$ contains at least two crossing circles. Indeed, since $K_1$ and $C_1$ satisfy the hypotheses of Lemma \ref{lem:CQC1} we conclude that every crossing circle $C\ne C_1$ that links $K_1$ is part of a separating quadruple $Q_C$ that includes the components $K_1,C_1$, and $C$.  Moreover, since $C_1$ links $K_f$, the knot circle $K_f$ is a puncture of $Q_C$ as well.  Lemma \ref{lem:SepQuad}$(ii)$ shows that both crossing circles of $Q_C$ that link $K_f$ change type, so that both are in $\mathcal{C}$. Thus $\mathcal{C}$ contains at least two crossing circles of $\mathcal{A}$.

Now each $C_i\in\mathcal{C}$ changes type so links distinct knot circles (Lemma \ref{lem:SameKCPSameType}), $K_f$ and a second knot circle $K_i\in\mathcal{A}$.  Moreover, since $C_i$ changes type and the type of $K_f$ is fixed, the argument above for the existence of $K_1$ shows that $K_i$ must change type.  Finally, since each knot circle that changes type is linked by a unique crossing circle that changes type (Lemma \ref{lem:UniqueCTC}), the $K_i$ are distinct.  Let $\mathcal{K}=\{K_1,\dots,K_n\}$ denote the knot circles (other than $K_f$) linked by crossing circles in $\mathcal{C}$, and note that $h$ changes the type of each knot circle in $\mathcal{K}$.  

At this stage we have proven parts $(i)$ and $(ii)$ of the lemma. 

Now define $\mathcal{C_K}$ to be all crossing circles of $\mathcal{A}$ that link two knot circles of $\mathcal{K}$, and let $C_{ij}$ denote a crossing circle linking $K_i,K_j\in\mathcal{K}$.  To see that $h$ fixes the type of $C_{ij}$, note that a crossing disk $D_{ij}$ bounded by $C_{ij}$ is punctured by both $K_i$ and $K_j$.  Since $h$ changes $K_i$ and $K_j$ to crossing circles, the disk $h(D_{ij})$ must be a thrice-punctured sphere with at least two crossing circle punctures. Theorem \ref{prop:BeltSumSummary} implies $h(D_{ij})$ must be a longitudinal disk, so $h$ fixes the type of $C_{ij}$.

We now address the existence statements of part $(iii)$ of the lemma.  First fix a knot circle $K_i\in\mathcal{K}$, we must show there is at least one crossing circle in $\mathcal{C_K}$ linking $K_i$.  As in the above proof that $\mathcal{C}$ contains at least two crossing circles, Lemma \ref{lem:CQC1} applies to the Hopf sublink $\{K_i,C_i\}$.  Thus each crossing circle $C\ne C_i$ that links $K_i$ is part of a separating quadruple $Q_C$ that includes $K_f$ as a puncture.  Again as above, Lemma \ref{lem:SepQuad}$(ii)$ implies that $C$ links two knot circles of $\mathcal{K}$, so that for each $K_i$ there is at least one $C_{ij}\in\mathcal{C_K}$.  In this case let $Q_{ij}$ denote the separating quadruple $Q_C$, and note that the crossing circles of $Q_{ij}$ are $C_i,C_j,$ and $C_{ij}$. 

Now fix a pair of knot circles $K_i,K_j\in\mathcal{K}$, and suppose $C_{ij}\in\mathcal{C_K}$ exists.  We begin by showing $C_{ij}$ is a puncture in a separating quadruple $Q$ whose crossing circle punctures are $C_i, C_j,$ and $C_{ij}$.  Since $C_{ij}\ne C_i$ links $K_i$, Lemma \ref{lem:CQC1} implies it is part of a separating quadruple $Q$ containing the punctures $K_i,C_i,$ and $C_{ij}$.  The knot circles $K_i,K_j,$ and $K_f$ are the knot circle punctures of $Q$, since they are linked by $C_i$ and $C_{ij}$. The final crossing circle of $Q$ must link $K_f$ and $K_j$, and must change type since $C_{ij}$ is the only crossing circle of $Q$ whose type is fixed by $h$.  Therefore the final crossing circle is $C_j$, and $Q$ has crossing circle punctures $C_i, C_j,$ and $C_{ij}$.

Hence each crossing circle linking $K_i$ and $K_j$ forms a longitudinal disk with $C_i$ and $C_j$. Now Lemma 4.2 of \cite{mrstz} shows there is at most one longitudinal disk with two given crossing circle punctures, so there is at most one $C_{ij}$. \end{proof}

Note that the sublink $\mathcal{L}_h$ of Lemma \ref{lem:SigSublink} satisfies many of the properties of a signature link. In particular, $\mathcal{L}_h$ consists of knot and crossing circles partitioned into non-empty sets that satisfy all linking requirements of Definition \ref{defn:SigLink}. The only properties of a signature link not yet verified are that all knot circles in $\mathcal{K}$ lie on the same side of $K_f$ and that $\mathcal{L}_h$ is a flat FAL itself.  Proofs of these properties appear in Theorems \ref{thm:SameSide} and \ref{thm:LisLs}, respectively, but several preliminary results are required.  We remark that Lemma \ref{lem:SigSublink} highlights a further similarity: the homeomorphism $h$ changes types on components of $\mathcal{L}_h$ in precisely the same way that a full-swap does on a signature link. Finally, note that the sublink depends on the choice of a type-changing homeomorphism $h$ together with a knot circle $K_1$ that changes type. The subscript of $\mathcal{L}_h$ is intended to emphasize this dependency.

Lemma \ref{lem:SigSublink} motivates the following definition.

\begin{df}\label{defn:SigSublink} 
Let $M = \mathbb{S}^3\setminus\mathcal{A}$ be a flat FAL complement with a unique reflection surface, and suppose $M$ admits a type-changing homeomorphism $h$ that changes the type of the knot circle $K_1$ of $\mathcal{A}$. Let $K_f$, $\mathcal{C}$, $\mathcal{K}$, and $\mathcal{C_K}$ be as in Lemma \ref{lem:SigSublink}. 

The \textbf{signature sublink} $\mathcal{L}_h$ of $\mathcal{A}$ is the union of these components, so that
\[
\mathcal{L}_h= \{ K_f \} \cup \mathcal{K}\cup\mathcal{C}\cup\mathcal{C_K},
\]
endowed with a fixed choice of crossing disk $D_i$ for each $C_i\in \mathcal{C}$. Let $\mathcal{D}$ denote the set of chosen crossing disks $\{D_1,\dots,D_n\}$. 
\end{df}

Given a signature sublink there may be several choices for crossing disks bounded by crossing circles in $\mathcal{C}$. For example, the crossing circle $C_4$ of Figure \ref{fig:SigLink} bounds the crossing disk $D_4$ pictured as well as a crossing disk passing between $K_2$ and $K_3$. Fixing an orientation on $K_f$, we make the convention that the ordering on $\mathcal{C}$ and $\mathcal{D}$ is determined by traversing $K_f$ from $D_1$ in the chosen direction. Different choices for $\mathcal{D}=\{D_1,\dots,D_n\}$ can lead to different orderings, so for convenience we assume the choice is fixed throughout.

The next lemma shows that every crossing circle of $\mathcal{A\setminus C}$ that bounds a crossing disk punctured by a knot circle of $\mathcal{K}$ is an element of $\mathcal{C_K}$.  The proof will show both that the crossing circle links distinct knot circles (rather than the same knot circle twice), and that both knot circles are from $\mathcal{K}$.  The lemma also associates a unique separating quadruple $Q_{ij}$ to each $C_{ij}\in\mathcal{C}$, and characterizes possible intersections of these separating quadruples. The separating quadruples guaranteed by Lemma \ref{lem:C_K} will be the main tool used in what follows, so we let $\mathcal{Q} = \cup_{\mathcal{C_K}}Q_{ij}$ denote their union.

\begin{lem}\label{lem:C_K}
Let $M = \mathbb{S}^{3} \setminus \mathcal{A}$ be a flat FAL  complement with a unique reflection surface,  suppose there is a type-changing homeomorphism $h:M\to M'$ to another flat FAL complement $M'$,  and let $\mathcal{L}_h$ be the corresponding signature sublink of $\mathcal{A}$. If $C$ is a crossing circle of $\mathcal{A} \setminus \mathcal{C}$, with a crossing disk punctured by some $K_i\in\mathcal{K}$, then 
\begin{enumerate}[label=\roman*.]
\item There is an index $j$ with $C=C_{ij}\in\mathcal{C_K}$,
\item The crossing circle $C_{ij}$ bounds a unique crossing disk $D_{ij}$ in $M$, 
\item There is a separating quadruple $Q_{ij}$ uniquely determined by $C_i, C_j, C_{ij}$ and the chosen crossing disks in $\mathcal{D}$, and
\item Distinct separating quadruples $Q_{ij},Q_{kl}\in\mathcal{Q}$, are either disjoint or share a single crossing disk in $\mathcal{D}$.
\end{enumerate}
\end{lem}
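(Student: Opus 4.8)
The plan is to re-run the machinery from the proof of Lemma~\ref{lem:SigSublink}$(iii)$ — namely Lemmas~\ref{lem:SameKCPSameType}, \ref{lem:UniqueCTC}, \ref{lem:CQC1}, \ref{lem:SepQuad} together with Theorem~\ref{prop:BeltSumSummary} — to establish $(i)$, and then to feed $(i)$ into the three uniqueness facts at our disposal (Lemma~4.2 of \cite{mrstz} on longitudinal disks, the Remark preceding Figure~\ref{fig:SeparatingQuadruples}, and Lemma~\ref{lem:NdisksDisjoint} on disjointness of $N$-disks) to obtain $(ii)$--$(iv)$ with little extra work.

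For $(i)$, first I would observe that $C\ne C_i$ since $C\notin\mathcal{C}$, so Lemma~\ref{lem:UniqueCTC} (applied to $K_i$, whose unique type-changing crossing circle is $C_i$) shows $h$ fixes the type of $C$, and Lemma~\ref{lem:SameKCPSameType} rules out $C$ linking $K_i$ twice; hence $C$ links $K_i$ and a distinct knot circle $J$. Pushing the given crossing disk $D$ of $C$ forward, $h(D)$ is an $N$-disk (homeomorphisms between flat FAL complements with unique reflection surfaces send $N$-disks to $N$-disks) with the two crossing-circle punctures $h(C),h(K_i)$, so it is a longitudinal disk by Theorem~\ref{prop:BeltSumSummary}; therefore $J$ changes type. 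Next I would apply Lemma~\ref{lem:CQC1} to $K_i$ and $C_i$ to place $C$ in a separating quadruple $Q$ with punctures including $K_i$, $C_i$, $C$. Since $C_i$ links $K_f$ by Lemma~\ref{lem:SigSublink}$(ii)$, the knot-circle punctures of $Q$ are exactly $K_i,K_f,J$, and Lemma~\ref{lem:SepQuad}$(ii)$ (exactly one opposite knot/crossing pair keeps its type) forces that type-fixed pair to be $C$ together with the knot circle of $Q$ not linked to $C$, which is $K_f$. Consequently the third crossing circle $\widetilde{C}$ of $Q$ changes type and links $\{K_f,J\}$; as $\widetilde{C}$ links $K_f$ and changes type it lies in $\mathcal{C}$, so $J=K_j\in\mathcal{K}$ and $\widetilde{C}=C_j$, whence $C=C_{ij}\in\mathcal{C_K}$.

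For $(ii)$ I would note that every crossing disk of $C_{ij}$ is punctured exactly once by each of $K_i,K_j$ and by no other component (a linking-number count, since $C_{ij}$ links $K_i,K_j$ each $\pm1$ times and everything else $0$ times); as $K_i,K_j$ change type while $C_{ij}$ does not (Lemma~\ref{lem:SigSublink}$(iii)$), the image of any such crossing disk is an $N$-disk with the three crossing-circle punctures $h(C_{ij}),h(K_i),h(K_j)$, hence a longitudinal disk. By Lemma~4.2 of \cite{mrstz} all these images coincide, and injectivity of $h$ gives a unique crossing disk $D_{ij}$. For $(iii)$, the quadruple $Q$ from $(i)$ has crossing-circle punctures $C_i,C_j,C_{ij}$; by the Remark before Figure~\ref{fig:SeparatingQuadruples} its longitudinal disk is the forced disk $D^{\ell}_{ij}$ with those punctures, and any separating quadruple with these crossing-circle punctures can differ only in its crossing disks. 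Replacing the crossing disks of $Q$ for $C_i,C_j$ by the chosen $D_i,D_j\in\mathcal{D}$ and the one for $C_{ij}$ by the now-unique $D_{ij}$ yields $Q_{ij}=\{D^{\ell}_{ij},D_i,D_j,D_{ij}\}$; these four $N$-disks are pairwise distinct, hence pairwise disjoint by Lemma~\ref{lem:NdisksDisjoint}, each crossing disk is punctured by two distinct knot circles ($D_i$ by $K_i,K_f$; $D_j$ by $K_j,K_f$; $D_{ij}$ by $K_i,K_j$), and a longitudinal disk together with crossing disks on each of its punctures separates $M$, so $Q_{ij}$ is a separating quadruple — uniquely determined by $C_i,C_j,C_{ij}$ and $\mathcal{D}$ since its longitudinal disk and its crossing disk for $C_{ij}$ are forced and its crossing disks for $C_i,C_j$ are prescribed by $\mathcal{D}$.

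For $(iv)$ I would use that every disk in $Q_{ij}\cup Q_{kl}$ is an $N$-disk, so by Lemma~\ref{lem:NdisksDisjoint} two of them are either identical or disjoint; thus $Q_{ij}$ and $Q_{kl}$ meet only in whatever disks they have in common. A longitudinal disk cannot equal a crossing disk (the puncture patterns differ), and a crossing disk is determined by the crossing circle it bounds; since $\mathcal{C}\cap\mathcal{C_K}=\emptyset$ and the $C_1,\dots,C_n$ are distinct while $C_{ij}\ne C_{kl}$ whenever $\{i,j\}\ne\{k,l\}$, the only possible common disk is some $D_m$ with $m\in\{i,j\}\cap\{k,l\}$ — at most one index, and that disk lies in $\mathcal{D}$. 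I expect the only genuinely delicate step to be $(i)$, specifically identifying the type-fixed knot circle of the separating quadruple $Q$ with the distinguished $K_f$ of the signature sublink; this is the same subtlety already handled in the proof of Lemma~\ref{lem:SigSublink}, and once it is in place parts $(ii)$--$(iv)$ are bookkeeping over the uniqueness lemmas.
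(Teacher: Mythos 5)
Your proposal is correct and follows essentially the same route as the paper: part $(i)$ via Lemmas \ref{lem:SameKCPSameType}, \ref{lem:UniqueCTC}, \ref{lem:CQC1} and \ref{lem:SepQuad}$(ii)$, and parts $(ii)$--$(iv)$ via the uniqueness of longitudinal disks (Lemma 4.2 of \cite{mrstz}) and Lemma \ref{lem:NdisksDisjoint}. The only cosmetic difference is in $(ii)$, where you use a linking-number parity count to see that both knot circles $K_i,K_j$ puncture every crossing disk of $C_{ij}$, whereas the paper gets the same fact from the separating-pair parity argument of Theorem \ref{thm:SepPair}; both then conclude identically from the uniqueness of longitudinal disks.
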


\begin{proof}
Let $C$ be a crossing circle in $\mathcal{A}\setminus \mathcal{C}$ that bounds a crossing disk $D$ punctured by $K_i\in \mathcal{K}$. Since $K_i$ changes type, $C$ links distinct knot circles by Lemma \ref{lem:SameKCPSameType}, and we let $K^{\ast}$ denote the other knot circle puncturing $D$.  Further, again since $K_i$ changes type, $C_i$ is the unique crossing circle linking it that changes type by Lemma \ref{lem:UniqueCTC}.  Now $C$ is not equal to $C_i$, so there is a separating quadruple $Q$ with components $K_i,C_i$, and $C$ by Lemma \ref{lem:CQC1}.  The crossing circles $C_i$ and $C$ link all three knot circle components of $Q$, so $K^{\ast}$ and $K_f$ are the other knot circles of $Q$ and must be linked by the final crossing circle, say $C^{\ast}$, of $Q$.  Note that $C$ and $K_f$ are the only components of $Q$ whose type is fixed by the homeomorphism $h$ (by Lemma \ref{lem:SepQuad}$(ii)$), so $h$ changes the types of $C^{\ast}$ and $K^{\ast}$.  Thus $C^{\ast}$ is a crossing circle linking $K_f$ which changes type under $h$, implying there is an index $j$ with $C^{\ast}=C_j\in\mathcal{C}$, and $K^{\ast} = K_j$ as well. The original crossing circle $C$ is then $C_{ij}\in\mathcal{C_K}$, verifying part $(i)$ of the lemma.

Now consider statement $(ii)$ of the lemma. Suppose $C_{ij}$ bounded two distinct crossing disks, the original disk $D$ and another $D_1$. Then $D$ and $D_1$ form a separating pair by Theorem \ref{thm:SepPair}, and every knot circle of $\mathcal{A}$ intersects $D\cup D_1$ an even number of times (possibly zero). This implies both are punctured by $K_i,K_j\in\mathcal{K}$, both of which change to crossing circles under $h$.  Since $C_{ij}$ doesn't change type, the homeomorphism $h$ maps $D$ and $D_1$ to distinct longitudinal disks with the same punctures.  This is impossible, however, because Lemma 4.2 of \cite{mrstz} shows that there is at most one longitudinal disk with two given punctures, let alone three. Hence $C_{ij}$ bounds a unique crossing disk.

The existence portion of statement $(iii)$ is guaranteed by Lemma \ref{lem:CQC1}, as noted above.  To see uniqueness, note that two separating quadruples with the same crossing circle components have the same longitudinal disks by Lemma 4.2 of \cite{mrstz}. Thus two separating quadruples with the same crossing circle components can differ only in their crossing disks.  Since $C_{ij}$ bounds a unique crossing disk and, by convention, there is a fixed choice of disks $\mathcal{D}$ for crossing circles of $\mathcal{C}$, the $Q_{ij}$ are unique.

Finally, we prove statement $(iv)$ of the lemma. If $Q_{ij}$ and $Q_{kl}$ are disjoint we are done, so suppose their intersection is non-empty.  All $N$-disks are identical or disjoint (Lemma \ref{lem:NdisksDisjoint}), so if $Q_{ij}$ intersects $Q_{kl}$ nontrivially, they share some subset of disks.  We will show that if the intersection is other than a single crossing disk in $\mathcal{D}$, the separating quadruples are equal.

An initial observation is that if $\{D_i,D_j\}\subset Q_{ij}\cap Q_{kl}$ then $Q_{ij} = Q_{kl}$.  In this case both $Q_{ij}$ and $Q_{kl}$ contain crossing circles $C_i$ and $C_j$.  Lemma \ref{lem:SigSublink}$(iii)$ shows they also contain the unique crossing circle $C_{ij}\in\mathcal{C_K}$, and the proof of statement $(iii)$ above shows that the separating quadruples are equal. 

Now if $Q_{ij}\cap Q_{kl}$ contains a longitudinal disk, they are equal by the above argument since they would share the punctures $C_i$ and $C_j$.  Similarly, $Q_{ij}$ and $Q_{kl}$ are equal if they share $D_{ij}$ since this implies they share $C_{ij}$, and the definition of $Q_{ij}$ implies they share $C_i$ and $C_j$ as well.  

Thus the intersection $Q_{ij}\cap Q_{kl}$ of distinct separating quadruples from $\mathcal{Q}$ is either empty or a single disk from $\mathcal{D}$.
\end{proof}

One way to phrase Lemma \ref{lem:C_K}$(i)$ is to say that the signature sublink $\mathcal{L}_h$ contains all crossing circles of the FAL $\mathcal{A}$ that link a knot circle of $\mathcal{K}$. Do note that since each knot circle in $\mathcal{K}$ changes type, Lemma \ref{lem:SameKCPSameType} implies that if $K_i$ punctures a crossing disk bounded by $C\in\mathcal{A}$ then it does so once, and $C$ links distinct knot circles.  For convenience let $\mathcal{L}^c = \mathcal{A}\setminus\mathcal{L}_h$ denote the components of $\mathcal{A}$ not in $\mathcal{L}_h$. If $C$ is a crossing circle of $\mathcal{L}^c$, then, it  either links the fixed component $K_f$ or only knot circles of $\mathcal{L}^c$.

\subsection{The Standard Ball}\label{subsec:StandardBall}

Our next objective  is to associate a two-sphere $S^+_{\alpha}$ with every arc $\alpha$ of $K_f\setminus\mathcal{D}$, with the property that $S^+_{\alpha}\cap\mathcal{A}$ consists of two points on $K_f$. This is formally stated and proved at the end of this subsection in Lemma \ref{prop:2ps}.  To build this two-sphere, we first need to introduce a number of properties and terminology related to separating quadruples associated with a signature sublink $\mathcal{L}_h$ of $\mathcal{A}$. As noted in the introduction to Section \ref{sec:CDFF}, this two-sphere will play an essential role in classifying the flat FALs under consideration in this section.

Choose an orientation on $K_f$, say counterclockwise, and note that the crossing disks of $\mathcal{D}$ partition $K_f$ into $n$ oriented arcs.  Let $\alpha$ be an \textit{open arc} of $K_f\setminus \mathcal{D}$ and use the orientation on $K_f$ to order the disks of $\mathcal{D}$ (and associated components of $\mathcal{L}_h$) so that $\alpha$ goes from $D_n$ to $D_1$.  Index the components of $\mathcal{K}$ and $\mathcal{C}$ so that $C_i\in\mathcal{C}$ bounds $D_i$ and links $K_i\in\mathcal{K}$, and use this ordering to index crossing circles $\mathcal{C_K}$ and separating quadruples $\mathcal{Q}$.  Also number the arcs of $K_f\setminus \mathcal{D}$ so that $\alpha_i$ goes from $D_{i-1}$ to $D_i$, for $2 \le i \le n$ ($\alpha$ could be considered $\alpha_1$ in this ordering, but we continue to refer to it as $\alpha$ because of the special role it plays).  Thus $\alpha$ induces an ordering on components and disks of the signature sublink $\mathcal{L}_h$, other than $K_f$, which we call the $\alpha$\textbf{-ordering} of $\mathcal{L}_h$.

Let $S^2_{ij}$ denote the separating quadruple $Q_{ij}\in\mathcal{Q}$ thought of as a two-sphere embedded in $\mathbb{S}^3$.  Note that the crossing circles $\{C_i, C_j,C_{ij}\}$ and disks $\{D_i,D_j,D_{ij}, D_{ij}^{\ell}\}$ of $Q_{ij}$ are subsets of $S^2_{ij}$; whereas, the knot circle components $\{K_i, K_j, K_f\}$ each puncture $S^2_{ij}$ twice. Here, $D_{ij}^{\ell}$ is a longitudinal disk, with longitudinal punctures along the crossing circles $C_i$, $C_j$, and $C_{ij}$, as discussed at the beginning of Section \ref{subsec:SepSets}. Moreover, $\mathbb{S}^3 \setminus S^2_{ij}$ is two open three balls, one of which contains $\alpha$. Define the \textbf{inside} $\mathbb{I}_{ij}$ of $Q_{ij}$ (or of $S^2_{ij}$) to be the open three-ball component of $\mathbb{S}^3\setminus S^2_{ij}$ that does not contain $\alpha$ (thinking of $\alpha$ as \emph{outside} each $Q_{ij}$).  Observe that $S^2_{ij}$ is the boundary of $\mathbb{I}_{ij}$, so the closure is $\overline{\mathbb{I}_{ij}} = \mathbb{I}_{ij}\cup S^2_{ij}$.

We highlight the consequence of Lemma \ref{lem:C_K}$(iv)$ that insides are either disjoint or nested.

\begin{lem}\label{lem:InsidesQij}
Let $\mathcal{A}$ be a flat FAL whose complement admits a unique reflection surface and a type-changing homeomorphism $h$ to another flat FAL complement.  Let $\mathcal{L}_h$ be a signature sublink of $\mathcal{A}$ and $\mathcal{Q}$ be the set of all separating quadruples determined by $\mathcal{C_K}$.  Finally, let $\alpha$ be an open arc of $K_f\setminus\mathcal{D}$ inducing insides for each separating quadruple of $\mathcal{Q}$.

If $Q_{ij},Q_{kl}\in\mathcal{Q}$ are distinct separating quadruples, then their insides $\mathbb{I}_{ij}$ and $\mathbb{I}_{kl}$ are either disjoint or properly nested (so $\mathbb{I}_{ij}$ is a proper subset of $\mathbb{I}_{kl}$, or vice-versa).
\end{lem}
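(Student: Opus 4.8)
The statement to be proved is that the insides $\mathbb{I}_{ij}$ and $\mathbb{I}_{kl}$ of two distinct separating quadruples are either disjoint or properly nested. The plan is to derive this from Lemma \ref{lem:C_K}$(iv)$, which tells us that distinct separating quadruples in $\mathcal{Q}$ are either disjoint (as sets of disks in $M$) or share a single crossing disk from $\mathcal{D}$. I would first treat the case in which the separating quadruples are genuinely disjoint, and then the case where they share exactly one crossing disk $D_i\in\mathcal{D}$.

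\emph{Case 1: $Q_{ij}$ and $Q_{kl}$ are disjoint.} Here the two-spheres $S^2_{ij}$ and $S^2_{kl}$ are disjoint in $\mathbb{S}^3$. Two disjoint embedded $2$-spheres in $\mathbb{S}^3$ always have the property that one of the two balls bounded by the first is either contained in one of the balls bounded by the second or is disjoint from it; this is the standard fact that disjoint spheres in $\mathbb{S}^3$ are ``unnested or nested.'' Since $\alpha$ lies outside both (by definition of inside), the relevant balls are $\mathbb{I}_{ij}$ and $\mathbb{I}_{kl}$, and I would argue directly: $S^2_{kl}$ is connected and disjoint from $S^2_{ij}$, so it lies entirely in $\mathbb{I}_{ij}$ or entirely in its complement $\mathbb{S}^3\setminus\overline{\mathbb{I}_{ij}}$. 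In the first subcase $\overline{\mathbb{I}_{kl}}\subset \mathbb{I}_{ij}$ (since $\alpha\notin\mathbb{I}_{ij}$ forces the $\alpha$-free side of $S^2_{kl}$ to be the one inside); similarly, if $S^2_{kl}$ lies in the complement, then either $\mathbb{I}_{kl}$ contains $\mathbb{I}_{ij}$ (if $\mathbb{I}_{ij}$ is on the $\alpha$-free side of $S^2_{kl}$) or $\mathbb{I}_{ij}$ and $\mathbb{I}_{kl}$ are disjoint. The bookkeeping about which side contains $\alpha$ is the only slightly delicate part here, but it is forced once one fixes that $\alpha$ is outside every quadruple.

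\emph{Case 2: $Q_{ij}\cap Q_{kl}$ is a single crossing disk $D$ from $\mathcal{D}$.} Now $S^2_{ij}\cap S^2_{kl} = D$, a single properly embedded thrice-punctured sphere. Cutting $\mathbb{S}^3$ along $D$ (or, more carefully, along a slightly pushed-off copy), I would observe that $S^2_{ij}\setminus D$ and $S^2_{kl}\setminus D$ become disjoint, and then reduce to an argument like Case 1: the connected piece $S^2_{kl}\setminus D$ lies on one side of $S^2_{ij}$. Because the shared disk $D=D_i$ lies on the boundary of $\alpha$'s arc-complement region, the component of $\mathbb{S}^3\setminus S^2_{ij}$ containing $\alpha$ is glued to $D$ on one side, while $\mathbb{I}_{ij}$ is glued to $D$ on the other; the same holds for $\mathbb{I}_{kl}$. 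Tracking which side of $D$ the interiors $\mathbb{I}_{ij}$ and $\mathbb{I}_{kl}$ approach, I would conclude that either the two insides lie on opposite sides of $D$ (hence are disjoint except along $D$, which is in neither open ball, so disjoint) or on the same side, in which case the argument of Case 1 applied to the spheres obtained by surgering along $D$ shows one inside is properly contained in the other.

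\emph{Main obstacle.} The genuinely subtle point is Case 2: handling the shared disk $D$ carefully so that the ``disjoint or nested'' dichotomy for spheres still applies even though $S^2_{ij}$ and $S^2_{kl}$ are not literally disjoint. The cleanest way I anticipate is to push $D$ off itself to a parallel copy $D'$ so that $S^2_{kl}$ is replaced by a sphere meeting $S^2_{ij}$ in an empty set, apply the clean Case 1 dichotomy, and then check that this pushing-off does not change the nesting relation among the $\alpha$-free balls — which it cannot, since $\alpha$ is disjoint from a neighborhood of $D$. Once this is set up correctly, verifying which side contains $\alpha$ (and hence identifying the insides correctly) is routine, using only that $\alpha\subset K_f$ is an arc of $K_f\setminus\mathcal{D}$ disjoint from every disk in each $Q_{ij}$.
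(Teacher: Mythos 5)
Your proof is correct, and it rests on the same essential input as the paper's --- Lemma \ref{lem:C_K}$(iv)$, which says distinct quadruples in $\mathcal{Q}$ are disjoint or share exactly one disk of $\mathcal{D}$ --- but the deduction is organized differently. The paper works one dimension down: it looks at the arcs $\alpha_{ij}=\mathbb{I}_{ij}\cap K_f$ and observes that if two such arcs overlapped without being nested, the disks $D_i,D_j$ would alternate with $D_k,D_l$ around $K_f$, forcing the spheres $S^2_{ij}$ and $S^2_{kl}$ to intersect somewhere other than a common disk of $\mathcal{D}$, contradicting Lemma \ref{lem:C_K}$(iv)$; nesting or disjointness of the arcs then translates into the same for the balls. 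You instead take Lemma \ref{lem:C_K}$(iv)$ as the case split itself and run the standard separation argument for (almost) disjoint embedded two-spheres in $\mathbb{S}^3$ directly, using $\alpha$ to pin down which complementary ball is the inside in each subcase. Your route is longer but more self-contained at the final step: the paper's passage from ``arcs nested or disjoint'' to ``insides nested or disjoint'' implicitly uses exactly the sphere-separation reasoning you spell out, so your write-up makes explicit what the paper leaves to the reader. You also correctly identify the one delicate point, the shared-disk case; your push-off (or, equivalently, the local analysis of which side of the common disk each inside approaches) handles it, and the side-of-$D$ dichotomy you describe matches the paper's later notion of quadruples being on the ``same side'' or ``opposite sides'' of a disk in $\mathcal{D}$.
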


\begin{proof}
We let $\alpha_{ij}$ denote the arc of $K_f$ that is inside $Q_{ij}$, so that $\alpha_{ij}$ runs from $D_i$ to $D_j$ and is disjoint from $\alpha$. Note that if $\alpha_{ij}$ and $\alpha_{kl}$ overlap, but are not nested, then the disks $D_i,D_j$ alternate with $D_k,D_l$ around $K_f$.  This implies the separating quadruples $Q_{ij}$ and $Q_{kl}$ intersect but not along a crossing disk in $\mathcal{D}$, contradicting Lemma \ref{lem:C_K}$(iv)$. Thus if the arcs $\alpha_{ij}$ and $\alpha_{kl}$ overlap then they are nested, and we conclude the insides $\mathbb{I}_{ij}$ and $\mathbb{I}_{kl}$ are either nested or disjoint.  
\end{proof}

We now use the insides of separating quadruples and set inclusion to define a partial ordering on the set $\mathcal{Q}$.

\begin{df}\label{defn:PO}
Let $\alpha$ be an open arc of $K_f\setminus\mathcal{D}$ inducing insides on elements of $\mathcal{Q}$. The separating quadruple $Q_{ij}$ is \textbf{inside} $Q_{kl}$, denoted $Q_{ij}\prec_{\alpha} Q_{ij}$, if $\mathbb{I}_{ij}$ is a proper subset of $\mathbb{I}_{kl}$.
\end{df}

Observe that the inside relation, being defined using set inclusion, is indeed a strict partial ordering on $\mathcal{Q}$.  First, the subset relation is transitive so the inside relation is as well. Second, the \emph{proper} subset restriction implies the inside relation is neither reflexive nor symmetric, so it is a strict partial ordering.  

The inside relation is most easily seen by choosing the midpoint of $\alpha$ to be infinity in $\mathbb{S}^3 = \mathbb{R}^3\cup \{\infty\}$ and viewing the link from infinity as in Figure \ref{fig:StandardBall}.  The separating quadruple $Q_{12}$ is not related to any other separating quadruple of $\mathcal{Q}$, while both relations $Q_{35}\prec_{\alpha} Q_{25}$ and $Q_{56}\prec_{\alpha} Q_{57}$ hold.

We now introduce more terminology that will be helpful in the ensuing discussion.

The term \textbf{inside} will frequently be used in the context of link components or thrice-punctured spheres to imply containment within a separating quadruple.  For example, the disk $D_3$ of Figure \ref{fig:StandardBall} is inside the separating quadruple $Q_{25}$ since $D_3\subset\mathbb{I}_{25}$, as are the open arcs $\alpha_3, \alpha_4, \alpha_5$ of $K_f$.  More generally, recall that two separating quadruples $Q_{ij}$ and $Q_{kl}$ intersect in at most one disk of $\mathcal{D}$ (together with its punctures). This implies that if $Q_{ij}\prec_{\alpha} Q_{kl}$, then the components $C_{ij},D_{ij}$, and $D_{ij}^{\ell}$ of $Q_{ij}$ are inside $Q_{kl}$ as well.

A component, disk, or arc is \textbf{outside} the separating quadruple $Q_{ij}$ if it is in the open three-ball of $\mathbb{S}^3\setminus Q_{ij}$ containing $\alpha$.  Thus the disks $D_1$ and $D_{12}$ of Figure \ref{fig:StandardBall} are outside $Q_{25}$.  More generally, if disjoint separating quadruples are not comparable in the inside partial ordering, then they are outside each other.

We will also have occasion to describe separating quadruples as being on the same or opposite sides of a common disk in $\mathcal{D}$.  Suppose two separating quadruples $Q,Q'\in\mathcal{Q}$ share a common disk $D\in\mathcal{D}$.   Then the closed 3-balls $\overline{\mathbb{I}}$ and $\overline{\mathbb{I}'}$ share a common boundary disk $D$ and have interiors that are either nested or disjoint. Define $Q$ and $Q'$ to be on the \textbf{same side} of the crossing disk $D \in \mathcal{D}$ if their insides are nested, and on \textbf{opposite sides} if they are disjoint.  For example, the separating quadruples $Q_{56}$ and $Q_{57}$ in Figure \ref{fig:StandardBall} are on the same side of $D_5$ while $Q_{25}$ and $Q_{57}$ are on opposite sides.

Recall that an element $Q\in\mathcal{Q}$ is \textbf{maximal} if $Q_{ij}\prec_{\alpha}Q$ whenever $Q_{ij}$ and $Q$ are comparable. Thus the separating quadruples $Q_{12}$, $Q_{25}$, and $Q_{57}$ of Figure \ref{fig:StandardBall} are maximal elements.  The term \textbf{outermost quadruple} will be used for maximal elements in the inside partial ordering on $\mathcal{Q}$, as it is more intuitive.  

\begin{figure}[h]
\begin{center}
\includegraphics{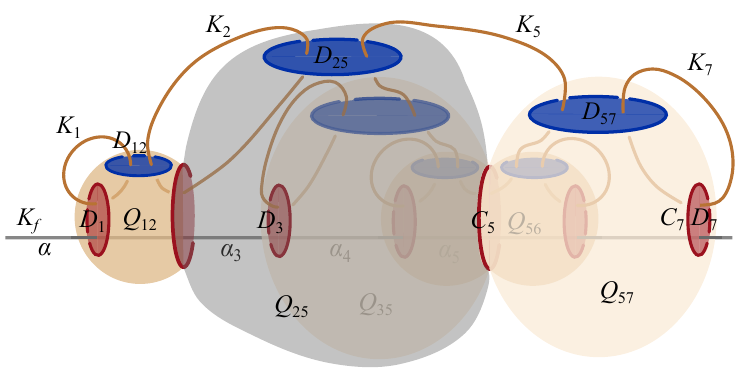}
\end{center}
\caption{A signature sublink with three outermost quadruples.}
\label{fig:StandardBall}
\end{figure}

We now highlight some important properties of this partial ordering.

\begin{lem}\label{lem:POprops}
Let $\mathcal{A}$ be a flat FAL whose complement admits a unique reflection surface and a type-changing homeomorphism $h$ to another flat FAL complement.  Let $\mathcal{L}_h$ be a signature sublink, $\alpha$ an open arc of $K_f\setminus\mathcal{D}$, and let $\prec_{\alpha}$ denote the induced inside relation on $\mathcal{Q}$.  The inside relation is a strict partial ordering on $\mathcal{Q}$ with the following properties:
\begin{enumerate}[label=\roman*.]
\item\label{lem:DistinctComp} Distinct separating quadruples of $\mathcal{Q}$ are not comparable if and only if their insides are disjoint.
\item\label{lem:SameSideD} Two separating quadruples that share a crossing disk in $D\in\mathcal{D}$ are comparable if and only if they are on the same side of $D$.
\item\label{lem:UniqueMax} Each $Q_{ij}\in \mathcal{Q}$ is either outermost or contained in a unique outermost element of $\mathcal{Q}$.
\item\label{D1Dn} The disk $D_1\in \mathcal{D}$ is part of a unique outermost quadruple, and is not inside any element of $\mathcal{Q}$. The same result is true of the disk $D_n\in\mathcal{D}$.
\end{enumerate}
\end{lem}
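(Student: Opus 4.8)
The plan is to verify the four properties of the inside relation $\prec_\alpha$ in order, leaning on Lemma \ref{lem:InsidesQij} (insides are nested or disjoint) and Lemma \ref{lem:C_K}$(iv)$ (distinct separating quadruples meet in at most a single disk of $\mathcal{D}$). The first property is essentially a restatement of Lemma \ref{lem:InsidesQij}: two distinct $Q_{ij}, Q_{kl}\in\mathcal{Q}$ have insides that are either disjoint or properly nested, and ``properly nested'' is by Definition \ref{defn:PO} exactly the statement that they are comparable; so they are incomparable precisely when their insides are disjoint. For the second property, suppose $Q, Q'\in\mathcal{Q}$ share a crossing disk $D\in\mathcal{D}$. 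I would recall from the discussion preceding the lemma that $\overline{\mathbb{I}}$ and $\overline{\mathbb{I}'}$ then share the common boundary disk $D$, and their interiors are nested or disjoint; by definition they are on the same side of $D$ iff their insides are nested, which by property \ref{lem:DistinctComp} happens iff they are comparable.

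The third property, \ref{lem:UniqueMax}, is where a small amount of genuine argument is needed, though it is the standard fact that in a finite partial order every element lies below a maximal one, plus a uniqueness claim coming from the geometry. Given $Q_{ij}\in\mathcal{Q}$, since $\mathcal{Q}$ is finite there is a maximal (outermost) $Q\in\mathcal{Q}$ with $Q_{ij}\preceq_\alpha Q$ (take any maximal element in the up-set of $Q_{ij}$). If $Q_{ij}$ itself is outermost we are done; otherwise suppose $Q_{ij}\prec_\alpha Q$ and $Q_{ij}\prec_\alpha Q'$ with $Q, Q'$ both outermost. Then $\mathbb{I}_{ij}\subset \mathbb{I}\cap\mathbb{I}'$, so $\mathbb{I}$ and $\mathbb{I}'$ are not disjoint, and by Lemma \ref{lem:InsidesQij} they are nested; without loss of generality $\mathbb{I}\subseteq\mathbb{I}'$. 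If the inclusion is proper then $Q\prec_\alpha Q'$, contradicting maximality of $Q$; hence $\mathbb{I}=\mathbb{I}'$, and since a separating quadruple is recovered from its two-sphere (hence from its inside ball), $Q=Q'$. So the outermost element above $Q_{ij}$ is unique.

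For the last property, \ref{D1Dn}, I would use the $\alpha$-ordering: $\alpha$ runs from $D_n$ to $D_1$, and for each $C_{ij}\in\mathcal{C_K}$ the arc $\alpha_{ij}\subset K_f$ inside $Q_{ij}$ runs from $D_i$ to $D_j$ and is disjoint from $\alpha$, hence is a union of consecutive arcs $\alpha_k$ among $\alpha_2,\dots,\alpha_n$. In particular $\alpha_{ij}$ never contains the arc $\alpha$ itself, so $D_1$ (respectively $D_n$) — being an endpoint disk of the arc $\alpha$ — is never in the interior $\mathbb{I}_{ij}$ of any $Q_{ij}$; that is, $D_1$ and $D_n$ are outside every separating quadruple they are not themselves part of, so they are not inside any element of $\mathcal{Q}$. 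To see $D_1$ lies in a unique outermost quadruple: $D_1$ bounds $C_1\in\mathcal{C}$, and by Lemma \ref{lem:SigSublink}$(iii)$ $K_1$ is linked by some $C_{1j}\in\mathcal{C_K}$, so $D_1\in Q_{1j}\in\mathcal{Q}$; by \ref{lem:UniqueMax}, $Q_{1j}$ is contained in a unique outermost $Q$, and then $D_1\in Q$ since $D_1$ is not inside $Q$ (it is on the boundary sphere). If $D_1$ belonged to two outermost quadruples $Q, Q'$, then $Q$ and $Q'$ share the disk $D_1\in\mathcal{D}$, so by \ref{lem:SameSideD} they are comparable iff on the same side of $D_1$; but two distinct outermost (maximal) elements cannot be comparable, so $Q,Q'$ are on opposite sides of $D_1$, meaning $\mathbb{I}$ and $\mathbb{I}'$ are disjoint and lie on opposite sides of the disk $D_1$. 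One of the two closed balls, say $\overline{\mathbb{I}'}$, then does not contain $\alpha$ while also not containing the arc $\alpha_2,\dots,\alpha_n$ on the $D_1$-side that belongs to the other; tracing which arc of $K_f\setminus\mathcal{D}$ it does contain forces it to contain $\alpha$, a contradiction. The main obstacle is getting this last orientation/arc-chasing step clean — making precise, from the $\alpha$-ordering, why an outermost quadruple sharing $D_1$ with another cannot exist on the ``wrong'' side — rather than any of the order-theoretic bookkeeping, which is routine.
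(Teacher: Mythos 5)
Your argument is correct and follows essentially the same route as the paper: (i) and (ii) are read off from Lemma \ref{lem:InsidesQij} and the definitions, (iii) is the finite-poset argument with uniqueness coming from the fact that two quadruples whose insides both contain $\mathbb{I}_{ij}$ have overlapping, hence nested, insides, and (iv) combines existence via Lemma \ref{lem:SigSublink}$(iii)$ with the observation that disks adjacent to $\alpha$ cannot be inside any quadruple. The arc-chasing step you flag at the end is closed exactly as in the paper: since $\alpha$ is by definition outside every element of $\mathcal{Q}$ and $K_f$ crosses $D_1$ from $\alpha$ into the adjacent arc on the other side, the inside of any separating quadruple containing $D_1$ must lie on the side of $D_1$ opposite $\alpha$; hence any two quadruples containing $D_1$ are on the same side of $D_1$ and are comparable by (ii), so two distinct outermost quadruples cannot both contain $D_1$.
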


\begin{proof}
Let $Q$ and $Q'$ be distinct separating quadruples in $\mathcal{Q}$.  When their insides are properly nested, $Q$ and $Q'$ are comparable; however, if $Q$ and $Q'$ have disjoint insides they are not comparable.  Lemma \ref{lem:InsidesQij} shows these are the only two cases, proving statement $(i)$. To see that statement (ii) holds, suppose $Q, Q'\in \mathcal{Q}$ are distinct separating quadruples that share the crossing disk $D\in \mathcal{D}$.  By statement $(i)$ they are not comparable if and only if their insides are disjoint which, by definition, is equivalent to saying they are on opposite sides of $D$.

Now, we consider statement (iii). Suppose $Q\in\mathcal{Q}$ is not outermost, so that there is a $Q'\in\mathcal{Q}$ with $Q\prec_{\alpha} Q'$. If $Q''$ is another separating quadruple with $Q\prec_{\alpha} Q''$, then the insides of $Q'$ and $Q''$ intersect nontrivially, and statement $(i)$ implies they are comparable.  Thus the set of all separating quadruples larger than $Q$ is a finite linearly ordered subset, and so contains a unique maximal element. 

The argument for statement $(iv)$ follows from the facts that every disk in $\mathcal{D}$ is either inside or on an outermost element of $\mathcal{Q}$, and that disks adjacent to $\alpha$ are not inside any element of $\mathcal{Q}$.  To see the first fact, note that the knot circle $K_i$ punctures $D_i$, and Lemma \ref{lem:SigSublink}$(iii)$ proves that $K_i$ is linked by some crossing circle $C_{il}\in \mathcal{C_K}$.  The crossing circle $C_{il}$ generates a separating quadruple $Q_{il}\in \mathcal{Q}$ which contains the disk $D_i$, by Lemma \ref{lem:C_K}.  We just verified that $Q_{il}$ is either outermost or inside an outermost $Q\in\mathcal{Q}$. If $Q_{il}$ is outermost then $D_i$ is part of an outermost quadruple; otherwise, $D_i$ is either on or inside $Q$. Thus every $D_i\in \mathcal{D}$ is either inside or on an outermost element of $\mathcal{Q}$.  Now suppose $D_i\subset \mathbb{I}_{jk}$ for some disk $D_i\in \mathcal{D}$ and where $\mathbb{I}_{jk}$ is the inside of an outermost quadruple $Q_{jk}\in \mathcal{Q}$.  Then both arcs of $K_f$ adjacent to $D_i$ are inside $Q_{jk}$ as well.  
By definition of inside, however, the arc $\alpha\subset K_f$ is outside every element of $\mathcal{Q}$ so disks adjacent to $\alpha$ are not inside any element of $\mathcal{Q}$.  Since $\alpha$ is adjacent to $D_1$ and $D_n$, they cannot be inside any separating quadruple and must be part of an outermost separating quadruple. To see uniqueness, note that all separating quadruples containing $D_1$ must be on the side of $D_1$ opposite $\alpha$.  Hence every pair of separating quadruples containing $D_1$ are comparable, making all such separating quadruples a linearly ordered subset, which must have a unique maximal element.  The same observations hold for the disk $D_n$.
\end{proof}

Consider the set of all outermost separating quadruples, which we denote $\{Q_1,Q_2,\dots,Q_l\}$.  No two outermost separating quadruples are comparable, so their insides $\{\mathbb{I}_i\}$ are disjoint by Lemma \ref{lem:POprops}(\ref{lem:DistinctComp}). The open arcs $\beta_i = \mathbb{I}_i\cap K_f$, therefore, are disjoint as well. By definition of inside, the arc $\alpha$ is disjoint from all $\{\mathbb{I}_i\}$, so the arcs $\{\beta_i\}$ can be ordered as they are encountered starting at $\alpha$ and traversing $K_f$ according to its orientation.  We assume the sequence $\{Q_1,Q_2,\dots,Q_l\}$ is listed using this order on the $\{\beta_i\}$.  

As an example, note that for a given $\alpha$, there is a unique outermost quadruple $\{Q_1\}$ if and only if $Q_1 = Q_{1n}$.  An alternative characterization is that there is a crossing circle $C_{1n}\in\mathcal{C_K}$ linking the first and last knot circle in the $\alpha$-ordering of $\mathcal{K}$.

If $Q_i, Q_j$ are not consecutive in the ordered sequence $\{Q_1,Q_2,\dots,Q_l\}$, there is some $Q_k$ between them along $K_f$ and they cannot share a disk of $\mathcal{D}$.  Hence, non-consecutive separating quadruples in the sequence $\{Q_1,Q_2,\dots,Q_l\}$ are disjoint.
Consecutive outermost quadruples $Q_i, Q_{i+1}$, on the other hand, can either share a disk or be disjoint.  They are disjoint if there is an arc of $K_f$ between $\beta_i$ and $\beta_{i+1}$. We say that consecutive quadruples $Q_i, Q_{i+1}$ are \textbf{adjacent} if $Q_i\cap Q_{i+1}=D_{j_i}$ for some $D_{j_i}\in\mathcal{D}$.  Since the insides of $Q_i$ and $Q_{i+1}$ are disjoint, if they are adjacent they are on opposite sides of $D_{j_i}$ (Lemma \ref{lem:POprops}(\ref{lem:SameSideD})).  The subsequence $\{Q_i,\dots,Q_j\}$ of $\{Q_1,Q_2,\dots,Q_l\}$, is a \textbf{maximally adjacent subsequence} if consecutive quadruples in the subsequence are adjacent while the pairs $\{Q_{i-1},Q_i\}$ and $\{Q_{j},Q_{j+1}\}$ are disjoint. 
The ordered sequence of all outermost quadruples $\{Q_1,Q_2,\dots,Q_l\}$ partitions into maximally adjacent subsequences. For example, the  maximally adjacent subsequence of Figure \ref{fig:StandardBall} is $\{ Q_{12},Q_{25},Q_{57}\}$.

Let $\{Q_1,\dots,Q_m\}$ be the initial maximally adjacent subsequence, so that consecutive quadruples of the subsequence are adjacent, while $Q_m$ and $Q_{m+1}$ are not.  We will use the subsequence $\{Q_1,\dots,Q_m\}$ to define the standard ball associated with $\alpha$.

Some elementary observations are in order before we define the standard ball. Let $Q_j$ be a outermost separating quadruple with inside $\mathbb{I}_j$, and note that its closure $\overline{\mathbb{I}_j}$ is a closed three-ball with boundary sphere $Q_j$ (we abuse notation and use $Q_j$ to refer to the two-sphere embedded in $\mathbb{S}^{3}$ corresponding to this separating quadruple).  Now suppose $Q_{j-1}$ and $Q_{j}$ are adjacent, outermost separating quadruples which share the crossing disk $D_{l_j}\in\mathcal{D}$.  Since $Q_{j-1}$ and $Q_{j}$ are outermost, Lemma \ref{lem:POprops}(\ref{lem:DistinctComp}) implies their insides are disjoint. The union $\overline{\mathbb{I}_{j-1}} \cup \overline{\mathbb{I}_{j}}$ is a closed 3-ball, since it is two closed 3-balls (with disjoint interiors) glued along a common disk in their boundary spheres.  The open disk $D_{l_j}^{\circ}$ is interior to $\overline{\mathbb{I}_{j-1}} \cup \overline{\mathbb{I}_{j}}$, so the boundary is $\partial\left(\overline{\mathbb{I}_{j-1}} \cup \overline{\mathbb{I}_{j}}\right)=\left(Q_{j-1}\cup Q_{j}\right)\setminus D_{l_j}^{\circ}$.
These observations extend to subsequences $\{Q_i,\dots,Q_k\}$ of adjacent, outermost separating quadruples. For each outermost quadruple $Q_j$ in the subsequence, form the closed three-ball $\overline{\mathbb{I}_j} = \mathbb{I}_j\cup Q_j$.  Then $\cup_{j=i}^k\overline{\mathbb{I}_j}$ is a closed three-ball, because it is a sequence of closed three-balls with disjoint interiors in which only consecutive balls are glued together disk on their boundary spheres.  
Moreover, the boundary sphere of $\cup_{j=i}^k\overline{\mathbb{I}_j}$ is given explicitly by $\left(\cup_{j=i}^k{Q_j}\right)\setminus \left(\cup_{j=i+1}^{k}{D_{l_j}^{\circ}}\right)$, where $D_{l_j} = Q_{j-1}\cap Q_{j}$. Applying this to the initial maximally adjacent subsequence $\{Q_1,\dots,Q_m\}$ yields the standard ball associated with $\alpha$. 

\begin{df}\label{defn:StandardBall}
Let $\alpha$ be an arc of $K_f\setminus \mathcal{D}$ with initial maximally adjacent subsequence $\{Q_1,\dots,Q_m\}$ of outermost separating quadruples.  Let $\mathbb{I}_j$ be the inside  of $Q_j$, and $\overline{\mathbb{I}_j}$ its closure.  The \textbf{standard ball} of $\alpha$, denoted $\mathbb{B}^3_{\alpha}$, is the union 
\[
\mathbb{B}^3_{\alpha} = \cup_{j=1}^m\overline{\mathbb{I}_j},
\]
and let $S^2_{\alpha}=\left(\cup_{j=1}^m{Q_j}\right)\setminus \left(\cup_{j=2}^{m}D_{l_j}^{\circ}\right)$ denote the boundary sphere of $\mathbb{B}^3_{\alpha}$, where $D_{l_j}=Q_{j-1}\cap Q_{j}$.  
\end{df} 

In what follows, components of $\mathcal{A}$ that intersect $S^2_{\alpha}$ will be important so, for convenience, we introduce some notation. Each $Q_j\in\{Q_1,\dots,Q_m\}$ is some $Q_{l_jl_{j+1}}\in \mathcal{Q}$, and it is with this notation we defined $D_{l_j}=Q_{j-1}\cap Q_{j}$, for $2 \le j \le m$.  Rather than using double-subscripts, we adopt lower-case letters to represent components of the $Q_j$. For example, denote the knot circles $K_{l_j}, K_{l_{j+1}}$ of $Q_j = Q_{l_jl_{j+1}}$ by $k_j$ and $k_{j+1}$ respectively, let $c_{j,j+1}= C_{l_jl_{j+1}}$, while $d_{j,j+1}^{\ell}$ denotes the longitudinal disk $D_{l_jl_{j+1}}^{\ell}$.  We continue to use $K_f$ for the fixed knot circle puncturing $Q_j$.  Please refer to Figure \ref{fig:StandardBall2} for an illustration of this notation.

Now consider the boundary $S^2_{\alpha}$ of the standard ball with this notation. The crossing disk shared by the outermost quadruples $Q_{j-1}$ and $Q_j$ is $d_j=D_{l_j}$, and $k_j$ is the knot circle puncturing them. Further, by Lemma \ref{lem:POprops}(\textit{\ref{D1Dn}}), the disk $D_1=d_1$ lies on the separating quadruple $Q_1$, so that $k_1 = K_1$. This implies $S^2_{\alpha}=\left(\cup_{j=1}^m{Q_j}\right)\setminus \left(\cup_{j=2}^{m}{d_{j}^{\circ}}\right)$, and $S^2_{\alpha}$ is punctured twice by each of the knot circles $K_f,k_1,k_2,\dots,k_m,k_{m+1}$.

The boundary $S^2_{\alpha}$ of the standard ball, then, is a two-sphere embedded in $\mathbb{S}^3$ that is intersected by the link $\mathcal{A}$ in many components of $\mathcal{L}_h$.  Our immediate goal is to extend $S^2_{\alpha}$ to an embedded two-sphere $S^+_{\alpha}$ that intersects $\mathcal{A}$ in exactly two points of $K_f$.  Since $\mathcal{A}$ is hyperbolic, the desired $S^+_{\alpha}$ cannot define a connect-sum decomposition, and one component of $\mathbb{S}^3\setminus S^+_{\alpha}$ must be a standard ball-arc pair. This significantly restricts the link $\mathcal{A}$ and allows us to prove that $\mathcal{A} = \mathcal{L}_h$ (Theorem \ref{thm:LisLs}), which ultimately leads to our main result. 

Some preliminary definitions, and a technical lemma, are necessary before constructing the two-sphere $S^+_{\alpha}$.  Lemma \ref{lem:C_K} shows that the structure of a signature link outlined in Lemma \ref{lem:SigLong} (a longitudinal disk and separating quadruple) persists in signature sublinks, even in the (potential) presence of additional components.  In the proof of Lemma \ref{lem:SigLong} the inside $\mathcal{P}_i$ of the knot circle $K_i$ can be described as the component of the reflection surface bounded by $K_i$ and not containing $K_f$.  

Now let $k_i$ be a knot circle in $\mathcal{K}$ puncturing the standard ball. Analogously define the inside $\mathcal{P}_i$ of $k_i$ to be that component of the reflection surface not containing $K_f$, including the boundary curve $k_i$.  Each $k_i$ punctures the boundary $S^2_{\alpha}$ twice, so half of the closed disk $\mathcal{P}_{i}$ is inside and half outside of $\mathbb{B}^3_{\alpha}$.  We let $\mathcal{P}_{i}^{\ast}$ denote the portion of $\mathcal{P}_{i}$ outside of $\mathbb{B}^3_{\alpha}$.  Precisely we have $\mathcal{P}_{i}^{\ast} = \mathcal{P}_{i} \setminus\mathbb{B}^3_{\alpha}$.  Note $\mathcal{P}_{i}^{\ast}$ is a half-open disk with the arc of $k_i$ outside $\mathbb{B}^3_{\alpha}$ part of its boundary.  For $2 \le i \le m$, the interior of $\mathcal{P}_{i}^{\ast}$, then, is an open disk whose boundary consists of an arc of $k_i$, geodesics on each of $Q_{i-1}$ and $Q_i$, and single point of the crossing circle $c_i$ on $Q_{i-1} \cap Q_i$ (see Figure \ref{fig:StandardBall2}). Note that, for $i=1, m+1$,  $\mathcal{P}_{i}^{\ast}$ is also an open disk, but whose boundary only consists of an arc of $k_i$ and a geodesic on $Q_i$. 

\begin{figure}[h]
\begin{center}
\includegraphics{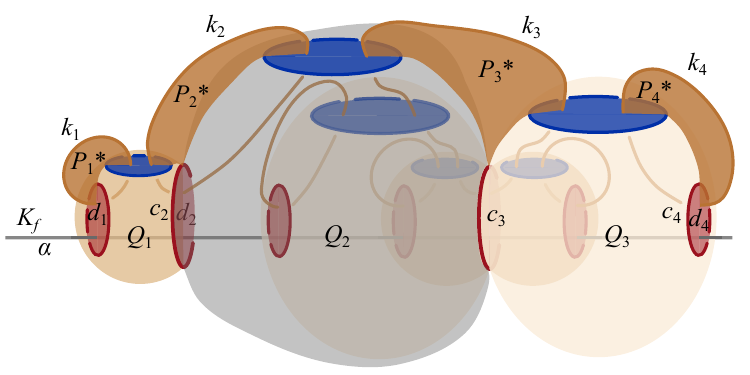}
\end{center}
\caption{The standard ball with disks $\mathcal{P}_i^{\ast}$.}
\label{fig:StandardBall2}
\end{figure}

Before constructing $S^{+}_{\alpha}$, we prove a technical lemma showing that no components of $\mathcal{A}$ intersect $\mathcal{P}_i^{\ast}$ (other than the boundary arc of $k_i$).  Components of $\mathcal{L}_h$ and its complement $\mathcal{L}^c = \mathcal{A}\setminus\mathcal{L}_h$ will be considered separately.

\begin{lem}\label{lem:Pstar}
Let $M = \mathbb{S}^{3} \setminus \mathcal{A}$ be a flat FAL complement with a unique reflection surface that admits a type-changing homeomorphism $h$ to another flat FAL complement. Let $\mathcal{L}_h$ denote the signature sublink of $\mathcal{A}$.  Let $\alpha$ be an arc of $K_f\setminus \mathcal{D}$ with standard ball $\mathbb{B}_{\alpha}^3$ generated by the initial maximally adjacent subsequence $\{Q_1,\dots,Q_m\}$ of outermost quadruples.  

If $\mathcal{P}_i^{\ast}$ is the portion of the reflection surface inside of $k_i\in\mathcal{K}$ but outside $\mathbb{B}_{\alpha}^3$, then the components of $\mathcal{A}$ are disjoint from the interior of $\mathcal{P}_i^{\ast}$. 
\end{lem}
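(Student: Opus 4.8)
The goal is to show that the interior of $\mathcal{P}_i^{\ast}$, the portion of the reflection surface $R$ inside $k_i$ but outside $\mathbb{B}^3_{\alpha}$, meets no component of $\mathcal{A}$ (except along the boundary arc of $k_i$). Since $\mathcal{P}_i^{\ast}$ is a subset of the reflection surface, the only components that could puncture its interior are crossing circles: knot circles lie on $R$ and cannot cross $k_i$ or any other knot circle in a flat FAL, so the only knot-circle puncture of $\mathcal{P}_i$ (hence of $\mathcal{P}_i^{\ast}$) is along $k_i$ itself. Thus it suffices to rule out crossing-circle punctures, and I would split the argument according to whether the crossing circle lies in $\mathcal{L}_h$ or in $\mathcal{L}^c = \mathcal{A}\setminus\mathcal{L}_h$, as the lemma statement suggests.

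\textbf{Components of $\mathcal{L}_h$.}
First I would handle crossing circles in $\mathcal{C}$. The crossing circle $c_i$ linking $k_i$ punctures $\mathcal{P}_i$ exactly once, and by construction of the standard ball this single puncture lies on the shared disk $d_i = Q_{i-1}\cap Q_i$ (or, for $i=1,m+1$, on $Q_i$ itself), hence on the boundary $S^2_\alpha$, not in the interior of $\mathcal{P}_i^{\ast}$. For a crossing circle $C_j\in\mathcal{C}$ with $j\ne i$: its crossing disk links $K_f$ and $K_j\ne k_i$, so $C_j$ does not link $k_i$, and since every crossing circle of a flat FAL links distinct knot circles and $k_i$ is the only knot-circle puncture of $\mathcal{P}_i$, the crossing disk for $C_j$ cannot meet $\mathcal{P}_i$ in an arc with an endpoint on $k_i$; I would invoke the disjointness of crossing disks (and the fact that $C_j$'s crossing disk, being an $N$-disk, intersects $R$ in non-separating geodesics) to conclude $C_j$ does not puncture $\mathcal{P}_i$ at all. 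Next, crossing circles $C_{jk}\in\mathcal{C_K}$: by Lemma \ref{lem:C_K}(i) every crossing circle of $\mathcal{A}$ that links a knot circle of $\mathcal{K}$ lies in $\mathcal{C_K}$ and links two distinct knot circles of $\mathcal{K}$. If $C_{jk}$ links $k_i$ then one of $j,k$ equals the index of $k_i$, and $C_{jk}$ together with $C_i$ generates (via Lemma \ref{lem:C_K}(iii)) a separating quadruple $Q$ containing $D_i$; since $Q_{i-1},Q_i$ are outermost, $Q$ is either one of $Q_{i-1},Q_i$ or is inside one of them, so the crossing-disk of $C_{jk}$ lies in $\overline{\mathbb{I}_{i-1}}\cup\overline{\mathbb{I}_i}\subset\mathbb{B}^3_\alpha$ and its intersection with $\mathcal{P}_i$ (which must be an arc from $c_{jk}$ to $k_i$) stays inside $\mathbb{B}^3_\alpha$, so it does not meet the interior of $\mathcal{P}_i^{\ast}$. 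If $C_{jk}$ does not link $k_i$, the argument of the $C_j$ case applies verbatim.

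\textbf{Components of $\mathcal{L}^c$.}
Finally I would treat a crossing circle $C\in\mathcal{L}^c = \mathcal{A}\setminus\mathcal{L}_h$. By Lemma \ref{lem:C_K}(i), since $C\notin\mathcal{C_K}$ it cannot bound a crossing disk punctured by any knot circle of $\mathcal{K}$; in particular its crossing disk is not punctured by $k_i$. As in the previous cases, $C$'s crossing disk is an $N$-disk meeting $R$ in non-separating geodesics, and since $k_i$ is the only knot-circle boundary of $\mathcal{P}_i$, any arc of $C$'s crossing disk on $\mathcal{P}_i$ would have to run from a crossing-circle puncture to $k_i$ — but $k_i$ is not a puncture of that crossing disk, a contradiction. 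Hence $C$ does not puncture $\mathcal{P}_i$ either, and a fortiori not its interior outside the ball. Combining the two cases: no component of $\mathcal{A}$ meets the interior of $\mathcal{P}_i^{\ast}$, as claimed.

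\textbf{Main obstacle.}
The step I expect to be most delicate is the $\mathcal{C_K}$ case, specifically controlling \emph{where} the crossing disk of a $C_{jk}$ linking $k_i$ sits relative to the standard ball. One must carefully use the uniqueness and nesting/disjointness properties of the separating quadruples in $\mathcal{Q}$ (Lemma \ref{lem:C_K}(iii)-(iv), Lemma \ref{lem:InsidesQij}, Lemma \ref{lem:POprops}) together with the outermost-ness of $Q_{i-1},Q_i$ to guarantee that the entire relevant crossing disk — and hence its intersection arc with $\mathcal{P}_i$ — lies inside $\mathbb{B}^3_\alpha$ rather than poking out through $\mathcal{P}_i^{\ast}$. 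The other cases reduce quickly to the disjointness of crossing disks and the elementary fact that a crossing disk meeting $\mathcal{P}_i$ must do so in an arc ending on the unique knot-circle boundary $k_i$.
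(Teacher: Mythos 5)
Your treatment of $\mathcal{L}_h$ is essentially the paper's argument, but there are two genuine gaps. The more serious one is in the $\mathcal{L}^c$ case. You assert that $k_i$ is the only knot-circle boundary of $\mathcal{P}_i$ and then derive a contradiction from the fact that an intersection arc of a crossing disk with $\mathcal{P}_i$ ``would have to run from a crossing-circle puncture to $k_i$.'' But the claim that $k_i$ is the only knot circle meeting $\mathcal{P}_i$ is only established for knot circles of $\mathcal{L}_h$ (a knot circle interior to $\mathcal{P}_i$ lies on the opposite side of $k_i$ from $K_f$, hence cannot be linked to $K_f$ and so is not in $\mathcal{K}$). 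Nothing in that argument prevents a knot circle of $\mathcal{L}^c$ from lying entirely inside $\mathcal{P}_i^{\ast}$, together with crossing circles of $\mathcal{L}^c$ whose crossing disks meet $\mathcal{P}_i^{\ast}$ in arcs joining punctures that are all interior to $\mathcal{P}_i^{\ast}$ --- your ``contradiction'' never materializes in that configuration. The paper rules this out differently: it shows that the set $\mathcal{L}_i^c$ of components of $\mathcal{A}$ meeting $\mathcal{P}_i^{\ast}$ is closed under linking (its crossing circles link only its knot circles and vice versa), so if it were nonempty, $\mathcal{A}$ would be a split link, contradicting hyperbolicity. That global non-splitness argument is the missing idea; without it an entire ``island'' of $\mathcal{L}^c$ could sit inside the disk bounded by $k_i$.

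The second gap is in the $\mathcal{C_K}$ case at the endpoint $i=m+1$. For $2\le i\le m$ your argument works: a quadruple $Q$ containing $d_i$ must be on the same side of $d_i$ as one of $Q_{i-1},Q_i$ (which lie on opposite sides), hence is inside one of them by maximality. But $k_{m+1}$ meets only $Q_m$ among the chosen subsequence, and a quadruple $Q$ generated by a crossing circle linking $k_{m+1}$ shares only $d_{m+1}$ with $Q_m$; a priori $Q$ could lie on the \emph{opposite} side of $d_{m+1}$ from $Q_m$, in which case it is not inside $Q_m$ and its components would protrude into $\mathcal{P}_{m+1}^{\ast}$. The paper excludes this by observing that such a $Q$ would be contained in an outermost quadruple adjacent to $Q_m$ on the far side of $d_{m+1}$, which would extend $\{Q_1,\dots,Q_m\}$ to a longer adjacent subsequence, contradicting its maximality. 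You flag the $\mathcal{C_K}$ case as delicate but do not supply this step, and your parenthetical ``$Q$ is either one of $Q_{i-1},Q_i$ or inside one of them'' silently assumes it.
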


\begin{proof}
Initially focus on components of $\mathcal{L}_h$ and consider $\mathcal{P}_i\cap \mathcal{L}_h$. First observe that $k_i$ is the only knot circle of $\mathcal{L}_h$ that is a boundary curve of $\mathcal{P}_i$.  To see this, suppose that $K$ is a knot circle of $\mathcal{A}$ interior to $\mathcal{P}_i$.  Then $K$ and $K_f$ are on opposite sides of $k_i$, and cannot be linked by a crossing circle.  Since each $K_j\in\mathcal{K}$ is linked to $K_f$ by $C_j$, we see $K\notin\mathcal{K}$ so $K$ is not in $\mathcal{L}_h$. Thus $\mathcal{P}_i^{\ast}\cap\mathcal{L}_h$ consists only of punctures by crossing circles of $\mathcal{L}_h$.

We turn our attention to crossing circles of $\mathcal{L}_h$ which intersect $\mathcal{P}_i^{\ast}$.  The crossing circle $C_i$ is the only one of $\mathcal{C}$ that intersects $\mathcal{P}_i$, and it does so in one point on the boundary of, not interior to, $\mathcal{P}_i^{\ast}$.  Now suppose $C\in\mathcal{C_K}$ is a crossing circle that links $k_i$, and let $Q$ be the separating quadruple it generates (guaranteed by Lemma \ref{lem:C_K}). In this case, $Q$ contains the crossing disk $D_i$ and is comparable to exactly those separating quadruples of $\mathcal{Q}$ on the same side of $D_i$ as $Q$ (Lemma \ref{lem:POprops}(\emph{\ref{lem:SameSideD}})). Consider the cases $i = 1$, $2\le i \le m$, and $i = m+1$ separately.

In the case $i=1$, the disk $d_1$ is $D_1\in\mathcal{D}$ by \ref{lem:POprops}(\emph{\ref{D1Dn}}), and every separating quadruple containing $D_1$ is opposite to $\alpha$.  Thus $Q$ is comparable to $Q_1$ and, by maximality of $Q_1$, we have $Q\prec_{\alpha}Q_1$.  This implies the crossing circle $C$ is inside $Q_1$ and disjoint from $\mathcal{P}_i^{\ast}$.

In the case $2\le i \le m$, the separating quadruple $Q$ shares the disk $d_i$ with both $Q_{i-1}$ and $Q_i$.  The outermost quadruples $Q_{i-1}$ and $Q_i$ are not comparable so must be on opposite sides of $d_i$.  Thus $Q$ is comparable to one of either $Q_{i-1}$ or $Q_i$, making $C$ interior to that outermost quadruple and disjoint from $\mathcal{P}_i^{\ast}$.

Finally consider $\mathcal{P}_{m+1}^{\ast}$ which is bounded by an arc of the knot circle $k_{{m+1}}$. In this case $Q$ shares the disk $d_{m+1}$ with $Q_m$ (e.g. disk $\mathcal{P}_4^{\ast}$ of Figure \ref{fig:StandardBall2}).  If $Q$ and $Q_m$ are on the opposite sides of $d_{m+1}$, then $Q$ is contained in a unique outermost quadruple $Q_{m+1}$ on the opposite side of $d_{m+1}$ from $Q_m$ (Lemma \ref{lem:POprops}). But then the sequence $\{Q_{1}, Q_{2}, \dots,Q_{m}\}$ can be extended by $Q_{m+1}$ to a longer sequence of adjacent, outermost separating quadruples.  This contradicts the definition of $\{Q_{1}, Q_{2}, \dots,Q_{m}\}$, so $Q$ and $Q_m$ are on the same side of $d_{m+1}$.  Maximality of $Q_m$ implies that $C$ is inside $Q_m$ and disjoint from $\mathcal{P}_i^{\ast}$.

In all cases, then, the interior of $\mathcal{P}_i^{\ast}$ is disjoint from crossing circles of $\mathcal{L}_h$.  The preceding argument showed that the same is true of knot circles in $\mathcal{L}_h$, so the interior of $\mathcal{P}_i^{\ast}$ is disjoint from the signature sublink $\mathcal{L}_h$. 

It remains to show that components of $\mathcal{L}^c = \mathcal{A}\setminus \mathcal{L}_h$ do not intersect the interior of $\mathcal{P}_{i}^{\ast}$. The proof amounts to showing that if the set of components of $\mathcal{L}^c$ intersecting $\mathcal{P}_{i}^{\ast}$ is non-trivial, then $\mathcal{A}$ is a split link, hence $\mathcal{L}^c$  must be disjoint from $\mathcal{P}_{i}^{\ast}$.

First recall that any crossing disk punctured by $k_i$ is bounded by a crossing circle in $\mathcal{L}_h$ by Lemma \ref{lem:C_K}, and consider a crossing circle $C\in \mathcal{L}^c$ that punctures the interior of $\mathcal{P}_{i}^{\ast}$.  Then, if $D$ a crossing disk bounded by $C$ it is disjoint from $k_i$ as well as from separating quadruples in $\mathcal{Q}$--in other words, $D$ is disjoint from the boundary of $\mathcal{P}_{i}^{\ast}$.  Now $C$ punctures $\mathcal{P}_{i}^{\ast}$, so $D$ intersects the reflection surface entirely within $\mathcal{P}_{i}^{\ast}$.  In particular, $C$ punctures the interior of $\mathcal{P}_{i}^{\ast}$ twice and any knot circle(s) linked by $C$ are interior to $\mathcal{P}_{i}^{\ast}$. 

Now let $K$ be a knot circle of $\mathcal{L}^c$ and recall that every crossing circle in $\mathcal{L}_h$ links only knot circles in $\mathcal{L}_h$. Thus all crossing circles of $\mathcal{A}$ that link $K$ are contained in $\mathcal{L}^c$.  Now suppose $K$ intersects $\mathcal{P}_{i}^{\ast}$. Knot circles of $\mathcal{A}$ are disjoint in the reflection surface and $K$, being in $\mathcal{L}^c$, is disjoint from $\mathcal{Q}$, so $K$ is contained in the interior of $\mathcal{P}_{i}^{\ast}$.  Then any crossing circle $C$ that links $K$ punctures $\mathcal{P}_{i}^{\ast}$, and the above argument shows that $C$ links only knot circles interior to $\mathcal{P}_{i}^{\ast}$.

Now suppose that $\mathcal{L}^c$ intersects $\mathcal{P}_{i}^{\ast}$ non-trivially, and let $\mathcal{L}_i^c$ denote the sublink of components of $\mathcal{A}$ that intersect $\mathcal{P}_{i}^{\ast}$. The above argument shows that crossing circles of $\mathcal{L}_i^c$ link only knot circles of $\mathcal{L}_i^c$, and vice versa.  Thus the components of flat FAL $\mathcal{A}$ partition into two non-empty subsets, $\mathcal{L}_i^c$ and its complement, in which crossing circles only link knot circles within their respective subset.  In an FAL this results in a split link, contradicting the fact that $\mathcal{A}$ is hyperbolic.  

Thus $\mathcal{P}_{i}^{\ast}$ is disjoint from $\mathcal{L}^c$, completing the proof that the components of $\mathcal{A}$ are disjoint from the interior of $\mathcal{P}_i^{\ast}$. 
\end{proof}

\begin{prop}\label{prop:2ps}
Let $\mathcal{A}$ be a flat FAL whose complement admits a type-changing homeomorphism $h$, with associated signature sublink $\mathcal{L}_h$. Let $K_f$ be the knot circle component of $\mathcal{L}_h$ whose type is fixed by $h$, and $\alpha$ be an arc of $K_f\setminus \mathcal{D}$. The standard ball $\mathbb{B}^3_{\alpha}$ has a neighborhood $N(\mathbb{B}^3_{\alpha})$ in $\mathbb{S}^3$ whose boundary is a two-sphere $S^+_{\alpha}$ such that $S^+_{\alpha}\cap \mathcal{A}$ is precisely two distinct points of $K_f$.  Moreover, $S^+_{\alpha}$ can be chosen so that  
\begin{enumerate}[label=\roman*.]
\item Every component, other than $K_f$, of $\mathcal{A}$ that intersects $\mathbb{B}^3_{\alpha}$ is inside $S^+_{\alpha}$, and
\item Every component of $\mathcal{A}$ that is outside $\mathbb{B}^3_{\alpha}$ is also outside $S^+_{\alpha}$.
\end{enumerate}
\end{prop}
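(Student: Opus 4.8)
The plan is to enlarge the standard ball $\mathbb{B}^3_{\alpha}$ by grafting on the half-disks $\mathcal{P}^{\ast}_1,\dots,\mathcal{P}^{\ast}_{m+1}$ as two-dimensional ``flaps,'' so that the knot circles $k_1,\dots,k_{m+1}$ get swallowed into the interior, and then to take $S^{+}_{\alpha}$ to be the boundary of a thin regular neighborhood of the result. Set $X=\mathbb{B}^3_{\alpha}\cup\bigcup_{i=1}^{m+1}\mathcal{P}^{\ast}_i$. The first step is to pin down how $\mathcal{A}$ meets $S^2_{\alpha}=\partial\mathbb{B}^3_{\alpha}$. Since $S^2_{\alpha}$ is assembled from the thrice-punctured-sphere faces of the outermost quadruples $Q_1,\dots,Q_m$, and every such face already has all three of its punctures used up --- each puncture being either a knot circle ($K_f$ or some $k_i$) or a crossing circle of $\mathcal{C}\cup\mathcal{C}_{\mathcal{K}}$ --- no further component of $\mathcal{A}$ can puncture a face of $S^2_{\alpha}$. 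Hence the only components of $\mathcal{A}$ that cross $S^2_{\alpha}$ transversally are $K_f$, meeting it in the two points $K_f\cap d_1$ and $K_f\cap d_{m+1}$, and each $k_i$, meeting it in two points on the transverse crossing disks adjacent to $k_i$; the relevant crossing circles lie \emph{on} $S^2_{\alpha}$ rather than crossing it. Combining this with Lemma~\ref{lem:POprops} and the argument of Lemma~\ref{lem:Pstar} --- a crossing circle of $\mathcal{C}_{\mathcal{K}}$ linking some $k_i$ generates a separating quadruple sharing the disk $d_i$ with $Q_{i-1}$ and $Q_i$, hence lies inside $\mathbb{B}^3_{\alpha}$ --- one obtains the dichotomy that every component of $\mathcal{A}$ other than $K_f$ and the $k_i$ is either contained in $\mathbb{B}^3_{\alpha}$ or disjoint from it.

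Next I would check that $X$ is a $3$-ball. Each $\mathcal{P}^{\ast}_i$ is an embedded disk meeting $\mathbb{B}^3_{\alpha}$ in a single arc of $\partial\mathcal{P}^{\ast}_i$ lying on $S^2_{\alpha}$ (the arc of geodesics on $Q_{i-1}$ and $Q_i$ described in Section~\ref{subsec:StandardBall}), while the complementary boundary arc of $\mathcal{P}^{\ast}_i$ is the sub-arc of $k_i$ lying outside $\mathbb{B}^3_{\alpha}$, and by definition the interior of $\mathcal{P}^{\ast}_i$ is disjoint from $\mathbb{B}^3_{\alpha}$. The disks are pairwise disjoint: a crossing disk bounded by $C_i$ meets the (planar) reflection surface in a geodesic joining the cusps $k_i$ and $K_f$, so there is an arc in the reflection surface from $k_i$ to $K_f$ avoiding $k_j$ for $j\neq i$; thus no $k_j$ lies on the $\mathcal{P}_i$-side of $k_i$, and the $\mathcal{P}_i$ (hence the $\mathcal{P}^{\ast}_i$) are pairwise disjoint. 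Therefore $X$ is a $3$-ball with finitely many disjoint two-dimensional flaps attached along disjoint boundary arcs, so $X$ is itself a $3$-ball, and a sufficiently small closed regular neighborhood $N=N(\mathbb{B}^3_{\alpha})$ of $X$ in $\mathbb{S}^3$ is a $3$-ball with $X\subset\operatorname{int}N$; put $S^{+}_{\alpha}=\partial N$, a two-sphere.

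Finally, with $N$ chosen thin, I would verify the three conclusions. Each $k_i$ satisfies $k_i\setminus\mathbb{B}^3_{\alpha}\subset\mathcal{P}^{\ast}_i$ (since $k_i\subset\partial\mathcal{P}_i$), so $k_i\subset X\subset\operatorname{int}N$ is disjoint from $S^{+}_{\alpha}$; the crossing circles lying on $S^2_{\alpha}$, and every component of $\mathcal{A}$ contained in $\mathbb{B}^3_{\alpha}$, also lie in $X\subset\operatorname{int}N$. By Lemma~\ref{lem:Pstar} the interiors of the $\mathcal{P}^{\ast}_i$ meet no component of $\mathcal{A}$, and the boundary of each $\mathcal{P}^{\ast}_i$ lies in $S^2_{\alpha}\cup k_i$, so any component disjoint from $\mathbb{B}^3_{\alpha}$ and not equal to one of the $k_i$ is disjoint from $X$, hence from $N$. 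And $K_f\cap\mathbb{B}^3_{\alpha}$ is the single arc $\Gamma$ with endpoints $K_f\cap d_1$ and $K_f\cap d_{m+1}$, while the complementary arc $K_f\setminus\Gamma$ lies in the component of the reflection surface containing $K_f$ and so misses every $\mathcal{P}_i$; thus $K_f$ meets $X$ only in $\Gamma$ together with a collar of its two endpoints, and $K_f$ crosses $S^{+}_{\alpha}$ in exactly two points. This yields $S^{+}_{\alpha}\cap\mathcal{A}$ equal to two points of $K_f$, and conclusions $(i)$ and $(ii)$ are then immediate from the first step: ``$J$ meets $\mathbb{B}^3_{\alpha}$ and $J\neq K_f$'' forces $J\subset X$, while ``$J$ disjoint from $\mathbb{B}^3_{\alpha}$'' forces $J$ disjoint from $X$.

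The step I expect to be the main obstacle is the second paragraph: establishing that $X$ is honestly a $3$-ball, i.e.\ that the $\mathcal{P}^{\ast}_i$ are embedded, pairwise disjoint, and glued to $\partial\mathbb{B}^3_{\alpha}$ along genuine arcs. The pairwise-disjointness is the delicate point, since at this stage of the paper one cannot yet invoke the (not-yet-proven) fact that all knot circles of $\mathcal{K}$ lie on the same side of $K_f$; it must instead be extracted from the planarity of the reflection surface together with the crossing-disk argument above.
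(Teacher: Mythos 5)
Your proposal is correct and follows essentially the same route as the paper: both build the cell complex $X=\mathbb{B}^3_{\alpha}\cup\bigcup_i\mathcal{P}_i$ (your $\mathbb{B}^3_{\alpha}\cup\bigcup_i\mathcal{P}_i^{\ast}$ is the same set), take $S^+_{\alpha}$ to be the boundary of a small regular neighborhood, and invoke Lemma \ref{lem:Pstar} to see that nothing outside $\mathbb{B}^3_{\alpha}$ meets the flaps. The disjointness worry you flag is handled in the paper inside Lemma \ref{lem:Pstar} by exactly the crossing-disk/linking argument you sketch (run with the arc from $k_j$ to $K_f$ coming from $D_j$, rather than from $D_i$, to rule out $k_j\subset\mathcal{P}_i$), so it is a non-issue.
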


\begin{proof}
Let $K_f$ be oriented with $\alpha$ an arc of $K_f$ between two consecutive disks of $\mathcal{D}$ and endow components of $\mathcal{L}_h$ with the $\alpha$-ordering. Further, let $\{Q_1,\dots,Q_m\}$ be the initial maximally adjacent subsequence and let $\mathbb{B}^3_{\alpha}$ be the standard ball of $\alpha$ with boundary sphere $S^2_{\alpha}$.  The proof consists of constructing a cell complex $X$ consisting of $\mathbb{B}^3_{\alpha}$ together with portions of the projection plane.  The desired sphere $S^+_{\alpha}$ will be the boundary of an appropriately chosen regular neighborhood $N(X)$ of this cell complex.  

Let $\mathcal{R}$ denotes the unique reflection surface of $M$. As in Lemma \ref{lem:Pstar}, let $\mathcal{P}_{i}$ denote the disk of $\mathcal{R}\setminus k_{i}$ that does not contain $K_f$, together with its boundary curve $k_{i}$.  Now define $X$ to be the cell complex
\[
X=\mathbb{B}^3_{\alpha} \cup \left(\cup_{1\le i \le m+1} \mathcal{P}_{i}\right).
\] 
Again following Lemma \ref{lem:Pstar}, the standard ball intersects each disk $\mathcal{P}_{i}$ and we let $\mathcal{P}_{i}^{\ast}$ denote the portion of $\mathcal{P}_{i}$ outside $\mathbb{B}^3_{\alpha}$ (i.e. $\mathcal{P}_{i}^{\ast} = \mathcal{P}_{i} \setminus\mathbb{B}^3_{\alpha}$--see Figure \ref{fig:StandardBall2}). 

The first statement of the proposition involves components of $\mathcal{A}$ that intersect $\mathbb{B}^3_{\alpha}$, and we show that each of these is contained in $X$.  Since the components of the quadruples $\{Q_1,\dots,Q_m\}$ are the only components of $\mathcal{A}$ that intersect the boundary $S^2_{\alpha}$ of the standard ball, we address those first.  All crossing circles in these quadruples lie on the boundary of the standard ball and are, therefore, subsets of $X$.  Recall that the knot circles of $\mathcal{A}$ that puncture $S^2_{\alpha}$ have been labeled $K_f,k_1,k_2,\dots,k_m,k_{m+1}$, which are not subsets of $\mathbb{B}^3_{\alpha}$.  Since $X$ includes the closed disks $\mathcal{P}_i$, however, each $k_i$ is a subset of $X$.  Thus all components of $\mathcal{A}$ that intersect  $S^2_{\alpha}$, other than $K_f$, are subsets of $X$. The remaining components of $\mathcal{A}$ that intersect $\mathbb{B}^3_{\alpha}$ are interior to $\mathbb{B}^3_{\alpha}$. Thus every component of $\mathcal{A}$ that intersects $\mathbb{B}^3_{\alpha}$ is contained in $X$ and, therefore, interior to every neighborhood of $X$ in $\mathbb{S}^3$.

The goal is to show that an appropriate open neighborhood $N(X)$ of $X$ is an open three ball whose boundary sphere satisfies the requirements for $S^+_{\alpha}$.   First observe that each $\mathcal{P}_{i}^{\ast}$ can be retracted into $\mathbb{B}^3_{\alpha}$ along the disk $\mathcal{P}_{i}$.  Since small enough neighborhoods of $\mathbb{B}^3_{\alpha}$ are three-balls, the same is true for $X$.  Moreover, since the knot circle $K_f$ punctures $S^2_{\alpha}$ twice, the same will be true of small enough neighborhoods of $X$.  From now on we assume $N(X)$ is an open three-ball neighborhood of $X$ with boundary sphere $S^2_X$ that is punctured twice by $K_f$.  The previous paragraph demonstrates that $S^2_X$ contains all other components of $\mathcal{A}$ that intersect $\mathbb{B}^{3}_{\alpha}$, so $S^2_X$ satisfies the first statement of the proposition as well.

We have shown that components of $\mathcal{A}$ that intersect $\mathbb{B}^3_{\alpha}$ behave as desired relative to $S^2_X$. Furthermore, the neighborhood $N(X)$ can be chosen small enough so that its boundary sphere doesn't intersect any components of $\mathcal{A}$ that are disjoint from $X$.  Thus any component of $\mathcal{A}$ that is disjoint from (or outside) $X$ is also outside $S^2_X$. 

To finish the proof of statement $(ii.)$, then, we must show that every component of $\mathcal{A}$ that is outside $\mathbb{B}^3_{\alpha}$ is also outside $X$. Or, contrapositively, show that every component of $\mathcal{A}$ that intersects $X\setminus\mathbb{B}^3_{\alpha} = \left(\cup_{1\le i \le m+1} \mathcal{P}_{i}^{\ast} \right)$ also intersects $\mathbb{B}^3_{\alpha}$.  Lemma \ref{lem:Pstar}, however, demonstrates that the only components of $\mathcal{A}$ intersecting the $\mathcal{P}_{i}^{\ast}$ are components of the outermost quadruples $\{Q_1,\dots,Q_m\}$, which also intersect $\mathbb{B}^3_{\alpha}$. Thus every component of $\mathcal{A}$ that is outside $\mathbb{B}^3_{\alpha}$ is also outside $X$, and $S^2_X$ satisfies the second statement of our proposition, making it our desired $S^2_+$.
\end{proof}

As an example, the sphere $S^+_{\alpha}$ of Figure \ref{fig:StandardBall2} contains the entire signature sublink $\mathcal{L}_h$ except the arc $\alpha$.  In fact, the twice-punctured sphere $S_{\alpha}^+$ of Lemma \ref{prop:2ps} will be used to provide a connect-sum decomposition of $\mathcal{A}$ if $\mathcal{A} \ne \mathcal{L}_h$, contradicting hyperbolicity.  Hence it is the key to finishing the proof that $\mathcal{A} = \mathcal{L}_h$.

\subsection{Hyperbolicity Conditions} \label{subsec:hyp}

We can now use the properties of $S^{+}_{\alpha}$ from Proposition \ref{prop:2ps} along with hyperbolicity conditions, specifically a hyperbolic link can not be a split link and can not be a connect-sum, to prove our main results. Before continuing, let us compare the \emph{signature sublinks} of Definition \ref{defn:SigSublink} to the \emph{signature links} of Definition \ref{defn:SigLink}.  One glaring difference is that a signature link is assumed to be a flat FAL while a signature sublink, being a sublink of a flat FAL, is not necessarily a flat FAL by itself.  The other difference is that all knot components of $\mathcal{K}$ in a signature link are assumed to be on the same side of $K_f$, and this has yet to be proven for signature sublinks.  By \textbf{same side of} $K_f$ we mean the same component of its complement in the projection plane.  Theorem \ref{thm:SameSide} shows that all knot circles in $\mathcal{K}$ for a signature sublink $\mathcal{L}_h$ are indeed on the same side of $K_f$.  Theorem \ref{thm:LisLs} proves that $\mathcal{L}_h$ is a flat FAL by proving (the stronger result) that it equals the flat FAL $\mathcal{A}$.  With these results in hand it is not hard to finish, in Theorem \ref{thm:LisL'}, the proof that flat FALs are determined by their complements.

\begin{thm}\label{thm:SameSide}
Let $M = \mathbb{S}^{3} \setminus \mathcal{A}$ and $M' = \mathbb{S}^{3} \setminus \mathcal{A}'$ be flat FAL complements, each with unique reflection surfaces. Suppose $h:M\to M'$ is a type-changing homeomorphism, and let $\mathcal{L}_h$ be the signature sublink of $\mathcal{A}$.  Then all knot circles in $\mathcal{K}$ are on the same side of $K_f$. 
\end{thm}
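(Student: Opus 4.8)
The plan is to argue by contradiction. Assume the knot circles of $\mathcal{K}$ do not all lie in the same component of $P\setminus K_f$, where $P$ is the projection sphere of an FAL diagram for $\mathcal{A}$ with reflection surface $R=P\setminus\mathcal{A}$. Write $P\setminus K_f=P_1\sqcup P_2$; since the knot circles of $\mathcal{A}$ lie in $P$, are mutually disjoint, and are connected, each knot circle other than $K_f$ lies in exactly one of $P_1,P_2$, and under our assumption we may take both $\mathcal{K}\cap P_1$ and $\mathcal{K}\cap P_2$ nonempty.

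The first step I would carry out is a local observation: a crossing circle that does not link $K_f$ forces the two knot circles it links onto the same side of $K_f$. Indeed, if $C$ links knot circles $K,K'$ with $K_f\notin\{K,K'\}$ and $D$ is a crossing disk for $C$, then $D$ has punctures exactly along $C,K,K'$ (Definition \ref{df:NR3PS}), so $D\cap K_f=\emptyset$; since $D$ is orthogonal to $R$ (Theorem \ref{prop:BeltSumSummary}), the curve $D\cap P$ is the union of the three non-separating geodesics of $D$, and the one joining the $K$- and $K'$-cusps is an arc of $P\setminus K_f$ whose two ends spiral into $K$ and into $K'$. An arc of $P_1\sqcup P_2$ has both ends in the same component, so $K$ and $K'$ lie on the same side of $K_f$. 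Applying this to each $C_{ij}\in\mathcal{C_K}$ (which bounds a crossing disk punctured by $K_i$ and $K_j$ by Lemma \ref{lem:C_K}(i)--(ii)) shows that knot circles of $\mathcal{K}$ linked through $\mathcal{C_K}$ share a side; applying it to every crossing circle of $\mathcal{A}$ that does not link $K_f$ shows, more generally, that no crossing circle of $\mathcal{A}$ connects a component lying on the $P_1$ side to one lying on the $P_2$ side except through the single component $K_f$ itself (each crossing circle that does link $K_f$ straddles it, or, if it links $K_f$ twice, sits against it). Thus the components of $\mathcal{A}$ separate into a ``$P_1$-part'' and a ``$P_2$-part'' interacting only along $K_f$.

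From this I would produce, by a cut-and-paste argument modeled on the construction of $S^2_\alpha$ and $S^+_\alpha$ in Lemma \ref{prop:2ps}, an embedded two-sphere $\Sigma\subset\mathbb{S}^3$ with $\Sigma\cap\mathcal{A}$ equal to two points of $K_f$, all $P_1$-side components inside and all $P_2$-side components outside. Because $\mathcal{K}\cap P_1\neq\emptyset$ and $\mathcal{K}\cap P_2\neq\emptyset$, each side of $\Sigma$ contains a closed component of $\mathcal{A}$, so $\Sigma$ exhibits a nontrivial connect-sum decomposition of $\mathcal{A}$; equivalently, $\Sigma$ punctured at its two points of $K_f$ is an essential annulus in $\mathbb{S}^3\setminus\mathcal{A}$, contradicting hyperbolicity. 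Hence one of $\mathcal{K}\cap P_1$, $\mathcal{K}\cap P_2$ is empty, which is the assertion. The hard part will be this last step: the sphere $\Sigma$ must be built with care, since the $P_1$- and $P_2$-biased crossing disks straddle $K_f$ and the components of $\mathcal{L}^c=\mathcal{A}\setminus\mathcal{L}_h$ (together with any crossing circles linking $K_f$ twice) have to be routed to one side without introducing spurious intersections with $\mathcal{A}$ --- exactly the kind of regular-neighborhood-of-a-cell-complex bookkeeping carried out in the proof of Lemma \ref{prop:2ps}.
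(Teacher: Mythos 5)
Your first step (a crossing circle that does not link $K_f$ forces the two knot circles it links onto the same side of $K_f$, via the non-separating geodesic of its crossing disk lying in the reflection surface) is exactly the opening move of the paper's proof, and it is correct. The divergence, and the problem, is in the second half. The paper does \emph{not} build a new sphere separating the entire ``$P_1$-part'' from the entire ``$P_2$-part.'' Instead it relabels so that $K_1$ and $K_n$ lie on opposite sides of $K_f$, takes $\alpha$ to be the arc of $K_f$ between $D_n$ and $D_1$, and applies Proposition \ref{prop:2ps} to that $\alpha$ as is. The only additional observation needed is that two separating quadruples whose $\mathcal{K}$-knot circles lie on opposite sides of $K_f$ have disjoint crossing-circle sets (by your own first step), hence are disjoint by Lemma \ref{lem:C_K}$(iv)$, hence are never adjacent; so the initial maximally adjacent subsequence $\{Q_1,\dots,Q_m\}$ stays entirely on the side of $Q_1$, the outermost quadruple containing $D_n$ lies outside $\mathbb{B}^3_\alpha$, and $S^+_\alpha$ already has components of $\mathcal{A}$ on both sides. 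No new construction is required.

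The gap in your version is precisely the step you flag as ``the hard part'': the sphere $\Sigma$ separating \emph{all} $P_1$-side components from \emph{all} $P_2$-side components is never constructed, and it is not clear it can be built by the regular-neighborhood bookkeeping of Lemma \ref{prop:2ps}. Any embedded sphere meeting $\mathcal{A}$ in exactly two points of $K_f$ cuts $K_f$ into two arcs, so for your $\Sigma$ to exist the crossing disks $D_i$ with $K_i\subset P_1$ must occur consecutively around $K_f$, i.e.\ the $P_1$-side and $P_2$-side disks of $\mathcal{D}$ must not interleave. In the hypothetical configuration you are trying to rule out, non-interleaving is not free: it requires an argument (essentially the disjointness of opposite-side quadruples invoked above, pushed further to control every disk of $\mathcal{D}$), and you would also have to route the components of $\mathcal{L}^c$ --- which at this stage of the paper have not yet been shown to be empty --- to a definite side. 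The paper's choice of $\alpha$ sidesteps all of this because the contradiction only needs \emph{some} component of $\mathcal{A}$ on each side of $S^+_\alpha$, not a clean separation of the two halves. If you replace your construction of $\Sigma$ with the direct application of Proposition \ref{prop:2ps} to the arc between $D_n$ and $D_1$, together with the non-adjacency observation, your argument closes.
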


\begin{proof}
The result will follow once we show that if knot circles of $\mathcal{K}$ are on opposite sides of $K_f$, then there is an arc $\alpha$ in $K_f\setminus \mathcal{D}$ such that the two-sphere $S^+_{\alpha}$ of Proposition \ref{prop:2ps} provides a connect-sum decomposition of $\mathcal{A}$.  

A crossing circle in $\mathcal{C_K}$ links two knot circles of $\mathcal{K}$ that are on the same side of $K_f$, since a crossing circle of $\mathcal{A}$ that punctures opposite sides of $K_f$ necessarily links $K_f$.  Therefore, if $C_{ij},C_{kl}\in \mathcal{C_K}$ link knot circles on opposite sides of $K_f$, then the sets of crossing circles $\left\{C_i,C_j,C_{ij}\right\}$ and $\left\{C_k,C_l,C_{kl}\right\}$ are disjoint.  Since separating quadruples can intersect only along common crossing disks (Lemma \ref{lem:C_K}$(iv)$), this implies $Q_{ij}$ and $Q_{kl}$ are disjoint.  In particular, outermost separating quadruples that link knot circles of $\mathcal{K}$ on opposite sides of $K_f$ are not adjacent.

Now suppose there are knot circles in $\mathcal{K}$ that are on opposite sides of $K_f$, and label knot circles of $\mathcal{K}$ so that $K_n$ and $K_1$ are on opposite sides of $K_f$.  Let $\alpha$ denote the arc of $K_f$ between crossing disks $D_n$ and $D_1$, and let $\mathbb{B}^3_{\alpha}$ be the standard ball of $\alpha$.  Since $K_f$ intersects $D_1$ and $D_n$ at the endpoints of $\alpha$, they are contained in outermost separating quadruples $Q_{1i}$ and $Q_{jn}$ by Lemma \ref{lem:POprops}(\emph{\ref{D1Dn}}). The maximally adjacent subsequence $\{Q_1,\dots,Q_m\}$ contains only outermost quadruples on the same side of $K_f$ as $Q_1$, and create the standard ball $\mathbb{B}^3_{\alpha}$. Since the outermost quadruple $Q_{jn}$ is on the opposite side from those in $\{Q_1,\dots,Q_m\}$, its components are outside $\mathbb{B}^3_{\alpha}$.  There are components of $\mathcal{A}$, then, on both sides of the two-sphere $S^+_{\alpha}$ constructed in Proposition \ref{prop:2ps}.  But then $S^+_{\alpha}$ provides a non-trivial connect-sum decomposition of $\mathcal{A}$, contradicting the hyperbolicity of $\mathcal{A}$.
\end{proof}

\begin{thm}\label{thm:LisLs}
Let $M = \mathbb{S}^{3} \setminus \mathcal{A}$ and $M' = \mathbb{S}^{3} \setminus \mathcal{A}'$ be flat FAL complements each with unique reflection surfaces. Suppose $h:M\to M'$ is a type-changing homeomorphism.  Then $\mathcal{A}$ is a signature link. 
\end{thm}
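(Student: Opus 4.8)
The plan is to prove that $\mathcal{A}$ coincides with its signature sublink $\mathcal{L}_h = \{K_f\}\cup\mathcal{K}\cup\mathcal{C}\cup\mathcal{C_K}$ from Definition \ref{defn:SigSublink}. Once $\mathcal{A} = \mathcal{L}_h$, the theorem follows quickly: Lemma \ref{lem:SigSublink} together with Theorem \ref{thm:SameSide} already furnish all of the defining conditions of Definition \ref{defn:SigLink} (the partition of $\mathcal{A}$ into four nonempty sets, the prescribed linking pattern of $\mathcal{C}$ and $\mathcal{C_K}$, and the fact that $\mathcal{K}$ lies entirely on one side of $K_f$), while uniqueness of the crossing circle $C_i$ linking $K_i$ to $K_f$ is automatic, since a second such crossing circle would bound a crossing disk punctured by $K_i\in\mathcal{K}$ and hence lie in $\mathcal{C_K}$ by Lemma \ref{lem:C_K}$(i)$, contradicting the requirement that $\mathcal{C_K}$-components link two knot circles of $\mathcal{K}$. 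So the whole statement reduces to showing $\mathcal{L}^c := \mathcal{A}\setminus\mathcal{L}_h = \emptyset$.

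I would argue this by contradiction, splitting into two cases according to whether some crossing circle of $\mathcal{L}^c$ links $K_f$. Suppose first that none does. Then by the remark following Lemma \ref{lem:C_K}, every crossing circle of $\mathcal{L}^c$ links only knot circles of $\mathcal{L}^c$, whereas every crossing circle of $\mathcal{L}_h$ (being in $\mathcal{C}$ or $\mathcal{C_K}$) links only knot circles of $\mathcal{L}_h$. Since each knot circle of an FAL is linked by at least two crossing circles, $\mathcal{L}^c$ must itself contain a knot circle, so $\mathcal{L}_h$ and $\mathcal{L}^c$ are nonempty sublinks across which no crossing circle links. In a flat FAL this forces $\mathcal{A}$ to be a split link, contradicting hyperbolicity.

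The remaining case is that some crossing circle $Z\in\mathcal{L}^c$ links $K_f$, and here I would invoke the two-sphere $S^+_\alpha$ of Proposition \ref{prop:2ps}. Pick a crossing disk $D_Z$ of $Z$ punctured by $K_f$; it is an $N$-disk, so Lemma \ref{lem:NdisksDisjoint} makes it disjoint from every crossing disk and longitudinal disk occurring in the separating quadruples of $\mathcal{Q}$, hence disjoint from each sphere $S^2_\alpha$. The puncture $D_Z\cap K_f$ lies in the interior of some arc $\alpha$ of $K_f\setminus\mathcal{D}$, and I would build $S^+_\alpha$ from that arc. Because $\alpha$ lies outside the standard ball $\mathbb{B}^3_\alpha$ while $D_Z$ is connected and disjoint from $S^2_\alpha = \partial\mathbb{B}^3_\alpha$, the disk $D_Z$—and with it $Z = \partial D_Z$—lies entirely outside $\mathbb{B}^3_\alpha$, so $Z$ is outside $S^+_\alpha$ by Proposition \ref{prop:2ps}$(ii)$. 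On the other hand $\mathcal{C_K}\ne\emptyset$ guarantees, via Lemma \ref{lem:POprops}$(iv)$, an outermost separating quadruple $Q_1\subseteq\mathbb{B}^3_\alpha$, so that the crossing circles of $Q_1$ are components of $\mathcal{A}$ meeting $\mathbb{B}^3_\alpha$ and hence, by Proposition \ref{prop:2ps}$(i)$, lying inside $S^+_\alpha$. Thus $S^+_\alpha$ is a two-sphere meeting $\mathcal{A}$ in exactly the two points $S^+_\alpha\cap K_f$, with components of $\mathcal{A}$ other than $K_f$ on both sides, so neither complementary ball is a trivial ball--arc pair; this exhibits $\mathcal{A}$ as a nontrivial connected sum (or a split link), again contradicting hyperbolicity. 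Hence $\mathcal{L}^c=\emptyset$ and $\mathcal{A}$ is a signature link.

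The heavy lifting is all in Proposition \ref{prop:2ps} and the structural results of Section \ref{subsec:StandardBall}; granting those, the only genuinely new ingredients are the split-link dichotomy used to dispose of components of $\mathcal{L}^c$ that avoid $K_f$, and the observation that the arc $\alpha$ must be chosen to contain a $K_f$-puncture of a crossing disk of the offending component $Z$, so that $Z$ gets pushed outside $S^+_\alpha$ while nontrivial content of $\mathcal{L}_h$ stays inside. I expect the main obstacle to be confirming that the chosen $S^+_\alpha$ really does separate the link nontrivially in both cases—i.e.\ that primeness genuinely fails—rather than the two-sphere bounding a product region on one side, which is exactly what Proposition \ref{prop:2ps}$(i)$--$(ii)$ are designed to rule out.
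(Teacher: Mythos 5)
Your proposal is correct and follows essentially the same route as the paper's proof: reduce to showing $\mathcal{L}^c=\emptyset$, use the sphere $S^+_{\alpha}$ of Proposition \ref{prop:2ps} (with $\alpha$ chosen to contain the $K_f$-puncture of the offending crossing disk) to derive a connect-sum contradiction when a crossing circle of $\mathcal{L}^c$ links $K_f$, and dispose of the remaining components with the split-link argument. The only differences are organizational — you case-split up front rather than first proving that crossing disks of $\mathcal{L}^c$ avoid $K_f$ — and you spell out the uniqueness of the $C_i$ in Definition \ref{defn:SigLink}, which the paper leaves implicit.
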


\begin{proof}
To prove this result we show that $\mathcal{A}$ equals its signature sublink $\mathcal{L}_h$.  Once we've shown $\mathcal{L}_h=\mathcal{A}$, the signature sublink is a flat FAL and satisfies the final property of Definition \ref{defn:SigLink}.  We assume that $\mathcal{L}^c = \mathcal{A} \setminus \mathcal{L}_h$ is non-empty, and will arrive at a contradiction.

We begin by showing that if $C$ is a crossing circle of $\mathcal{L}^c$, then any crossing disk $D$ bounded by $C$ is punctured only by knot circles in $\mathcal{L}^c$.  First, since $C\in\mathcal{A}\setminus\mathcal{L}_h$, Lemma \ref{lem:C_K} implies that $D$ is not punctured by any $K_i\in \mathcal{K}$.

Now suppose that $D$ is punctured by $K_f$.  The crossing disks $\mathcal{D}$ cut $K_f$ into $n$ arcs. Let $\alpha$ be an arc of $K_f$ that punctures the disk $D$, then construct the standard ball $\mathbb{B}^3_{\alpha}$ of $\alpha$ and the associated twice-punctured sphere $S^+_{\alpha}$ of Proposition \ref{prop:2ps}. Now the crossing disk $D$ must be outside $\mathbb{B}^3_{\alpha}$ since it is punctured by $\alpha$ and is disjoint from $S^2_{\alpha} = \partial \mathbb{B}^3_{\alpha}$.  Then $C=\partial D$ is also outside $\mathbb{B}^3_{\alpha}$, and $C$ must be outside $S^+_{\alpha}$ by Proposition \ref{prop:2ps}. There are also components of $\mathcal{A}$ inside $S^+_{\alpha}$, by Lemma \ref{prop:2ps}, since $S^+_{\alpha}$ contains the components of the maximally adjacent subsequence $\{Q_1,\dots,Q_m\}$, which form a non-trivial subset of $\mathcal{A}$. Thus $S^+_{\alpha}$ provides a non-trivial connect-sum decomposition of $\mathcal{A}$. Since $\mathcal{A}$ is hyperbolic this is a contradiction, and every crossing circle $C\in\mathcal{L}^c$ links knot circles in $\mathcal{L}^c$.

Further recall that crossing circles of $\mathcal{L}_h$ link only knot circles in $\mathcal{L}_h$.  At this stage the components of $\mathcal{A}$ have been partitioned into two non-empty subsets $\mathcal{L}_h$ and $\mathcal{L}^c$ with the property that crossing circles from one subset bound crossing disks punctured only by knot circle(s) from the same set.  This is a contradiction, since such an FAL admits a disconnected diagram, indicating it came from a splittable link. Thus the link that generated $\mathcal{A}$ was split, contradicting the hyperbolicity of $\mathcal{A}$.

We conclude that $\mathcal{L}^c$ is empty, and $\mathcal{L}_h=\mathcal{A}$.
\end{proof}

The following theorem highlights our main result: flat FALs are determined by their complements. 

\begin{thm}\label{thm:LisL'}
Let $\mathcal{A}, \mathcal{A}'$ be flat FALs with homeomorphic complements. Then $\mathcal{A}$ is isotopic to $\mathcal{A}'$. 
\end{thm}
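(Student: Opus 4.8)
The plan is to assemble the pieces already in place according to the number of reflection surfaces. First, by Corollary \ref{thm:uniquereflectionhomeo} a homeomorphism $h : M = \mathbb{S}^3\setminus\mathcal{A} \to M' = \mathbb{S}^3\setminus\mathcal{A}'$ induces an isometry $\rho_h$ carrying reflection surfaces of $M$ bijectively to those of $M'$; in particular $M$ and $M'$ have the same number of reflection surfaces. If that number exceeds one, Corollary \ref{cor:MRSdetermined} gives at once that $\mathcal{A}$ and $\mathcal{A}'$ are isotopic, so we may assume $M$ and $M'$ each carry a \emph{unique} reflection surface $R$ and $R'=\rho_h(R)$. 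If $h$ is not type-changing, then it sends knot circles to knot circles and crossing circles to crossing circles, hence preserves the peripheral structures determined by $R$ and $R'$, and so extends to an isotopy of $\mathbb{S}^3$ making $\mathcal{A}$ and $\mathcal{A}'$ equivalent, exactly as recorded in Section \ref{sec:Transition}. This leaves only the case of a type-changing homeomorphism $h$ between flat FAL complements with unique reflection surfaces.

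In this case Theorem \ref{thm:LisLs} shows $\mathcal{A}$ is a signature link, equal to its signature sublink $\mathcal{L}_h=\{K_f\}\cup\mathcal{K}\cup\mathcal{C}\cup\mathcal{C_K}$ associated to $h$, and Lemma \ref{lem:SigSublink} tells us precisely which components change type: $h$ changes the type of exactly those components lying in $\mathcal{C}\cup\mathcal{K}$, fixing the type of $K_f$ and of every component of $\mathcal{C_K}$. Now let $h_f$ be the full-swap on $\mathcal{A}$, the composition of the $ml$-swaps on the Hopf sublinks $K_i\cup C_i$ for $i=1,\dots,n$ (Definition \ref{defn:MLSwap}). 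Each $ml$-swap preserves the reflection surface and interchanges meridian and longitude on exactly the two cusps of its Hopf pair, leaving all other peripheral structures fixed; since a cusp is a crossing-circle cusp precisely when the reflection surface meets it in meridians (Proposition \ref{prop:RSfeatures}), it follows that $h_f$ preserves the reflection surface and changes the type of exactly the components in $\mathcal{C}\cup\mathcal{K}$ — the same set changed by $h$. By Proposition \ref{prop:MLswaps}, $h_f(\mathcal{A})$ is a signature link isotopic to $\mathcal{A}$; in particular $\mathbb{S}^3\setminus h_f(\mathcal{A})$ is a flat FAL complement which, being homeomorphic to $M$, again has a unique reflection surface by Corollary \ref{thm:uniquereflectionhomeo}.

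The comparison is then the crux. Consider the homeomorphism $h\circ h_f^{-1}:\mathbb{S}^3\setminus h_f(\mathcal{A})\to M'$. Because $h$ and $h_f$ induce the same change of types on the components of $\mathcal{A}$ — flipping exactly those in $\mathcal{C}\cup\mathcal{K}$ — the composite $h\circ h_f^{-1}$ induces no net change of type, i.e. it carries crossing circles to crossing circles and knot circles to knot circles relative to the respective unique reflection surfaces (which it matches up by Corollary \ref{thm:uniquereflectionhomeo}). Lemma \ref{lem:MeridianLongitude} then shows it carries meridians to meridians and longitudes to longitudes, so it preserves peripheral structures and therefore extends to a homeomorphism of pairs $(\mathbb{S}^3,h_f(\mathcal{A}))\to(\mathbb{S}^3,\mathcal{A}')$, which we may take orientation-preserving after possibly composing with a reflection in a projection plane. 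Hence $h_f(\mathcal{A})$ is isotopic to $\mathcal{A}'$, and combining this with the isotopy $\mathcal{A}\simeq h_f(\mathcal{A})$ of Proposition \ref{prop:MLswaps} yields that $\mathcal{A}$ is isotopic to $\mathcal{A}'$, as desired.

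I expect the main obstacle to be the careful verification that $h$ and $h_f$ flip the types of the \emph{same} set of components, so that $h\circ h_f^{-1}$ is genuinely type-preserving. This hinges on two things pulling in the same direction: the explicit action of $ml$-swaps on peripheral structures and on the reflection surface (giving that $h_f$ flips types precisely on $\mathcal{C}\cup\mathcal{K}$), and the identification $\mathcal{A}=\mathcal{L}_h$ together with Lemma \ref{lem:SigSublink} (giving that $h$ does the same). Once that bookkeeping is settled, the remaining implications — type-preserving homeomorphism $\Rightarrow$ preserves peripheral structure $\Rightarrow$ extends over $\mathbb{S}^3$ — are routine applications of earlier results.
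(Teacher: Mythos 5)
Your proposal is correct and follows essentially the same route as the paper: split on the number of reflection surfaces, dispatch the multiple-reflection-surface case via Corollary \ref{cor:MRSdetermined}, and in the unique-reflection-surface case invoke Theorem \ref{thm:LisLs} to get a signature link, compare $h$ with the full-swap $h_f$ (which acts identically on peripheral structures), and conclude via Proposition \ref{prop:MLswaps} that the composite preserves peripheral systems and extends to an isotopy of $\mathbb{S}^3$. The only cosmetic difference is that you compose $h\circ h_f^{-1}$ where the paper uses $h_{ml}\circ h^{-1}$, which changes nothing.
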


\begin{proof}
First, if $h:M\to M'$ is not type-changing, then it preserves peripheral systems. Under this assumption, $h$ extends to an isotopy of $\mathbb{S}^{3}$, making $\mathcal{A}$ and $\mathcal{A}'$ isotopic links. Moving forward, we will assume that $h$ is type-changing. Our proof breaks down into just two cases since two flat FAL complements each with a different number of reflection surfaces can not be homeomorphic by Corollary \ref{cor:MRSdetermined}.

\underline{Case I}: If $M$ contains multiple distinct reflection surfaces, then Corollary \ref{cor:MRSdetermined} tells us that $\mathcal{A}$ and $\mathcal{A'}$ are isotopic links. 

\underline{Case II}: Suppose $M$ and $M'$ each contain a unique reflection surface. Then by Theorem \ref{thm:LisLs} we know that $\mathcal{A}$ has the structure of a signature link $\mathcal{L}$.  Thus, there is a unique knot circle $K_f$ whose type is fixed by $h$, and $h$ changes the type of every crossing circle $C_i$ linking $K_f$.  Each $C_i$ links a knot circle $K_i$ that changes type and $h$ preserves the type of all other crossing circles, each of which link two of the $K_i$. The fact that $\mathcal{A}$ has all these properties is justified at the beginning of this section in Lemma \ref{lem:SigSublink}.

Since $\mathcal{A}$ is a signature link, we can consider the full-swap homeomorphism $h_{f}: M \rightarrow M''$, discussed in Section \ref{Sec:SigLinks}, where $M'' = \mathbb{S}^{3} \setminus \mathcal{A}''$ and $\mathcal{A}''$ is also a signature link. This homeomorphism has the same effect on  peripheral structures as $h$ and Proposition \ref{prop:MLswaps} tells us that $\mathcal{A}$ and $\mathcal{A}''$ are isotopic links. Now $h_{f}$ and $h$ act identically on peripheral systems, so $h_{f} \circ h^{-1}:M'\to M''$ preserves peripheral systems and extends to an isotopy of $\mathbb{S}^3$.  This implies that the links $\mathcal{A}'$ and $\mathcal{A}''$ are isotopic. Thus, $\mathcal{A}$ and $\mathcal{A}'$ are isotopic, finishing this case.
\end{proof}

By combining Theorem \ref{thm:SymmetryMRS} with the work from this section, we can now provide a complete proof of Theorem \ref{thm:SymmetryThm}, which we first restate.

\begin{named}{Theorem \ref{thm:SymmetryThm}}
Let $\mathcal{A}$ be a flat FAL.  Then either
\begin{itemize}
\item $\mathcal{A}$ is not a signature link and both $\mathcal{A}$ and its complement $M = \mathbb{S}^3\setminus \mathcal{A}$ have the same symmetry group, or
\item $\mathcal{A}$ is a signature link and full-swaps on $\mathcal{A}$ generate symmetries of $M = \mathbb{S}^3\setminus \mathcal{A}$ which are not restrictions of symmetries of $\mathcal{A}$ to $M$.
\end{itemize}
\end{named}

\begin{proof}
Let $\mathcal{A}$ be a flat FAL with $M = \mathbb{S}^{3} \setminus \mathcal{A}$. Note that, $P_3$ is a signature link (see Figure \ref{fig:TwoMLswaps}) whose complement contains multiple reflection surfaces.

First, suppose $A$ is not a signature link. If $M$ contains multiple reflection surfaces, then  Theorem \ref{thm:SymmetryMRS} tells us that $Sym(\mathbb{S}^{3}, \mathcal{A}) = Sym(\mathbb{S}^{3} \setminus \mathcal{A})$. If $M$ contains a unique reflection surface and $A$ is not a signature link, then Theorem \ref{thm:LisLs} implies every self-homeomorphism of $M$ is not type-changing. Thus, in this case, every self-homeomorphism of $M$ preserves peripheral structures, and so,  extends to an isotopy of $\mathbb{S}^{3}$, as needed. 

Now, suppose $A$ is a signature link. Then $M$ admits a full-swap homeomorphism $h_f$. As discussed in Section \ref{Sec:SigLinks}, full-swaps are compositions of ml-swaps, where these ml-swaps exchange meridional and longitudinal slopes on (distinct) Hopf sublinks of $\mathcal{A}$. Such homeomorphisms do not extend to isotopies of $\mathbb{S}^{3}$, and so, $h_f$ is a representative for an element of $Sym(M)$ that does not restrict to an element of $Sym(\mathbb{S}^{3}, \mathcal{A})$. 
\end{proof}



\end{document}